\documentclass[11pt,a4paper,reqno]{amsart}
\usepackage{amsfonts}\usepackage{amsmath,amssymb,amsthm,amsxtra}
\allowdisplaybreaks[2]
\usepackage{aliascnt}
\usepackage[dvipsnames]{xcolor}
\usepackage[colorlinks, linkcolor=RoyalBlue,anchorcolor=Periwinkle,
citecolor=Orange,urlcolor=Emerald]{hyperref}
\usepackage{enumitem}
\usepackage{geometry,array}
\usepackage{graphicx}
\usepackage{bookmark}
\usepackage{tikz}\usetikzlibrary{matrix}
\usepackage{longtable}
\usepackage{cleveref}
\usepackage{tikz-cd}
\usetikzlibrary{calc}
\tikzset{curve/.style={settings={#1},to path={(\tikztostart)
			.. controls ($(\tikztostart)!\pv{pos}!(\tikztotarget)!\pv{height}!270:(\tikztotarget)$)
			and ($(\tikztostart)!1-\pv{pos}!(\tikztotarget)!\pv{height}!270:(\tikztotarget)$)
			.. (\tikztotarget)\tikztonodes}},
	settings/.code={\tikzset{quiver/.cd,#1}
		\def\pv##1{\pgfkeysvalueof{/tikz/quiver/##1}}}, quiver/.cd,pos/.initial=0.35,height/.initial=0}
\newcommand{\mK}{\mathbb K}
\newcommand{\mS}{\mathbb S}
\newcommand{\mB}{\mathcal B}
\newcommand{\mC}{\mathcal C}
\newcommand{\mD}{\mathcal D}
\newcommand{\mI}{\mathcal I}
\newcommand{\mL}{\mathcal L}
\newcommand{\mR}{\mathcal R}

\newcommand{\Hom}{\operatorname{Hom}}
\newcommand{\HH}{\operatorname{HH}}
\newcommand{\id}{\operatorname{id}}
\newcommand{\Sub}{\operatorname{Sub}}
\newcommand{\Sp}{\operatorname{Sp}}
\newcommand{\Int}{\operatorname{Int}}
\newcommand{\Occ}{\operatorname{Occ}}
\newcommand{\para}[2]{(#1,#2)}
\newcommand{\Para}[2]{(#1\,||\,#2)}
\newcommand{\num}[2]{\operatorname{Num}_{#1}(#2)}
\newcommand{\ep}{\varepsilon}
\newcommand{\sgn}{\operatorname{sgn}}
\newcommand{\one}{\mathbf 1}
\newcommand{\taue}{\tau_{\ell(\tau)}}
\newcommand{\bi}{\beta_{i_0}}
\newcommand{\ei}{e_{i_0}}
\newcommand{\epi}{\varepsilon_{i_0}}
\newcommand{\bet}{\beta_{\ell(b)}}
\newcommand{\gj}{\gamma_{j_0}}
\theoremstyle{plain}
\newtheorem{theorem}{Theorem}[section]

\newaliascnt{lemma}{theorem}
\newtheorem{lemma}[lemma]{Lemma}
\aliascntresetthe{lemma}

\newaliascnt{corollary}{theorem}
\newtheorem{corollary}[corollary]{Corollary}
\aliascntresetthe{corollary}

\newaliascnt{proposition}{theorem}
\newtheorem{proposition}[proposition]{Proposition}
\aliascntresetthe{proposition}

\theoremstyle{definition}
\newaliascnt{definition}{theorem}
\newtheorem{definition}[definition]{Definition}
\aliascntresetthe{definition}

\newaliascnt{example}{theorem}
\newtheorem{example}[example]{Example}
\aliascntresetthe{example}

\newaliascnt{remark}{theorem}
\newtheorem{remark}[remark]{Remark}
\aliascntresetthe{remark}

\numberwithin{equation}{section}

\newtheorem*{maintheorema}{Theorem A}
\newtheorem*{maintheoremb}{Theorem B}
\newtheorem*{maintheoremc}{Theorem C}
\crefname{theorem}{theorem}{theorems}
\Crefname{theorem}{Theorem}{Theorems}
\crefname{lemma}{lemma}{lemmas}
\Crefname{lemma}{Lemma}{Lemmas}
\crefname{corollary}{corollary}{corollaries}
\Crefname{corollary}{Corollary}{Corollaries}
\crefname{proposition}{proposition}{propositions}
\Crefname{proposition}{Proposition}{Propositions}
\crefname{definition}{definition}{definitions}
\Crefname{definition}{Definition}{Definitions}
\crefname{remark}{remark}{remarks}
\Crefname{remark}{Remark}{Remarks}
\crefname{example}{example}{examples}
\Crefname{example}{Example}{Examples}
\crefname{equation}{equation}{equations}
\Crefname{equation}{Equation}{Equations}
\crefname{appendix}{appendix}{appendices}
\Crefname{appendix}{Appendix}{Appendices}
\title[Transferred $L_\infty$-structures and binary brackets]
{Transferred $L_\infty$-structures and a classification of binary brackets for finite-dimensional skew-gentle algebras}

\author{Zixu Li}
\address{Yancheng Institute of Technology, 224007 Yancheng, Jiangsu, P. R. China}
\email{lizx19@tsinghua.org.cn}

\author{Liyu Liu}
\address{School of Mathematics, Yangzhou University, 225009 Yangzhou, Jiangsu, P. R. China}
\email{lyliu@yzu.edu.cn}

\author{Lingchao Meng}
\address{School of Mathematics, Yangzhou University, 225009 Yangzhou, Jiangsu, P. R. China}
\email{bcmenglc@outlook.com}

\date{}

\subjclass[2020]{Primary 16E40; Secondary 17B55, 18G35}
\keywords{skew-gentle algebras, Hochschild cohomology, Gerstenhaber brackets,
	$L_\infty$-algebras, homotopy transfer, formality}

\begin{document}
	
	\begin{abstract}
		We construct a transferred $L_\infty$-structure on the shifted Chouhy--Solotar cochain complex of a finite-dimensional skew-gentle algebra and give explicit basis-level formulas for its binary operation $l_2$ in all shifted degrees. We further prove that the Hochschild dg Lie algebra of an $A_n$-type skew-gentle algebra is homotopy abelian, and hence $L_\infty$-formal, and contrast this with Wong's non-formality result for the dual numbers $\mK[\theta]/\langle \theta^2 \rangle$, leading to a general formality classification problem.
	\end{abstract}
	
	\maketitle
	\tableofcontents
	\bigskip
	
	%===========================
	\section{Introduction}
	
	Hochschild cohomology is one of the fundamental homological invariants of an
	associative algebra.  Beyond its structure as a graded vector space, it
	carries the cup product and the Gerstenhaber bracket, which together form a
	Gerstenhaber algebra \cite{Ger63}.  At the cochain level, the shifted Hochschild cochain complex
	$C^*(A)[1]$, equipped with the Hochschild differential and the shifted
	Gerstenhaber bracket, is a differential graded (dg) Lie algebra.  We refer to it as the Hochschild dg Lie algebra.  This algebra controls the deformation
	theory of $A$: infinitesimal deformations, the Maurer--Cartan equation, and
	obstruction theory are naturally expressed in its terms; see, for example,
	\cite{Ger63,Wit19}.  The question underlying the present work is therefore
	not only what $\HH^*(A)$ and its Gerstenhaber bracket look like, but how much
	of the chain-level Lie-theoretic structure of $C^*(A)[1]$ survives, up to
	higher homotopy, on Hochschild cohomology itself.
	
	The natural language for this question is that of $L_\infty$-algebras.  The
	origins of higher homotopy algebra go back to Stasheff's work on homotopy
	associativity and $A_\infty$-structures \cite{Sta63}.  Higher-homotopical Lie
	structures subsequently emerged in deformation theory, notably in the work
	of Schlessinger--Stasheff \cite{SS85}, and were developed in the language of
	strongly homotopy Lie, or $L_\infty$, algebras by Lada--Stasheff and
	Lada--Markl \cite{LS93,LM95}.  An $L_\infty$-algebra
	$(L,\{l_n\}_{n\geq1})$ replaces the strict identities of a dg Lie algebra
	by a coherent hierarchy of higher homotopies.  In particular, the
	arity-three $L_\infty$-identity relates the Jacobiator of $l_2$ to $l_1$ and
	$l_3$, while the subsequent identities govern the compatibility of the
	higher operations.
	
	For explicit calculations on quiver algebras, this higher-homotopical
	viewpoint is closely tied to the existence of small projective bimodule
	resolutions.  Bardzell constructed such a resolution for monomial algebras
	\cite{Bar97}.  Comparison morphisms between Bardzell's resolution and the bar
	resolution were made explicit by Redondo--Rom\'an \cite{RR18a}, who used them
	to compute the cup product and the Gerstenhaber bracket for quadratic string
	algebras \cite{RR18b}; see also Volkov's construction of the Gerstenhaber
	bracket via arbitrary projective resolutions \cite{Vol19}.  A complementary
	higher-homotopical approach to monomial algebras was developed by Tamaroff,
	who constructed explicit minimal models and the associated canonical
	$A_\infty$-structures on Ext-algebras \cite{Tam21}.
	
	The Hochschild dg Lie structure was transferred to a small cochain complex by
	Redondo--Rossi Bertone \cite{RR22}, who constructed an explicit
	$L_\infty$-structure on the shifted Bardzell complex of a monomial algebra
	together with a weak $L_\infty$ quasi-isomorphism to the Hochschild complex.
	Building on this construction, M\"uller--Redondo--Rossi Bertone--Suarez
	studied finite-dimensional gentle algebras and, under additional hypotheses
	on the quiver, obtained nilpotency results for the transferred
	$L_\infty$-structure and described Maurer--Cartan elements in the nilpotent
	cases \cite{MRRS25}.  Thus, already for gentle algebras, the small transferred
	model retains deformation-theoretic information that is invisible from the
	cohomology groups alone.
	
	Within this framework, Hochschild $L_\infty$-formality asks whether the full
	chain-level dg Lie algebra can be replaced, up to $L_\infty$
	quasi-isomorphism, by the much smaller graded Lie algebra carried by Hochschild
	cohomology.  More precisely, after transferring to a minimal model on
	$\HH^*(A)[1]$ and writing $l_r^{\min}$ for its operations, one has
	$l_1^{\min}=0$ and $l_2^{\min}=[-,-]_{\HH}$, while the operations
	$l_r^{\min}$ for $r\geq3$ record the genuinely higher homotopical information
	that survives on cohomology.  Hochschild $L_\infty$-formality means that the
	minimal model is $L_\infty$-isomorphic to one with $l_r^{\min}=0$ for all
	$r\geq3$, whereas homotopy abelianity means that it can be chosen abelian; in
	particular, the induced Gerstenhaber bracket then vanishes.  Precise
	definitions are given in \Cref{def:formality}.
	
	Formality has deep roots in rational homotopy theory.  A classical landmark
	is the theorem of Deligne--Griffiths--Morgan--Sullivan for compact K\"ahler
	manifolds \cite{DGMS75}.  A prominent formality result in a Hochschild-type
	setting is Kontsevich's formality theorem \cite{Kon03}.  Hochschild
	$L_\infty$-formality is not automatic for associative algebras: non-formality
	phenomena are known, for instance, for certain universal enveloping algebras
	\cite{BEGM23}.  Closer to the setting of the present paper,
	Bocklandt--van de Kreeke proved formality of the Hochschild dg Lie algebra for
	a class of gentle $A_\infty$-algebras associated with arc collections having
	no loops or two-cycles \cite{BK24}.
	On the other hand, Wong proved that the Hochschild dg Lie algebra
	of the exterior algebra on a one-dimensional vector space is not
	formal \cite[Theorem~6.3]{Won17}. As an ungraded associative algebra,
	this exterior algebra is isomorphic to
	$\mK[\theta]/\langle\theta^2\rangle$, giving a finite-dimensional
	gentle example with non-formal Hochschild dg Lie algebra.  Thus Hochschild
	$L_\infty$-non-formality does occur in the finite-dimensional gentle setting.
	
	We now turn to the classes of algebras considered here.  Gentle algebras were
	introduced by Assem--Skowro\'nski \cite{AS87} and have since become a central
	class of tame algebras with rich representation-theoretic, derived, and
	geometric structures; see, for example, \cite{AG08}.  Their Hochschild theory
	has been developed progressively at several levels.  Ladkani computed the
	dimensions of the Hochschild cohomology groups and showed that they are
	determined by the Avella-Alaminos--Gei\ss{} derived invariant \cite{Lad12}.
	For quadratic string algebras, and hence in particular for gentle algebras,
	Redondo--Rom\'an described Hochschild cohomology in terms of Bardzell's
	resolution and obtained explicit cup-product and Gerstenhaber-bracket formulas
	\cite{RR18b}.  In a surface-theoretic direction, Valdivieso-D\'iaz gave
	geometric computations of Hochschild cohomology for Jacobian gentle algebras
	arising from unpunctured surfaces \cite{VD15}.  Chaparro--Schroll--Solotar
	described the Lie algebra structure of the first Hochschild cohomology of
	gentle algebras and related it to the associated ribbon graph \cite{CSS20}.
	More recently, Chaparro--Schroll--Solotar--Su\'arez-\'Alvarez computed the
	complete Tamarkin--Tsygan calculus of gentle algebras, including the
	Hochschild cohomology ring, its Gerstenhaber structure, Hochschild and cyclic
	homology, and their geometric interpretation \cite{CSSSA26}.  For graded
	gentle algebras, Opper described the bigraded Hochschild cohomology together
	with the cup product and Gerstenhaber bracket and obtained a criterion for
	intrinsic formality \cite{Opp26}.  We stress that intrinsic formality in that
	work concerns $A_\infty$-enhancements of a graded algebra and is distinct from
	the Hochschild $L_\infty$-formality considered here.
	
	Skew-gentle algebras were introduced by Gei\ss--de la Pe\~na \cite{GP99}.
	Their derived categories admit an orbifold-surface interpretation
	\cite{LFSV22}, making them a natural extension of the gentle setting in which
	Hochschild structures can also be compared with geometry.  In the
	skew-gentle setting, Bian--Schroll--Solotar--Wang--Wen computed the Hochschild
	cohomology and Gerstenhaber algebra structure of graded skew-gentle algebras
	and related these structures to the underlying graded orbifold surface
	\cite{BSSWW26}.  Crucially for the present paper, they presented a graded
	version of the Chouhy--Solotar (CS) projective resolution \cite{CS15},
	constructed comparison morphisms, transported the Gerstenhaber bracket from
	the bar complex to the CS cochain complex, and derived explicit insertion
	formulas on basis cochains.  Their Hochschild basis theorem is also the key
	input in our $A_n$-type formality result below.
	
	The skew-gentle case is not a direct repetition of the monomial gentle case.
	The essential new feature is the double role played by a special loop.  If
	$\ep_i$ is a special loop, then $\ep_i^2\in I^S$ in the auxiliary monomial
	relation set used in the CS resolution, whereas multiplication in the actual
	algebra satisfies $\ep_i^2=\ep_i$.  Thus $\ep_i^2$ behaves as a
	monomial relation in the combinatorics of the CS resolution, while $\ep_i$ is
	an idempotent in the algebra itself.  Relation combinatorics is therefore
	monomial-like, but cochain values are multiplied in an algebra in which
	special-loop powers are reduced using the relation $\ep_i^2=\ep_i$.  This distinction must be
	tracked throughout the binary classification, particularly in path
	replacements and endpoint terms involving special loops, where idempotent
	absorption can occur.
	
	The first objective is to realize the Hochschild higher Lie structure
	on the CS cochain complex. The CS-projective resolution and comparison
	morphisms of \cite{BSSWW26} are combined with a relative version of
	the recursive homotopy construction in \cite{RR22}. The contraction
	identities and side conditions are verified in the skew-gentle
	setting, yielding the transferred $L_\infty$-structure in
	Theorem~A below. After translating between the grading, shift, and path-order conventions, the
	transferred binary operation is precisely the Gerstenhaber bracket transported
	to the CS complex in \cite[Section~3.6]{BSSWW26}.  The point is therefore not
	to introduce another binary bracket, but to place the small CS complex inside
	the full $L_\infty$ framework and thereby make the higher transfer recursion
	available on a path-combinatorial model.  Throughout the paper, the term
	binary bracket refers to the graded skew-symmetric binary operation
	$l_2$; it is not assumed to satisfy the strict graded Jacobi identity unless
	this is stated explicitly.
	
	The second, and computationally central, objective is to pass from the local
	insertion formulas to explicit basis-level formulas for the final binary
	bracket.  Proposition~3.25 of \cite{BSSWW26} determines the individual
	insertions, but the nonvanishing of $l_2$ is not decided by either insertion
	direction separately.  One must combine the two circle products, determine
	their signs and cancellations, evaluate the resulting path products in $A$
	via $\pi$, and account for repeated occurrences of the same support.  We
	organize this problem by the ordered pair of shifted input degrees.  Up to
	graded skew-symmetry, the resulting four types are $(-1,n)$ with $n\geq-1$,
	$(0,0)$, $(0,+)$, and $(+,+)$, where $+$ denotes an arbitrary positive
	shifted degree.  The four types exhaust all shifted degree pairs, and in each
	type we obtain explicit basis-level formulas for the bracket.
	
	The computation combines local path replacement and substitution, a global
	classification of repeated supports, and the analysis of signed occurrences
	in the periodic cases.  Locally, one must also account for the endpoint and
	interior terms produced by the comparison maps and reduce special-loop powers
	using their idempotent relations in $A$.  Globally, finite-dimensionality and
	the gentle uniqueness conditions force every relation concatenation with
	repeated arrows into one of three periodic forms: $\omega^N$, a power
	$\tau^w$ of a primitive relation cycle, or $\tau^w\tau_1\cdots\tau_r$ with a
	proper suffix; all remaining supports contain no repeated arrows.  In the
	periodic cases we determine every relevant occurrence position and its sign,
	so that multiple local contributions can be assembled with their correct
	multiplicities and cancellations.  This converts the transported bracket from an insertion formula into a finite explicit procedure for arbitrary basis	cochains and provides the concrete low-order input needed to analyse the first transferred higher operations and the nontrivial homotopy governing the graded Jacobi relation for $l_2$.
	
	The final part of the paper turns from the binary classification to the
	formality problem and reveals two contrasting phenomena. On the positive
	side, every $A_n$-type skew-gentle algebra, with arbitrary orientation of the underlying $A_n$-graph, has a homotopy abelian Hochschild dg Lie algebra and is therefore Hochschild $L_\infty$-formal.
	At the same time, if a special vertex is not an endpoint, the transferred CS model carries a nonzero chain-level ternary operation, which provides a nontrivial homotopy governing the graded Jacobi relation for $l_2$.	Thus, already within the $A_n$ family, nontrivial higher operations on a transferred model coexist with a trivial minimal	$L_\infty$-structure.
	
	By contrast, the dual numbers
	$D=\mK[\theta]/\langle\theta^2\rangle$, recalled above, provide a
	finite-dimensional gentle algebra with non-formal Hochschild dg Lie
	algebra. The coexistence of formal and non-formal examples motivates
	a combinatorial classification of Hochschild $L_\infty$-formality
	in the finite-dimensional gentle/skew-gentle setting. The aim is to
	find necessary and sufficient conditions on the quiver, the quadratic
	relations, and the special vertices, distinguishing among the
	homotopy abelian, formal but not homotopy abelian, and non-formal cases.
	
	We now state the main results more formally.  In the statements below, $\mK$
	is a field of characteristic zero, and each skew-gentle algebra is considered
	together with a skew-gentle triple $(Q,I,S)$.
	
	\begin{maintheorema}[Theorem~\ref{thm:transferred-Linfty}]
		Let $A$ be a finite-dimensional skew-gentle algebra.  The shifted CS cochain
		complex $B_{\mathrm{CS}}^*(A)[1]$ carries a transferred $L_\infty$-structure.
		Moreover, $G^*$ extends to a weak $L_\infty$
		quasi-isomorphism
		\[
		\phi:B_{\mathrm{CS}}^*(A)[1]\longrightarrow C^*(A)[1].
		\]
		In particular,
		\[
		l_1=\delta,\qquad
		l_2(f\otimes_{\mK}g)=F^*[G^*f,G^*g].
		\]
	\end{maintheorema}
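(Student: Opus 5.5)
The proof is an application of the homotopy transfer theorem for dg Lie algebras along a contraction. The approach follows Redondo--Rossi Bertone \cite{RR22}, with Bardzell's resolution replaced by the graded Chouhy--Solotar resolution of \cite{BSSWW26}. The plan has four steps.

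\textbf{Step 1: the chain-level comparison data.} Let $P_\bullet$ be the CS resolution and $\mathrm{Bar}_\bullet$ the (normalized, relative to $E=\mK Q_0$) bar resolution. From \cite{BSSWW26} we take comparison morphisms of $A$-bimodule complexes $F:P_\bullet\to\mathrm{Bar}_\bullet$ and $G:\mathrm{Bar}_\bullet\to P_\bullet$, both lifting the identity of $A$. Applying $\Hom_{A^e}(-,A)$ yields cochain maps $F^*:C^*(A)\to B^*_{\mathrm{CS}}(A)$ and $G^*:B^*_{\mathrm{CS}}(A)\to C^*(A)$. I would first check that $GF=\id_P$ holds on the nose. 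This reduces to a direct computation on generators $1\otimes\tau\otimes1$ of $P$: $F$ sends such a generator to a sum of bar elements built from the arrows of $\tau$, and $G$ evaluates these back. Special loops need care here, because $\ep_i^2=\ep_i$ in $A$ but $\ep_i^2\in I^S$ in the combinatorics. It then follows that $F^*G^*=\id$ on $B^*_{\mathrm{CS}}(A)$.

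\textbf{Step 2: a homotopy and the side conditions.} We need $h:\mathrm{Bar}_\bullet\to\mathrm{Bar}_{\bullet+1}$ with $\id-FG=dh+hd$. Following \cite{RR22}, define $h$ recursively on the degree of the bar complex using the standard contracting homotopy $s(a_0\otimes\cdots\otimes a_{n+1})=1\otimes a_0\otimes\cdots\otimes a_{n+1}$, which is left $A$-linear but only right $E$-linear. Concretely, set $h_n=s\circ(\id-F_nG_n-h_{n-1}d_n)$ on bimodule generators and extend $A^e$-linearly; this is the relative version, since generators are taken over $E$. Dualizing gives $H=h^*$ with $\id-G^*F^*=\delta H+H\delta$. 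Next we would arrange the side conditions $hF=0$, $Gh=0$ and $hh=0$, either by direct verification on generators or by the standard modification $h\mapsto h'' $, where $h'=(dh+hd)h(dh+hd)$ and $h''=h'dh'$. This gives a special deformation retract $(F^*,G^*,H)$ between $C^*(A)[1]$ and $B^*_{\mathrm{CS}}(A)[1]$. I expect this step to be the main obstacle: the interaction between $s$ and the idempotent relation $\ep_i^2=\ep_i$ must be checked to keep $h$ well defined on the normalized complex.

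\textbf{Step 3: transfer.} Apply the homotopy transfer theorem for $L_\infty$-algebras, using a characteristic zero field and the tree formulas (Kontsevich--Soibelman / Fukaya style, as in \cite{RR22}). Transferring the dg Lie algebra $(C^*(A)[1],\delta,[-,-])$ along $(F^*,G^*,H)$ gives operations
\[
l_n=\sum_{T}\pm\, F^*\circ(\text{tree }T\text{ with vertices }[-,-],\text{ internal edges }H,\text{ leaves }G^*).
\]
This yields $l_1=\delta$ and $l_2(f\otimes g)=F^*[G^*f,G^*g]$. The same theorem gives an $L_\infty$ morphism $\phi$ with $\phi_1=G^*$ and $\phi_n$ built from trees with root $H$ instead of $F^*$.

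\textbf{Step 4: $\phi$ is a weak quasi-isomorphism.} This holds because $G^*$ induces the canonical isomorphism $\HH^*(A)\cong H^*(B_{\mathrm{CS}}(A))$: comparison maps between projective resolutions are homotopy equivalences. The tree sums are finite because each degree of $B_{\mathrm{CS}}$ is finite dimensional, so no convergence issue arises.
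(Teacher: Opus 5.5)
Your proposal is correct and follows the paper's argument: the BSSWW26 comparison maps with $G_nF_n=\id$, the relative RR22 recursion $H_n=s_n(F_nG_n-H_{n-1}d_n)$ on generators extended $A^e$-linearly, dualization to a contraction $(F^*,G^*,H^*)$, and then \cite[Theorem~2.7]{RR22}. The paper takes your first option and proves $G_{n+1}H_n=0$, $H_nF_n=0$ and $H_{n+1}H_n=0$ directly; your $h\mapsto h'dh'$ modification would also suffice for Theorem~A, but it gives up the explicit recursive homotopy.

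Two corrections. First, $s_n(a\otimes\mathbf r\otimes b)=1\otimes\varpi(a)\otimes\mathbf r\otimes b$ is right $A$-linear and only left $E$-linear, the reverse of what you wrote; this is why $H_n$ is defined on generators and then extended. Second, the relation $\ep_i^2=\ep_i$ plays no role in the well-definedness of $H_n$, which is formal. It enters only in the side condition $G_{n+1}H_n=0$, where one checks that a boundary idempotent reduction does not change the last arrow of the left coefficient.
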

	
	Theorem~A establishes the small-model framework in which the Hochschild
	higher Lie structure can be studied explicitly.  The main computational
	result is an explicit basis-level treatment of the binary stage.
	
	\begin{maintheoremb}
		Let $A$ be a finite-dimensional skew-gentle algebra and let
		$B=B_{\mathrm{CS}}^*(A)[1]$.  For arbitrary basis cochains $f\in B^m$ and
		$g\in B^n$, where $m,n\geq-1$, the computation of the transferred binary
		bracket $l_2(f\otimes_{\mK}g)$ falls, after interchanging the two inputs by
		graded skew-symmetry if necessary, into exactly one of the following four
		types.
		
		\begin{enumerate}[label=\textnormal{(\arabic*)}]
			\item
			For type $(-1,n)$, where $n\geq-1$, the bracket is given by
			\Cref{prop:degree-minus-one-brackets}.
			
			\item
			For type $(0,0)$, the bracket is given by
			\Cref{prop:degree-zero-classification}.
			
			\item
			For type $(0,+)$, the bracket is given by
			\Cref{prop:mixed-classification}.
			
			\item
			For type $(+,+)$, the bracket is evaluated by
			\Cref{prop:positive-positive-classification}.
		\end{enumerate}
	\end{maintheoremb}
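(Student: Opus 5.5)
Theorem~B is essentially an organizing statement: its content is that the four types exhaust all ordered degree pairs up to graded skew-symmetry, and that in each type the bracket is computed by the cited proposition. I would therefore prove it in two steps, a combinatorial reduction followed by an assembly of the four propositions.

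The first step is the reduction. Since $l_2$ has degree $0$ on $B=B_{\mathrm{CS}}^*(A)[1]$ and is graded skew-symmetric by Theorem~A, we have $l_2(f\otimes_{\mK} g)=\pm l_2(g\otimes_{\mK} f)$ with the Koszul sign $-(-1)^{mn}$. So it suffices to treat one representative of each unordered pair $\{m,n\}$ with $m,n\geq -1$. Partition the degrees into $\{-1\}$, $\{0\}$ and $\{+\}=\{1,2,\dots\}$. The unordered pairs of classes are then $\{-1,\ast\}$, $\{0,0\}$, $\{0,+\}$ and $\{+,+\}$. If either degree equals $-1$, we swap the inputs if needed so that $f$ has degree $-1$; this gives type $(-1,n)$ with $n\geq -1$. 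Otherwise both degrees are $\geq 0$, and exactly one of the remaining three types applies, after swapping in the mixed case so that the degree-$0$ input comes first. The types are disjoint once this normalization is fixed. The only overlap is $(-1,-1)$, which the convention $n\geq -1$ places in type~(1) alone. This gives the ``exactly one'' clause.

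The second step is the assembly. In each type I would start from the formula $l_2(f\otimes_{\mK} g)=F^*[G^*f,G^*g]$ of Theorem~A. Using the identification with the transported bracket of \cite[Section~3.6]{BSSWW26}, I would rewrite it as the signed difference $f\circ g-\pm\, g\circ f$ of the two insertion operations described in \cite[Proposition~3.25]{BSSWW26}. For each basis cochain $f$ (a pair of a relation concatenation, or arrow/vertex, and a parallel path) I would then list the local insertion terms, both the interior substitutions and the endpoint terms from the comparison maps. Each resulting path is evaluated in $A$ through $\pi$, reducing $\ep_i^2=\ep_i$ for special loops. The cited Propositions~\ref{prop:degree-minus-one-brackets}, \ref{prop:degree-zero-classification}, \ref{prop:mixed-classification} and \ref{prop:positive-positive-classification} carry out exactly this, so the theorem follows by citing them case by case once the reduction is in place.

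The substantive difficulty lies inside type $(+,+)$, and partly $(0,+)$, where one support may occur at several positions inside another. There I would invoke the global classification of repeated supports: by finite-dimensionality and the gentle uniqueness conditions, a relation concatenation with repeated arrows has the form $\omega^N$, $\tau^w$, or $\tau^w\tau_1\cdots\tau_r$. In these periodic cases one has to enumerate the occurrence positions and their Koszul signs to detect cancellations, and this is the step I expect to be the main obstacle. In all non-periodic cases every occurrence is unique and the local formula applies directly.
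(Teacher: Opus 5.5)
Your proposal is correct and follows the paper's approach: the paper proves Theorem~B by observing, just after \eqref{sym}, that graded skew-symmetry reduces every degree pair to exactly one of $(-1,n)$, $(0,0)$, $(0,+)$, $(+,+)$, and then citing \Cref{prop:degree-minus-one-brackets,prop:degree-zero-classification,prop:mixed-classification,prop:positive-positive-classification}, whose proofs handle the periodic supports through \Cref{gamma-cases,lem:periodic-occurrences}. The one minor difference is that the paper does not go through the insertion formulas of \cite{BSSWW26}. It expands $F^*[G^*f,G^*g]$ directly into \eqref{eq:general-l2} and treats the degree $-1$ input separately, so your detour through that identification is unnecessary, though harmless.
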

	Thus Theorem~B gives a complete basis-level classification of the
	transferred binary bracket and a finite procedure for computing
	$l_2(f\otimes_{\mK}g)$ in every shifted degree.
	
	\begin{maintheoremc}[Theorems~\ref{thm:An-formality} and~\ref{thm:An-interior-special}]
		Let $A$ be an $A_n$-type skew-gentle algebra. Then the Hochschild dg Lie
		algebra $C^*(A)[1]$ is homotopy abelian; in particular, $A$ is Hochschild $L_\infty$-formal. If $A$ has a special vertex that is not an endpoint,
		then the transferred ternary operation on $B_{\mathrm{CS}}^*(A)[1]$ is
		nonzero, and the graded Jacobi relation for $l_2$ is governed by a
		nontrivial ternary homotopy.
	\end{maintheoremc}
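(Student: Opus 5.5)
To prove Theorem~C I would first make the Hochschild cohomology of an $A_n$-type skew-gentle algebra completely explicit, and then transfer to a minimal model. The key input is the Hochschild basis theorem of \cite{BSSWW26}. For a skew-gentle triple $(Q,I,S)$ whose underlying graph is of type $A_n$, the quiver has no cycles apart from the special loops. The relations in $I$ are quadratic paths along the line, and the auxiliary monomial set $I^S$ adds $\ep_i^2$ for each special vertex. I would read off from the basis theorem that $\HH^*(A)$ is concentrated in low degree: $\HH^0(A)=Z(A)$ is spanned by the unit together with at most idempotent-type contributions at special vertices, and the higher classes are represented by parallel pairs. Since $A$ has finite global dimension (it is Morita equivalent to a gentle-type algebra of tree type without oriented cycles), I expect in fact $\HH^{\geq1}(A)=0$ and $\HH^0(A)=\mK$.

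If that holds, the argument is short. The minimal model $\HH^*(A)[1]$ is then concentrated in the single shifted degree $-1$. Each $l_r^{\min}$ has degree $2-r$, so for $r\geq2$ it would map $(\HH^*[1])^{-1}$ in arity $r$ to degree $-r+2-r=2-2r\neq-1$. Hence all $l_r^{\min}$ with $r\geq2$ vanish for degree reasons, and the minimal model is abelian. If the basis theorem instead produces classes in more than one degree, I would use a degree-counting argument together with Theorem~B. Type $(-1,n)$ in \Cref{prop:degree-minus-one-brackets} shows that brackets with the unit class vanish. For the remaining classes, supported on parallel paths in a tree quiver, I would show via \Cref{prop:degree-zero-classification} and \Cref{prop:mixed-classification} that all transported brackets are coboundaries. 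The higher $l_r^{\min}$ would then be killed by showing that no composable chain of insertions exists between supports in an $A_n$ quiver. Homotopy abelian implies $L_\infty$-formal by \Cref{def:formality}.

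For the second statement, take a special vertex $i$ that is not an endpoint, with arrows $\alpha$ and $\beta$ adjacent to $i$. I would compute $l_3$ on $B_{\mathrm{CS}}^*(A)[1]$ from the transfer recursion $l_3=F^*\bigl(\sum\pm[h[G^*f,G^*g],G^*k]\bigr)$, using the homotopy $h$ of Theorem~A. The choice of inputs is three basis cochains supported on $\ep_i$, $\alpha$ and $\beta$, namely the derivation-type cochains $(\ep_i\,||\,\ep_i)$, $(\alpha\,||\,\alpha)$ and $(\beta\,||\,\beta)$ or suitable degree-$1$ cochains with $\ep_i^2$-support. The point is that the idempotent absorption $\ep_i^2=\ep_i$ creates an endpoint term in $[G^*f,G^*g]$ that is not in the image of $G^*$. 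Applying $h$ to it gives a nonzero chain, whose bracket with the third input survives $F^*$. Once $l_3\neq0$ is established, the arity-three $L_\infty$ identity exhibits it as the homotopy for the Jacobiator of $l_2$. Its nontriviality means that it is not removable by $l_1$-exactness on the chain level.

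I expect the explicit $l_3$ computation to be the main obstacle. It requires tracking $h$ through the relative recursive construction and checking that the signed contributions of the two interior arrows do not cancel. The interior hypothesis enters exactly here: at an endpoint, only one arrow is adjacent, and the term would vanish.
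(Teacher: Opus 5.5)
Both halves of your proposal have genuine gaps.

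\textbf{Homotopy abelianity.} Your expectation $\HH^{\geq1}(A)=0$ is false, and finite global dimension does not imply it. Suppose the ordinary quiver is linearly oriented, $1\xrightarrow{\alpha_1}\cdots\xrightarrow{\alpha_{n-1}}n$, and every vertex is special. Then the basis theorem of \cite{BSSWW26} gives $\HH^{n-1}(A)\cong\mK$, spanned by the class $\zeta$ of $\para{\alpha_1\cdots\alpha_{n-1}}{\ep_1\alpha_1\ep_2\alpha_2\cdots\ep_{n-1}\alpha_{n-1}\ep_n}$. This already happens for $n=2$. Splitting each $e_i=\ep_i+(e_i-\ep_i)$ turns $A$ into the path algebra of a non-oriented $4$-cycle with two sources and two sinks. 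So the basic quiver is not a tree, and Happel's formula gives $\dim\HH^1(A)=1-4+4=1$.

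Your one-degree argument is fine when $\HH^*(A)$ is concentrated in degree $0$, but your fallback does not handle this exceptional case.
\begin{enumerate}
\item Showing that transported brackets are coboundaries controls at most $l_2^{\min}$.
\item ``No composable chain of insertions'' cannot be the mechanism for $r\geq3$. The minimal operations are built through the homotopy in \eqref{eq:vn}--\eqref{eq:phin}, and for $n\geq3$ the second half of the theorem shows that insertion composites already produce a nonzero $l_3$ on the CS model.
\item Pure degree counting fails on mixed inputs. With $|[1_A]|=-1$ and $|\zeta|=n-2$, the value $l_2^{\min}([1_A]\otimes\zeta)$ lies in degree $-1$ when $n=2$, and $l_3^{\min}([1_A]\otimes\zeta\otimes\zeta)$ lies in degree $2=|\zeta|$ when $n=4$.
\end{enumerate}
Two ideas are missing.
\begin{enumerate}
\item[(a)] $1_A$ is central for the normalized Gerstenhaber bracket, because insertions first apply $\varpi$ and $\varpi(1_A)=0$. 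Choose a contraction onto cohomology with $\iota[1_A]=1_A$ and $h(1_A)=0$. A simultaneous induction through the recursion then shows that $l_r^{\min}$ and $\phi_r$ vanish whenever one input is $[1_A]$. Your observation about type $(-1,n)$ is only the $r=2$ instance, on the wrong model.
\item[(b)] On $\zeta^{\otimes r}$, graded skew-symmetry in characteristic zero kills $l_r^{\min}$ when $n$ is even. When $n$ is odd, the output degree $r(n-3)+2$ lies outside $\{-1,n-2\}$.
\end{enumerate}

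\textbf{Ternary operation.} You do not carry out the computation you propose, and your suggested inputs give no leverage. On $\para{\ep_i}{\ep_i}$, $\para{\alpha}{\alpha}$, $\para{\beta}{\beta}$ all pairwise brackets $l_2$ vanish, since the support arrows are distinct and do not occur in one another's values. So $l_3\neq0$ there would rest entirely on an untracked evaluation of $h$. Moreover, a nonzero value of $l_3$ alone would not show that the graded Jacobi relation for $l_2$ fails.

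The paper avoids $h$ by using the arity-three identity in the opposite direction. If $l_3\equiv0$, then $l_2$ satisfies the strict graded Jacobi identity, so it suffices to exhibit a nonzero Jacobiator from the explicit binary formulas.
\begin{enumerate}
\item Condition (G2), with $\ep_j$ occupying one incoming and one outgoing slot, forces the local shape $i\xrightarrow{\alpha}j\xrightarrow{\beta}k$.
\item Conditions (G3)--(G4) then give $\alpha\beta\in I^S$ and $\alpha\ep_j\beta\in\mB$.
\item Take $f_j=\para{\alpha}{\alpha\ep_j}$ and $g_j=\para{\ep_j}{\ep_j}$ in $B^0$, and $h_j=\para{\alpha\beta}{\alpha\ep_j\beta}$ in $B^1$. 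Then $l_2(f_j\otimes g_j)=-f_j$ and $l_2(g_j\otimes h_j)=h_j$.
\item Also $l_2(f_j\otimes h_j)=h_j$, via the absorption $\pi(\alpha\ep_j\ep_j\beta)=\alpha\ep_j\beta$; the correction term vanishes because $L_\alpha(\alpha\ep_j)=0$.
\item The Jacobiator is therefore $-h_j-h_j+h_j=-h_j\neq0$.
\end{enumerate}
This gives both assertions at once. The interior hypothesis enters only through the existence of the length-two support $\alpha\beta$ through $j$, not through a non-cancellation between two adjacent arrows as you suggest.
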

	
	Theorem~C exhibits, already within the $A_n$ family, the separation between
	chain-level higher operations and minimal-model formality.  Together with
	Wong's non-formal gentle example recalled above, it shows that both formal and
	non-formal behaviour occur within the finite-dimensional gentle/skew-gentle
	framework.
	
	The paper is organized as follows. In \Cref{sec:preliminaries} we recall
	skew-gentle algebras, the normalized relative Hochschild complex, and the
	$L_\infty$ conventions and homotopy transfer theorem used below. In
	\Cref{sec:main-formulas} the transferred $L_\infty$-structure on
	the shifted CS cochain complex is constructed and the general formula for its binary
	operation is derived. In
	\Cref{sec:degree-minus-one-classification,sec:degree-zero-classification,sec:mixed-classification,sec:positive-positive-classification}
	we give the basis-level classification of the transferred binary
	bracket in the four degree types $(-1,n)$, $(0,0)$, $(0,+)$, and $(+,+)$.  The
	corresponding classifications are summarized in tabular form in
	\Cref{app:degree-minus-one-lookup,app:degree-zero-lookup,app:mixed-lookup,app:positive-positive-lookup}. In \Cref{sec:higher-operations-formality} we turn from the binary
	classification to higher operations and the formality problem.  The homotopy abelian property for the $A_n$-type is proved, and nontrivial
	transferred ternary operations are exhibited when a special vertex is not
	an endpoint. Finally, we employ Wong's work to give a non-formal gentle example \cite{Won17}.
	%===========================

	\section*{Declaration of generative AI and AI-assisted technologies in the manuscript preparation process}
	
	All computations and proofs in this work were carried out
	and completed by the human authors. During the preparation of the
	manuscript, the authors used ChatGPT (OpenAI) as an auxiliary tool for
	additional checks of mathematical consistency and correctness, as well as
	for language editing and improving readability. All AI-assisted suggestions
	were subsequently reviewed, verified, and, where necessary, revised by the
	human authors. The three authors take full responsibility for the
    correctness, originality, and content of the manuscript.
    %============================
    	\subsection*{Acknowledgments}
    This work is supported by National Natural Science Foundation of China (Grant No. 12601043).
    
	%===========================
	\section{Preliminaries}\label{sec:preliminaries}
	
	Throughout the paper, $\mK$ is a field of characteristic zero. All algebras are finite-dimensional over $\mK$ unless explicitly stated otherwise. All graded objects are cohomologically graded, and $|x|$ denotes the degree of a homogeneous element $x$ of a graded vector space. 
	%===========================
	\subsection{Quivers and skew-gentle algebras}
	
	We use standard terminology for quivers and path algebras. A quiver is a quadruple
	$Q=(Q_0,Q_1,s,t)$, where $Q_0$ is the set of vertices, $Q_1$ is the set of
	arrows, and $s,t:Q_1\to Q_0$ are the source and target maps. All quivers in
	this paper are finite and connected. For $n\geq0$, let $Q_n$ denote the set
	of paths of length $n$. We compose paths from left to right: a path
	$a\in Q_n$ of positive length is written
	$a=\alpha_1\cdots\alpha_n$, with $t(\alpha_i)=s(\alpha_{i+1})$ for
	$1\leq i<n$. Its source, target, and length are denoted by
	$s(a)=s(\alpha_1)$, $t(a)=t(\alpha_n)$, and $\ell(a)=n$, respectively. For
	each $i\in Q_0$, the trivial path at $i$ is denoted by $e_i$ and has length
	zero. An arrow $\alpha$ is a \emph{loop} if $s(\alpha)=t(\alpha)$. Two paths
	$p$ and $q$ are \emph{parallel}, written $p\parallel q$, if
	$s(p)=s(q)$ and $t(p)=t(q)$.
	
	A path $c=c_1\cdots c_d$ of positive length is a \emph{cycle} if
	$t(c_d)=s(c_1)$. A \emph{relation set} of $Q$ is a finite set of
	$\mK$-linear combinations of paths in $Q$. If $R$ is a relation set whose
	elements are paths of length two, then a cycle $c=c_1\cdots c_d$ is called a
	\emph{relation cycle} (with respect to $R$) if
	$c_ic_{i+1}\in R$ for $1\leq i<d$ and $c_dc_1\in R$. A cycle is
	\emph{primitive} if it is not a proper power of a shorter cycle.
	
	The path algebra of $Q$ over $\mK$ is denoted by $\mK Q$. If $R$ is a set of
	relations, then $\langle R\rangle$ denotes the two-sided ideal generated by
	$R$. For an arrow $\alpha$ and a path $c$, we write $\num{\alpha}{c}$ for
	the number of occurrences of $\alpha$ in $c$. Whenever a displayed sum
	involves products of paths, only composable products are included.
	
	\begin{definition}\label{def:gentle}
		A pair $(Q,I)$ consisting of a quiver $Q$ and a relation set $I$ is called a
		\emph{gentle pair} if the following conditions hold:
		\begin{enumerate}[label=\textup{(G\arabic*)}]
			\item Every element of $I$ is a path of length two.
			\item At each vertex, at most two arrows start and at most two arrows end.
			\item For every arrow $\alpha\in Q_1$, there is at most one arrow $\beta$
			with $\alpha\beta\in I$, and at most one arrow $\gamma$ with
			$\gamma\alpha\in I$.
			\item For every arrow $\alpha\in Q_1$, there is at most one arrow $\beta$
			with $\alpha\beta\notin I$, and at most one arrow $\gamma$ with
			$\gamma\alpha\notin I$.
		\end{enumerate}
		A finite-dimensional $\mK$-algebra is called \emph{gentle} if it is Morita
		equivalent to $\mK Q/\langle I\rangle$ for a gentle pair $(Q,I)$.
	\end{definition}
	
	\begin{definition}\label{def:finite-skew-gentle} 
		Let $Q$ be a quiver, let $I$ be a relation set of $Q$, and let $S\subseteq Q_0$.
		For each $i\in S$, adjoin a loop $\ep_i:i\to i$, called a
		\emph{special loop}, and denote the resulting quiver by $Q^S$.
		Set
		\[
		\Sp=\{\ep_i \mid i\in S\},
		\qquad
		I^S=I\cup\{\ep_i^2 \mid i\in S\}.
		\]
		The triple $(Q,I,S)$ is called a \emph{skew-gentle triple} if
		$(Q^S,I^S)$ is a gentle pair. A finite-dimensional $\mK$-algebra is called \emph{skew-gentle} if it is Morita equivalent to
		\[
		\mK Q^S/
		\left\langle
		I\cup\{\ep_i^2-\ep_i \mid i\in S\}
		\right\rangle.
		\]
		When $S=\varnothing$, this is precisely the gentle case. 
	\end{definition}
	
	A vertex $i\in S$ is called a \emph{special vertex}. 
	For $n\geq0$, denote by $Q_n^S$ the set of paths of length $n$ in $Q^S$.
	Now fix a skew-gentle triple $(Q,I,S)$ and consider the skew-gentle algebra 
	\[
	A=\mK Q^S/
	\left\langle
	I\cup\{\ep_i^2-\ep_i \mid i\in S\}
	\right\rangle.
	\]
	Let $\pi:\mK Q^S\longrightarrow A$ be the canonical projection. Following
	\cite[Section~2.1]{BSSWW26}, let $\mB$ be the set of paths in $Q^S$ that
	contain no element of $I^S$ as a subpath. The images under $\pi$ of the
	paths in $\mB$ form a $\mK$-basis of $A$. Whenever no confusion can arise,
	a path in $\mB$ is identified with its image in $A$.
	%===========================
	\subsection{The normalized relative Hochschild complex}
	\label{subsec:normalized-relative-Hochschild}
	
	Let
	$E=\mK Q_0=\bigoplus_{i\in Q_0}\mK e_i\subseteq A$.
	Then $E$ is a semisimple $\mK$-algebra. We write
	$E^e=E\otimes_{\mK}E^{\mathrm{op}}$ and $A^e=A\otimes_{\mK}A^{\mathrm{op}}$.
	Let $\mB^+=\mB\setminus Q_0$ and set $\bar A=\operatorname{span}_{\mK}\mB^+$.
	Then $A=E\oplus_{\mK}\bar A$ as $E$-bimodules, and the quotient map induces	an $E$-bimodule isomorphism $\bar A\cong A/E$. We denote by $\varpi:A\longrightarrow\bar A$	the projection associated with this decomposition. In particular,
	$\varpi(e_i)=0$ and $\varpi(\ep_i)=\ep_i$. For a path $p$ in $Q^S$, its
	value in $A$ is always understood through the canonical projection
	$\pi:\mK Q^S\to A$ fixed above. Thus, for example,
	$\pi(\ep_i^r)=\ep_i$ for every $r\geq1$.
	
	Ordinary Hochschild cohomology can be computed from the normalized
	$E$-relative bar resolution; see \cite{GS86,Wit19}.
	Using the $E^e$-module identification $A/E\cong\bar A$, the normalized
	$E$-relative bar resolution of $A$ has, for $n\geq0$, the terms
	\[
	\overline C_n(A)
	=
	A\otimes_E\bar A^{\otimes_E n}\otimes_E A,
	\]
	where $\bar A^{\otimes_E0}=E$. Together with the multiplication augmentation
	$d_0:\overline C_0(A)=A\otimes_E A\to A$, these terms form the normalized
	$E$-relative projective $A^e$-resolution of $A$. For $n\geq1$, its
	differential is induced by the ordinary bar differential and is given, for
	$r_i\in\bar A$, by
	\[
	\begin{aligned}
		&d_n(
		a_0\otimes_E r_1\otimes_E\cdots\otimes_E r_n\otimes_E a_{n+1}
		)
		\\
		&\quad={}
		a_0r_1\otimes_E r_2\otimes_E\cdots\otimes_E r_n\otimes_E a_{n+1}
		\\
		&\qquad+
		\sum_{i=1}^{n-1}(-1)^i
		a_0\otimes_E r_1\otimes_E\cdots\otimes_E
		\varpi(r_ir_{i+1})\otimes_E\cdots\otimes_E r_n\otimes_E a_{n+1}
		\\
		&\qquad+(-1)^n
		a_0\otimes_E r_1\otimes_E\cdots\otimes_E r_{n-1}\otimes_E r_na_{n+1}.
	\end{aligned}
	\]
	In an internal term, $\varpi(r_ir_{i+1})$ is the projection of
	$r_ir_{i+1}$ onto $\bar A$.
	
	Applying $\Hom_{A^e}(-,A)$ gives, for $n\geq0$, the normalized
	$E$-relative Hochschild cochain spaces
	\[
	\Hom_{A^e}(\overline C_n(A),A)
	\cong
	\Hom_{E^e}(\bar A^{\otimes_E n},A):=C^n(A).
	\]
	For $\varphi\in C^n(A)$ and $r_i\in\bar A$, the cochain differential is
	given by  
	\begin{equation}\label{eq:signed-Hochschild-differential}
		\begin{aligned}
			&(d_C\varphi)(r_1\otimes_E\cdots\otimes_E r_{n+1})
			\\
			&\quad={}
			(-1)^n\Bigg[
			r_1\varphi(r_2\otimes_E\cdots\otimes_E r_{n+1})
			\\
			&\qquad\qquad+
			\sum_{i=1}^{n}(-1)^i
			\varphi(
			r_1\otimes_E\cdots\otimes_E\varpi(r_ir_{i+1})
			\otimes_E\cdots\otimes_E r_{n+1}
			)
			\\
			&\qquad\qquad+
			(-1)^{n+1}
			\varphi(r_1\otimes_E\cdots\otimes_E r_n)r_{n+1}
			\Bigg].
		\end{aligned}
	\end{equation}	 
	The factor $(-1)^n$ does not change the Hochschild cohomology groups,
	since it leaves the kernels and images of the differentials unchanged; it
	is included only to match the dg Lie sign convention on $C^*(A)[1]$ used
	below. 
	
	Let $\varphi\in C^p(A)$ and $\psi\in C^q(A)$ be homogeneous. For $0\leq i\leq p-1$, define the normalized relative insertion by
	\[
	\varphi\circ_i\psi
	=
	\varphi(
	\id^{\otimes_E i}\otimes_E(\varpi\circ\psi)
	\otimes_E\id^{\otimes_E(p-i-1)}
	),
	\]
	and set
	\[
	\varphi\circ\psi
	=
	\sum_{i=0}^{p-1}(-1)^{i(q-1)}\varphi\circ_i\psi.
	\]
	The sum is zero when $p=0$. The Gerstenhaber bracket is
	\[
	[\varphi,\psi]
	=
	\varphi\circ\psi-(-1)^{(p-1)(q-1)}\psi\circ\varphi.
	\]
	These are the standard Gerstenhaber operations on the normalized relative
	Hochschild complex over $E$; equivalently, they are the relative bar formulas
	of \cite[Section~2.2]{BSSWW26}, written in the present grading convention.
	The Gerstenhaber bracket has cohomological degree $-1$ on $C^*(A)$. Under
	the shift convention $C^*(A)[1]^m=C^{m+1}(A)$,
	a $p$-cochain has degree $p-1$, and the Gerstenhaber bracket has degree
	zero. Together with the signed differential
	\eqref{eq:signed-Hochschild-differential}, it makes $(C^*(A)[1],d_C,[-,-])$
	a dg Lie algebra.
	
	%===========================
	\subsection{\texorpdfstring{$L_\infty$-algebras}{L-infinity algebras}}
	\label{subsec:Linfty-preliminaries}
	
	We follow the unsuspended cohomological convention and notation of
	\cite[Section~2]{RR22}. Let $V$ be a
	graded $\mK$-vector space and let $v_1,\dots,v_n\in V$ be homogeneous. We
	write $\bigwedge V$ for the free graded-commutative algebra on $V$, thus
	$\wedge$ denotes its product. Let $\mS_n$ denote the symmetric
	group on $n$ letters. For $\sigma\in\mS_n$, the Koszul sign
	$\epsilon(\sigma;v_1,\dots,v_n)$ is determined by
	\[
	v_1\wedge\cdots\wedge v_n
	=
	\epsilon(\sigma;v_1,\dots,v_n)\,
	v_{\sigma(1)}\wedge\cdots\wedge v_{\sigma(n)}.
	\]
	Let $\sgn(\sigma)\in\{\pm1\}$ denote the signature of $\sigma$, and set
	\[
	\chi(\sigma;v_1,\dots,v_n)
	=
	\sgn(\sigma)\epsilon(\sigma;v_1,\dots,v_n).
	\]
	When the homogeneous inputs are clear from the context, we write simply
	$\epsilon(\sigma)$ and $\chi(\sigma)$.
	
	For integers $r,s\geq0$ with $r+s=n$, an $(r,s)$-unshuffle is a permutation
	$\sigma\in\mS_n$ satisfying $\sigma(1)<\cdots<\sigma(r)$ and
	$\sigma(r+1)<\cdots<\sigma(n)$; the set of all such unshuffles is denoted by
	$\mS_{r,s}$. For $1\leq t<n$, put
	\[
	\mS_{t,n-t}^{-}=
	\{\sigma\in\mS_{t,n-t} \mid \sigma(1)<\sigma(t+1)\}.
	\]
	For homogeneous inputs $x_1,\dots,x_n$, define
	\[
	\kappa(\sigma)_t
	=(-1)^{(t-1)+(n-t-1)(\sum_{p=1}^{t}|x_{\sigma(p)}|)}.
	\]
	For $\sigma\in\mS_n$, let $\widehat\sigma$ denote the \emph{unsigned}
	permutation operator
	\[
	\widehat\sigma
	(v_1\otimes_{\mK}\cdots\otimes_{\mK}v_n)
	=
	v_{\sigma(1)}\otimes_{\mK}\cdots\otimes_{\mK}v_{\sigma(n)}.
	\]
	Thus the Koszul signs in the formulas below are carried by $\chi(\sigma)$,
	not by $\widehat\sigma$.
	
	\begin{definition}\label{def:Linfty}
		An $L_\infty$-algebra is a graded $\mK$-vector space $L$ together with maps
		$l_n:L^{\otimes_{\mK}n}\longrightarrow L$, $n\geq1$, of degree $2-n$, such
		that, for every $\sigma\in\mS_n$,
		\[
		l_n\widehat\sigma=\chi(\sigma)l_n,
		\]
		and, for every $N\geq1$,
		\[
		\sum_{\substack{i+j=N+1\\ i,j\geq1}}
		\ \sum_{\sigma\in\mS_{i,N-i}}
		(-1)^{i(j-1)}\chi(\sigma)
		l_j(l_i\otimes_{\mK}\id^{\otimes_{\mK}(N-i)})\widehat\sigma
		=0
		\]
		on homogeneous tensors in $L^{\otimes_{\mK}N}$.
	\end{definition}
	
	The first three identities recover the familiar dg Lie relations up to higher
	homotopy. For $N=1$, one has $l_1^2=0$; for $N=2$, $l_1$ is a derivation of
	$l_2$; and for $N=3$, the graded Jacobi identity for $l_2$ holds up to the
	homotopy encoded by $l_3$. In particular, a dg Lie algebra $(L,d,[-,-])$ is
	regarded as an $L_\infty$-algebra by taking $l_1=d$, $l_2=[-,-]$, and
	$l_n=0$ for $n\geq3$.
	
	Following the terminology and conventions of \cite[Definition~2.4]{RR22}, a weak $L_\infty$-morphism
	$\phi:L\to M$ is a collection of graded skew-symmetric maps
	$\phi_n:L^{\otimes_{\mK}n}\to M$, $n\geq1$, of degree $1-n$ satisfying the
	$L_\infty$-morphism identities. It is \emph{strict} if $\phi_n=0$ for all
	$n\geq2$, and it is an $L_\infty$ \emph{quasi-isomorphism} if
	$\phi_1:(L,l_1)\to(M,l_1)$ is a quasi-isomorphism of cochain complexes.
	
	The recursive homotopy transfer construction of
	\cite[Section~2]{RR22} is used in the following form. Let $(\mathfrak g,d,[-,-])$ be a dg Lie algebra and
	let $(V,\delta)$ be a cochain complex. Suppose that there are cochain maps
	fitting into the diagram
	\[\begin{tikzcd}
		(V,\delta) && (\mathfrak g,d)
		\arrow["\iota", shift left, from=1-1, to=1-3]
		\arrow["p", shift left, from=1-3, to=1-1]
	\end{tikzcd}\]
	and a degree $-1$ map $h:\mathfrak g\to\mathfrak g$ such that
	\[
	p\iota=\id_V,
	\qquad
	\iota p-\id_{\mathfrak g}=hd+dh,
	\qquad
	h\iota=0,
	\qquad
	ph=0,
	\qquad
	h^2=0.
	\]
	We refer to such data $(p,\iota,h)$ as contraction data.
	If $\phi_t:V^{\otimes_{\mK}t}\to\mathfrak g$ and
	$\phi_{n-t}:V^{\otimes_{\mK}(n-t)}\to\mathfrak g$, then
	$[\phi_t,\phi_{n-t}]$ denotes the $n$-linear map obtained by applying the dg
	Lie bracket of $\mathfrak g$ to the two output values:
	\[
	\begin{aligned}
		&[\phi_t,\phi_{n-t}]
		(x_1\otimes_{\mK}\cdots\otimes_{\mK}x_n)
		\\
		&\qquad=
		\bigl[
		\phi_t(x_1\otimes_{\mK}\cdots\otimes_{\mK}x_t),
		\phi_{n-t}(x_{t+1}\otimes_{\mK}\cdots\otimes_{\mK}x_n)
		\bigr].
	\end{aligned}
	\]
	Set $l_1=\delta$, $\phi_1=\iota$, and $u_1=v_1=0$. For $n>1$, define
	recursively
	\begin{align}
		v_n
		&=
		\sum_{t=1}^{n-1}
		\sum_{\tau\in\mS_{t,n-t}^{-}}
		\chi(\tau)\kappa(\tau)_t
		[\phi_t,\phi_{n-t}]\widehat\tau,
		\label{eq:vn}\\
		l_n&=p\,v_n,
		\label{eq:ln}\\
		u_n
		&=
		\sum_{k=2}^{n}
		\sum_{\tau\in\mS_{k,n-k}}
		(-1)^{k(n-k)+1}\chi(\tau)
		\phi_{n-k+1}(l_k\otimes_{\mK}\id^{\otimes_{\mK}(n-k)})\widehat\tau,
		\label{eq:un}\\
		\phi_n&=h(u_n+v_n).
		\label{eq:phin}
	\end{align}
	
	\begin{theorem}[{\cite[Theorem~2.7]{RR22}}]\label{thm:transfer-theorem}
		With the preceding notation, the maps $l_n$ define an $L_\infty$-structure
		on $V$, and the maps $\phi_n$ define a weak $L_\infty$ quasi-isomorphism
		$V\to\mathfrak g$.
	\end{theorem}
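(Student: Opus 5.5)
The plan is to argue by induction on the arity $n$, proving three statements together: the $L_\infty$-identities for $\{l_k\}_{k\le n}$ up to arity $n$, the weak $L_\infty$-morphism identities for $\{\phi_k\}_{k\le n}$ up to arity $n$, and graded skew-symmetry of $l_n$ and $\phi_n$.

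\emph{Setup and skew-symmetry.} Work in the convolution complex $\Hom_{\mK}(V^{\otimes_{\mK}n},\mathfrak g)$. Its differential is $\partial F=d\circ F-(-1)^{|F|}F\circ\delta^{(n)}$, where $\delta^{(n)}$ is the extension of $\delta$ to $V^{\otimes_{\mK}n}$ as a derivation. The identity $\iota p-\id_{\mathfrak g}=hd+dh$ gives, for every $F$,
\[
\partial(hF)=\iota pF-F-h\,\partial F .
\]
Skew-symmetry of $v_n$ and $u_n$ follows from the unshuffle sums. In \eqref{eq:vn} the restriction to $\mS^{-}_{t,n-t}$, together with the graded antisymmetry of $[-,-]$, symmetrizes each pair $(t,n-t)$ exactly once; in \eqref{eq:un} the sum over $\mS_{k,n-k}$ does the same. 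Since $p$ and $h$ are linear and applied after the symmetrization, $l_n=pv_n$ and $\phi_n=h(u_n+v_n)$ inherit this skew-symmetry. The case $n=1$ is immediate: $l_1=\delta$ and $\phi_1=\iota$ is a chain map.

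\emph{Key lemma.} Assume the inductive hypothesis up to arity $n-1$. The central claim is
\[
\partial(u_n+v_n)=0 .
\]
To prove it I would expand $\partial v_n$ using that $d$ is a derivation of $[-,-]$. This replaces each $d\phi_t$ by the lower morphism identity, i.e.\ by $\phi_t\circ\delta$ plus terms of the form $\phi_{t-k+1}(l_k\otimes\id)$ and $[\phi_s,\phi_{t-s}]$. The terms $[[\phi_a,\phi_b],\phi_c]$ cancel by the graded Jacobi identity in $\mathfrak g$, after summing over all shuffles. For $\partial u_n$, I would apply the lower morphism identities to $\phi_{n-k+1}$ and the lower $L_\infty$-identities to the nested $l_j(l_i\otimes\id)$. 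The remaining mixed terms $[\phi,\phi](l_k\otimes\id)$ then cancel against the corresponding terms coming from $\partial v_n$. The term $k=n$ of $u_n$ equals $\pm\iota l_n$, and it is a cycle because $\partial(\iota l_n)=\iota\,\partial l_n$ and $\partial l_n=p\,\partial v_n$.

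\emph{Deducing the arity-$n$ identities.} With the lemma in hand, the formula for $\partial(hF)$ gives
\[
\partial\phi_n=\iota p(u_n+v_n)-(u_n+v_n).
\]
Every term of $u_n$ with $k<n$ has the form $h(\cdots)$, so it is killed by $p$ because $ph=0$. Hence $p(u_n+v_n)=l_n\pm p\iota l_n$, and $p\iota=\id_V$ reduces this to a multiple of $l_n$ that is fixed once the signs are tracked. This identity is exactly the arity-$n$ morphism identity, rearranged.

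For the $L_\infty$-identity, apply $p$ to $\partial(u_n+v_n)=0$. Using $ph=0$, $p\iota=\id$, and that $p$ is a chain map, every term containing a $\phi_m$ with $m\geq2$ disappears. What remains is $\sum_{i+j=n+1}\pm l_j(l_i\otimes\id)$ summed over unshuffles, which is the required identity; the $i=1$ and $j=1$ terms come from $\partial l_n$. The hypotheses $h\iota=0$ and $h^2=0$ are used to discard terms such as $h\iota l_n$ inside $\phi_n$.

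\emph{Quasi-isomorphism.} $\phi_1=\iota$ is a quasi-isomorphism because $p\iota=\id_V$ and $\iota p$ is homotopic to $\id_{\mathfrak g}$ via $h$.

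The main obstacle is the key lemma $\partial(u_n+v_n)=0$. It is a long bookkeeping of Koszul signs $\chi$, the factors $\kappa(\tau)_t$, and $(-1)^{k(n-k)+1}$ across compositions of unshuffles. I would handle it by first passing to the suspended coalgebra (symmetric) picture, where all signs become uniform and the identity reduces to the associativity of composition of coderivations. I would then translate back to the unsuspended conventions of \cite[Section~2]{RR22}.
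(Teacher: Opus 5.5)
\textbf{Verdict: there is a genuine gap.} The paper gives no proof of this statement; it quotes it from Redondo--Rossi Bertone. So your outline has to stand on its own, and the gap is at the key lemma.

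\textbf{Where the key lemma fails.} The claim $\partial(u_n+v_n)=0$ cannot be reached by cancellation from the hypotheses in arities $\leq n-1$.
\begin{itemize}
\item Your treatment of the $k=n$ summand is false. $\iota l_n$ is not a cycle in general, because $\partial l_n=p\,\partial v_n\neq0$. For $n=3$, after the Jacobi cancellation $\partial v_3$ is a signed sum of terms $[\iota l_2,\iota]$, so $\partial l_3$ is, up to sign, the Jacobiator of $l_2$. This is nonzero in the situation of \Cref{thm:An-interior-special}, where $\iota l_3=G^*l_3$ is therefore not a cycle.
\item The summand $-\iota l_{n-k+1}$ of $u_{n-k+1}$ enters through $\partial\phi_{n-k+1}$. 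It contributes terms $\iota\,l_{n-k+1}(l_k\otimes\id)$ with $2\leq k\leq n-1$. These have total arity $n$, and no lower $L_\infty$-identity cancels them.
\end{itemize}
If you do the bookkeeping correctly, everything else cancels and you get $\partial(u_n+v_n)=\pm\iota J_n$, not $0$. Here $J_n$ is the left-hand side of the arity-$n$ identity in \Cref{def:Linfty}. In the coalgebra picture this follows from $Q_{\mathfrak g}E+EQ_V=-\Phi Q_V^2$, where $E=Q_{\mathfrak g}\Phi-\Phi Q_V$ is the defect with $\phi_n$ provisionally zero. So your key lemma is equivalent to the $L_\infty$-identity you later derive from it, and the argument is circular.

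\textbf{The derivation step is also empty.} Since $p$ is a chain map, $p\,\partial(u_n+v_n)=\partial\,p(u_n+v_n)$. But $p(u_n+v_n)$ vanishes identically, so applying $p$ to $\partial(u_n+v_n)=0$ gives only $0=0$. Moreover, $p$ does not kill bracket terms $p[\phi_a,\phi_b]$ with $a\geq2$ or $b\geq2$, so the claim that every term containing some $\phi_m$ with $m\geq2$ disappears is false for those.

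\textbf{The missing idea.} It is the fact you relegated to sign-tracking: $p(u_n+v_n)=0$ exactly. The $k=n$ summand of $u_n$ is $(-1)^{n\cdot0+1}\phi_1l_n=-\iota l_n$. Every other summand has outer factor $\phi_{n-k+1}\in\operatorname{im}h$, so $ph=0$ kills it. Hence $pu_n=-l_n=-pv_n$. If $p(u_n+v_n)$ were a nonzero multiple of $l_n$, as your wording allows, the morphism identity would fail.

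\textbf{The correct order of the argument.}
\begin{enumerate}
\item Using only the hypotheses in arities $<n$, show $\partial(u_n+v_n)=\pm\iota J_n$.
\item Apply the chain map $p$: $\pm J_n=p\,\partial(u_n+v_n)=\partial\,p(u_n+v_n)=0$. This is the arity-$n$ $L_\infty$-identity.
\item Hence $\partial(u_n+v_n)=0$. Your formula $\partial(hF)=\iota pF-F-h\,\partial F$, with $F=u_n+v_n$, then gives $\partial\phi_n=-(u_n+v_n)$, which is the arity-$n$ morphism identity.
\end{enumerate}
With this reordering, the rest of your outline is sound: skew-symmetry, the Jacobi cancellation, the cancellation of the mixed terms $[\phi,\phi](l_k\otimes\id)$, and the quasi-isomorphism statement.
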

	
	When $V=H^*(\mathfrak g,d)$ is equipped with the zero differential, we write
	the transferred structure as
	\[
	\bigl(
	H^*(\mathfrak g,d),
	0,
	l_2^{\min},
	l_3^{\min},
	l_4^{\min},
	\ldots
	\bigr)
	\]
	and call it the \emph{transferred minimal model} of $\mathfrak g$ associated
	with the chosen contraction. Thus $l_1^{\min}=0$, while
	$l_2^{\min}$ is the bracket induced by the dg Lie bracket on cohomology.
	More generally, we use \emph{minimal model} for an $L_\infty$-structure on
	$H^*(L,l_1)$ with vanishing unary operation that is $L_\infty$
	quasi-isomorphic to $L$.
	
	Formality and homotopy abelianity are understood in the sense of
	\cite[Definitions~6.2.1--6.2.2 and Section~12.7]{Man22}.
	Write $l_2^H$ for the binary operation induced by $l_2$ on $H^*(L,l_1)$.
	
	\begin{definition}\label{def:formality}
		An $L_\infty$-algebra $L$ is \emph{formal} if it is $L_\infty$
		quasi-isomorphic to
		\[
		\bigl(H^*(L,l_1),0,l_2^H,0,0,\ldots\bigr).
		\]
		It is \emph{homotopy abelian} if it is $L_\infty$ quasi-isomorphic to
		\[
		\bigl(H^*(L,l_1),0,0,0,\ldots\bigr).
		\]
		A finite-dimensional algebra $A$ is \emph{Hochschild $L_\infty$-formal} if
		its Hochschild dg Lie algebra $C^*(A)[1]$ is formal.
	\end{definition}
	
	Equivalently, in terms of minimal models,
	\[
	\begin{aligned}
		L\text{ is formal}
		&\quad\Longleftrightarrow\quad
		\text{a minimal model of }L\text{ can be chosen with }
		l_r^{\min}=0\text{ for all }r\geq3,
		\\
		L\text{ is homotopy abelian}
		&\quad\Longleftrightarrow\quad
		\text{a minimal model of }L\text{ can be chosen with }
		l_r^{\min}=0\text{ for all }r\geq2.
	\end{aligned}
	\]
	The individual higher brackets in a transferred minimal model depend on the
	chosen contraction and may change under an $L_\infty$-isomorphism of minimal
	models. Consequently, the nonvanishing of some $l_r^{\min}$ in one
	transferred minimal model does not by itself imply non-formality.

	%===========================
	\section{The transferred \texorpdfstring{$L_\infty$}{L-infinity}-structure}\label{sec:main-formulas}
	
	With the Hochschild dg Lie algebra and the homotopy transfer theorem
	fixed in the previous section, the transfer to the CS cochain complex
	requires the CS-projective resolution, comparison morphisms with the
	normalized relative bar resolution, and a compatible homotopy.
	The resolution and comparison morphisms are recalled below, and the
	required homotopy is constructed. The resulting contraction is used
	to transfer the dg Lie structure from $C^*(A)[1]$ to
	$B_{\mathrm{CS}}^*(A)[1]$.
	%===========================
	\subsection{The CS-projective resolution}
	
 Let
	$\Gamma=\cup_{n\geq0}\Gamma_n$,
	where $\Gamma_0=Q_0$, $\Gamma_1=Q_1^S$, and, for $n\geq2$,
	\[
	\Gamma_n=
	\{\gamma_1\cdots\gamma_n\in Q_n^S \mid 
	\gamma_i\gamma_{i+1}\in I^S\text{ for }1\leq i<n\}.
	\]
	Elements of $\Gamma_n$ are called relation concatenations. We identify a
	vertex $i\in Q_0$ with the corresponding trivial path $e_i$. For $n\geq1$,
	set $\Sp^n=\{\ep_i^n \mid i\in S\}\subseteq\Gamma_n$.
	
	For $n\geq0$, set
	$B_n(A)=A\otimes_E\mK\Gamma_n\otimes_E A$. The following is the
	ungraded, left-to-right form of the CS-projective resolution in
	\cite[Proposition~2.8]{BSSWW26}. When $S=\varnothing$, it reduces to
	Bardzell's resolution for quadratic monomial algebras \cite{Bar97}.
	
	\begin{lemma}[{\cite[Proposition~2.8]{BSSWW26}}]\label{thm:CS-resolution}
		Let $A$ be a finite-dimensional skew-gentle algebra.  There is a projective
		$A^e$-resolution
		\[
		\cdots\longrightarrow
		B_n(A)
		\xrightarrow{\delta_n}
		B_{n-1}(A)
		\longrightarrow\cdots\longrightarrow
		B_0(A)
		\xrightarrow{\delta_0}A.
		\]
		The augmentation $\delta_0$ is multiplication.  For
		$\gamma=\gamma_1\cdots\gamma_n\in\Gamma_n\setminus\Sp^n$, $n\geq1$, one has
		\[
		\delta_n(1\otimes_E\gamma_1\cdots\gamma_n\otimes_E1)
		=
		\gamma_1\otimes_E\gamma_2\cdots\gamma_n\otimes_E1
		+(-1)^n1\otimes_E\gamma_1\cdots\gamma_{n-1}\otimes_E\gamma_n.
		\]
		Here an empty middle concatenation is interpreted as the corresponding trivial
		path.  For a special-loop concatenation $\ep_i^n\in\Sp^n$,
		\[
		\delta_n(1\otimes_E\ep_i^n\otimes_E1)=
	\begin{cases}
		\ep_i\otimes_E\ep_i^{n-1}\otimes_E1
		-1\otimes_E\ep_i^{n-1}\otimes_E\ep_i,
		& \text{if } n\text{ is odd},\\
		\ep_i\otimes_E\ep_i^{n-1}\otimes_E1
		+1\otimes_E\ep_i^{n-1}\otimes_E\ep_i
		-1\otimes_E\ep_i^{n-1}\otimes_E1,
		& \text{if } n\text{ is even}.
	\end{cases}
		\]
	\end{lemma}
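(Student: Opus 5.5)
To prove \Cref{thm:CS-resolution}, I would verify three things in turn: that each $B_n(A)$ is a projective $A^e$-module, that $\delta$ squares to zero with $\delta_0\delta_1=0$, and that the complex is exact. Projectivity is immediate. $E$ is semisimple, so $\mK\Gamma_n$ is a projective $E^e$-module, and $A\otimes_E-\otimes_E A$ sends projective $E^e$-modules to projective $A^e$-modules.

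For $\delta^2=0$ I would separate non-special and special generators.

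\emph{Non-special generators.} Take $\gamma=\gamma_1\cdots\gamma_n\notin\Sp^n$. Every proper subconcatenation of $\gamma$ is again non-special: a special loop $\ep_i$ can only be followed, within $I^S$, by $\ep_i$ itself, by gentleness (G3). Hence the Bardzell-type computation applies verbatim. The two middle terms $\gamma_1\otimes\gamma_2\cdots\gamma_{n-1}\otimes\gamma_n$ appear with signs $(-1)^{n-1}$ and $(-1)^n$ and cancel. The outer terms vanish because $\gamma_1\gamma_2\in I\subseteq\ker\pi$.

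\emph{Special generators.} For $\ep_i^n$ one must use $\ep_i^2=\ep_i$ in $A$, not $\ep_i^2=0$. I would check the two parities directly.
\begin{itemize}
\item For $n$ even, apply $\delta_{n-1}$ (the odd formula) to the three terms. The outer terms contribute $\ep_i^2\otimes\ep_i^{n-2}\otimes1-\ep_i\otimes\ep_i^{n-2}\otimes\ep_i$ and $\ep_i\otimes\ep_i^{n-2}\otimes\ep_i-1\otimes\ep_i^{n-2}\otimes\ep_i^2$. The third term contributes $-(\ep_i\otimes\ep_i^{n-2}\otimes1-1\otimes\ep_i^{n-2}\otimes\ep_i)$. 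Using $\ep_i^2=\ep_i$, the total vanishes.
\item For $n$ odd, the analogous computation uses the even formula for $\delta_{n-1}$, and the idempotent relation again produces cancellation.
\end{itemize}
The case $n=1$ reduces to $\delta_0\delta_1=0$, which is clear.

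Exactness I would not try to prove by a direct homotopy. Instead I would give a contracting homotopy of the complex as left $A$-modules (equivalently right $A$-modules), using the basis $\mB$. Define $E$-linear maps $s_n:B_n(A)\to B_{n+1}(A)$ on $b\otimes\gamma\otimes1$ with $b\in\mB$ by a case analysis.
\begin{itemize}
\item $s$ extends $\gamma$ on the left by the last arrow of $b$ when the two form a relation concatenation.
\item For special loops, $s$ compensates the idempotent term.
\end{itemize}
One then checks $\delta s+s\delta=\id$. A cleaner route is to cite \cite{CS15}: the Chouhy--Solotar construction starts from the monomial (Bardzell) resolution of $\mK Q^S/\langle I^S\rangle$ and deforms its differential along the reduction system $\ep_i^2\to\ep_i$. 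Their theorem gives exactness once the reduction system satisfies the diamond condition, which holds because $\mB$ is a basis. So the efficient plan is to verify the hypotheses of \cite{CS15} and match the resulting differential with the displayed formula.

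The main obstacle is this final matching in the special-loop case. One has to track the correction terms produced by the CS perturbation and confirm that only the single extra term $-1\otimes\ep_i^{n-1}\otimes1$ survives in even degree, using that $\ep_i$ is not composable in $I^S$ with other arrows. I would try this first for $n=2,3$ and then induct.
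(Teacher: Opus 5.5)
Your plan is correct and is essentially the paper's route. The paper gives no proof of its own here: it imports the resolution from \cite[Proposition~2.8]{BSSWW26}, which is the Chouhy--Solotar resolution \cite{CS15} for the reduction system given by $I$ together with $\ep_i^2\to\ep_i$. Your direct checks of projectivity and of $\delta_{n-1}\delta_n=0$ are accurate, both for ordinary generators and for $\ep_i^n$ in both parities using $\ep_i^2=\ep_i$.

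The matching step you flag as the main obstacle can be avoided altogether. Once $\delta^2=0$ is known, filter the augmented complex by total path length. The differential and the multiplication of $A$ never increase this length, while the term $-1\otimes_E\ep_i^{n-1}\otimes_E1$ and each reduction $\ep_i^2\mapsto\ep_i$ strictly decrease it. Hence the associated graded complex is Bardzell's resolution \cite{Bar97} of the monomial algebra $\mK Q^S/\langle I^S\rangle$, whose basis is again $\mB$. Exactness then lifts to $B_\bullet(A)$ because each $B_n(A)$ is finite-dimensional.
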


	We reserve $B_n(A)$ for the $n$th term of the CS-projective resolution and
	write $B_{\mathrm{CS}}^n(A)$ for the corresponding dual cochain space
	$\Hom_{A^e}(B_n(A),A)$.  Thus applying
	$\Hom_{A^e}(-,A)$ to $B_\bullet(A)$ gives the CS cochain complex
	$B_{\mathrm{CS}}^\bullet(A)$.
	
	For finite sets of paths $X$ and $Y$, define
	\[
	\Para{X}{Y}
	=
	\{\para{x}{y}\mid x\in X,\ y\in Y,\ x\parallel y\},
	\]
	and let $\mK\Para{X}{Y}$ be the $\mK$-vector space with basis
	$\Para{X}{Y}$.
	
	\begin{lemma}[{\cite{Cib90}}]
		\label{lem:CS-cochain-identification}
		For each $n\geq0$, there is an isomorphism of $\mK$-vector spaces
		\[
		\Hom_{A^e}(A\otimes_E\mK\Gamma_n\otimes_E A,A)
		\cong
		\Hom_{E^e}(\mK\Gamma_n,A)
		\cong
		\mK\Para{\Gamma_n}{\mB}.
		\]
	\end{lemma}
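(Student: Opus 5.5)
The plan is to prove the two isomorphisms separately, the first by the free-bimodule adjunction and the second by a basis computation using the orthogonal idempotents of $E$.

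For the first isomorphism, I use that $A\otimes_E\mK\Gamma_n\otimes_E A\cong A^e\otimes_{E^e}\mK\Gamma_n$ is the $A^e$-module induced from the $E^e$-module $\mK\Gamma_n$. Extension of scalars along $E^e\to A^e$ is left adjoint to restriction, so
\[
\Hom_{A^e}(A\otimes_E\mK\Gamma_n\otimes_E A,A)\cong\Hom_{E^e}(\mK\Gamma_n,A).
\]
Concretely, a bimodule map $F$ is sent to $\gamma\mapsto F(1\otimes_E\gamma\otimes_E1)$. The inverse sends $f$ to $a\otimes_E\gamma\otimes_E b\mapsto af(\gamma)b$. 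This inverse is well defined because $f$ is $E$-bilinear, and it is $A^e$-linear by construction. The two assignments are mutually inverse because $1\otimes_E\gamma\otimes_E1$ generates the bimodule.

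For the second isomorphism, I decompose using the idempotents $e_i$. As an $E^e$-module, $\mK\Gamma_n=\bigoplus_{\gamma\in\Gamma_n}\mK\gamma$, and each $\gamma$ satisfies $\gamma=e_{s(\gamma)}\gamma e_{t(\gamma)}$. Hence an $E$-bilinear map $f$ is determined by the values $f(\gamma)$. These values are arbitrary subject only to $f(\gamma)\in e_{s(\gamma)}Ae_{t(\gamma)}$. Conversely, any such choice of values extends uniquely, since the summands $\mK\gamma$ are free on one generator and each only sees the corner $e_{s(\gamma)}(-)e_{t(\gamma)}$. This gives
\[
\Hom_{E^e}(\mK\Gamma_n,A)\cong\prod_{\gamma\in\Gamma_n}e_{s(\gamma)}Ae_{t(\gamma)}.
\]

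Next I need a basis of each corner. The images of the paths in $\mB$ form a $\mK$-basis of $A$, and every path is homogeneous for the idempotent decomposition. Therefore $e_iAe_j$ has basis $\{b\in\mB : s(b)=i,\ t(b)=j\}$. So a basis of the product is indexed by pairs $(\gamma,b)$ with $\gamma\parallel b$. The pair $\para{\gamma}{b}$ corresponds to the $E$-bilinear map sending $\gamma$ to $b$ and every other element of $\Gamma_n$ to $0$. This gives the isomorphism with $\mK\Para{\Gamma_n}{\mB}$.

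Two finiteness facts are needed to turn the product into a direct sum and to make $\Para{\Gamma_n}{\mB}$ a finite set. $\mB$ is finite because $A$ is finite-dimensional. $\Gamma_n$ is finite because $Q^S$ is a finite quiver, so there are only finitely many paths of each length.

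The one point needing care is the special-loop case. Here $\ep_i^2\in I^S$ while $\pi(\ep_i^2)=\ep_i$, so I must check that $\mB$ still gives a basis of $A$ and not of the monomial algebra. That is exactly the fact quoted from \cite[Section~2.1]{BSSWW26}, so I take it as given, and the rest is formal.
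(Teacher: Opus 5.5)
Your proof is correct, and it is the standard argument behind this result. The paper gives no proof of its own and simply cites Cibils: extension/restriction of scalars along $E^e\to A^e$ gives the first isomorphism, and the idempotent decomposition $\Hom_{E^e}(\mK\gamma,A)\cong e_{s(\gamma)}Ae_{t(\gamma)}$ together with the path basis $\mB$ gives the second.

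One small wording fix: $\mK\gamma$ is not free over $E^e$. It is the cyclic projective summand $E^e(e_{s(\gamma)}\otimes e_{t(\gamma)})$, and that is exactly why its $\Hom$ into $A$ is the corner $e_{s(\gamma)}Ae_{t(\gamma)}$.
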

	
	Under this identification, 
	$\para{\gamma}{a}\in\Para{\Gamma_n}{\mB}$ denotes the elementary
	$E^e$-linear map $\mK\Gamma_n\to A$ sending $\gamma$ to $a$ and every other element of
	$\Gamma_n$ to zero. The notation is extended linearly in the
	second entry. If $u=\sum_j c_j a_j$ where $a_j\in\mB$ and $a_j\parallel\gamma$, then 
	we set
	$\para{\gamma}{u}
	=
	\sum_j c_j\para{\gamma}{a_j}$.
	
	The CS cochain spaces are equipped with the signed differential
	\begin{equation}\label{eq:signed-CS-differential}
		\delta^m:B_{\mathrm{CS}}^m(A)\longrightarrow B_{\mathrm{CS}}^{m+1}(A),
		\qquad
		\delta^m(f)=(-1)^m f\circ\delta_{m+1}.
	\end{equation}
	
	\begin{lemma}[{\cite[Proposition~2.10]{BSSWW26}}]
		\label{lem:dual-differential}
		Under the identification of
		\Cref{lem:CS-cochain-identification}, the signed CS differential is
		given as follows. If $m=0$, or if $m\geq1$ and
		$\gamma\in\Gamma_m\setminus\Sp^m$, then
		\[\delta^m\para{\gamma}{a}
			=
			(-1)^m
			\sum_{\substack{\beta\in Q_1^S\\
					\para{\beta\gamma}{\pi(\beta a)}
					\in\Para{\Gamma_{m+1}}{\mB}}}
			\para{\beta\gamma}{\pi(\beta a)}
			-
			\sum_{\substack{\alpha\in Q_1^S\\
					\para{\gamma\alpha}{\pi(a\alpha)}
					\in\Para{\Gamma_{m+1}}{\mB}}}
			\para{\gamma\alpha}{\pi(a\alpha)}.\]
		If $m\geq1$ and $\gamma=\ep_i^m$, then
		\[
		\delta^m\para{\ep_i^m}{a}
		=
		\begin{cases}
			\para{\ep_i^{m+1}}{\pi(\ep_i a-a\ep_i)},
			& \text{if } m\text{ is even},\\
			\para{\ep_i^{m+1}}{\pi(a-\ep_i a-a\ep_i)},
			& \text{if } m\text{ is odd}.
		\end{cases}
		\]
	\end{lemma}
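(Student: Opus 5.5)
The plan is to compute $\delta^m\para{\gamma}{a}=(-1)^m\,\para{\gamma}{a}\circ\delta_{m+1}$ directly from \Cref{thm:CS-resolution}. We evaluate it on each basis element $1\otimes_E\eta\otimes_E1$ with $\eta\in\Gamma_{m+1}$ and then translate the result back through the identification of \Cref{lem:CS-cochain-identification}. Since $\para{\gamma}{a}$ is $A^e$-linear and vanishes on every generator other than $\gamma$, the only contributions come from terms of $\delta_{m+1}(1\otimes\eta\otimes1)$ whose middle factor is exactly $\gamma$.

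\emph{Non-special case.} Let $\eta=\eta_1\cdots\eta_{m+1}\in\Gamma_{m+1}\setminus\Sp^{m+1}$. Then
\[
\delta_{m+1}(1\otimes\eta\otimes1)=\eta_1\otimes\eta_2\cdots\eta_{m+1}\otimes1+(-1)^{m+1}1\otimes\eta_1\cdots\eta_m\otimes\eta_{m+1}.
\]
Applying $\para{\gamma}{a}$ gives two pieces. The first is $\pi(\eta_1a)$ when $\eta=\beta\gamma$. The second is $(-1)^{m+1}\pi(a\eta_{m+1})$ when $\eta=\gamma\alpha$. Multiplying by $(-1)^m$ produces the signs $(-1)^m$ and $-1$ in the statement. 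The value $\pi(\beta a)$ is either zero or a single element of $\mB$, because $A$ has a path basis and the only nonmonomial reduction is $\ep_i^2=\ep_i$. Hence it is a single basis cochain when it is nonzero, and the summation condition $\para{\beta\gamma}{\pi(\beta a)}\in\Para{\Gamma_{m+1}}{\mB}$ just records that $\beta\gamma$ is a relation concatenation and the product is nonzero.

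Two further contributions must be controlled. The first comes from $\eta=\ep_i^{m+1}\in\Sp^{m+1}$. Since $\gamma\notin\Sp^m$, the middle factors of $\delta_{m+1}(1\otimes\ep_i^{m+1}\otimes1)$ are all $\ep_i^m\neq\gamma$, so these terms contribute nothing. This needs $m\ge1$. For $m=0$ with $\gamma=e_i$, the special term $\delta_1(1\otimes\ep_i\otimes1)=\ep_i\otimes e_i\otimes1-1\otimes e_i\otimes\ep_i$ is already of the non-special shape and fits the same formula, with $\beta=\alpha=\ep_i$. The second concern is that one $\eta$ might factor as both $\beta\gamma$ and $\gamma\alpha$. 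In that case both terms genuinely appear in the sum, which is consistent with the statement's formula, since it contains both sums.

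\emph{Special case.} Let $\gamma=\ep_i^m$. By gentleness (G3), the only arrow $\beta$ with $\beta\ep_i\in I^S$ is $\ep_i$, and likewise on the right. The only $\eta\in\Gamma_{m+1}$ with a boundary term involving $\ep_i^m$ is therefore $\ep_i^{m+1}$. Applying the two cases of \Cref{thm:CS-resolution} with $n=m+1$ and multiplying by $(-1)^m$ gives the following:
\begin{itemize}
\item for $m$ even ($n$ odd), the result is $\pi(\ep_ia-a\ep_i)$;
\item for $m$ odd ($n$ even), it is $-\bigl(\pi(\ep_ia+a\ep_i)-a\bigr)=\pi(a-\ep_ia-a\ep_i)$.
\end{itemize}
Here too one must check that no non-special $\eta$ has $\ep_i^m$ as a face. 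This holds because such an $\eta$ would be $\beta\ep_i^m$ with $\beta\ep_i\in I^S$, forcing $\beta=\ep_i$.

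The main obstacle is not the sign bookkeeping but the interaction with the special loops in the non-special case. One must verify that the two-term formula of \Cref{thm:CS-resolution} really applies to every $\eta\notin\Sp^{m+1}$. This includes $\eta$ containing a single $\ep_i$ at an end, such as $\ep_i\gamma$, which cannot happen: $\ep_i\gamma_1\in I^S$ forces $\gamma_1=\ep_i$ and then $\gamma\in\Sp^m$. The result is that special loops never appear as $\beta$ or $\alpha$ in the first formula when $m\ge1$, which keeps the special and non-special contributions cleanly separated.
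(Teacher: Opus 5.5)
Your proposal is correct. You dualize the differentials of \Cref{thm:CS-resolution} through the elementary cochains $\para{\gamma}{a}$ with the sign $(-1)^m$, and use (G3) to show that no face of $\beta\gamma$ or $\gamma\alpha$ is special when $\gamma\notin\Sp^m$ and that $\ep_i^m$ is a face only of $\ep_i^{m+1}$; this yields exactly the stated formulas. The paper gives no argument of its own beyond citing \cite[Proposition~2.10]{BSSWW26}, and your direct computation is the verification that this citation stands for.
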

	%===========================
	\subsection{Comparison morphisms and contraction}
	
	Write
	\[
	F_n:B_n(A)\longrightarrow\overline C_n(A),\qquad
	G_n:\overline C_n(A)\longrightarrow B_n(A).
	\]
	These are the comparison morphisms of
	\cite[Propositions~2.12 and~2.13]{BSSWW26} in the ungraded left-to-right
	convention. On standard $A$-bimodule generators, they are given as follows
	and extended $A$-bilinearly.
	For $n\geq1$,
	\[
	F_n(1\otimes_E\gamma_1\cdots\gamma_n\otimes_E1)
	=1\otimes_E\gamma_1\otimes_E\cdots\otimes_E\gamma_n\otimes_E1.
	\]
	For a path $x\in\mB$, written $x=x_1\cdots x_{\ell(x)}$,
	\[
	G_1(1\otimes_E x\otimes_E1)=
	\sum_{i=1}^{\ell(x)}x_1\cdots x_{i-1}\otimes_E x_i\otimes_E x_{i+1}\cdots x_{\ell(x)}.
	\]
	For $n\geq2$ and composable paths
	$x_1,\dots,x_n\in\mB^+$,
	\[
	G_n(1\otimes_E x_1\otimes_E\cdots\otimes_E x_n\otimes_E1)
	=
	\begin{cases}
		L\otimes_E\alpha x_2\cdots x_{n-1}\beta\otimes_E R,
		& \begin{array}{l}
			\text{if } x_1=L\alpha,\ x_n=\beta R\\
			\text{and }\alpha x_2\cdots x_{n-1}\beta\in\Gamma_n,
		\end{array}\\[5mm]
		0, & \text{otherwise.}
	\end{cases}
	\]
	For $n=0$, under the canonical identification
	$B_0(A) \cong \overline C_0(A) \cong A\otimes_E A$,
	we take $F_0=G_0=\id$.
	%================================
	\begin{lemma}[{\cite[Propositions~2.12 and~2.13]{BSSWW26}}]\label{lem:comparison-maps}
		The maps $F_\bullet$ and $G_\bullet$ are chain maps and satisfy $G_nF_n=\id_{B_n(A)}$ for every $n\geq0$.
	\end{lemma}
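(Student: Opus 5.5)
The identity $G_nF_n=\id_{B_n(A)}$ is the easy half, so I would dispose of it first. Since all maps are $A$-bilinear, it suffices to check it on the generators $1\otimes_E\gamma\otimes_E1$ with $\gamma=\gamma_1\cdots\gamma_n\in\Gamma_n$.

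For $n=0$ there is nothing to prove. For $n=1$, $\gamma_1$ is an arrow and the sum defining $G_1$ has the single term $1\otimes_E\gamma_1\otimes_E1$. For $n\geq2$, $F_n$ produces $1\otimes_E\gamma_1\otimes_E\cdots\otimes_E\gamma_n\otimes_E1$, where each arrow $\gamma_i$ lies in $\mB^+$; this holds also for a special loop, since only $\ep_i^2$ lies in $I^S$. Because each $x_i$ is a single arrow, the factorizations $x_1=L\alpha$ and $x_n=\beta R$ are forced, with $L,R$ trivial, $\alpha=\gamma_1$ and $\beta=\gamma_n$. The middle word is then $\gamma\in\Gamma_n$, so $G_n$ returns $1\otimes_E\gamma\otimes_E1$.

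Next I would show that $F_\bullet$ is a chain map, i.e.\ $d_nF_n=F_{n-1}\delta_n$ on generators. I would first record a structural remark coming from the gentle axioms for $(Q^S,I^S)$. Since $\ep_i^2\in I^S$, condition (G3) shows that $\ep_i$ occurs in no other relation. Hence a relation concatenation of length $\geq2$ containing a special loop is a power $\ep_i^n\in\Sp^n$.

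\emph{Case $\gamma\notin\Sp^n$.} All internal bar terms involve $\varpi(\gamma_i\gamma_{i+1})=0$, since $\gamma_i\gamma_{i+1}\in I$ is zero in $A$. The two outer terms are exactly $F_{n-1}$ applied to the two terms of \Cref{thm:CS-resolution}.

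\emph{Case $\gamma=\ep_i^n$.} Here $\varpi(\ep_i\ep_i)=\ep_i\neq0$. Each internal term equals $1\otimes_E\ep_i^{\otimes(n-1)}\otimes_E1$, and the internal signs contribute the coefficient $\sum_{i=1}^{n-1}(-1)^i$. This sum is $0$ for $n$ odd and $-1$ for $n$ even, which reproduces precisely the extra term $-1\otimes_E\ep_i^{n-1}\otimes_E1$ in the even case of \Cref{thm:CS-resolution}. Finally, the outer terms match because $(-1)^n=-1$ for $n$ odd and $+1$ for $n$ even.

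The remaining and hardest step is $\delta_nG_n=G_{n-1}d_n$ on generators $1\otimes_Ex_1\otimes_E\cdots\otimes_Ex_n\otimes_E1$ with $x_i\in\mB^+$. I would first treat $n=1$ directly: the sum over positions telescopes, giving $x\otimes_E1-1\otimes_Ex$ after applying $\delta_1$, and this equals $G_0d_1$. For special loops inside $x$, the idempotent relation must be used, but $x\in\mB$ contains no $\ep_i^2$, so the telescoping is unaffected.

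For $n\geq2$ I would split into cases according to which of $G_n$ and the terms of $G_{n-1}d_n$ are nonzero.
\begin{itemize}
\item The outer bar terms $x_1\otimes_E\cdots$ and $\cdots\otimes_Ex_n$ are nonzero under $G_{n-1}$ exactly when the right part of $x_1$, respectively the left part of $x_n$, fits into a relation concatenation.
\item An internal term $\varpi(x_jx_{j+1})$ survives only if $x_jx_{j+1}\in\mB$. Such a term is killed by $G_{n-1}$ unless $j\in\{1,n-1\}$ and the merged factor still ends, respectively begins, with an arrow compatible with the relation chain.
\end{itemize}
I would show that these surviving internal terms cancel pairwise against outer terms in which the last arrow of $x_1$ is absorbed. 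The genuinely skew-gentle subcase, where $x_jx_{j+1}$ reduces via $\ep_i^2=\ep_i$ (for instance $x_j$ ending and $x_{j+1}$ beginning with $\ep_i$, which is impossible in $\mB$ unless one of them is exactly $\ep_i$), must be matched against the third term of $\delta_n$ on $\ep_i^n$.

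This is where I expect the main obstacle: keeping track of signs and of the idempotent absorption when $x_1$ or $x_n$ equals $\ep_i$ itself. My fallback is to cite \cite[Propositions~2.12 and~2.13]{BSSWW26} and verify that the change to the ungraded left-to-right convention only reverses path order and adjusts signs uniformly, so that the chain-map property is preserved.
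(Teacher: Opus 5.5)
Your proofs of $G_nF_n=\id_{B_n(A)}$ and of $d_nF_n=F_{n-1}\delta_n$ are correct, and so is the telescoping argument for $\delta_1G_1=G_0d_1$. In particular, the count $\sum_{j=1}^{n-1}(-1)^j\in\{0,-1\}$ correctly reproduces the extra even-degree term of $\delta_n(1\otimes\ep_i^n\otimes1)$. This is more than the paper gives: it offers no argument and imports the whole lemma from \cite[Propositions~2.12 and~2.13]{BSSWW26}.

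The gap is the remaining claim $\delta_nG_n=G_{n-1}d_n$ for $n\geq2$. You only outline it, and the outline gets wrong which terms of $G_{n-1}d_n(z)$ survive, so it cannot be completed as written. There are three concrete problems.

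(i) It is false that $\varpi(x_jx_{j+1})\neq0$ only when $x_jx_{j+1}\in\mB$. It is also false that $x_j$ ending and $x_{j+1}$ beginning with $\ep_i$ forces one of them to equal $\ep_i$. Take the configuration $u\xrightarrow{\alpha}v\xrightarrow{\beta}w$ with $v\in S$ from \Cref{thm:An-interior-special}. Both $x_1=\alpha\ep_v$ and $x_2=\ep_v\beta$ lie in $\mB$, and $\varpi(x_1x_2)=\alpha\ep_v\beta\neq0$. This term is indispensable. We have $G_2(1\otimes\alpha\ep_v\otimes\ep_v\beta\otimes1)=\alpha\otimes\ep_v^2\otimes\beta$. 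The outer terms $x_1G_1(x_2)+G_1(x_1)x_2$ give $\alpha\ep_v\otimes\ep_v\otimes\beta+\alpha\otimes\ep_v\otimes\ep_v\beta$ plus two tails, $\alpha\ep_v\otimes\beta\otimes1$ and $1\otimes\alpha\otimes\ep_v\beta$. The merged term $-G_1(\alpha\ep_v\beta)$ cancels both tails and supplies the third term $-\alpha\otimes\ep_v\otimes\beta$ of $\delta_2$.

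(ii) Merged terms at interior positions $2\leq j\leq n-2$ are not automatically killed by $G_{n-1}$, because $\varpi(\ep_i\ep_i)=\ep_i$ is an arrow. On $1\otimes\ep_i\otimes\cdots\otimes\ep_i\otimes1$ with $n\geq4$, all $n-1$ merged terms survive. They are exactly what produces the even-degree term of $\delta_n$, as in your $F$-computation.

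(iii) Idempotent merges are not always matched against that third term. Let $\alpha\neq\ep_i$ be an arrow starting at $i$ and $z=1\otimes\ep_i\otimes\ep_i\otimes\alpha\otimes1$. Then $G_3(z)=0$, and the merge at $j=1$ is killed. The merge $G_2(1\otimes\ep_i\otimes\ep_i\alpha\otimes1)=1\otimes\ep_i^2\otimes\alpha$ instead cancels the outer term $-G_2(1\otimes\ep_i\otimes\ep_i\otimes1)\alpha$.

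To close the gap directly, use the correct local rule. Let $\lambda$ be the last arrow of $x_j$ and $\mu$ the first arrow of $x_{j+1}$. Then $\varpi(x_jx_{j+1})$ is:
$0$ if $\lambda\mu\in I$;
the concatenation if $\lambda\mu\notin I^S$;
the concatenation with one $\ep_i$ deleted if $\lambda=\mu=\ep_i$.
In the nonzero cases it keeps the first arrow of $x_j$ and the last arrow of $x_{j+1}$; this is the observation used in the proof of \Cref{lem:GH-zero}. It is an arrow only when $x_j=x_{j+1}=\ep_i$.

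Then split according to whether $G_n(z)$ vanishes.

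Suppose $G_n(z)\neq0$. Then $z=1\otimes L\nu_1\otimes\nu_2\otimes\cdots\otimes\nu_nR\otimes1$ with $\nu_1\cdots\nu_n\in\Gamma_n$. All merged terms vanish unless $\nu_1\cdots\nu_n=\ep_i^n$. The two outer terms, together with the $n-1$ merged terms in the $\ep_i$-power case (and the tail cancellation above when $n=2$), give exactly $\delta_n(L\otimes\nu_1\cdots\nu_n\otimes R)$.

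Suppose $G_n(z)=0$. Interior merges are killed, since a surviving one would force $z$ to have the $\ep_i^n$ shape. For $n\geq3$, each nonzero outer term cancels against the merge at $j=1$ or $j=n-1$, with opposite sign. The reason is that $G_{n-1}$ reads only the last arrow of the merged first factor, respectively the first arrow of the merged last factor, and that arrow is unchanged by the merge. For $n=2$ this is the derivation identity $G_1(x_1x_2)=G_1(x_1)x_2+x_1G_1(x_2)$ for $x_1x_2\in\mB$.

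With this case analysis, or with your fallback citation (which is all the paper itself offers), the lemma is established. Without either, the chain-map property of $G_\bullet$, on which \Cref{lem:relative-recursive-homotopy} depends, remains unproved.
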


	The basis decomposition
	$A=E\oplus_{\mK}\bar A$ gives the standard contracting homotopy of the reduced
	$E$-relative bar resolution,
	\[
	\begin{aligned}
		s_n:\overline C_n(A)&\longrightarrow\overline C_{n+1}(A),\\
		s_n(a\otimes_E r_1\otimes_E\cdots\otimes_E r_n\otimes_E b)
		&=
		1\otimes_E\varpi(a)\otimes_E r_1\otimes_E\cdots\otimes_E r_n\otimes_E b.
	\end{aligned}
	\]
	The map $s_n$ is left $E$-linear and right $A$-linear, but in general it is
	not left $A$-linear. At the augmentation level, we use the standard convention
	$s_{-1}(a)=1\otimes_E a$ for $a\in A$. On the augmented reduced relative bar resolution these
	maps satisfy the usual contracting-homotopy identity
	\[
	d_{n+1}s_n+s_{n-1}d_n=\id.
	\]
	
	The following adapts \cite[Lemma~3.1]{RR22} to the $E$-relative setting.
	
	\begin{lemma}
		\label{lem:relative-recursive-homotopy}
		Set $H_{-1}=H_0=0$.  For $n\geq1$, suppose that $H_{n-1}$ has already
		been defined as an $A^e$-linear map.  For
		\[
		\begin{aligned}
			\mathbf r&=r_1\otimes_E\cdots\otimes_E r_n
			\in\bar A^{\otimes_E n},\\
			\widetilde{\mathbf r}
			&=1\otimes_Er_1\otimes_E\cdots\otimes_Er_n\otimes_E1,
		\end{aligned}
		\]
		define $H_n$ on this standard generator by
		\begin{equation}\label{eq:recursive-H}
			H_n(\widetilde{\mathbf r})
			=
			(s_nF_nG_n-s_nH_{n-1}d_n)
			(\widetilde{\mathbf r}),
		\end{equation}
		and extend by
		\begin{equation}\label{eq:H-Ae-extension}
			H_n(a\otimes_E\mathbf r\otimes_Eb)
			=
			a H_n(\widetilde{\mathbf r}) b.
		\end{equation}
		Then \eqref{eq:H-Ae-extension} is well defined and gives an $A^e$-linear
		map
		$H_n:\overline C_n(A)\longrightarrow\overline C_{n+1}(A)$.
		Moreover,
		\begin{equation}\label{eq:FG-homotopy}
			F_nG_n-\id_{\overline C_n(A)}
			=
			d_{n+1}H_n+H_{n-1}d_n
		\end{equation}
		for every $n\geq0$.
	\end{lemma}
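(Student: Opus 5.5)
The plan is a well-definedness check on generators, followed by induction on $n$ for \eqref{eq:FG-homotopy}, in the spirit of \cite[Lemma~3.1]{RR22}.

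\textbf{Well-definedness.} Since $\overline C_n(A)=A\otimes_E\bar A^{\otimes_E n}\otimes_E A$ is the free $A^e$-module induced from the $E^e$-module $V=\bar A^{\otimes_E n}$, an $A^e$-linear map out of it is the same thing as an $E^e$-linear map $V\to\overline C_{n+1}(A)$. So it suffices to show that $\mathbf r\mapsto H_n(\widetilde{\mathbf r})$, as defined by \eqref{eq:recursive-H}, is $E^e$-linear in $\mathbf r$. The maps $F_n$, $G_n$ and $d_n$ are $A^e$-linear, and $H_{n-1}$ is $A^e$-linear by the induction hypothesis. The map $s_n$ is left $E$-linear and right $A$-linear. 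For $e\in E$ we have $e\,\widetilde{\mathbf r}=\widetilde{e\mathbf r}$ and $\widetilde{\mathbf r}\,e=\widetilde{\mathbf r e}$, so each composite in \eqref{eq:recursive-H} is $E$-bilinear in $\mathbf r$. Hence $H_n$ extends uniquely to an $A^e$-linear map by \eqref{eq:H-Ae-extension}.

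The essential point is that we only use left $E$-linearity of $s_n$, never left $A$-linearity, which fails. That is exactly why $H_n$ is defined on standard generators first and then extended.

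\textbf{Homotopy identity.} We argue by induction on $n$. For $n=0$, $F_0G_0=\id$ and $H_{-1}=H_0=0$, so \eqref{eq:FG-homotopy} holds. Now let $n\geq1$ and assume \eqref{eq:FG-homotopy} in degree $n-1$. Both sides of \eqref{eq:FG-homotopy} in degree $n$ are $A^e$-linear, so it suffices to check it on a generator $\widetilde{\mathbf r}$.

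Using $d_{n+1}s_n=\id-s_{n-1}d_n$, we get
\[
d_{n+1}H_n(\widetilde{\mathbf r})
=(F_nG_n-H_{n-1}d_n)(\widetilde{\mathbf r})
-s_{n-1}\bigl(d_nF_nG_n-d_nH_{n-1}d_n\bigr)(\widetilde{\mathbf r}).
\]
We simplify the bracketed term in two steps.

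\begin{enumerate}[label=\textnormal{(\roman*)}]
\item By \Cref{lem:comparison-maps}, $F$ and $G$ are chain maps, so $d_nF_nG_n=F_{n-1}G_{n-1}d_n$.
\item By the induction hypothesis, $d_nH_{n-1}=F_{n-1}G_{n-1}-\id-H_{n-2}d_{n-1}$. Together with $d_{n-1}d_n=0$ this gives $d_nH_{n-1}d_n=F_{n-1}G_{n-1}d_n-d_n$.
\end{enumerate}

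Therefore the bracketed term equals $d_n$, and
\[
d_{n+1}H_n(\widetilde{\mathbf r})
=F_nG_n(\widetilde{\mathbf r})-H_{n-1}d_n(\widetilde{\mathbf r})-s_{n-1}d_n(\widetilde{\mathbf r}).
\]
For $n=1$ the same manipulation applies with $H_{-1}=0$, $d_0d_1=0$, and the augmentation convention $s_{-1}(a)=1\otimes_E a$.

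It remains to show $s_{n-1}d_n(\widetilde{\mathbf r})=\widetilde{\mathbf r}$.

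\begin{itemize}
\item In $d_n(\widetilde{\mathbf r})$, every term other than the first has left coefficient $a_0=1\in E$. Since $\varpi(1)=0$, the map $s_{n-1}$ kills all of these terms.
\item The first term is $r_1\otimes_E r_2\otimes_E\cdots\otimes_E r_n\otimes_E 1$. Because $r_1\in\bar A$, $s_{n-1}$ sends it to $1\otimes_E r_1\otimes_E\cdots\otimes_E r_n\otimes_E1=\widetilde{\mathbf r}$.
\item For $n=1$ the same holds, since $s_0(r_1\otimes_E 1)=\widetilde{\mathbf r}$ and $s_0(1\otimes_E r_1)=0$.
\end{itemize}

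This gives $F_nG_n-\id=d_{n+1}H_n+H_{n-1}d_n$ on generators, hence everywhere.

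The main obstacle is this last step: evaluating $s_{n-1}d_n$ on generators. It is the only place where the specific (non-$A$-linear) form of $s$ enters, and it relies essentially on normalization over $E$, namely $\varpi(1)=0$.
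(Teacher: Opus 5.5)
Your proposal is correct and follows essentially the same argument as the paper. The only difference is cosmetic. You verify $s_{n-1}d_n(\widetilde{\mathbf r})=\widetilde{\mathbf r}$ by expanding $d_n$ term by term. The paper instead reads it off from $s_n(\widetilde{\mathbf r})=0$ together with the contracting identity $d_{n+1}s_n+s_{n-1}d_n=\id$.
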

	
	\begin{proof}
		Because $\varpi:A\to\bar A$ is an $E^e$-linear projection, $s_n$ is left
		$E$-linear and right $A$-linear.  By induction, the maps
		$F_n,G_n,d_n$, and $H_{n-1}$ occurring before $s_n$ are $A^e$-linear.
		Consequently,
		\[T_n:=s_nF_nG_n-s_nH_{n-1}d_n\]
		is left $E$-linear and right $A$-linear.  If $e,f\in E$, then the balancing
		relations in the relative tensor product give
		$\widetilde{e\mathbf r f} = e \widetilde{\mathbf r} f$,
		and hence
		$T_n(\widetilde{e\mathbf r f}) = e T_n(\widetilde{\mathbf r}) f$.
		Thus the assignment in \eqref{eq:recursive-H} is $E^e$-linear in
		$\mathbf r$, and therefore it has the unique $A^e$-linear extension
		\eqref{eq:H-Ae-extension}.
		
		It remains to prove the homotopy identity.  The case $n=0$ follows from
		$F_0G_0=\id$ and $H_{-1}=H_0=0$.  Assume inductively that
		\eqref{eq:FG-homotopy} holds in degree $n-1$.  Composing that identity on
		the right with $d_n$, and using that $F_\bullet$ and $G_\bullet$ are chain
		maps, gives
		$d_nH_{n-1}d_n = d_nF_nG_n-d_n$.
		Let $z=\widetilde{\mathbf r}$ be a standard generator.  Then
		$d_n(F_nG_n-H_{n-1}d_n)(z)=d_n(z)$.
		Moreover, $s_n(z)=0$ because $\varpi(1)=0$, and the contracting-homotopy
		identity therefore gives
		$s_{n-1}d_n(z)=z$.
		Using \eqref{eq:recursive-H}, we obtain
		\begin{align*}
			d_{n+1}H_n(z)
			&=
			d_{n+1}s_n(F_nG_n-H_{n-1}d_n)(z)\\
			&=
			(\id-s_{n-1}d_n)
			(F_nG_n-H_{n-1}d_n)(z)\\
			&=
			F_nG_n(z)-H_{n-1}d_n(z)-s_{n-1}d_n(z)\\
			&=
			F_nG_n(z)-z-H_{n-1}d_n(z).
		\end{align*}
		This proves \eqref{eq:FG-homotopy} on the standard generators.  Both sides
		are $A^e$-linear, so the identity holds on all of $\overline C_n(A)$.
	\end{proof}
	
	The side conditions required for the transfer are established below by
	adapting the monomial arguments of \cite[Lemmas~3.5 and~3.6]{RR22}
	to the skew-gentle setting. The required path combinatorics is governed
	by the quadratic monomial set $I^S$, and every path in $\mB$
	avoids the relations in $I^S$.
	
	\begin{lemma}\label{lem:GH-zero}
		For every $n\geq0$, $G_{n+1}H_n=0$.
	\end{lemma}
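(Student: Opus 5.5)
Since $H_n$ and $G_{n+1}$ are both $A^e$-linear, it suffices to check $G_{n+1}H_n(\widetilde{\mathbf r})=0$ on standard generators $\widetilde{\mathbf r}=1\otimes_E r_1\otimes_E\cdots\otimes_E r_n\otimes_E1$, where each $r_i$ is a path in $\mB^+$. The key structural input is that $H_n$ is a composite beginning with $s_n$. We therefore first prove a general vanishing statement: $G_{n+1}s_n(y)=0$ for every element $y$ of $\overline C_n(A)$ of the form $a\otimes_E x_1\otimes_E\cdots\otimes_E x_n\otimes_E b$, with $a$ and the $x_i$ basis paths. Granting this, the result follows at once from \eqref{eq:recursive-H}: $H_n(\widetilde{\mathbf r})=s_n(F_nG_n(\widetilde{\mathbf r})-H_{n-1}d_n(\widetilde{\mathbf r}))$, and the argument of $s_n$ is a linear combination of such tensors. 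The case $n=0$ is trivial because $H_0=0$.

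For the vanishing statement, note that $s_n(y)=1\otimes_E\varpi(a)\otimes_E x_1\otimes_E\cdots\otimes_E x_n\otimes_E b$. If $a$ is trivial this is zero. Otherwise, apply the formula for $G_{n+1}$ with first tensor factor $x_1'=\varpi(a)=a$, a nontrivial path in $\mB$, and last factor $x_{n+1}'=x_n$. That formula gives $L\otimes_E \alpha x_1\cdots x_{n-1}\beta\otimes_E R$ with left coefficient $L$, where $a=L\alpha$. The crucial point is that the left coefficient in the original generator sits outside everything, but here the first entry of the output of $G_{n+1}$ is $L$. The output lies in $B_{n+1}(A)$ as $L\cdot(1\otimes_E\gamma\otimes_E R)$, which is not obviously zero. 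So a naive $G_{n+1}s_n=0$ is false in general. We need to use the specific shape of what $s_n$ is applied to, as in \cite[Lemma~3.5]{RR22}.

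The refined plan is therefore an induction on $n$ that tracks the form of $F_nG_n(\widetilde{\mathbf r})-H_{n-1}d_n(\widetilde{\mathbf r})$. First, $F_nG_n(\widetilde{\mathbf r})$ is a sum of terms $L\otimes_E\gamma_1\otimes_E\cdots\otimes_E\gamma_n\otimes_E R$ with $\gamma_1\cdots\gamma_n\in\Gamma_n$, and $\gamma_1$ a single arrow ending the path $r_1=L\gamma_1$. Applying $s_n$ yields $1\otimes_E L\otimes_E\gamma_1\otimes_E\cdots$. Then $G_{n+1}$ requires $L=L'\alpha$ with $\alpha\gamma_1\cdots\gamma_{n-1}\beta\in\Gamma_{n+1}$, and in particular $\alpha\gamma_1\in I^S$. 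But $L\gamma_1=r_1\in\mB$ avoids $I^S$, so this is impossible. The exceptional case is a special loop, where $\alpha\gamma_1=\ep_i^2$ but $r_1$ might be $\ep_i$ only; then $L$ is trivial and the $s_n$-term already vanishes. Similarly, the terms from $H_{n-1}d_n$ are, by the inductive description, of the form $s_{n-1}(\cdots)$ multiplied on the left by path pieces. After the outer $s_n$, their first two tensor factors again form a product $L\cdot(\text{first factor})$ that equals a subpath of some $r_i$ or of a $\varpi(r_ir_{i+1})$, hence lies in $\mB$ and contains no relation from $I^S$. We therefore formulate an inductive invariant: every summand of $H_{n-1}(\cdots)$ has the form $1\otimes_E y_1\otimes_E y_2\otimes_E\cdots$ with $y_1y_2$, read as a path, containing no element of $I^S$ at the junction. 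Applying $G_{n+1}$ to any tensor with this property kills it.

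The main obstacle is establishing this junction invariant through the recursion, especially for the internal terms $\varpi(r_ir_{i+1})$ and for special loops. There, $\pi(\ep_i^2)=\ep_i$ collapses a product, so a relation $\ep_i\ep_i$ could in principle appear at a junction after idempotent absorption. I would first handle this by checking directly that every factor produced by $d_n$, $F_n$ or $G_n$ is a path in $\mB$, and that adjacent factors arising from splitting a single $\mB$-path have $\mB$-concatenation. I would then verify by hand the low cases $n=1,2$ involving $\ep_i$, where the $-1\otimes\ep_i^{n-1}\otimes1$ term of the resolution does not arise because $F,G$ are the given maps.
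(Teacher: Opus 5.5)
There is a genuine gap in your treatment of the $H_{n-1}d_n$-terms. Two of your claims are false. The first is that, after the outer $s_n$, the first two tensor factors of these terms multiply to a subpath of some $r_i$ or of some $\varpi(r_ir_{i+1})$. The second is the inductive invariant that the first junction $y_1y_2$ of every summand of $H_{n-1}$ contains no relation.

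Here is why. $H_{n-1}$ of a standard generator lies in $\operatorname{Im}s_{n-1}$, so it has left coefficient $1$, and $\varpi(1)=0$. Hence the only part of $H_{n-1}d_n(\widetilde{\mathbf r})$ not annihilated by $s_n$ is $r_1H_{n-1}(1\otimes r_2\otimes\cdots\otimes r_n\otimes 1)$. Applying $s_n$ to it gives tensors $1\otimes r_1\otimes Y_1\otimes\cdots$, where $Y_1$ is, for instance, a proper prefix of $r_2$. The junction between $r_1$ and $Y_1$ is then a relation whenever $r_1r_2$ dies at that junction.

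Concretely, take $1\xrightarrow{\alpha}2\xrightarrow{\beta}3\xrightarrow{\gamma}4$ with $I=\{\alpha\beta\}$, $S=\varnothing$, and $z=1\otimes\alpha\otimes\beta\gamma\otimes 1$. Then:
\begin{itemize}
\item $s_2F_2G_2(z)=0$;
\item $H_1(1\otimes\beta\gamma\otimes 1)=1\otimes\beta\otimes\gamma\otimes 1$ and $H_1(1\otimes\alpha\otimes 1)=0$;
\item $\varpi(\alpha\beta\gamma)=0$.
\end{itemize}
Hence $H_2(z)=-1\otimes\alpha\otimes\beta\otimes\gamma\otimes 1$, whose first junction $\alpha\beta$ lies in $I$. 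So your invariant already fails for $H_2$, and the induction cannot reach $G_4H_3=0$. This tensor is indeed killed by $G_3$, but only because of the junction $\beta\gamma\notin I^S$, not $\alpha\beta$.

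Special loops play no role in this failure. Also, the special-loop ``exception'' you discuss for the $F_nG_n$-term does not arise: the relevant junction lies inside $r_1\in\mB$, and that excludes $\varepsilon^2$ as well.

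There are two ways to repair the argument.

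\emph{Weaken the invariant.} Require only that each summand $1\otimes X_1\otimes\cdots\otimes X_{n+1}\otimes c$ of $H_n(z)$ has \emph{some} $k\in\{1,\dots,n\}$ for which the last arrow of $X_k$ followed by the first arrow of $X_{k+1}$ is not in $I^S$.
\begin{itemize}
\item $G_{n+1}$ kills every such tensor, because it needs all of these junctions to be relations.
\item The $s_nF_nG_n$-summands satisfy the condition with $k=1$.
\item The surviving summands $1\otimes r_1\otimes Y_1\otimes\cdots$ inherit it from $H_{n-1}$, with $k$ shifted by one.
\end{itemize}

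\emph{Use the paper's route.} Finish the computation you began when you observed that $G_{n+1}s_n\neq 0$: it actually gives the factorization $G_{n+1}s_n=K_nG_n$. Here $K_n(c\otimes\gamma\otimes b)=L\otimes\lambda\gamma\otimes b$ if $c=L\lambda$ with $\lambda\gamma_1\in I^S$, and $0$ otherwise. Then, for a standard generator $z$,
\[
G_{n+1}H_n(z)=K_n\bigl(G_nF_nG_n-G_nH_{n-1}d_n\bigr)(z)=K_nG_n(z),
\]
using $G_nF_n=\id$ and the induction hypothesis $G_nH_{n-1}=0$. Finally, $K_nG_n(z)=0$ by exactly your junction argument for the $F_nG_n$-term. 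This route inducts on the statement itself and needs no structural description of the summands of $H_n$.
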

	
	\begin{proof}
		The case $n=0$ is immediate from $H_0=0$.  We prove the assertion for
		$n\geq1$ by induction.
		
		We first record a factorization of $G_{n+1}s_n$ through $G_n$.  For
		$\gamma=\gamma_1\cdots\gamma_n\in\Gamma_n$, define a $\mK$-linear map
		$K_n:B_n(A)\to B_{n+1}(A)$ on the standard basis determined by $\mB$ by
		\[
		K_n(c\otimes_E\gamma\otimes_E b)
		=
		\begin{cases}
			L\otimes_E\lambda\gamma\otimes_E b,
			& \text{if } c=L\lambda\text{ and }\lambda\gamma_1\in I^S,\\
			0, & \text{otherwise},
		\end{cases}
		\]
		where in the first case $c$ is a nontrivial path in $\mB$ and $\lambda$
		is its last arrow.  Since $\gamma\in\Gamma_n$, the condition
		$\lambda\gamma_1\in I^S$ implies $\lambda\gamma\in\Gamma_{n+1}$, so
		$K_n$ is well defined.
		
		We claim that $G_{n+1}s_n=K_nG_n$ on $\overline C_n(A)$.  By linearity,
		it is enough to consider a basis tensor
		$u=a\otimes_E x_1\otimes_E\cdots\otimes_E x_n\otimes_E b$, where
		$a$ and the $x_i$ lie in $\mB$.
		
		Assume first that $n\geq2$.  We first note that
		$G_{n+1}s_n(u)\neq0$ implies $G_n(u)\neq0$.  Indeed, in this case
		$\varpi(a)\neq0$, so we may write $a=L\lambda$, and writing
		$x_n=\beta R$, the defining formula for $G_{n+1}$ requires
		$\lambda x_1\cdots x_{n-1}\beta\in\Gamma_{n+1}$.
		Since this relation concatenation has length $n+1$, each of
		$x_1,\ldots,x_{n-1}$ is an arrow.  Deleting its first arrow therefore gives
		$x_1\cdots x_{n-1}\beta\in\Gamma_n$, which is precisely a nonzero term of
		$G_n(u)$.  Hence, if $G_n(u)=0$, both sides of the claimed factorization
		vanish.
		
		We may therefore assume that $G_n(u)\neq0$.  Write
		$x_1=P\alpha$, $x_n=\beta R$, with
		$\alpha x_2\cdots x_{n-1}\beta\in\Gamma_n$.  Then
		$G_n(u)=aP\otimes_E\alpha x_2\cdots x_{n-1}\beta\otimes_E Rb$.
		If $P$ is nontrivial and this term is nonzero after applying $\pi$, the last arrow of the left coefficient $aP$ is the last
		arrow $\mu$ of $P$: the only possible nonzero reduction at the joining
		boundary between $a$ and $P$ is a special-loop idempotent reduction, which
		does not change that last arrow. Since $P\alpha=x_1$ lies in $\mB$,
		$\mu\alpha\notin I^S$, and hence $K_nG_n(u)=0$. The preceding observation
		also gives $G_{n+1}s_n(u)=0$ in this case, since a nonzero value would force
		$x_1$ to be an arrow, whereas $P$ is nontrivial.
		
		Thus either side can be nonzero only when $P$ is trivial.  In that
		case $x_1=\alpha$.  Writing $a=L\lambda$, the value $K_nG_n(u)$ is
		nonzero precisely when $\lambda\alpha\in I^S$, and then it equals
		$L\otimes_E\lambda\alpha x_2\cdots x_{n-1}\beta\otimes_E Rb$.
		This is exactly the value obtained by first applying $s_n$, which
		inserts $\varpi(a)$ as the first bar factor, and then applying
		$G_{n+1}$.  If $a$ is trivial, both sides vanish because $\varpi(a)=0$
		and $K_n$ vanishes on trivial left coefficients.  Hence
		$G_{n+1}s_n(u)=K_nG_n(u)$ for $n\geq2$.
		
		For $n=1$, write $u=a\otimes_E x\otimes_E b$. The formula for $G_1$
		gives a sum over decompositions $x=P\alpha Q$. Whenever $P$ is
		nontrivial, a nonzero value $\pi(aP)$ still ends
		in the last arrow $\mu$ of $P$, since the only possible nonzero reduction
		at the joining boundary is a special-loop idempotent reduction. Moreover,
		$\mu\alpha\notin I^S$, since $P\alpha$ is a subpath of the path in $\mB$
		$x$; hence the corresponding summand is killed by $K_1$.  Only the
		decomposition with $P$ trivial can contribute, and on that summand
		the condition imposed by $K_1$ is precisely the condition imposed by
		$G_2$ after $s_1$ has inserted $\varpi(a)$.  Thus
		$G_2s_1=K_1G_1$ as well.  This proves the claimed factorization for
		every $n\geq1$.
		
		Now let
		$z=1\otimes_E x_1\otimes_E\cdots\otimes_E x_n\otimes_E1$
		be a standard normalized basis generator.  By
		\eqref{eq:recursive-H} and the factorization above,
		\[
		\begin{aligned}
			G_{n+1}H_n(z)
			&=
			K_nG_n(F_nG_n-H_{n-1}d_n)(z)\\
			&=
			K_n((G_nF_n)G_n-(G_nH_{n-1})d_n)(z).
		\end{aligned}
		\]
		By \Cref{lem:comparison-maps}, $G_nF_n=\id$.  For $n=1$ we have
		$G_1H_0=0$, while for $n>1$ the induction hypothesis gives
		$G_nH_{n-1}=0$.  Consequently,
		$G_{n+1}H_n(z)=K_nG_n(z)$.
		
		It remains to show that $K_nG_n(z)=0$.  Suppose first that $n\geq2$
		and $G_n(z)\neq0$.  Then, by the explicit formula for $G_n$, there
		are paths $L,R\in\mB$ and arrows $\alpha,\beta$ such that
		$x_1=L\alpha$, $x_n=\beta R$, and
		$\alpha x_2\cdots x_{n-1}\beta\in\Gamma_n$.  If $L$ is trivial,
		$K_nG_n(z)=0$ by definition.  If $L=L'\lambda$ is nontrivial, then
		$\lambda\alpha$ is an adjacent pair inside the path in $\mB$
		$x_1=L\alpha$, and therefore $\lambda\alpha\notin I^S$.  Again
		$K_nG_n(z)=0$.
		
		For $n=1$, every summand of $G_1(z)$ has the form
		$L\otimes_E\alpha\otimes_E R$ with $x_1=L\alpha R$.  If $L$ is
		trivial, it is killed by $K_1$; if $L=L'\lambda$ is nontrivial,
		then $\lambda\alpha\notin I^S$ because $x_1\in\mB$, so it is
		again killed by $K_1$.  Hence $K_1G_1(z)=0$.
		
		Thus $G_{n+1}H_n(z)=0$ for every standard normalized generator $z$.
		Since $G_{n+1}H_n$ is $A^e$-linear, the identity holds on all of
		$\overline C_n(A)$.
	\end{proof}
	
	\begin{lemma}\label{lem:HF-zero}
		For every $n\geq0$, $H_nF_n=0$.
	\end{lemma}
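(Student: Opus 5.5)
Since both $H_n$ and $F_n$ are $A^e$-linear, it suffices to check $H_nF_n=0$ on the standard generators $1\otimes_E\gamma\otimes_E1$ of $B_n(A)$ with $\gamma=\gamma_1\cdots\gamma_n\in\Gamma_n$. The argument is an induction on $n$. The cases $n=0$ (where $H_0=0$) and the trivial starting point are immediate. For $n\geq1$ we have $F_n(1\otimes_E\gamma\otimes_E1)=z:=1\otimes_E\gamma_1\otimes_E\cdots\otimes_E\gamma_n\otimes_E1$, which is itself a standard normalized generator. We may therefore apply \eqref{eq:recursive-H} directly:
\[
H_nF_n(1\otimes_E\gamma\otimes_E1)=s_nF_nG_nF_n(1\otimes_E\gamma\otimes_E1)-s_nH_{n-1}d_nF_n(1\otimes_E\gamma\otimes_E1).
\]

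The first term is handled by \Cref{lem:comparison-maps}. Since $G_nF_n=\id$, it equals $s_nF_n(1\otimes_E\gamma\otimes_E1)=s_n(z)$, and this vanishes because the left coefficient is $1$ and $\varpi(1)=0$.

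For the second term we use that $F_\bullet$ is a chain map, so $d_nF_n=F_{n-1}\delta_n$. Then $H_{n-1}d_nF_n=(H_{n-1}F_{n-1})\delta_n$, and by the induction hypothesis $H_{n-1}F_{n-1}=0$ this is zero. The base of the induction requires $H_0F_0=0$, which holds because $H_0=0$. Note that $H_{n-1}F_{n-1}$ is $A^e$-linear, so it kills all of $B_{n-1}(A)$ and not only the generators. This matters because $\delta_n(1\otimes_E\gamma\otimes_E1)$ has nontrivial coefficients such as $\gamma_1\otimes\cdots$, and in the special-loop case the even-degree formula has three terms. The hypothesis applies to every element of $B_{n-1}(A)$, so no case distinction between $\Gamma_n\setminus\Sp^n$ and $\Sp^n$ is needed.

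The only point needing care is making sure we use the chain-map property in the right direction. The chain-map property $d_nF_n=F_{n-1}\delta_n$ from \Cref{lem:comparison-maps} must hold for the special-loop differentials as well, including the extra term $-1\otimes_E\ep_i^{n-1}\otimes_E1$. Rather than recomputing this, we simply invoke the lemma. Apart from that, the argument needs none of the path combinatorics used in \Cref{lem:GH-zero}.
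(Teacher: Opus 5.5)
Your proposal is correct and matches the paper's proof essentially step for step. Both argue by induction on $n$ and evaluate the recursion \eqref{eq:recursive-H} on the standard generator $F_n(1\otimes_E\gamma\otimes_E1)$. The first term reduces to $s_nF_n(1\otimes_E\gamma\otimes_E1)=0$ via $G_nF_n=\id$ and $\varpi(1)=0$. The second term vanishes because $d_nF_n=F_{n-1}\delta_n$ and $H_{n-1}F_{n-1}=0$ by the inductive hypothesis.
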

	
	\begin{proof}
		We proceed by induction on $n$.  The case $n=0$ follows from $H_0=0$.
		Assume that
		$H_{n-1}F_{n-1}=0$.
		Since $H_nF_n$ is $A^e$-linear, it is enough to evaluate it on a standard
		generator
		$1\otimes_E\gamma\otimes_E1\in B_n(A)$, where $\gamma=\gamma_1\cdots\gamma_n\in\Gamma_n$.
		The image of this generator under $F_n$ is again a standard normalized
		bar generator.  Since $G_nF_n=\id$ by \Cref{lem:comparison-maps} and
		$F_\bullet$ is a chain map, the generator-level formula
		\eqref{eq:recursive-H} gives
		\begin{align*}
			&H_nF_n(1\otimes_E\gamma\otimes_E1)
			\\
			&\quad=
			s_nF_nG_nF_n(1\otimes_E\gamma\otimes_E1)
			-
			s_nH_{n-1}d_nF_n(1\otimes_E\gamma\otimes_E1)
			\\
			&\quad=
			s_nF_n(1\otimes_E\gamma\otimes_E1)
			-
			s_nH_{n-1}F_{n-1}
			\delta_n(1\otimes_E\gamma\otimes_E1)
			\\
			&\quad=
			s_nF_n(1\otimes_E\gamma\otimes_E1).
		\end{align*}
		By the definition of $F_n$,
		$F_n(1\otimes_E\gamma\otimes_E1) = 1\otimes_E\gamma_1\otimes_E\cdots \otimes_E\gamma_n\otimes_E1$.
		Therefore
		$s_nF_n(1\otimes_E\gamma\otimes_E1)=0$,
		because $\varpi(1)=0$.  Thus $H_nF_n$ vanishes on standard generators,
		and $A^e$-linearity gives $H_nF_n=0$ for every $n\geq0$.
	\end{proof}
	
	\begin{lemma}\label{lem:HH-zero}
		For every $n\geq0$, $H_{n+1}H_n=0$.
	\end{lemma}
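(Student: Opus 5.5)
By $A^e$-linearity of $H_{n+1}H_n$, it suffices to check the identity on standard normalized generators $z=1\otimes_E x_1\otimes_E\cdots\otimes_E x_n\otimes_E1$. The case $n=0$ is trivial because $H_0=0$. For $n\geq1$, the recursive formula \eqref{eq:recursive-H} together with the extension \eqref{eq:H-Ae-extension} shows that $H_n(z)$ lies in the image of $s_n$. Write $s_n(w)=\sum 1\otimes_E\varpi(a)\otimes_E\mathbf r\otimes_E b$ with $w=\sum a\otimes_E\mathbf r\otimes_E b$. Then $H_n(z)$ is a sum of terms of the form $1\otimes_E y_0\otimes_E y_1\otimes_E\cdots\otimes_E y_n\otimes_E b$, that is, standard generators multiplied on the right by elements $b\in A$. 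By right $A$-linearity of $H_{n+1}$, it is therefore enough to show that $H_{n+1}$ kills every standard generator lying in the image of $s_n$. Equivalently, the plan is to prove
\[
H_{n+1}s_n=0,
\]
following \cite[Lemma~3.6]{RR22}.

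To do this, evaluate $H_{n+1}$ on $y=s_n(u)$ with $u$ a basis tensor, using
\[
H_{n+1}(y)=s_{n+1}F_{n+1}G_{n+1}(y)-s_{n+1}H_nd_{n+1}(y).
\]
Here $y$ is already a standard generator, so \eqref{eq:recursive-H} applies directly. For the first term, the factorization $G_{n+1}s_n=K_nG_n$ established in the proof of \Cref{lem:GH-zero} gives $F_{n+1}G_{n+1}s_n(u)=F_{n+1}K_nG_n(u)$. The output of $K_n$ has left coefficient $L$ and middle factor $\lambda\gamma$; after applying $F_{n+1}$ one obtains $L\otimes_E\lambda\otimes_E\gamma_1\otimes_E\cdots$. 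Applying $s_{n+1}$ to this produces $1\otimes_E\varpi(L)\otimes_E\lambda\otimes_E\cdots$, which is not obviously zero. So this term has to be cancelled by the second one rather than vanish on its own.

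For the second term, expand $d_{n+1}s_n=\id-s_{n-1}d_n$, using the contracting-homotopy identity. This gives
\[
s_{n+1}H_nd_{n+1}s_n(u)=s_{n+1}H_n(u)-s_{n+1}H_ns_{n-1}d_n(u).
\]
The last piece vanishes by induction, since $H_ns_{n-1}=0$ is the degree-$(n-1)$ case of the claim. Hence
\[
H_{n+1}s_n(u)=s_{n+1}\bigl(F_{n+1}K_nG_n-H_n\bigr)(u).
\]
Since $H_n$ is $A^e$-linear and only right $E$-linearity is available for $s_n$, I will treat $u=a\otimes_E\mathbf r\otimes_E b$ by writing $H_n(u)=aH_n(\tilde{\mathbf r})b$. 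Substituting \eqref{eq:recursive-H} for $H_n(\tilde{\mathbf r})$ and using $s_{n+1}(a\,s_n(w))=s_{n+1}(1\otimes_E\varpi(a)\otimes_E\cdots)$ reduces everything to comparing $s_{n+1}\bigl(a\,s_nF_nG_n(\tilde{\mathbf r})\bigr)$ with $s_{n+1}F_{n+1}K_nG_n(u)$. This is the combinatorial heart of the argument, analogous to the monomial case. The key facts are that $s_{n+1}s_n=0$, since the inserted first factor has trivial left coefficient, and that left multiplication by $a$ followed by $s$ moves exactly the final arrow $\lambda$ of $a$ into the bar.

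The main obstacle is this last comparison in the presence of special loops. A product $\pi(a\,y_0)$ can undergo an idempotent reduction $\ep_i\ep_i=\ep_i$ at the joining boundary. In that case the last arrow is not removed, and $\varpi$ of the product does not vanish the way it would in the monomial case. I would first try to show, as in the proof of \Cref{lem:GH-zero}, that such reductions preserve the last arrow, so that the terms still pair off with $F_{n+1}K_nG_n$. If that fails, I would instead argue directly that $s_{n+1}s_n=0$ and that every term of $H_n(z)$ has the form $s_n(\cdots)$ with left coefficient $1$, which makes the second application of the $s$-part vanish.
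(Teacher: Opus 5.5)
Your proposal has a genuine gap: the reduction to $H_{n+1}s_n=0$ asks for something false. The image of $s_n$ contains \emph{every} standard generator of $\overline C_{n+1}(A)$. For $r_0,\dots,r_n\in\bar A$ one has $\varpi(r_0)=r_0$, hence $s_n(r_0\otimes_E r_1\otimes_E\cdots\otimes_E r_n\otimes_E1)=1\otimes_E r_0\otimes_E\cdots\otimes_E r_n\otimes_E1$. So $H_{n+1}s_n=0$ would force $H_{n+1}=0$ by $A^e$-linearity.

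This already fails at the bottom of your induction, $H_1s_0=0$. Take a path $\alpha\beta\in\mB$ of length two. Since $H_0=0$,
\[
H_1(1\otimes_E\alpha\beta\otimes_E1)=s_1F_1G_1(1\otimes_E\alpha\beta\otimes_E1)=s_1(1\otimes_E\alpha\otimes_E\beta+\alpha\otimes_E\beta\otimes_E1)=1\otimes_E\alpha\otimes_E\beta\otimes_E1\neq0.
\]
Consequently the inductive input $H_ns_{n-1}=0$ is false. The identity $s_{n+1}(F_{n+1}K_nG_n-H_n)(u)=0$, which you planned to verify combinatorially, does not hold in general. Your concern about idempotent reductions is beside the point. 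What makes $H_{n+1}$ vanish on $H_n(z)$ is not merely that $H_n(z)$ lies in $\operatorname{Im}s_n$, but the specific identities that $H_n$ satisfies.

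The missing idea is to apply the recursion to $H_n(z)$ itself and to use the lemmas already proved. Because $H_n(z)$ has left coefficient $1$, it is a right $A$-linear combination of standard generators, so \eqref{eq:recursive-H} applies termwise:
\[
H_{n+1}H_n(z)=s_{n+1}F_{n+1}G_{n+1}H_n(z)-s_{n+1}H_nd_{n+1}H_n(z).
\]
\begin{itemize}
\item The first term vanishes by \Cref{lem:GH-zero}.
\item By \eqref{eq:FG-homotopy}, $d_{n+1}H_n=F_nG_n-\id-H_{n-1}d_n$.
\item $H_nF_n=0$ by \Cref{lem:HF-zero}, and the induction hypothesis is $H_nH_{n-1}=0$ (not $H_ns_{n-1}=0$).
\end{itemize}
Together these leave $H_{n+1}H_n(z)=s_{n+1}H_n(z)$. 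This vanishes because $H_n(z)\in\operatorname{Im}s_n$ and $s_{n+1}s_n=0$.

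Your fallback sentence points at this last step but skips the reduction. Without the identity $H_{n+1}H_n(z)=s_{n+1}H_n(z)$, the outer $s_{n+1}$ acts on $F_{n+1}G_{n+1}H_n(z)$ and $H_nd_{n+1}H_n(z)$, not on $H_n(z)$, so $s_{n+1}s_n=0$ cannot be invoked directly.
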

	
	\begin{proof}
		We argue by induction on $n$.  The case $n=0$ is immediate from $H_0=0$.
		Assume that
		$H_nH_{n-1}=0$.
		Since $H_{n+1}H_n$ is $A^e$-linear, it is enough to evaluate it on a
		standard normalized generator
		$z= 1\otimes_E x_1\otimes_E\cdots\otimes_E x_n\otimes_E1$.
		By \eqref{eq:recursive-H}, every summand of $H_n(z)$ lies in
		$\operatorname{Im}s_n$ and therefore has trivial left outer coefficient.
		After expanding the middle factors in the basis $\mB$, each summand
		has the form $z'b$, where $z'$ is a standard normalized generator and
		$b\in A$. Since both $H_{n+1}$ and the generator-level recursive expression
		are right $A$-linear, the recursion may be applied termwise to $H_n(z)$.
		Using \Cref{lem:GH-zero}, we obtain
		\begin{align*}
			H_{n+1}H_n(z)
			&=
			s_{n+1}F_{n+1}G_{n+1}H_n(z)
			-
			s_{n+1}H_nd_{n+1}H_n(z)
			\\
			&=
			-s_{n+1}H_nd_{n+1}H_n(z).
		\end{align*}
		By \eqref{eq:FG-homotopy},
		$d_{n+1}H_n = F_nG_n-\id-H_{n-1}d_n$.
		Hence
		\begin{align*}
			H_{n+1}H_n(z)
			&=
			-s_{n+1}H_n
			(F_nG_n-\id-H_{n-1}d_n)(z)
			\\
			&=
			s_{n+1}H_n(z),
		\end{align*}
		where $H_nF_n=0$ by \Cref{lem:HF-zero} and
		$H_nH_{n-1}=0$ by the induction hypothesis.  Finally, the
		generator-level formula \eqref{eq:recursive-H} gives
		$H_n(z) = s_n(F_nG_n-H_{n-1}d_n)(z)$.
		Since $s_{n+1}s_n=0$ by $\varpi(1)=0$, it follows that
		$H_{n+1}H_n(z)=s_{n+1}H_n(z)=0$.
		The result on all of $\overline C_n(A)$ follows from $A^e$-linearity.
	\end{proof}
	
	Consequently, \eqref{eq:FG-homotopy} and
	\Cref{lem:comparison-maps,lem:GH-zero,lem:HF-zero,lem:HH-zero}
	give the contraction data $(F_\bullet,G_\bullet,H_\bullet)$.
	%====================
	Since $F_\bullet$, $G_\bullet$, and $H_\bullet$ are $A^e$-linear,
	precomposition gives maps on $A^e$-linear cochains. Dualizing by
	$\Hom_{A^e}(-,A)$ gives the cochain maps $F^*:C^*(A)\to B_{\mathrm{CS}}^*(A)$ and $G^*:B_{\mathrm{CS}}^*(A)\to C^*(A)$, where $F^*(\varphi)=\varphi F$ and $G^*(f)=fG$.
	In accordance with the signed differentials
	\eqref{eq:signed-Hochschild-differential} and
	\eqref{eq:signed-CS-differential}, define
	\begin{equation}\label{eq:signed-dual-homotopy}
		\begin{aligned}
			H^*:C^n(A)\longrightarrow C^{n-1}(A),\qquad
			H^*(\varphi)=
			\begin{cases}
				0, & \text{if } n=0,\\
				(-1)^{n-1}\varphi\circ H_{n-1}, & \text{if } n\geq1.
			\end{cases}
		\end{aligned}
	\end{equation}
	
	\begin{corollary}\label{cor:dual-contraction}
		The maps $F^*,G^*,H^*$ satisfy
		\[
		F^*G^*=\id,\qquad
		G^*F^*-\id=H^*d_C+d_CH^*,
		\]
		and
		\[
		H^*G^*=0,\qquad
		F^*H^*=0,\qquad
		H^*H^*=0.
		\]
	\end{corollary}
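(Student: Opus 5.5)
The plan is to dualize the resolution-level contraction data $(F_\bullet,G_\bullet,H_\bullet)$ by applying $\Hom_{A^e}(-,A)$, which reverses the order of composition. Then we check that the signs chosen in \eqref{eq:signed-Hochschild-differential}, \eqref{eq:signed-CS-differential} and \eqref{eq:signed-dual-homotopy} turn the homotopy identity \eqref{eq:FG-homotopy} into the stated one.

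The three vanishing identities and $F^*G^*=\id$ are formal. For $f\in B^n_{\mathrm{CS}}(A)$ we have $F^*G^*f=fG_nF_n=f$ by \Cref{lem:comparison-maps}. Next, $H^*G^*f=\pm fG_nH_{n-1}=0$ by \Cref{lem:GH-zero}. For $\varphi\in C^n(A)$, $F^*H^*\varphi=\pm\varphi H_{n-1}F_{n-1}=0$ by \Cref{lem:HF-zero}. Finally, $H^*H^*\varphi=\pm\varphi H_{n-1}H_{n-2}=0$ by \Cref{lem:HH-zero}. The cases $n=0$ and $n=1$ are trivial because $H^*$ vanishes on $C^0$ and $H_{-1}=H_0=0$. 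We should also note in passing that $F^*$ and $G^*$ are cochain maps for the signed differentials. This holds because both signed differentials carry the same factor $(-1)^m$ on degree-$m$ cochains: the Hochschild one is $d_C\varphi=(-1)^n\varphi\circ d_{n+1}$, read off from \eqref{eq:signed-Hochschild-differential}, and the CS one is \eqref{eq:signed-CS-differential}. The chain-map property of $F_\bullet,G_\bullet$ then transfers directly.

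The main step is the homotopy identity. Take $\varphi\in C^n(A)$ and precompose \eqref{eq:FG-homotopy} in degree $n$. This gives
\[
\varphi F_nG_n-\varphi=\varphi d_{n+1}H_n+\varphi H_{n-1}d_n .
\]
We now express each term on the right via the signed operators. First, $d_C\varphi=(-1)^n\varphi d_{n+1}$ lies in $C^{n+1}(A)$. Then
\[
H^*(d_C\varphi)=(-1)^{n}(d_C\varphi)H_n=(-1)^{n}(-1)^n\varphi d_{n+1}H_n=\varphi d_{n+1}H_n .
\]
Second, $H^*\varphi=(-1)^{n-1}\varphi H_{n-1}$ lies in $C^{n-1}(A)$. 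Then
\[
d_C(H^*\varphi)=(-1)^{n-1}(H^*\varphi)d_n=\varphi H_{n-1}d_n .
\]
Summing the two gives exactly $G^*F^*\varphi-\varphi$, since $G^*F^*\varphi=\varphi F_nG_n$. The edge case $n=0$ uses $H^*=0$ on $C^0$ and $H_0=0$; there both sides vanish because $F_0G_0=\id$.

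The only real obstacle is the bookkeeping of the identifications. The signed Hochschild differential \eqref{eq:signed-Hochschild-differential} must indeed equal $(-1)^n\varphi\circ d_{n+1}$ under $\Hom_{A^e}(\overline C_n(A),A)\cong C^n(A)$. To check this, evaluate $\varphi d_{n+1}$ on $1\otimes_E r_1\otimes_E\cdots\otimes_E r_{n+1}\otimes_E1$ using the bar differential: the terms match the bracketed expression, including the last sign $(-1)^{n+1}$. Once this is confirmed, the rest is the sign arithmetic above.
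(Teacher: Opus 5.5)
Your proposal is correct and follows essentially the same route as the paper's proof: it dualizes $(F_\bullet,G_\bullet,H_\bullet)$ and cancels the paired degree signs in $H^*d_C$ and $d_CH^*$, so that \eqref{eq:FG-homotopy} gives the homotopy identity. The side conditions then come from the vanishing lemmas for $G_{n+1}H_n$, $H_nF_n$ and $H_{n+1}H_n$. Your explicit check that \eqref{eq:signed-Hochschild-differential} equals $(-1)^n\varphi\circ d_{n+1}$ is a step the paper uses without writing it out.
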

	
	\begin{proof}
		For $f\in B_{\mathrm{CS}}^n(A)$ and $\varphi\in C^n(A)$,
		$F^*G^*(f)=fG_nF_n=f$.
		The maps $F^*$ and $G^*$ are cochain maps because $F_\bullet$ and
		$G_\bullet$ are chain maps and both cochain complexes use the same factor
		$(-1)^n$ in degree $n$.
		
		For $n=0$, the homotopy identity is immediate from $F_0G_0=\id$ and
		$H_0=0$.  Let $n\geq1$ and take $\varphi\in C^n(A)$.  By
		\eqref{eq:signed-Hochschild-differential} and
		\eqref{eq:signed-dual-homotopy}, the two degree-dependent signs cancel in
		each composite:
		\begin{align*}
			H^*d_C(\varphi)
			&=
			(-1)^n(-1)^n\varphi d_{n+1}H_n
			=
			\varphi d_{n+1}H_n,
			\\
			d_CH^*(\varphi)
			&=
			(-1)^{n-1}(-1)^{n-1}\varphi H_{n-1}d_n
			=
			\varphi H_{n-1}d_n.
		\end{align*}
		Hence \eqref{eq:FG-homotopy} gives
		\[
		(H^*d_C+d_CH^*)(\varphi)
		=
		\varphi(F_nG_n-\id)
		=
		(G^*F^*-\id)(\varphi).
		\]
		The three side conditions follow from
		\Cref{lem:GH-zero,lem:HF-zero,lem:HH-zero}; the scalar signs in
		\eqref{eq:signed-dual-homotopy} do not affect their vanishing.
	\end{proof}
	
	%===========================
	\subsection{Transfer of the Hochschild dg Lie structure}
	\label{subsec:transfer-Hochschild}
	
	The preceding construction provides the contraction data required by
	\Cref{thm:transfer-theorem}.  We now apply that theorem to the shifted CS and
	Hochschild cochain complexes.  Set $B=B_{\mathrm{CS}}^*(A)[1]$ and
	$C=C^*(A)[1]$.
	A basis element of shifted degree $m$ in $B$ is represented by a cochain
	$\para{x}{a}\in\mK\Para{\Gamma_{m+1}}{\mB}$.
	Apply \Cref{thm:transfer-theorem} with
	\[
	V=B,\qquad \mathfrak g=C,\qquad \iota=G^*,\qquad p=F^*,\qquad h=H^*.
	\]
	
	\begin{theorem}\label{thm:transferred-Linfty}
		Let $A$ be a finite-dimensional skew-gentle algebra.  The shifted CS cochain
		complex $B=B_{\mathrm{CS}}^*(A)[1]$ carries an $L_\infty$-structure
		$\{l_n\}_{n\geq1}$ defined by
		\Cref{eq:vn,eq:ln,eq:un,eq:phin}.  Moreover, $G^*$
		extends to a weak $L_\infty$ quasi-isomorphism
		\[
		\phi:B\longrightarrow C^*(A)[1].
		\]
		In particular,
		\[
		l_1=\delta,\qquad
		l_2(f\otimes_{\mK}g)=F^*[G^*f,G^*g].
		\]
	\end{theorem}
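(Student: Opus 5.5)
The proof is a direct application of \Cref{thm:transfer-theorem}, with $V=B$, $\mathfrak g=C=C^*(A)[1]$, $\iota=G^*$, $p=F^*$, $h=H^*$. The plan is to check that these data form contraction data in the sense of \Cref{subsec:Linfty-preliminaries}, after passing to the shifted gradings, and then to read off $l_1$ and $l_2$ from the recursion \eqref{eq:vn}--\eqref{eq:phin}.

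\textbf{Step 1: $(C,d_C,[-,-])$ is a dg Lie algebra.} This is the standard normalized $E$-relative Gerstenhaber structure recalled in \Cref{subsec:normalized-relative-Hochschild}. The differential carries the sign $(-1)^n$, and the bracket has degree zero after the shift $C^*(A)[1]^m=C^{m+1}(A)$.

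\textbf{Step 2: the maps and degrees.} By \Cref{lem:CS-cochain-identification,lem:dual-differential}, $(B,\delta)$ is a cochain complex. The maps $F^*$ and $G^*$ are cochain maps of degree $0$ between the shifted complexes. This holds because both complexes use the same sign $(-1)^n$ in degree $n$, as noted in the proof of \Cref{cor:dual-contraction}. The map $H^*$ lowers cochain degree by one, so it has degree $-1$ after the shift.

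\textbf{Step 3: the contraction identities.} \Cref{cor:dual-contraction} supplies exactly the identities required by the theorem:
\[
p\iota=F^*G^*=\id, \qquad \iota p-\id=G^*F^*-\id=H^*d_C+d_CH^*,
\]
\[
h\iota=H^*G^*=0, \qquad ph=F^*H^*=0, \qquad h^2=0.
\]
Shifting the grading by one does not change any of these identities, because all the maps are homogeneous and no Koszul sign enters a composite of unary maps.

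\textbf{Step 4: conclusion.} \Cref{thm:transfer-theorem} now gives the $L_\infty$-structure $\{l_n\}$ on $B$ defined by \eqref{eq:vn}--\eqref{eq:phin}. It also gives the weak $L_\infty$-morphism $\phi$ with $\phi_1=\iota=G^*$.

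\textbf{Step 5: $\phi$ is a quasi-isomorphism.} We have $F^*G^*=\id$, and $G^*F^*$ is homotopic to $\id$. Hence $G^*$ induces an isomorphism in cohomology with inverse induced by $F^*$, so $\phi$ is an $L_\infty$ quasi-isomorphism. Concretely, both complexes compute $\HH^*(A)$, since the CS resolution of \Cref{thm:CS-resolution} and the normalized relative bar resolution are projective resolutions of $A$ over $A^e$.

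\textbf{Step 6: the explicit formulas.} By construction $l_1=\delta$. For $n=2$, the only summand in \eqref{eq:vn} is $t=1$. The set $\mS^{-}_{1,1}$ contains only the identity permutation, because the condition $\sigma(1)<\sigma(2)$ excludes the transposition. For the identity, $\chi=1$ and $\kappa(\mathrm{id})_1=(-1)^{0+0}=1$. Therefore
\[
v_2(f\otimes g)=[\phi_1f,\phi_1g]=[G^*f,G^*g],
\qquad
l_2=p\,v_2=F^*[G^*f,G^*g].
\]

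\textbf{Main obstacle.} The only genuinely nontrivial input is the set of side conditions $GH=0$, $HF=0$, $HH=0$. These are already established in \Cref{lem:GH-zero,lem:HF-zero,lem:HH-zero}, so what remains in this proof is sign bookkeeping. One must confirm that the dualized homotopy, with its sign $(-1)^{n-1}$, matches the convention $\iota p-\id=hd+dh$ used in \cite{RR22} rather than the opposite sign. This check is the one done in \Cref{cor:dual-contraction}.
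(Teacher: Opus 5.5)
Your proposal is correct and follows essentially the same route as the paper. You apply \Cref{thm:transfer-theorem} to the contraction data $(F^*,G^*,H^*)$ supplied by \Cref{cor:dual-contraction}, use that $C^*(A)[1]$ is a dg Lie algebra, and read off $l_1=\delta$ and $l_2=F^*[G^*f,G^*g]$ from the recursion. Your additional checks ($\mS^{-}_{1,1}=\{\id\}$ with $\chi=\kappa=1$, and the quasi-isomorphism property of $G^*$ from $F^*G^*=\id$ and $G^*F^*\simeq\id$) only make explicit what the paper's short proof leaves implicit.
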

	
	\begin{proof}
		By \Cref{cor:dual-contraction}, the complexes $B$ and $C$, together with
		$F^*,G^*,H^*$, satisfy the contraction identities and side conditions required
		in \Cref{thm:transfer-theorem}.  Since $C=C^*(A)[1]$ is a dg Lie algebra under
		the shifted Gerstenhaber bracket, the transfer theorem applies and gives an
		$L_\infty$-structure on $B$.  The formula for $l_1$ is the transferred unary
		operation, and the recursion for $n=2$ gives
		$l_2(f\otimes_{\mK}g)=F^*[G^*f,G^*g]$.  The same theorem also gives the weak
		$L_\infty$ morphism whose first component is $\phi_1=G^*$.
	\end{proof}

	After the degree shift and our path-order conventions, the operation $l_2$
	agrees with the Gerstenhaber bracket transported to the CS cochain complex in
	\cite[Section~3.6]{BSSWW26}; see also the insertion formulas in
	\cite[Proposition~3.25]{BSSWW26}.  These formulas are reorganized below into explicit basis-level formulas for
	the bracket, including cancellations between the two insertion directions
	and, in the periodic cases, occurrence multiplicities and their signs.
	%===========================
	
	Let $m,n\geq0$, let
	$f=\para{x}{a}\in B^m=\mK\Para{\Gamma_{m+1}}{\mB}$, and let
	$g=\para{y}{b}\in B^n=\mK\Para{\Gamma_{n+1}}{\mB}$.
	For
	$\gamma=\gamma_1\cdots\gamma_{m+n+1}\in\Gamma_{m+n+1}$,
	all products occurring as cochain values or outer coefficients are evaluated in $A$ via the canonical projection $\pi:\mK Q^S\to A$. Whenever such a value is nonzero, we identify it with its expansion in the basis $\mB$. Empty strings of tensor factors are omitted. Then
	\begin{align}\label{eq:general-l2}
		&l_2(f\otimes_{\mK} g)(\gamma)
		\notag\\
		={}&
		\sum_{i=0}^{m}(-1)^{i(n+2)}
		\para{x}{a}G_{m+1}
		(
		\gamma_1\otimes_E\cdots\otimes_E\gamma_i
		\otimes_E
		\varpi(\para{y}{b}(\gamma_{i+1}\cdots\gamma_{i+n+1}))
		\notag\\[-1mm]
		&\hspace{44mm}
		\otimes_E\gamma_{i+n+2}\otimes_E\cdots\otimes_E\gamma_{m+n+1}
		)
		\notag\\
		&-
		(-1)^{mn}
		\sum_{j=0}^{n}(-1)^{j(m+2)}
		\para{y}{b}G_{n+1}
		(
		\gamma_1\otimes_E\cdots\otimes_E\gamma_j
		\otimes_E
		\varpi(\para{x}{a}(\gamma_{j+1}\cdots\gamma_{j+m+1}))
		\notag\\[-1mm]
		&\hspace{44mm}
		\otimes_E\gamma_{j+m+2}\otimes_E\cdots\otimes_E\gamma_{m+n+1}
		).
	\end{align}
	Here an empty initial or terminal tensor string is simply omitted. The two
	sums correspond, respectively, to inserting $g$ into $f$ and inserting $f$
	into $g$.
	
	For arbitrary homogeneous $f,g\in B$, the transferred binary bracket is
	graded skew-symmetric:
	\begin{align}\label{sym}
		l_2(f\otimes_{\mK} g)
		=
		-(-1)^{|f||g|}l_2(g\otimes_{\mK} f),
	\end{align}
	where $|-|$ denotes the shifted degree on $B$.
	
	For homogeneous inputs $f\in B^m$ and $g\in B^n$, we call $(m,n)$ the
	\emph{degree pair} of $f\otimes_{\mK}g$, and we say that
	$l_2(f\otimes_{\mK}g)$ is a bracket of type $(m,n)$.  This terminology
	records only the two shifted input degrees and does not introduce an additional
	$\mathbb Z^2$-grading on $B$.  In this degree-type notation, the symbol $+$
	denotes an arbitrary positive shifted degree.  Thus, up to graded
	skew-symmetry, the four possible types are $(-1,n)$ with
	$n\geq-1$, $(0,0)$, $(0,+)$, and $(+,+)$.
	
	%===========================
	\subsection{Combinatorial preparation for the classification}
	\label{subsec:path-replacement-notation}
	
	\begin{lemma}\label{lem:special-loop-dichotomy}
		Let $n\geq1$ and let $\gamma\in\Gamma_n$. If $\gamma$ contains a special loop $\ep_i$, then $\gamma=\ep_i^n$. Consequently, every element of $\Gamma_n$ either consists entirely of arrows of $Q$ or is a pure power of a special loop.
	\end{lemma}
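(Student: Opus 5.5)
The argument is local: it uses only the gentle conditions (G3) and (G4) for the pair $(Q^S,I^S)$ at the special vertex $i$, together with the fact that consecutive arrows of an element of $\Gamma_n$ form relations in $I^S$. Write $\gamma=\gamma_1\cdots\gamma_n$ and suppose $\gamma_k=\ep_i$ for some $k$. If $n=1$ there is nothing to prove, so assume $n\geq2$. We show that the neighbours $\gamma_{k-1}$ and $\gamma_{k+1}$, whenever they exist, are again equal to $\ep_i$. An induction outward from position $k$, in both directions, then gives $\gamma_j=\ep_i$ for all $j$, that is, $\gamma=\ep_i^n$.

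For the right neighbour, suppose $k<n$. By definition of $\Gamma_n$ we have $\ep_i\gamma_{k+1}\in I^S$. By construction, $\ep_i^2\in I^S$. By (G3) applied to the arrow $\ep_i$, there is at most one arrow $\beta$ with $\ep_i\beta\in I^S$. Hence $\gamma_{k+1}=\ep_i$.

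The left neighbour is symmetric. If $k>1$, then $\gamma_{k-1}\ep_i\in I^S$ and $\ep_i\ep_i\in I^S$, and the second half of (G3) gives $\gamma_{k-1}=\ep_i$.

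No real obstacle is expected. The only point to check is that (G3) applies literally: $\ep_i$ is an arrow of $Q^S$, and the relation $\ep_i^2$ has been placed in $I^S$ by construction. This is exactly what \Cref{def:finite-skew-gentle} guarantees when it requires $(Q^S,I^S)$ to be a gentle pair. One could also note that the relations in $I$ involve only arrows of $Q$, but the uniqueness clause of (G3) already suffices.

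The consequence follows from the dichotomy. If $\gamma$ contains no special loop, then every arrow of $\gamma$ lies in $Q_1^S\setminus\Sp=Q_1$. Otherwise $\gamma\in\Sp^n$ by the argument above. Finally, a single $\gamma$ cannot contain two different special loops $\ep_i$ and $\ep_j$, because the argument forces every letter of $\gamma$ to equal whichever special loop occurs first.
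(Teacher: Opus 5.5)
Your proof is correct and follows the same route as the paper: since $\ep_i^2\in I^S$, condition \textup{(G3)} for the gentle pair $(Q^S,I^S)$ forces every neighbour of an occurrence of $\ep_i$ in a relation concatenation to be $\ep_i$, and propagating this outward in both directions gives $\gamma=\ep_i^n$. Only \textup{(G3)} is actually used, so your opening mention of \textup{(G4)} is unnecessary but harmless.
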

	
	\begin{proof}
		Since $\ep_i^2\in I^S$, condition \textup{(G3)} in
		\Cref{def:gentle} implies that $\ep_i$ is the unique arrow that
		can follow $\ep_i$ in a relation and also the unique arrow that can precede
		it in a relation. Hence every arrow adjacent to $\ep_i$ in a relation
		concatenation must again be $\ep_i$. Propagating this observation in both
		directions proves that $\gamma=\ep_i^n$.
	\end{proof}
	
	\begin{lemma}
		\label{cycle-not-has-loop}
		Let $\tau=\tau_1\cdots\tau_d$ be a primitive relation cycle with $d>1$. Then none of the arrows $\tau_j$ is a loop, where $1\leq j\leq d$.
	\end{lemma}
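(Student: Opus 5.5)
The plan is to argue by contradiction, combining the gentle axiom \textup{(G3)} with the fact that each special loop is already matched with itself in a relation. The idea is that a loop occurring in $\tau$ would have to be related to itself, and this would force the whole cycle to collapse to a power of that single loop.

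Suppose some $\tau_j$ is a loop at a vertex $v$. There are two cases.

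\emph{Case 1: $\tau_j$ is a special loop $\ep_i$.} Regard $\tau$ as a relation concatenation. Since $\tau$ is a relation cycle, $\tau^2$ and its cyclic rotations lie in $\Gamma_{2d}$. The cyclic successor of $\tau_j$ is in relation with $\tau_j$, so by \Cref{lem:special-loop-dichotomy} (equivalently, by the argument of its proof, which uses \textup{(G3)} together with $\ep_i^2\in I^S$) that successor must equal $\ep_i$. Propagating around the cycle gives $\tau=\ep_i^d$. For $d>1$ this is a proper power of the length-one cycle $\ep_i$, contradicting primitivity.

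\emph{Case 2: $\tau_j=\alpha$ is an ordinary loop of $Q$ at $v$.} Here we need $\alpha^2\in I^S$. If $v\in S$, then both $\alpha$ and $\ep_v$ start and end at $v$. By \textup{(G2)} these are the only arrows at $v$. By \textup{(G4)} at most one arrow can follow $\alpha$ outside $I^S$, and by \textup{(G3)} at most one can follow it inside. Since $\ep_v\alpha$ is not a relation (the relation partner of $\ep_v$ is $\ep_v$ itself), we get $\alpha\ep_v\notin I^S$ and $\ep_v\alpha\notin I^S$. Hence, since a successor in relation with $\alpha$ must be one of the two arrows at $v$, the relation successor of $\alpha$ is forced to be $\alpha$, i.e.\ $\alpha^2\in I^S$. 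If $v\notin S$, the successor $\tau_{j+1}$ starts at $v$ and satisfies $\alpha\tau_{j+1}\in I$. Either $\tau_{j+1}=\alpha$, or it is a second arrow $\beta$ leaving $v$. In the latter case \textup{(G2)} allows at most two arrows into $v$: $\alpha$ and one more, say $\gamma$. Following the cycle, $\tau$ must return to $v$ before $\alpha$ recurs, and the arrow preceding $\alpha$ in $\tau$ is in relation with $\alpha$. I then expect a counting argument via \textup{(G3)}/\textup{(G4)} on the local configuration to show that $\alpha\alpha\notin I$ would force $\alpha\beta,\gamma\alpha\in I$. That configuration is allowed, however, so the case $\tau_{j+1}\ne\alpha$ is the genuine difficulty. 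It must be handled using finite-dimensionality: the path $\alpha^k$ lies in $\mB$ for all $k$ unless $\alpha^2\in I$, so finite-dimensionality of $A$ forces $\alpha^2\in I^S$. Once $\alpha^2\in I^S$, \textup{(G3)} says $\alpha$ is the unique arrow following $\alpha$ in a relation, so $\tau_{j+1}=\alpha$ and, propagating, $\tau=\alpha^d$, again contradicting primitivity.

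The main obstacle is exactly this step: extracting $\alpha^2\in I^S$ for an ordinary loop from finite-dimensionality (no power $\alpha^k$ may survive in $\mB$). The special-vertex subcase must also be checked carefully against \textup{(G2)}–\textup{(G4)}.
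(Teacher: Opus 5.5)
Your proof is correct and, once the exploratory detour in Case~2 is set aside, it is the paper's argument: finite-dimensionality forces $\omega^2\in I^S$ for any loop $\omega$ (automatically so for special loops), and then \textup{(G3)} forces every neighbour of $\omega$ in the relation cycle to be $\omega$, so $\tau=\omega^d$, contradicting primitivity. Your case split into special loops and ordinary loops at special or non-special vertices is therefore unnecessary, and the step you flag as the ``main obstacle'' is already settled by your own one-line observation that $\omega^k\in\mB$ for every $k$ whenever $\omega^2\notin I^S$.
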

	
	\begin{proof}
		Suppose that $\tau_j=\omega$ is a loop. One must have $\omega^2\in I^S$:
		otherwise every positive power of $\omega$ would lie in $\mB$, contradicting
		the finite-dimensionality of $A$. Condition \textup{(G3)} then implies that
		any arrow forming a relation with $\omega$ on either side must itself be
		$\omega$. Hence every arrow on the relation cycle containing $\omega$ equals
		$\omega$, so the underlying primitive relation cycle has length one,
		contradicting $d>1$.
	\end{proof}
	
	\begin{lemma}
		\label{loop-cycle-unique}
		For every vertex $v\in Q_0^S$, there is at most one oriented cycle in $\mB$ based at $v$.
	\end{lemma}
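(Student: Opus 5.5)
The plan is to use the gentle conditions (G2)--(G4) for the pair $(Q^S,I^S)$ to show that paths in $\mB$ can be continued in at most one way.

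First, note that if a path $p=p_1\cdots p_k\in\mB$ has $k\geq1$, then by (G4) there is at most one arrow $\beta$ with $p_k\beta\notin I^S$. Hence any extension of $p$ to a longer path in $\mB$ is uniquely determined, arrow by arrow. In other words, for each arrow $\alpha$ there is at most one maximal path in $\mB$ beginning with $\alpha$, and every path in $\mB$ starting with $\alpha$ is an initial segment of it.

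Next, suppose $c$ and $c'$ are two oriented cycles in $\mB$ based at $v$. We interpret ``oriented cycle in $\mB$'' as a cycle $c=c_1\cdots c_d\in\mB$ with $s(c_1)=t(c_d)=v$, read up to the choice of cycle, and presumably not required to be primitive as a rotation. If $c_1=c'_1$, then by the uniqueness of continuation one is an initial segment of the other, say $c'=cq$ with $q$ nontrivial. Then $c'$ begins with $c$ followed by $c_1$ again, since $q$ starts at $v$ and the continuation after $c_d$ is forced. Because both $c c_1\cdots$ lies in $\mB$ and the continuation is deterministic, the path $c^N$ is an initial segment of arbitrarily long paths in $\mB$ as soon as $c_dc_1\notin I^S$.

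Here is the key step. Finite-dimensionality of $A$ forbids $c^2\in\mB$: otherwise all powers $c^N$ would be nonzero distinct basis paths. Special-loop reductions do not create basis paths, because $\ep_i^2\in I^S$ excludes $\ep_i^2$ from $\mB$. So if $c_dc_1\notin I^S$, then $c^2\in\mB$, which is a contradiction. Thus $c_dc_1\in I^S$, and no path in $\mB$ extends $c$ past $c_d$ with the arrow $c_1$. Since $q$ starts at $v$, it must start with an arrow $\beta\neq c_1$ with $c_d\beta\notin I^S$. I expect to carry this out using (G3)/(G4) at $c_d$: exactly one arrow $\beta$ follows $c_d$ outside $I^S$, and $c_1$ is the relation partner.

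There remains the case $c_1\neq c'_1$, where both start at $v$. By (G2), these are the only two arrows leaving $v$. The plan is to reach a contradiction by looking at the arrows ending at $v$. By (G4), $c_d$ has a unique non-relation successor, so $c_dc_1\in I^S$ or $c_dc'_1\in I^S$. The argument above shows $c_dc_1\in I^S$, and symmetrically $c'_{d'}c'_1\in I^S$.

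The main obstacle is that the literal statement may be intended as ``at most one cycle up to rotation through $v$,'' or ``at most one cycle starting with a given arrow.'' I would first try to prove that two distinct cycles at $v$ force two distinct arrows ending at $v$, namely $c_d\neq c'_{d'}$ via (G3). Together with the two distinct starting arrows, this would give four arrows at $v$. I would then derive a contradiction from (G4), which forces $c_dc'_1$ and $c'_{d'}c_1$ to be non-relations, producing the path $c\,c'$ in $\mB$. Iterating then gives arbitrarily long basis paths, namely all words in $c,c'$, contradicting finite-dimensionality. This ``two cycles glue into infinitely many basis paths'' argument is the step I would work out most carefully, especially checking that special loops, for which $\ep_i\in\mB$ but $\ep_i^2\notin\mB$, fit the same pattern.
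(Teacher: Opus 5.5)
You have a genuine gap in the case $c_1=c'_1$. Your case $c_1\neq c'_1$ is essentially the paper's argument and works once two citations are corrected.

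\textbf{The case $c_1\neq c'_1$.} Closing pairs lie in $I^S$ by finite-dimensionality, the cross pairs $c_dc'_1$ and $c'_{d'}c_1$ are non-relations, and so $(cc')^N\in\mB$ for every $N$. Two corrections are needed.
\begin{itemize}
\item The non-relation of the cross pairs comes from \textup{(G3)}: the unique relation successor of $c_d$ is $c_1\neq c'_1$. It does not come from \textup{(G4)}, which can only force relations.
\item Only alternating words such as $(cc')^N$ lie in $\mB$, not all words in $c,c'$ (indeed $c^2\notin\mB$). The alternating words suffice for the contradiction.
\end{itemize}

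\textbf{The case $c_1=c'_1$.} You correctly obtain $c'=cq$ with $q$ nontrivial, $c_dc_1\in I^S$, and $q$ starting with an arrow $\beta\neq c_1$ with $c_d\beta\notin I^S$. This refutes your earlier sentence that $c'$ continues with $c_1$ after $c$. But you never reach a contradiction, and the place you propose to look, the arrow $c_d$, gives none: $c_dc_1\in I^S$ together with $c_d\beta\notin I^S$ is exactly what \textup{(G3)} and \textup{(G4)} allow.

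The contradiction is at the other end of $q$. Since $q$ is a subpath of $c'\in\mB$ running from $v$ to $v$, it is itself an oriented cycle in $\mB$. Let $\delta$ be its last arrow; then its closing pair $\delta\beta$ lies in $I^S$. But $\delta$ is also the last arrow of $c'$, whose closing pair is $\delta c_1\in I^S$. Since $\beta\neq c_1$, the arrow $\delta$ has two distinct relation successors, contradicting \textup{(G3)}. This is how the paper closes this case.

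Alternatively, you can apply your distinct-first-arrow argument to the pair $(c,q)$, both cycles in $\mB$ at $v$ with different first arrows. For that you must prove that case first and state the reduction explicitly.

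Finally, your worry about interpreting the statement is unnecessary. An oriented cycle in $\mB$ based at $v$ is simply a nontrivial path in $\mB$ from $v$ to $v$, and that is how the paper uses it.
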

	
	\begin{proof}
		First observe that if $c=c_1\cdots c_m\in\mB$ is an oriented cycle, then $c_m c_1\in I^S$. Otherwise every positive power $c^N$ would lie in $\mB$, contradicting the finite-dimensionality of $A$.
		
		Suppose that $p$ and $q$ are distinct oriented cycles in $\mB$ based at
		$v$. If their first arrows are distinct, then their last arrows are also
		distinct: otherwise the common last arrow would form relations with two
		distinct following arrows, contrary to \textup{(G3)}. Since the closing
		pairs of $p$ and $q$ lie in $I^S$, condition \textup{(G3)} also shows that
		the two cross-boundary pairs do not lie in $I^S$. Hence every positive power
		of $pq$ lies in $\mB$, a contradiction.
		
		It remains to consider the case in which $p$ and $q$ have the same first
		arrow. Condition \textup{(G4)} forces the two paths to agree for as long as
		both continue without passing through a relation. Since they are distinct,
		after interchanging them if necessary we may write $q=pr$, where $r\in\mB$
		is a nontrivial oriented cycle based at $v$ and the boundary between $p$ and
		$r$ is not a relation. Let $\alpha$ be the common first arrow of $p$ and
		$q$, let $\beta$ be the first arrow of $r$, and let $\delta$ be its last
		arrow. Then $\beta\neq\alpha$, while both $\delta\alpha$ and
		$\delta\beta$ belong to $I^S$, because they are the closing pairs of $q$
		and $r$, respectively. This contradicts \textup{(G3)} directly.
	\end{proof}
	
	\begin{lemma}\label{gamma-cases}
		Let $\gamma=\gamma_1\cdots\gamma_n\in\Gamma_n$ with $n\geq2$. Then $\gamma$ contains a repeated arrow if and only if, after cyclically rotating the underlying primitive relation cycle so that its first arrow is $\gamma_1$, one of the following mutually exclusive cases occurs:
		\begin{enumerate}[label=\textup{(\arabic*)}]
			\item $\gamma=\omega^n$, where $\omega$ is a loop with $\omega^2\in I^S$;
			\item $\gamma=\tau^w$, where $\tau=\tau_1\cdots\tau_d$ is a primitive relation cycle of length $d>1$, $w\geq2$, and $n=wd$;
			\item $\gamma=\tau^w\tau_1\cdots\tau_r$, where $\tau=\tau_1\cdots\tau_d$ is a primitive relation cycle of length $d>1$, $w\geq1$, $1\leq r<d$, and $n=wd+r$.
		\end{enumerate}
	\end{lemma}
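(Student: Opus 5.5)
The key tool is condition \textup{(G3)}. Since each arrow has at most one successor in $I^S$, a relation concatenation $\gamma=\gamma_1\cdots\gamma_n\in\Gamma_n$ is determined by $\gamma_1$ and $n$: we have $\gamma_{i+1}$ equal to the unique arrow $\sigma(\gamma_i)$ with $\gamma_i\gamma_{i+1}\in I^S$. Similarly, each arrow has at most one predecessor in a relation. So $\sigma$ is a partial injective map on $Q_1^S$, and $\gamma$ is the orbit segment $\gamma_1,\sigma(\gamma_1),\dots,\sigma^{n-1}(\gamma_1)$.

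For the forward direction, suppose $\gamma$ has a repeated arrow, say $\gamma_i=\gamma_j$ with $i<j$ and $j-i$ minimal among repetitions. Injectivity of $\sigma$ lets us step backwards from $\gamma_i$ and $\gamma_j$, giving $\gamma_{i-1}=\gamma_{j-1}$ as long as $i>1$. Iterating, we may assume $i=1$, so $\gamma_1=\gamma_{d+1}$ for the least such $d=j-1$. Then $\tau=\gamma_1\cdots\gamma_d$ satisfies $\gamma_k\gamma_{k+1}\in I^S$ for $k<d$, and $\gamma_d\gamma_{d+1}=\tau_d\tau_1\in I^S$. Hence $\tau$ is a relation cycle. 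It is primitive because its arrows are pairwise distinct by minimality of $d$.

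Determinism of $\sigma$ then gives $\gamma_{k+d}=\gamma_k$ for all admissible $k$. Writing $n=wd+r$ with $0\le r<d$ and $w\ge1$, we get $\gamma=\tau^w\tau_1\cdots\tau_r$, with the convention that the rotation puts $\gamma_1$ first, which holds automatically here. The cases now split as follows.
\begin{itemize}
\item If $d=1$, then $\tau=\omega$ is a loop with $\omega^2\in I^S$, and this is case (1).
\item If $d>1$ and $r=0$, then $w\ge2$ because there is a repetition, and this is case (2).
\item If $d>1$ and $r>0$, this is case (3), with $w\ge1$.
\end{itemize}
By \Cref{cycle-not-has-loop}, when $d>1$ no arrow of $\tau$ is a loop, which is consistent with the separation of case (1). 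The cases are mutually exclusive because $d$ is an invariant of $\gamma$: it is the minimal period, equal to the length of the primitive cycle. Cases (2) and (3) are then distinguished by whether $d$ divides $n$.

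The converse is immediate. In each of the three forms some arrow appears twice: in case (1) because $n\ge2$, in case (2) because $w\ge2$, and in case (3) because $\tau_1$ occurs at positions $1$ and $d+1\le n$.

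The main obstacle is the backward-propagation step. It needs the injectivity half of \textup{(G3)} (uniqueness of $\gamma$ with $\gamma\alpha\in I^S$) to shift the repetition to start at $\gamma_1$. That is what guarantees that $\gamma$ begins at the start of the period rather than containing a non-periodic prefix.
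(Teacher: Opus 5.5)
Your argument is correct and follows the paper's proof. Condition \textup{(G3)} is applied in both directions to force $\gamma$ to run periodically along a relation cycle starting at $\gamma_1$, and Euclidean division of $n$ by the cycle length gives the three cases. The only organizational difference is that you choose a repetition of minimal gap, which gives pairwise distinct arrows of $\tau$, and hence primitivity, directly. The paper instead passes to the primitive root of $\gamma_i\cdots\gamma_{j-1}$ and proves distinctness from primitivity via \textup{(G3)}. That fact, that a primitive relation cycle has pairwise distinct arrows, is what your exclusivity remark tacitly uses when you identify the minimal period with $\ell(\tau)$ for an arbitrary primitive $\tau$ presenting $\gamma$.
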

	
	\begin{proof}
		Each of the three displayed forms contains a repeated arrow. Conversely, suppose that $\gamma_i=\gamma_j$ for some $1\leq i<j\leq n$. Since $\gamma\in\Gamma_n$, we have $\gamma_k\gamma_{k+1}\in I^S$ for $1\leq k<n$. In particular,
		$\gamma_{j-1}\gamma_i=\gamma_{j-1}\gamma_j\in I^S$. Hence
		$c=\gamma_i\gamma_{i+1}\cdots\gamma_{j-1}$
		is a relation cycle. Let $\tau$ be a primitive relation cycle and let $q\geq1$ be such that, up to cyclic rotation, $c=\tau^q$.
		
		By \textup{(G3)}, whenever two consecutive arrows form a relation, either
		arrow uniquely determines the other on the corresponding side. Consequently,
		once the cycle $\tau$ occurs inside $\gamma$, all arrows of $\gamma$ to its
		right and left are forced to continue periodically along $\tau$. After
		cyclically rotating $\tau$ so that $\tau_1=\gamma_1$ and writing
		$d=\ell(\tau)$, this periodicity is expressed by
		\[
		\gamma_j=\tau_{1+((j-1)\bmod d)}
		\quad(1\leq j\leq n).
		\]

		If $\ell(\tau)=1$, write $\tau=\omega$. Then $\omega$ is a loop and $\omega^2\in I^S$. The periodicity just established gives
		$\gamma=\omega^n$,
		which is case \textup{(1)}.
		
		Now suppose that $\ell(\tau)>1$. The arrows of $\tau$ are pairwise
		distinct. Indeed, if an arrow occurred twice in $\tau$, condition
		\textup{(G3)} would force the cyclic word $\tau$ to be periodic with a
		strictly smaller period. This would express $\tau$ as a proper power of a
		shorter relation cycle, contrary to its primitivity.
		
		By Euclidean division, there exist unique integers $w\geq0$ and $0\leq r<\ell(\tau)$ such that
		$n=w\ell(\tau)+r$. The preceding periodicity formula gives
		\[
		\gamma=
		\begin{cases}
			\tau^w, & \text{if } r=0,\\
			\tau^w\tau_1\cdots\tau_r, & \text{if } 1\leq r<\ell(\tau).
		\end{cases}
		\]
		
		If $r=0$, the assumption that $\gamma$ contains a repeated arrow forces $w\geq2$, because the arrows in a single copy of $\tau$ are pairwise distinct. Thus case \textup{(2)} occurs. If $1\leq r<\ell(\tau)$, the same assumption forces $w\geq1$, since for $w=0$ the path $\tau_1\cdots\tau_r$ would be a proper initial segment of $\tau$ and would again contain no repeated arrow. Hence case \textup{(3)} occurs.
	\end{proof}
	
	\begin{lemma}\label{basis-case}
		Let $c\in\mB$. Then every arrow occurs at most once in $c$.
	\end{lemma}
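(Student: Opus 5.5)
We argue by contradiction, following the pattern of \Cref{loop-cycle-unique}: a repeated arrow will produce an oriented cycle in $\mB$, and finite-dimensionality together with the gentle conditions will force arbitrarily long paths in $\mB$. Write $c=c_1\cdots c_m\in\mB$ and suppose $c_i=c_j$ for some $i<j$. Then $z=c_i c_{i+1}\cdots c_{j-1}$ is a subpath of $c$, hence lies in $\mB$. It is an oriented cycle, since $t(c_{j-1})=s(c_j)=s(c_i)$. Moreover, $z c_i = c_i\cdots c_{j-1}c_j$ is a subpath of $c$, so the closing pair $c_{j-1}c_i=c_{j-1}c_j$ is a consecutive pair of arrows inside a path of $\mB$. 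Therefore $c_{j-1}c_i\notin I^S$.

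Next we show that $z^N\in\mB$ for every $N\geq1$. Since $I^S$ consists only of paths of length two (condition (G1) for the gentle pair $(Q^S,I^S)$), a path lies in $\mB$ exactly when none of its consecutive arrow pairs lies in $I^S$. The consecutive pairs of $z^N$ are of two kinds: the internal pairs $c_kc_{k+1}$ with $i\leq k<j-1$, which are pairs of $c\in\mB$, and the junction pair $c_{j-1}c_i$, which we have just shown is not in $I^S$. Hence $z^N\in\mB$ for all $N$. These paths have pairwise distinct lengths, so their images in $A$ give infinitely many linearly independent basis elements. This contradicts the finite-dimensionality of $A$.

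The only point needing care is the special loop. A special loop $\ep_i$ is idempotent in $A$, so one might worry that powers collapse in $A$. However, $\ep_i^2\in I^S$, so no path of $\mB$ contains $\ep_i\ep_i$. If $c_i=\ep_i$ were repeated, the argument above would require $\ep_i\ep_i\notin I^S$ in the case $j=i+1$. This case is therefore excluded directly, and the general argument covers all other cases, since it never uses $\pi$ beyond the fact that paths of $\mB$ form a basis of $A$.

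The main (mild) obstacle is precisely this bookkeeping: membership in $\mB$ is a purely combinatorial, quadratic-monomial condition relative to $I^S$, even though the algebra relation is $\ep_i^2=\ep_i$. Once this is kept straight, the proof is a short pumping argument.
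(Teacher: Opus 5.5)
Your proof is correct and is essentially the paper's argument: the segment between the two occurrences is a cycle in $\mB$ whose closing pair already occurs in $c$, so every power of it lies in $\mB$, which contradicts finite-dimensionality. Your separate paragraph on special loops is harmless but redundant, because the closing-pair argument already handles the case $j=i+1$ uniformly.
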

	
	\begin{proof}
		Suppose that an arrow $\alpha$ occurs twice in $c$. The segment beginning with the first occurrence of $\alpha$ and ending immediately before the second is an oriented cycle in $\mB$. Its closing pair occurs as a subpath of $c$, so every positive power of this cycle again lies in $\mB$. These powers represent nonzero basis elements of arbitrarily large length, contradicting the finite-dimensionality of $A$.
	\end{proof}
	
	We now fix the notation and standing conventions used throughout the classification.
	
	\begin{enumerate}[label=\textup{(N\arabic*)}]
		\item The two elementary cochains are written as
		$f=\para{x}{a}\in B^m=\mK\Para{\Gamma_{m+1}}{\mB}$ and
		$g=\para{y}{b}\in B^n=\mK\Para{\Gamma_{n+1}}{\mB}$, where
		$m,n\geq-1$. The tensor $f\otimes g$ has degree pair $(m,n)$.
		The paths $x$ and $y$ are the supports of the cochains, while $a$ and $b$
		are their value paths in $\mB$.
		
		\item If a nontrivial path $c\in\mB$ is decomposed into arrows, we write
		\[
		c=c_1\cdots c_{\ell(c)}.
		\]
		In particular, whenever the value paths $a$ and $b$ are nontrivial, we use
		the fixed notation
		$a=\alpha_1\cdots\alpha_{\ell(a)}$ and $b=\beta_1\cdots\beta_{\ell(b)}$.
		If $c=e_i$ is a trivial path, then $\ell(c)=0$, and no arrow symbols are
		attached to $c$.
		
		\item Let $\alpha$ be an arrow, let $c\in\mB$ be parallel to $\alpha$, and let
		$d\in\mB$. Define the path-replacement operator by
		\[
		\Sub_\alpha^c(d)
		=
		\sum_{d=p\alpha q}\pi(pcq),
		\]
		where the sum is taken over all factorizations $d=p\alpha q$, equivalently,
		over all occurrences of $\alpha$ in $d$. The paths $p$ and $q$ may be
		trivial, and terms whose image under $\pi$ is zero are omitted.
		
		\item For an arrow $\alpha$ and a path $c\in\mB$, possibly trivial, define
		\[
		L_\alpha(c)=
		\begin{cases}
			p, & \text{if } c=p\alpha,\\
			0, & \text{otherwise},
		\end{cases}
		\qquad
		R_\alpha(c)=
		\begin{cases}
			q, & \text{if } c=\alpha q,\\
			0, & \text{otherwise.}
		\end{cases}
		\]
		Here again $p$ and $q$ may be trivial. In particular,
		$L_\alpha(e_i)=R_\alpha(e_i)=0$.
		
		\item For a statement $P$, let $\one_P$ denote its indicator, equal to $1$ if
		$P$ holds and to $0$ otherwise.
		
		\item For $N\geq1$, a relation concatenation is written as
		\[
		\gamma=\gamma_1\cdots\gamma_N\in\Gamma_N.
		\]
		An element of $\Gamma_0$ is written simply as $e_i$. In periodic cases,
		we use the notation $\omega^N$, $\tau^w$, and
		$\tau^w\tau_1\cdots\tau_r$, where $1\leq r<\ell(\tau)$, as in
		\Cref{gamma-cases}. Here $\omega$ is a loop satisfying $\omega^2\in I^S$ and
		\[
		\tau=\tau_1\cdots\tau_{\ell(\tau)}
		\]
		is a primitive relation cycle. The indices of the arrows of $\tau$ are read
		cyclically whenever necessary.
		
		\item The symbols $p$ and $q$ denote path factors attached on the left and on
		the right, respectively. They may be trivial. Any additional local condition
		on $p$ or $q$, such as being nontrivial or forming a boundary loop or cycle,
		is stated where it is used.
		
		\item In path-replacement arguments, $\lambda$ and $\mu$ denote the local left
		and right factors immediately adjacent to the arrow being replaced. Each is
		either the appropriate trivial path or an adjacent special loop; the precise
		admissibility conditions are specified in the relevant statements.
		
		\item Whenever a formal concatenation of paths occurs in a cochain value,
		an outer coefficient, or a path replacement, its image in $A$ is understood
		via the canonical projection $\pi:\mK Q^S\to A$. When needed, this image is
		expressed in the basis $\mB$; zero images are omitted.
		
		\item From this point onward, tensor-product subscripts are suppressed whenever
		they are determined by the context. Recall that
		\[
		E=\bigoplus_{i\in Q_0}\mK e_i,
		\]
		and that, for a right $E$-module $W$ and a left $E$-module $V$, there is a
		canonical identification of $\mK$-vector spaces
		\[
		W\otimes_E V
		\cong
		\bigoplus_{i\in Q_0}
		We_i\otimes_{\mK}e_iV.
		\]
		Thus an $E$-relative tensor may be regarded as a $\mK$-tensor whose adjacent
		components have matching vertex idempotents. In particular, tensors of paths
		occurring in the relative bar complex, the CS resolution, and the comparison
		maps are composable.
		
		Tensor products of cochains occurring as inputs of the $L_\infty$-operations
		are taken over $\mK$. Nevertheless, in the basis-level formulas considered
		below, only the composable path components selected by the supports and the
		comparison maps can contribute. We therefore write simply $\otimes$ for both
		tensor products from this point onward.
	\end{enumerate}
	
	With these preparations in place, we now begin the classification of the four
	types, starting with brackets having a shifted degree $-1$ input.
	
	%===========================
	\section{Brackets of type \texorpdfstring{$(-1,n)$}{(-1,n)}}
	\label{sec:degree-minus-one-classification}
	\label{subsec:degree-minus-one}
	
	This section gives a complete basis-level classification of brackets of type
	$(-1,n)$, that is,
	\[
	l_2:B^{-1}\otimes B^n\longrightarrow B^{n-1},\quad n\geq-1.
	\]
	Formula \eqref{eq:general-l2} applies to degree pairs $(m,n)$ with
	$m,n\geq0$.  Up to graded skew-symmetry, the only remaining type is
	$(-1,n)$ with $n\geq-1$, so the shifted degree $-1$ input is placed first
	throughout this section.
	
	A basis element of $B^{-1}$ has vertex support and value
	$a\in e_iAe_i\cap\mB$.  Accordingly, we distinguish whether $a$ is the
	trivial path, a loop, or a cycle in $\mB$ of length greater than one.  The
	last case is treated separately for $n=0$ and $n\geq1$.
	
	\begin{proposition}
		\label{prop:degree-minus-one-brackets}
		Let $f=\para{e_i}{a}\in B^{-1}$ and
		$g=\para{y}{b}\in B^n$, where $n\geq-1$ and $a\in e_iAe_i$ is a path in $\mB$ based at $i$. Then the following
		statements hold.
		
		\begin{enumerate}
			\item If $n=-1$, then $l_2(f\otimes g)=0$.

			\item If $n\geq0$ and $a=e_i$, then $l_2(f\otimes g)=0$.
			
			\item Suppose that $n\geq0$ and $a=\omega$ is a loop at $i$.
			A nonzero bracket is possible only when
			$
			y=\omega^{n+1}.
			$
			With the convention $\omega^0=e_i$, one has
			\[
			l_2(
			\para{e_i}{\omega}
			\otimes
			\para{\omega^{n+1}}{b}
			)
			=
			(-1)^{n+1}
			\sum_{j=0}^{n}
			(-1)^j
			\para{\omega^n}{b}
			=
			\begin{cases}
				-\para{\omega^n}{b},
				& \text{if } n\text{ is even},\\
				0,
				& \text{if } n\text{ is odd}.
			\end{cases}
			\]
			
			\item Suppose that
			$a=\alpha_1\cdots\alpha_{\ell(a)}$ with
			$\ell(a)>1$
			is a cycle in $\mB$ based at $i$.
			
			If $n=0$, then
			\[
			l_2(
			\para{e_i}{a}
			\otimes
			\para{y}{b}
			)
			=
			-\para{e_i}{\Sub_y^b(a)}.
			\]
			Consequently,
			\[
			l_2(
			\para{e_i}{a}
			\otimes
			\para{y}{b}
			)\neq0
			\Longleftrightarrow
			\Sub_y^b(a)\neq0,
			\]
			and the nonzero substitutions are classified by
			\Cref{lem:replacement-criterion}.
			
			If $n\geq1$, write $y=y_1\cdots y_{n+1}$.
			Then
			\begin{align}
				l_2(
				\para{e_i}{a}\otimes\para{y}{b}
				)
				={}&
				(-1)^{n+1}
				\one_{y_1=\alpha_{\ell(a)}}
				\para{y_2\cdots y_{n+1}}
				{\pi(L_{\alpha_{\ell(a)}}(a)b)}
				\notag\\
				&-
				\one_{y_{n+1}=\alpha_1}
				\para{y_1\cdots y_n}
				{\pi(bR_{\alpha_1}(a))}.
				\label{eq:degree-minus-one-cycle}
			\end{align}
			As usual, terms with zero values under $\pi$ are omitted.
			
			If both terms in \eqref{eq:degree-minus-one-cycle} are nonzero, their
			support paths are distinct. Consequently, the bracket is nonzero if
			and only if at least one of the following two pairs of conditions holds:
			\[
			y_1=\alpha_{\ell(a)},\qquad
			\pi(L_{\alpha_{\ell(a)}}(a)b)\neq0,
			\]
			or
			\[
			y_{n+1}=\alpha_1,\qquad
			\pi(bR_{\alpha_1}(a))\neq0.
			\]
		\end{enumerate}
		
		Brackets with degree pair $(n,-1)$ are obtained from those of type
		$(-1,n)$ by graded skew-symmetry:
		\[
		l_2(g\otimes f)
		=
		-(-1)^n l_2(f\otimes g).
		\]
	\end{proposition}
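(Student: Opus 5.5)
The plan is to compute directly from Theorem~\ref{thm:transferred-Linfty}, $l_2(f\otimes g)=F^*[G^*f,G^*g]$, and not from \eqref{eq:general-l2}, which only covers $m,n\geq0$. Since $G_0=\id$, the cochain $G^*f=G^*\para{e_i}{a}$ is the $0$-cochain $\varphi\in C^0(A)$ given by the element $a\in e_iAe_i$. For a $0$-cochain the insertion $\varphi\circ\psi$ is the empty sum. Hence, for $\psi=G^*g\in C^{n+1}(A)$,
\[
[\varphi,\psi]=-(-1)^{(0-1)(n)}\psi\circ\varphi=-(-1)^{n}\sum_{j=0}^{n}(-1)^{j}\,\psi\circ_j\varphi,
\]
where $\psi\circ_j\varphi$ inserts $\varpi(a)$ into the $j$th slot. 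Applying $F^*$ and evaluating on a generator $\gamma=\gamma_1\cdots\gamma_n\in\Gamma_n$, with $F_n$ replacing $\gamma$ by $\gamma_1\otimes\cdots\otimes\gamma_n$, the bracket becomes a signed sum over $j$ of
\[
\para{y}{b}\,G_{n+1}(\gamma_1\otimes\cdots\otimes\gamma_j\otimes\varpi(a)\otimes\gamma_{j+1}\otimes\cdots\otimes\gamma_n).
\]
Everything then reduces to reading off $G_{n+1}$ on these tensors. The sign bookkeeping, in particular matching the overall factor $(-1)^{n+1}$ in part~(3) and the relative sign $-1$ in \eqref{eq:degree-minus-one-cycle}, I would check at the end against the conventions of \eqref{eq:signed-Hochschild-differential}.

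Parts~(1) and~(2) are immediate. If $n=-1$, both $G^*f$ and $G^*g$ are $0$-cochains, so both insertion sums are empty. If $a=e_i$, then $\varpi(a)=0$ and every term vanishes.

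For part~(3), with $a=\omega$ an arrow, the tensor with $\omega$ in slot $j$ consists of $n+1$ arrows. The formula for $G_{n+1}$ (or for $G_1$ when $n=0$) returns $1\otimes\gamma_1\cdots\omega\cdots\gamma_n\otimes1$ exactly when this word lies in $\Gamma_{n+1}$, and pairing with $\para{y}{b}$ forces $y$ to be that word. The key step is to show this happens only for $y=\omega^{n+1}$. This should follow from (G3): since $\omega$ is a loop in a finite-dimensional algebra, $\omega^2\in I^S$ (as in the proof of Lemma~\ref{cycle-not-has-loop}), so the only arrow forming a relation with $\omega$ on either side is $\omega$ itself, and propagation forces $\gamma=\omega^n$. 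Every slot $j$ then contributes $\para{\omega^n}{b}$ with sign $(-1)^j$. Summing gives $\sum_{j=0}^n(-1)^j$, which is $1$ for $n$ even and $0$ for $n$ odd.

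Part~(4), where $\ell(a)>1$, is the main case.
\begin{itemize}
\item For $n=0$, $G_1(1\otimes a\otimes 1)=\sum_k \alpha_1\cdots\alpha_{k-1}\otimes\alpha_k\otimes\alpha_{k+1}\cdots$. Pairing with $\para{y}{b}$ keeps the occurrences $\alpha_k=y$ and yields $\pi(pbq)$, which is exactly $\Sub_y^b(a)$; the sign is $-1$.
\item For $n\geq1$, an interior slot $0<j<n$ places the non-arrow path $a$ in a middle position. The $G_{n+1}$ formula then requires $a$ to be a single arrow of a relation concatenation, so these terms vanish.
\item In slot $0$, write $a=L\alpha_{\ell(a)}$. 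Then $G_{n+1}$ gives $L\otimes\alpha_{\ell(a)}\gamma_1\cdots\gamma_n\otimes1$ when this word is in $\Gamma_{n+1}$. This forces $y_1=\alpha_{\ell(a)}$ and $\gamma=y_2\cdots y_{n+1}$, with value $\pi(L_{\alpha_{\ell(a)}}(a)b)$.
\item Symmetrically, slot $n$ gives $y_{n+1}=\alpha_1$ and value $\pi(bR_{\alpha_1}(a))$.
\end{itemize}

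The remaining claim is that the two supports are distinct. If $y_2\cdots y_{n+1}=y_1\cdots y_n$, then $y$ is the power of a single arrow, so $\alpha_{\ell(a)}=y_1=y_{n+1}=\alpha_1$. That is a repeated arrow in $a\in\mB$, because $\ell(a)>1$, contradicting Lemma~\ref{basis-case}. The nonvanishing criterion then follows, and the final $(n,-1)$ statement is just \eqref{sym} with $|f|=-1$.

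I expect the main obstacle to be special loops and idempotent absorption. For instance, when $\omega=\ep_i$, the values $\pi(\ep_i^r)=\ep_i$ and the boundary products $\pi(Lb)$ must be evaluated via $\pi$ rather than dropped. I would also need to check that the $G_{n+1}$ formula, stated for composable paths in $\mB^+$, applies verbatim to these inserted tensors.
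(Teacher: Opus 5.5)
Your proposal is correct and follows essentially the same route as the paper. Both compute $F^*[G^*f,G^*g]$, where only the insertion of $\varpi(a)$ into $G^*g$ survives, with sign $(-1)^{n+1}(-1)^j$ at slot $j$. Both then read off $G_{n+1}$ case by case: the loop case is forced to $y=\omega^{n+1}$ by \textup{(G3)}; for $\ell(a)>1$ the interior slots vanish and the two endpoint slots give the $L_{\alpha_{\ell(a)}}(a)$ and $R_{\alpha_1}(a)$ terms; and distinctness of the two supports comes from Lemma~\ref{basis-case}. The only cosmetic difference is in part~(1). You use that both insertion sums are empty for two $0$-cochains, while the paper notes that the output would lie in $B^{-2}=0$; both arguments are immediate.
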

	
	\begin{proof}
		The shifted complex
		$B=B_{\mathrm{CS}}^*(A)[1]$
		is concentrated in degrees at least $-1$. Since $l_2$ has degree zero,
		the case $n=-1$ takes values in $B^{-2}=0$, proving the first assertion.
		
		Assume now that $n\geq0$. Since $G^*f$ is a Hochschild $0$-cochain, the
		composition
		$G^*f\circ G^*g$
		vanishes.
		
		For $n=0$ and $\gamma\in\Gamma_0$, the remaining composition gives
		\[
		l_2(f\otimes g)(\gamma)
		=
		-\one_{\gamma=e_i}
		\para{y}{b}G_1(\varpi(a)).
		\]
		
		For $n\geq1$, let
		$\gamma=\gamma_1\cdots\gamma_n\in\Gamma_n$.
		Inserting the value $a$ at every occurrence of the vertex $i$ along
		$\gamma$ gives
		\begin{align}
			&l_2(f\otimes g)(\gamma)
			=
			(-1)^{n+1}
			\one_{e_{s(\gamma_1)}=e_i}
			\para{y}{b}G_{n+1}
			(
			\varpi(a)\otimes
			\gamma_1\otimes\cdots\otimes\gamma_n
			)
			\notag\\
			&+
			(-1)^{n+1}
			\sum_{\substack{1\leq j\leq n\\ e_{t(\gamma_j)}=e_i}}
			(-1)^j
			\para{y}{b}G_{n+1}
			(
			\gamma_1\otimes\cdots\otimes\gamma_j
			\otimes\varpi(a)
			\otimes\gamma_{j+1}\otimes\cdots\otimes\gamma_n
			).
			\label{eq:degree-minus-one-general}
		\end{align}
		
		If $a=e_i$, then $\varpi(a)=0$, so the second assertion follows.
		
		Suppose that $a=\omega$ is a loop. Finite-dimensionality implies
		$\omega^2\in I^S$.
		By the gentle uniqueness conditions, every relation concatenation
		containing $\omega$ is a pure power of $\omega$. Hence a nonzero term
		forces
		$y=\omega^{n+1}$.
		For $n\geq1$, the corresponding output support is
		$\gamma=\omega^n$.
		There are $n+1$ possible insertion positions, all producing the same
		value $b$, and their total coefficient is
		$(-1)^{n+1}\sum_{j=0}^{n}(-1)^j$.
		For $n=0$, the same formula follows from the preceding computation. This
		proves the third assertion.
		
		Finally, suppose that
		$a=\alpha_1\cdots\alpha_{\ell(a)}$ with $\ell(a)>1$ is a cycle in $\mB$. Its closing product satisfies
		$\alpha_{\ell(a)}\alpha_1\in I^S$;
		otherwise every positive power of $a$ would lie in $\mB$ and hence be nonzero in $A$,
		contradicting the finite-dimensionality of $A$.
		
		When $n=0$, the preceding evaluation on $\Gamma_0$ is governed by
		$G_1$. By \Cref{basis-case}, the arrow $y$ occurs in $a$ at most once.
		Expanding $G_1(\varpi(a))$ therefore gives precisely the path-replacement
		sum $\Sub_y^b(a)$. Taking account of the global negative sign yields
		\[
		l_2(
		\para{e_i}{a}\otimes\para{y}{b}
		)
		=
		-\para{e_i}{\Sub_y^b(a)}.
		\]
		
		Now let $n\geq1$. At an internal insertion position
		$1\leq j\leq n-1$,
		the path $a$ is an internal input of $G_{n+1}$. By the definition of
		the comparison map, every internal input contributing to an
		$(n+1)$-arrow relation concatenation must be a single arrow. Since
		$\ell(a)>1$, all internal insertions vanish.
		
		At the left endpoint, write
		$a=L_{\alpha_{\ell(a)}}(a)\alpha_{\ell(a)}$.
		After applying the cochain $\para{y}{b}$, the resulting term can be
		nonzero only when
		$y=\alpha_{\ell(a)}\gamma$.
		In this case its value under $\pi$ is
		$\pi(L_{\alpha_{\ell(a)}}(a)b)$,
		and its insertion sign is $(-1)^{n+1}$. This gives the first term in
		\eqref{eq:degree-minus-one-cycle}.
		
		At the right endpoint, write
		$a=\alpha_1R_{\alpha_1}(a)$.
		The resulting term can be nonzero only when
		$y=\gamma\alpha_1$.
		In this case its value under $\pi$ is
		$\pi(bR_{\alpha_1}(a))$.
		The insertion sign at $j=n$ is
		$(-1)^{n+1+n}=-1$,
		which gives the second term in
		\eqref{eq:degree-minus-one-cycle}.
		
		It remains to show that the two nonzero terms cannot cancel. If their
		support paths were equal, then
		$y_2\cdots y_{n+1} = y_1\cdots y_n$.
		Hence
		$y_1=y_2=\cdots=y_{n+1}$.
		If both terms are nonzero, their endpoint conditions then imply
		$\alpha_{\ell(a)} = y_1 = y_{n+1} = \alpha_1$.
		This contradicts \Cref{basis-case}, since the same arrow would occur
		twice in the path $a\in\mB$ and $\ell(a)>1$. Thus the two nonzero
		terms have distinct supports and cannot cancel.
	\end{proof}
	
	Thus \Cref{prop:degree-minus-one-brackets}, together with
	\Cref{lem:replacement-criterion} in the case $n=0$, gives the complete
	basis-level classification of brackets of type $(-1,n)$, $n\geq-1$, up to
	graded skew-symmetry.  A compact lookup table is given in
	\Cref{app:degree-minus-one-lookup}.
	
	We now turn to the remaining three types $(0,0)$, $(0,+)$, and $(+,+)$, for
	which both inputs have nonnegative shifted degree.
	%===========================
	\section{Brackets of type \texorpdfstring{$(0,0)$}{(0,0)}}\label{sec:degree-zero-classification}
	
	This section gives a complete basis-level classification of brackets of type
	$(0,0)$, that is,
	\[
	l_2:B^0\otimes B^0\longrightarrow B^0.
	\]
	We first establish a finite-dimensional criterion for when path replacement in
	$\mB$ can be nonzero and then analyze the cases $x=y$ and $x\neq y$.
	
	%===========================
	\subsection{General formula for brackets of type \texorpdfstring{$(0,0)$}{(0,0)}}
	
	\begin{proposition}\label{DZSF}
		For $x,y\in\Gamma_1$ and paths $a,b\in\mB$ parallel to $x,y$, respectively,
		\[
		l_2(\para{x}{a}\otimes \para{y}{b})
		=
		\para{y}{\Sub_x^a(b)}-
		\para{x}{\Sub_y^b(a)}.
		\]
		In particular, every nonzero summand is supported on $x$ or on $y$.
	\end{proposition}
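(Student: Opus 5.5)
We specialize the general formula \eqref{eq:general-l2} to $m=n=0$. Then each of the two sums has a single term ($i=0$ and $j=0$), every sign $(-1)^{i(n+2)}$, $(-1)^{mn}$, $(-1)^{j(m+2)}$ equals $1$, and the output lies in $B^0=\mK\Para{\Gamma_1}{\mB}$. So it suffices to evaluate, for each arrow $\gamma\in\Gamma_1=Q_1^S$,
\[
l_2(f\otimes g)(\gamma)
=\para{x}{a}G_1\bigl(\varpi(\para{y}{b}(\gamma))\bigr)
-\para{y}{b}G_1\bigl(\varpi(\para{x}{a}(\gamma))\bigr).
\]

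We first treat the first term. The inner value $\para{y}{b}(\gamma)$ is $\one_{\gamma=y}\,b$, so this term is supported on $\gamma=y$. If $b$ is trivial, then $\varpi(b)=0$, and also $\Sub_x^a(b)=0$ because a trivial path has no occurrence of $x$; hence the two sides agree. If $b=\beta_1\cdots\beta_{\ell(b)}$ is nontrivial, then $\varpi(b)=b$, and the formula for $G_1$ gives
\[
G_1(b)=\sum_{k}\beta_1\cdots\beta_{k-1}\otimes\beta_k\otimes\beta_{k+1}\cdots\beta_{\ell(b)}.
\]
Applying the $A^e$-linear map $\para{x}{a}$ kills every summand with $\beta_k\neq x$. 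A summand with $\beta_k=x$ is sent to $\pi(\beta_1\cdots\beta_{k-1}\,a\,\beta_{k+1}\cdots\beta_{\ell(b)})$. These are exactly the terms of $\Sub_x^a(b)$, indexed by the factorizations $b=p\,x\,q$. Here all outer coefficients and the value are evaluated in $A$ via $\pi$, in accordance with (N3) and (N9). This yields $\para{y}{\Sub_x^a(b)}$.

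The second term is handled by the same computation with the roles of $(x,a)$ and $(y,b)$ interchanged, and it yields $\para{x}{\Sub_y^b(a)}$ with the minus sign. Combining the two terms gives the formula. The supports of the summands are visibly $x$ or $y$.

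The only point requiring care is the identification of the outer products with the $\pi$-images in the definition of $\Sub$, including the possible special-loop idempotent reductions at the joins; this is built into (N3). One should also check the degenerate case $\ell(b)=1$, where $b=x$ and $G_1(b)=e\otimes x\otimes e$, and this agrees with $\Sub_x^a(x)=a$. By \Cref{basis-case} each arrow occurs at most once in $b$, so in fact there is at most one summand, although the formula does not need this.
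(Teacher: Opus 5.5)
Your proposal is correct and follows essentially the same route as the paper. You specialize \eqref{eq:general-l2} to $m=n=0$, observe that each insertion term is supported on a single arrow, and identify $\para{x}{a}G_1(\varpi(b))$ with the path-replacement sum $\Sub_x^a(b)$ via the explicit formula for $G_1$. Computing the two insertion terms as separate cochains also covers the case $x=y$ without the separate remark the paper needs in its pointwise evaluation at $\gamma=x$ and $\gamma=y$.
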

	
	\begin{proof}
		Let $\gamma\in\Gamma_1$.  By \Cref{eq:general-l2},
		\[
		l_2(\para{x}{a}\otimes \para{y}{b})(\gamma)
		=
		\para{x}{a}G_1(\varpi(\para{y}{b}(\gamma)))
		-
		\para{y}{b}G_1(\varpi(\para{x}{a}(\gamma))).
		\]
		If $\gamma\neq x,y$, both terms vanish.  At $\gamma=y$, the first term is
		$\para{x}{a}G_1(\varpi(b))$, which is exactly the sum obtained by replacing every
		occurrence of $x$ in $b$ by $a$, namely $\Sub_x^a(b)$.  The second term
		contributes at $\gamma=y$ only when $x=y$, in which case it is already included
		in $-\para{x}{\Sub_y^b(a)}$.  The evaluation at $\gamma=x$ is symmetric and
		gives the second substitution term.  This proves the formula and the support
		claim.
	\end{proof}
	
	\begin{corollary}\label{00-x-equal-to-a}
		Let $\para{x}{a},\para{y}{b}\in B^0$.  If $a=x$, then
		\[
		l_2(\para{x}{x}\otimes\para{y}{b})
		=
		\begin{cases}
			(\num{x}{b}-1)\para{y}{b}, & \text{if } x=y,\\
			\num{x}{b}\para{y}{b}, & \text{if } x\neq y.
		\end{cases}
		\]
		If $b=y$, then
		\[
		l_2(\para{x}{a}\otimes\para{y}{y})
		=
		\begin{cases}
			(1-\num{y}{a})\para{x}{a}, & \text{if } x=y,\\
			-\num{y}{a}\para{x}{a}, & \text{if } x\neq y.
		\end{cases}
		\]
		In particular, these brackets are nonzero precisely when the displayed
		counting coefficients give a nonzero linear combination.
	\end{corollary}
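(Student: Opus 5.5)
The plan is to read \Cref{00-x-equal-to-a} off \Cref{DZSF}, computing both substitution terms by hand. The key observation is that when the value path coincides with the support arrow, the substitution operator acts trivially on each occurrence. It is enough to prove the first statement, where $a=x$. The second statement, where $b=y$, then follows from graded skew-symmetry \eqref{sym} in degrees $(0,0)$, which gives $l_2(\para{x}{a}\otimes\para{y}{y})=-l_2(\para{y}{y}\otimes\para{x}{a})$. Applying the first case with the roles of $x$ and $y$ interchanged yields $-(\num{y}{a}-1)\para{x}{a}$ or $-\num{y}{a}\para{x}{a}$, as claimed.

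For $a=x$, \Cref{DZSF} gives $l_2(\para{x}{x}\otimes\para{y}{b})=\para{y}{\Sub_x^x(b)}-\para{x}{\Sub_y^b(x)}$, and I will evaluate the two terms in turn.

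For the first term, $\Sub_x^x(b)=\sum_{b=pxq}\pi(pxq)$. Every summand is $\pi(b)=b$, because $b\in\mB$ and no special-loop reduction can occur: the path is unchanged. There is one summand per occurrence of $x$ in $b$, so $\Sub_x^x(b)=\num{x}{b}\,b$.

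For the second term, $\Sub_y^b(x)$ is a sum over factorizations $x=pyq$ of the single arrow $x$. Such a factorization exists only when $x=y$ with $p,q$ trivial, and then $\Sub_y^b(x)=b$. If $x\ne y$ the sum is empty and the term vanishes.

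Assembling the two terms gives $(\num{x}{b}-1)\para{y}{b}$ when $x=y$, using $\para{x}{b}=\para{y}{b}$ in that case, and $\num{x}{b}\para{y}{b}$ otherwise. The concluding remark about nonvanishing is immediate, since $\para{y}{b}$ is a basis element of $\mK\Para{\Gamma_1}{\mB}$.

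The only point needing care is the claim that $\pi(pxq)=b$ with no idempotent reduction. This holds because $pxq$ is literally the path $b\in\mB$, which contains no subpath in $I^S$ and in particular no $\ep_i^2$. One should also check that each factorization contributes exactly once, even though \Cref{basis-case} already forces $\num{x}{b}\le 1$. I expect no genuine obstacle; the main risk is a sign error in the skew-symmetry step, which I will double-check against the explicit formula of \Cref{DZSF} directly as well.
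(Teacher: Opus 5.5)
Your proposal is correct and follows essentially the same route as the paper. Both read the formula off \Cref{DZSF} using $\Sub_x^x(b)=\num{x}{b}\,b$ and $\Sub_y^b(x)=\one_{x=y}\,b$, and both obtain the case $b=y$ from graded skew-symmetry \eqref{sym}; the paper also notes the equivalent direct computation via \Cref{DZSF}, which is the cross-check you propose.
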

	
	\begin{proof}
		If $a=x$, then replacing an occurrence of $x$ in $b$ by $x$ leaves $b$
		unchanged, so $\Sub_x^x(b)=\num{x}{b}b$.  Since $x$ and $y$ are arrows,
		$\num{y}{x}=1$ if $x=y$ and $\num{y}{x}=0$ otherwise.  The first formula
		therefore follows from \Cref{DZSF}.
		
		The case $b=y$ is obtained by interchanging the two inputs and using
		\eqref{sym}.  Equivalently, applying \Cref{DZSF} directly gives
		$\Sub_x^a(y)=\num{x}{y}a$ and $\Sub_y^y(a)=\num{y}{a}a$,
		which yields the stated formula.
	\end{proof}
	
	\begin{remark}
		The nonvanishing in \Cref{00-x-equal-to-a} is determined entirely by the
		occurrence numbers in the displayed formulas.
	\end{remark}
	
	\begin{lemma}\label{x-is-loop}
		For any $\para{x}{a}\in B^0$, if $x$ is a loop, then
		$a=x$ or $a=e_{s(x)}$.  If $a$ is a loop, then $x=a$.
	\end{lemma}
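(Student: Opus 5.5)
Both assertions follow from the parallelism condition $a\parallel x$ built into $\Para{\Gamma_1}{\mB}$, together with finite-dimensionality and the gentle conditions, in particular \Cref{loop-cycle-unique}.

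For the first assertion, let $x$ be a loop at $v=s(x)=t(x)$. Since $\para{x}{a}\in\Para{\Gamma_1}{\mB}$, the value $a\in\mB$ is parallel to $x$. So $a$ is either the trivial path $e_v$ or an oriented cycle in $\mB$ based at $v$. Next, $x^2\in I^S$: otherwise every power $x^N$ would avoid $I^S$ and lie in $\mB$, contradicting finite-dimensionality. This is the same argument used in \Cref{cycle-not-has-loop}. Hence $x$ itself is an oriented cycle in $\mB$ based at $v$. By \Cref{loop-cycle-unique} there is at most one oriented cycle in $\mB$ based at $v$. Therefore any nontrivial $a$ must equal $x$, and $a\in\{x,e_v\}$.

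For the second assertion, let $a$ be a loop at $v$. Then $x\in\Gamma_1=Q^S_1$ is an arrow parallel to $a$, so $x$ is also a loop at $v$. Moreover, $x$ is nonzero in $A$, and a single arrow contains no element of $I^S$, so $x\in\mB$. Thus $x$ and $a$ are both oriented cycles in $\mB$ based at $v$, and \Cref{loop-cycle-unique} gives $x=a$.

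The main point needing care is that \Cref{loop-cycle-unique} is stated for oriented cycles in $\mB$ without a length restriction. Loops are length-one cycles, and its proof treats them correctly: the closing pair $c_mc_1$ becomes $\omega\omega=\omega^2\in I^S$. The case of a special loop $\ep_i$ also needs a check. Here $\ep_i^2\in I^S$ by definition, so $\ep_i\in\mB$ is a cycle. Its idempotent behaviour $\pi(\ep_i^2)=\ep_i$ plays no role, since the lemma only concerns paths in $\mB$ and the relation set $I^S$. With these points checked, no further computation is needed.
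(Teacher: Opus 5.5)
Your proof is correct and follows the paper's argument: parallelism forces $a$ to be either trivial or a cycle in $\mB$ at $s(x)$, and \Cref{loop-cycle-unique} identifies any nontrivial such cycle with $x$. The only difference is that the paper derives the second assertion from the first rather than reapplying the lemma directly, and your detour through $x^2\in I^S$ is unnecessary, since any single arrow already lies in $\mB$ because $I^S$ consists of paths of length two.
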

	
	\begin{proof}
		Let $i=s(x)=t(x)$.  If $x$ is a loop and $a$ is trivial, then necessarily
		$a=e_i$.  Suppose that $a$ is nontrivial.  Since $x,a\in\mB$ are both
		oriented cycles based at $i$, \Cref{loop-cycle-unique} gives $a=x$.  This
		proves the first assertion.
		
		If $a$ is a loop, then $a\parallel x$ implies that $x$ is also a loop at the
		same vertex.  Applying the first assertion to $\para{x}{a}$ gives
		$a=x$ or $a=e_{s(x)}$.  Since $a$ is an arrow, it is nontrivial, and hence
		$a=x$.
	\end{proof}
	
	We shall use the following elementary criterion repeatedly.  It is where the
	finite-dimensional hypothesis enters the analysis of type $(0,0)$.
	
	\begin{lemma}\label{lem:replacement-criterion}
		Let $b=\beta_1\cdots\bet\in\mB$, let $x\in\Gamma_1$, and let
		$a\in\mB$ be parallel to $x$.  If $\ell(b)=1$, then
		\[
		\Sub_x^a(b)=
		\begin{cases}
			a, & \text{if } x=b,\\
			0, & \text{if } x\neq b.
		\end{cases}
		\]
		
		Assume now that $\ell(b)>1$.  If $x$ does not occur in $b$, then
		$\Sub_x^a(b)=0$.  Otherwise let $i_0$ be the unique index such that
		$x=\beta_{i_0}$.  Define
		\[
		\mL_{i_0}=
		\{e_{s(x)}\}
		\cup
		\{\beta_{i_0-1} \mid i_0>1,\ \beta_{i_0-1}\in\Sp\},
		\]
		and
		\[
		\mR_{i_0}=
		\{e_{t(x)}\}
		\cup
		\{\beta_{i_0+1} \mid i_0<\ell(b),\ \beta_{i_0+1}\in\Sp\}.
		\]
		Every nontrivial element of $\mL_{i_0}$ or $\mR_{i_0}$ is therefore a
		special loop and is idempotent in $A$.
		
		Then $\Sub_x^a(b)$ is given as follows.
		
		\begin{enumerate}
			\item If $1<i_0<\ell(b)$, then
			\[
			\Sub_x^a(b)=
			\begin{cases}
				b, & \text{if } a=\lambda x\mu\text{ for some }
				\lambda\in\mL_{i_0},\ \mu\in\mR_{i_0},\\
				0, & \text{otherwise.}
			\end{cases}
			\]
			
			\item If $i_0=1$, then
			\[
			\Sub_x^a(b)=
			\begin{cases}
				\beta_2\cdots\bet,
				& \text{if } x=\beta_1\text{ is a loop and }a=e_{s(x)},\\
				\pi(pb),
				& \text{if } a=px\mu\text{ for some path }p
				\text{ based at }s(x)\text{ and }\mu\in\mR_1,\\
				0, & \text{otherwise.}
			\end{cases}
			\]
			The path $p$ may be trivial.
			
			\item If $i_0=\ell(b)$, then
			\[
			\Sub_x^a(b)=
			\begin{cases}
				\beta_1\cdots\beta_{\ell(b)-1},
				& \text{if } x=\bet\text{ is a loop and }a=e_{s(x)},\\
				\pi(bq),
				& \text{if } a=\lambda xq\text{ for some path }q
				\text{ based at }t(x)\text{ and }\lambda\in\mL_{\ell(b)},\\
				0, & \text{otherwise.}
			\end{cases}
			\]
			The path $q$ may be trivial.
		\end{enumerate}
	\end{lemma}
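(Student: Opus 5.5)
The plan is to compute $\Sub_x^a(b)$ directly from its definition in (N3) and check which terms survive under $\pi$.

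\emph{Reduction to one occurrence.} By \Cref{basis-case}, the arrow $x$ occurs at most once in $b$. So the sum defining $\Sub_x^a(b)$ is either empty, giving zero, or has the single term $\pi(pa q)$ with $b=p x q$, $p=\beta_1\cdots\beta_{i_0-1}$ and $q=\beta_{i_0+1}\cdots\beta_{\ell(b)}$. The case $\ell(b)=1$ is immediate: $p$ and $q$ are trivial, so the term is $a$ if $x=b$ and the sum is empty otherwise. Everything therefore reduces to deciding when $\pi(paq)\neq0$ and identifying this element. Note that $p$, $a$, $q$ all lie in $\mB$ and $\ep_i^2$ reduces to $\ep_i$. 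Hence $\pi(paq)$ is nonzero iff every reduction at the two junctions $p|a$ and $a|q$ is either a nonrelation or a special-loop reduction $\ep_i\ep_i\mapsto\ep_i$, and the reduced word avoids $I$. The reduced word is then again a single basis path.

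\emph{Junction analysis.} Take the interior case $1<i_0<\ell(b)$, so $\beta_{i_0-1}$ and $\beta_{i_0+1}$ exist. Since $\beta_{i_0-1}x\notin I^S$, condition (G4) says $\beta_{i_0-1}$ has at most one non-relation successor, namely $x$. If $a$ is trivial, then $\beta_{i_0-1}\beta_{i_0+1}$ must be a non-relation, which forces $\beta_{i_0+1}=x$. This is impossible unless $x$ is a loop, and then $x$ repeats in $b$, contradicting \Cref{basis-case}. So $a$ is nontrivial. Its first arrow $a_1$ must satisfy $\beta_{i_0-1}a_1\notin I^S$, or else $a_1=\beta_{i_0-1}$ is a special loop that gets absorbed.
\begin{itemize}
\item In the first case (G4) gives $a_1=x$.
\item In the absorbed case the next arrow is again constrained, since $\ep\ep\in I^S$ and any other successor $c$ has $\ep c\notin I^S$ unique. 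By (G4) and the fact that the successor of the special loop inside $b$ is $x$, this forces $a=\ep x\cdots$.
\end{itemize}
Symmetrically $a$ ends with $x$ or with $x\mu$ where $\mu=\beta_{i_0+1}\in\Sp$. Now \Cref{basis-case} on $a$, together with \Cref{cycle-not-has-loop} and \Cref{loop-cycle-unique} to exclude longer middles, gives $a=\lambda x\mu$ with $\lambda\in\mL_{i_0}$ and $\mu\in\mR_{i_0}$. Conversely, for such $a$ idempotent absorption gives $\pi(paq)=b$.

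\emph{Endpoint cases.} For $i_0=1$ there is no left constraint, so $a=p'x\mu$ with $p'$ arbitrary and the value is $\pi(p'b)$. The only extra possibility is trivial $a$. Then $\pi(q)$ is nonzero automatically, but parallelism $a\parallel x$ with $a=e_{s(x)}$ forces $x$ to be a loop, giving $\beta_2\cdots\beta_{\ell(b)}$. The right analysis is the same as above. The case $i_0=\ell(b)$ is the mirror image.

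\emph{Expected difficulty.} The main obstacle is the rigorous exclusion of exotic $a$ in the interior case, for example $a$ that starts or ends with longer detours. I would handle this by induction along the arrows of $a$ using (G3) and (G4) at each junction, with finite-dimensionality (via \Cref{basis-case}) ruling out returns to $x$.
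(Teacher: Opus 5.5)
Your proposal is correct and follows essentially the same route as the paper's proof. You reduce to the single occurrence of $x$ via \Cref{basis-case}, analyze the two new junctions with \textup{(G4)} (special-loop squares being the only relations that survive), and handle the endpoint cases by dropping the absent constraint.

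There are two small cases you skip. One is that $\beta_{i_0-1}\beta_{i_0+1}$ might equal $\ep^2$ when $a$ is trivial; the other is $a=\beta_{i_0-1}$ in the absorbed case. The paper dismisses both in one line each via \Cref{basis-case} and \Cref{x-is-loop}. Also, \Cref{basis-case} alone (the unique occurrence of $x$ in $a$) already forces $a=\lambda x\mu$, so you need neither the induction you anticipate nor \Cref{cycle-not-has-loop,loop-cycle-unique}.
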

	
	\begin{proof}
		By the formula for $G_1$,
		\[
		G_1(\varpi(b))
		=
		\sum_{i=1}^{\ell(b)}
		\beta_1\cdots\beta_{i-1}\otimes \beta_i
		\otimes \beta_{i+1}\cdots\bet.
		\]
		Hence
		\[
		\Sub_x^a(b)
		=
		\sum_{i=1}^{\ell(b)}
		\beta_1\cdots\beta_{i-1} \para{x}{a}(\beta_i)
		\beta_{i+1}\cdots\bet.
		\]
		A summand can be nonzero only if $x=\beta_i$.  By \Cref{basis-case},
		$b$ has no repeated arrows, so there is at most one such index.
		
		If $\ell(b)=1$, the displayed formula gives $\Sub_x^a(b)=a$ when
		$x=b$, and zero otherwise.  Thus assume $\ell(b)>1$, and let $i_0$ be
		the unique index with $x=\beta_{i_0}$.
		
		The only possible obstructions to the nonvanishing of
		\[
		\beta_1\cdots\beta_{i_0-1} a
		\beta_{i_0+1}\cdots\bet
		\]
		occur at the new interfaces created by the replacement.
		A monomial relation in $I^S$ kills the product, except when the interface
		is a square $\varepsilon^2$ of a special loop, which reduces in $A$ to
		$\varepsilon$.
		
		Suppose first that $1<i_0<\ell(b)$.  If $a=e_{s(x)}$, then $x$ is a
		loop, and deleting it joins $\beta_{i_0-1}$ directly to $\beta_{i_0+1}$.
		Since $\beta_{i_0-1}x\notin I^S$ and $\beta_{i_0+1}\neq x$,
		condition \textup{(G4)} forces
		$\beta_{i_0-1}\beta_{i_0+1}\in I^S$.
		By \Cref{basis-case}, these two arrows are distinct, so this relation
		is not a special-loop square.  The replacement is therefore zero in $A$.
		
		Now let $a$ be nontrivial and suppose that the replacement is nonzero.
		Since $\beta_{i_0-1}\beta_{i_0}$ is a subpath in $\mB$ of $b$, the gentle
		uniqueness conditions imply that the first arrow of $a$ must be
		$\beta_{i_0}$, unless $\beta_{i_0-1}$ is a special loop and the left
		interface is the idempotent square $\beta_{i_0-1}^2$.  Similarly, from
		the subpath in $\mB$ $\beta_{i_0}\beta_{i_0+1}$, the last arrow of $a$
		must be $\beta_{i_0}$, unless $\beta_{i_0+1}$ is a special loop and the
		right interface is $\beta_{i_0+1}^2$.
		In the left idempotent-square alternative, $a\parallel x$ and
		\Cref{x-is-loop,basis-case} exclude $a=\beta_{i_0-1}$; applying
		\textup{(G4)} to the next arrow of $a$ forces it to be $x$.
		The right alternative is symmetric.
		Since $a\in\mB$ and has no repeated arrows by \Cref{basis-case},
		these possibilities force $a=\lambda\beta_{i_0}\mu$
		with $\lambda\in\mL_{i_0}$ and $\mu\in\mR_{i_0}$.  Conversely, every
		such $a$ reconstructs $b$ after reducing the possible idempotent squares.
		This proves the interior case.
		
		Now let $i_0=1$.  If $a=e_{s(x)}$, then $a\parallel x$ forces $x$ to be
		a loop, and the replacement gives $\beta_2\cdots\bet$.  Otherwise the
		right interface with $\beta_2$ must be nonzero.  The same gentle
		uniqueness argument shows that the terminal part of $a$ is either
		$x=\beta_1$ or $\beta_1\beta_2$ with $\beta_2$ a special loop.  Since
		$a\parallel x$, the part of $a$ preceding $x$ is a path $p$ based at
		$s(x)$, possibly trivial.  Hence
		$a=px\mu$, where $\mu\in\mR_1$,
		and the replacement is $\pi(pb)$.  No other right interface can survive.
		
		The case $i_0=\ell(b)$ is left-right symmetric.  If $a=e_{s(x)}$, one
		obtains the right endpoint loop deletion.  Otherwise
		$a=\lambda xq$, $\lambda\in\mL_{\ell(b)}$, where $q$ is a path based at $t(x)$, possibly trivial, and the replacement
		is $\pi(bq)$.  This exhausts all possibilities.
	\end{proof}
	
	\begin{remark}
		We shall use three of the alternatives in \Cref{lem:replacement-criterion}
		repeatedly below.  Replacing an endpoint loop by the trivial path will be
		referred to as loop deletion; the two endpoint alternatives will be called
		left and right endpoint extensions; and a replacement in which an adjacent
		special loop is inserted and then absorbed by $\varepsilon^2=\varepsilon$
		will be referred to as idempotent absorption.  These terms describe only the
		corresponding local replacement mechanisms.
	\end{remark}
	
	The rest of the section proves the following classification statement.
	
	\begin{proposition}\label{prop:degree-zero-classification}
		Let $f=\para{x}{a}\in B^0$ and $g=\para{y}{b}\in B^0$.  Up to the
		skew-symmetry relation \eqref{sym}, the nonzero brackets of type $(0,0)$ are
		exactly the following:
		\begin{enumerate}
			\item the cases in \Cref{00-x-equal-to-a} for which the displayed
			counting coefficient is nonzero in $\mK$;
			\item the remaining cases displayed in
			\eqref{eq:xneqy-left-loop-deletion}--\eqref{eq:xneqy-idempotent-absorption},
			subject to the hypotheses stated there.
		\end{enumerate}
		No other bracket of type $(0,0)$ occurs.
	\end{proposition}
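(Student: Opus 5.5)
The plan starts from the general formula of \Cref{DZSF}, namely $l_2(\para{x}{a}\otimes\para{y}{b})=\para{y}{\Sub_x^a(b)}-\para{x}{\Sub_y^b(a)}$. This reduces the whole classification to deciding when the two path replacements $\Sub_x^a(b)$ and $\Sub_y^b(a)$ are nonzero, and whether they can cancel. Each replacement is completely described by \Cref{lem:replacement-criterion}. I would split first according to whether $a=x$ or $b=y$.

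If $a=x$ or $b=y$, then \Cref{00-x-equal-to-a} already gives the bracket as an occurrence count times a single basis cochain. This yields item (1) directly. By \eqref{sym} we may exchange the inputs, so from now on assume $a\neq x$ and $b\neq y$.

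Next I would treat the case $x=y$. Both terms then live on the same support, and the bracket is $\para{x}{\Sub_x^a(b)-\Sub_x^b(a)}$. By \Cref{basis-case}, $x$ occurs at most once in each of $a$ and $b$. I would run through the alternatives of \Cref{lem:replacement-criterion} for both substitutions at once:
\begin{itemize}
\item If $x$ is a loop, then \Cref{x-is-loop} forces $a,b\in\{x,e_{s(x)}\}$, so $a=b=e_{s(x)}$ and both terms vanish.
\item If $x$ is not a loop, a nonzero $\Sub_x^a(b)$ forces $b$ to contain $x$ and $a$ to have the form $\lambda x\mu$, $px\mu$ or $\lambda xq$. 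A nonzero $\Sub_x^b(a)$ forces the symmetric shape.
\end{itemize}
Comparing lengths, I expect either $a=b$, in which case the two terms cancel, or a contradiction with \Cref{loop-cycle-unique}/\Cref{basis-case}. The delicate point is the special-loop padding $\lambda,\mu$: when $a=\varepsilon x$ and $b=x$, the excluded case $b=y$ reappears.

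For $x\neq y$ the two terms have distinct supports, so the bracket is nonzero if and only if one of the replacements is nonzero. Applying \Cref{lem:replacement-criterion} to $\Sub_x^a(b)$, and to $\Sub_y^b(a)$ by symmetry, gives exactly four mechanisms: left loop deletion, right loop deletion, left or right endpoint extension, and interior idempotent absorption. These are the displayed cases \eqref{eq:xneqy-left-loop-deletion}--\eqref{eq:xneqy-idempotent-absorption}. For each mechanism I would record the hypothesis on $(x,a,y,b)$ and simplify it using gentle uniqueness:
\begin{itemize}
\item For loop deletion, $x$ is a loop with $a=e_{s(x)}$, and $x$ is the first (respectively last) arrow of $b$.
\item For endpoint extension, $x$ is an end arrow of $b$ and $a=px\mu$. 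Here I would check via \Cref{x-is-loop} which combinations with the padding $\mu\in\mR_1$ survive $\pi$.
\item For idempotent absorption, $a=\lambda x\mu$ with at least one special loop $\lambda$ or $\mu$, and $b$ has interior occurrence of $x$ flanked by the same special loops. The result is then $b$ itself.
\end{itemize}
Finally I would argue that when both substitutions are nonzero simultaneously, the answer is just the sum of the two listed contributions. Completeness then follows because \Cref{lem:replacement-criterion} lists all nonzero alternatives.

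I expect the main obstacle to be the $x=y$ subcase with special-loop padding, where idempotent reductions could in principle produce non-cancelling terms. Here I would first try to show that $\Sub_x^a(b)\neq0\neq\Sub_x^b(a)$ forces $\ell(a)=\ell(b)$ up to absorbed special loops, and hence $\pi$-equal values.
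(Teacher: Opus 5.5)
Your overall skeleton (\Cref{DZSF} plus \Cref{lem:replacement-criterion}, split into $x=y$ and $x\neq y$) is the paper's. The conclusion you aim for in the subcase $x=y$, $a\neq x\neq b$ (no new brackets) is also correct. However, the argument you propose for that subcase, which you yourself identify as the crux, would fail.

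It is not true that two nonzero substitutions force $a=b$, or force $\ell(a)=\ell(b)$ up to absorbed special loops, or lead to a contradiction. Let $x$ be a non-loop arrow and let $p$, $q$ be nontrivial cycles at $s(x)$, $t(x)$ with $px,xq\in\mB$; then $pxq\in\mB$, since $I^S$ is quadratic. For $a=px$, $b=xq$ one gets $\Sub_x^{a}(b)=pxq=\Sub_x^{b}(a)\neq0$, although $a\neq b$. The simplest instance is the linearly oriented $A_2$-type algebra with both vertices special, $x=\alpha_1$, $a=\ep_1\alpha_1$, $b=\alpha_1\ep_2$. Moreover $\ell(a)-\ell(b)=\ell(p)-\ell(q)$ need not vanish. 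The bracket vanishes because the two \emph{substituted values} coincide, not because $a$ and $b$ do.

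The missing idea is the paper's finite list $\Pi_1$. Since $b\neq x$, a nonzero $\Sub_x^a(b)$ forces $\ell(b)>1$, and hence $x$ occurs in both $a$ and $b$. By \Cref{loop-cycle-unique}, the parts before and after $x$ are the unique boundary cycles $p$, $q$, independent of $a$ and $b$, so $a,b\in\{px,xq,pxq\}$. A direct check of the pairs finishes the argument:
the pair $\{px,xq\}$ gives $\pi(pxq)$ on both sides;
the pair $\{px,pxq\}$ gives $pxq$ on both sides if $p$ is a special loop (absorption), and $0$ on both sides otherwise;
the pair $\{xq,pxq\}$ behaves symmetrically.
This also disposes of the possibility, not addressed in your plan, that exactly one of the two substitutions is nonzero.

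In the case $x\neq y$ there are two further omissions.

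First, your four mechanisms leave out the length-one clause of \Cref{lem:replacement-criterion}: $b=x$ gives $\Sub_x^a(b)=a$. This is neither a loop deletion, an endpoint extension nor an absorption. It is precisely what produces \eqref{eq:xneqy-parallel-swap}, for $\ell(a)=1$, where \textup{(G2)} forces $y=a$ and the opposite term $-\para{x}{x}$ is also present. It also produces \eqref{eq:xneqy-y-support-only}, for $\ell(a)>1$.

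Second, each display asserts the full value of $l_2$. So in the loop-deletion, extension and absorption cases you must show that the opposite substitution vanishes:
it vanishes trivially when $a=e_{s(x)}$;
for $a=p\beta_1\mu$, $y$ could only occur in $p$, and replacing it by $b$ creates a second occurrence of $\beta_1$, which is impossible by \Cref{basis-case};
for $a=\lambda x\mu$, $y$ does not occur in it at all.
Saying that the answer is then the sum of two listed contributions is not enough, since such a sum is not itself one of the listed brackets. Exhaustiveness needs the fact that both substitutions are nonzero only in the swap case.
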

	
	To verify \Cref{prop:degree-zero-classification}, we treat first the case
	$x=y$ and then the case $x\neq y$.  The cases $a=x$ or $b=y$ are already
	covered by \Cref{00-x-equal-to-a} and will not be listed again unless needed
	to close a subcase.  When $x\neq y$, skew-symmetry allows us to classify the
	cases for which $\Sub_x^a(b)\neq0$ and then record whether the opposite term
	$-\Sub_y^b(a)$ also contributes.
	
	%===========================
	\subsection{The case \texorpdfstring{$x=y$}{x=y} and \texorpdfstring{$a\neq b$}{a not equal b}}\label{subsec:degree-zero-xeqy}
	Assume first that the two cochains have the same support arrow $x=y$, while
	their value paths are distinct.  By \Cref{DZSF}, the only possible support of
	the bracket is $x$, and
	\[l_2(\para{x}{a}\otimes \para{x}{b})(x)=\Sub_x^a(b)-\Sub_x^b(a).\]
	
	\begin{remark}
		The two substitutions $\Sub_x^a(b)$ and $\Sub_x^b(a)$ are interchanged by
		swapping the two inputs.  Since $l_2$ is skew-symmetric in the shifted
		grading, it is enough to analyze the following two situations:
		\begin{enumerate}
			\item $\Sub_x^a(b)\neq0$ and $\Sub_x^b(a)=0$;
			\item $\Sub_x^a(b)\neq0$ and $\Sub_x^b(a)\neq0$.
		\end{enumerate}
		The case $\Sub_x^a(b)=0$ and $\Sub_x^b(a)\neq0$ is obtained from the first by
		exchanging the two inputs.
	\end{remark}
	
	A necessary condition for $\Sub_x^a(b)\neq0$ is that
	$b$ be nontrivial and contain the arrow $x$.  Thus $x=\bi$ for a unique index
	$i_0$, by \Cref{basis-case}.  We keep the vertex-label convention
	$s(\bi)=i_0$.
	
	We organize the verification by the length of $a$.
	
	%=======================================
	\subsubsection{$\ell(a)=0$}
	Let $a$ be trivial.  Since $a\parallel x$, the arrow $x=\bi$ is a loop.  In
	this case $\Sub_x^b(a)=0$ and
	\[
	l_2(\para{\bi}{\ei}\otimes\para{\bi}{b})
	=
	\para{\bi}{\Sub_{\bi}^{\ei}(b)}.
	\]
	By \Cref{x-is-loop}, $b=\ei$ or $b=\bi$.  Since $a\neq b$, the only nonzero
	possibility is $b=\bi$, and then
	\[
	l_2(\para{\bi}{\ei}\otimes\para{\bi}{\bi})
	=
	\para{\bi}{\ei}.
	\]
	This case is already covered by \Cref{00-x-equal-to-a}.
	
	%===========================================
	\subsubsection{$\ell(a)=1$}
	If $a$ is a loop, then \Cref{x-is-loop} gives $a=x$, so this case is already
	covered by \Cref {00-x-equal-to-a}.  We may therefore assume that $a$ is a non-loop
	arrow.  If $a=x$ or $b=x$, we are again in \Cref {00-x-equal-to-a}; hence assume
	$a\neq x$ and $b\neq x$.  Since $a\parallel x$, the arrows $a$ and $x$ are
	distinct parallel arrows:
	\[
	\begin{tikzcd}
		\bullet && \bullet
		\arrow["a"', shift right, from=1-1, to=1-3]
		\arrow["x", shift left, from=1-1, to=1-3]
	\end{tikzcd}
	\]
	If $\ell(b)=1$, then $b$ is an arrow parallel to $x$.  Since $a$ and $x$
	already exhaust the two possible outgoing arrows and $a\neq b$, we must have
	$b=x$, which is covered by \Cref {00-x-equal-to-a}.  If $\ell(b)>1$, then
	\Cref{lem:replacement-criterion} shows that a nonzero replacement of $x$ in
	$b$ requires the value path $a$ to contain $x$ as the distinguished replaced
	arrow.  Since $a$ is a single arrow distinct from $x$, this is impossible.
	Thus $\Sub_x^a(b)=0$, and no new nonzero case occurs.
	
	%==============================
	\subsubsection{$\ell(a)>1$}\label{subsec:xeqy-ella-gt-one}
	By \Cref{x-is-loop}, the support arrow $x=\bi$ is not a loop.
	
	\begin{enumerate}
		\item Suppose that $\num{\bi}{a}\neq0$.  By \Cref{basis-case},
		$\num{\bi}{a}=1$.  Since $a\parallel\bi$, the portions of $a$ before and after its unique
		occurrence of $\bi$, whenever nontrivial, are oriented cycles in
		$\mB$ based at $s(\bi)$ and $t(\bi)$, respectively.
		By \Cref{loop-cycle-unique},
		there is at most one such nontrivial cycle at each endpoint.
		Let $p$ and $q$, when they exist, denote these possible nontrivial
		oriented cycles at $s(\bi)$ and $t(\bi)$, respectively, independently of
		the particular choices of $a$ and $b$.  We use
		\[\Pi_1=\{\bi, p\bi, \bi q, p\bi q\}\]
		as a list of possible forms, omitting nonexistent terms or terms not lying in $\mB$.
		Every parallel path in $\mB$ containing $\bi$ has one of these forms.
		Since $\Sub_x^a(b)\neq0$, the path $b$ also contains $\bi$, so
		$b\in\Pi_1$; moreover, $a\in\Pi_1\setminus\{\bi\}$ because $\ell(a)>1$.
		
		A direct substitution check gives
		\[l_2(\para{\bi}{p\bi}\otimes\para{\bi}{p\bi})=
		\begin{cases}
			\para{\bi}{p\bi}-\para{\bi}{p\bi}=0, & \text{if } p^2=p,\\
			0-0=0, & \text{if } p^2=0.
		\end{cases}\]
		All other admissible choices of $a\in\Pi_1\setminus\{\bi\}$ and $b\in\Pi_1$ likewise give zero by the same substitution calculation.
		Hence the case $\num{x}{a}=1$ gives no new nonzero bracket of type $(0,0)$.
		
		\item Suppose that $\num{\bi}{a}=0$.  Since $\ell(a)>1$ and
		$\bi\parallel a$, the local configuration contains the arrow $\bi$ and a
		distinct path $a\in\mB$ joining the same two endpoints:
		\[
		\begin{tikzcd}
			{i_0} && {i_0+1}
			\arrow["\bi", shift left, from=1-1, to=1-3]
			\arrow["a"', shift right, dotted, from=1-1, to=1-3]
		\end{tikzcd}
		\]
		
		Since $\Sub_x^a(b)\neq0$, the path $b$ contains $\bi=x$.  If
		$\ell(b)>1$, then \Cref{lem:replacement-criterion}, together with the
		fact that $x$ is not a loop, shows that every nonzero replacement of
		$x$ in $b$ forces the value path $a$ to contain $x$.  This contradicts
		$\num{x}{a}=0$.  Hence no new nonzero case with $\ell(b)>1$ occurs.
		
		It follows that the only remaining possibility is $\ell(b)=1$.  Since
		$b$ contains $x=\bi$, we must have $b=x$.  In this case
		\[
		l_2(\para{x}{a}\otimes\para{x}{x})=\para{x}{a},
		\]
		because $\num{x}{a}=0$.  Thus a nonzero bracket does occur, but it is
		precisely the case already covered by
		\Cref {00-x-equal-to-a}, rather than a new local configuration.
	\end{enumerate}
	
	Thus, when $x=y$ and $a\neq b$, all nonzero brackets are already covered by
	\Cref {00-x-equal-to-a}.
	
	%===============================================
	\subsection{The case \texorpdfstring{$x\neq y$}{x not equal y}}\label{subsec:degree-zero-xneqy}
	We now assume $x\neq y$.  By \Cref{DZSF}, the two possible supports of the
	bracket are $x$ and $y$:
	\begin{align*}
		l_2(\para{x}{a}\otimes \para{y}{b})(x)
		&=-\Sub_y^b(a),\\
		l_2(\para{x}{a}\otimes \para{y}{b})(y)
		&=\Sub_x^a(b).
	\end{align*}
	The two formulas are the same replacement calculation applied in the
	two possible directions.  Hence, up to \eqref{sym}, it is enough to classify
	the cases in which the $y$-supported term $\Sub_x^a(b)$ is nonzero.
	For each representative we also determine whether the $x$-supported term
	$-\Sub_y^b(a)$ contributes.
	
	A necessary condition for $\Sub_x^a(b)$ to be nonzero is that
	$x=\bi$ for a unique index $i_0$.  As above, we keep the convention
	$s(\bi)=i_0$.  We again split the verification according to $\ell(a)$.
	
	%==========================
	\subsubsection{$\ell(a)=0$}\label{subsec:xneqy-ella-zero}
	Let $a$ be trivial.  Then $\Sub_y^b(a)=0$ and
	$x=\bi$ is a loop.  By the replacement criterion, an internal occurrence
	$1<i_0<\ell(b)$ cannot contribute: deleting $\bi$ would join the two
	neighboring arrows, and the resulting product is zero.  Thus only the two
	endpoint occurrences can survive.
	
	\begin{enumerate}
		\item Suppose that $i_0=1$.  Then $x=\beta_1$ is a loop.  Since
		$\ell(b)>1$, the support arrow $y$ cannot be a loop by \Cref{x-is-loop}.
		The source of $y$ is the source of $b$, and the gentle valency condition
		shows that the only possible non-loop outgoing arrow is $\beta_2$.
		Hence $y=\beta_2$.  We may write
		$b=\beta_1\beta_2 q$,
		where $q$ is a path based at $t(\beta_2)$, possibly trivial.  The local
		configuration is
		\[
		\begin{tikzcd}
			1 && \bullet
			\arrow["{x=\beta_1}", from=1-1, to=1-1, loop, in=145, out=215, distance=10mm]
			\arrow["{y=\beta_2}", from=1-1, to=1-3]
			\arrow["q", dotted, from=1-3, to=1-3, loop, in=325, out=35, distance=10mm]
		\end{tikzcd}
		\]
		and
		\begin{align}
			l_2(\para{\beta_1}{e_1}\otimes \para{\beta_2}{\beta_1\beta_2 q})
			&=\para{\beta_2}{\beta_2 q}.
			\label{eq:xneqy-left-loop-deletion}
		\end{align}
		
		\item Suppose that $i_0=\ell(b)$.  This is the left-right symmetric
		case.  We have $y=\beta_{\ell(b)-1}$ and may write
		$b=p\beta_{\ell(b)-1}\bet$,
		where $p$ is a path based at $s(\beta_{\ell(b)-1})$, possibly trivial.
		Thus
		\[
		\begin{tikzcd}
			\bullet && \bullet
			\arrow["p", dotted, from=1-1, to=1-1, loop, in=145, out=215, distance=10mm]
			\arrow["{y=\beta_{\ell(b)-1}}", from=1-1, to=1-3]
			\arrow["{x=\bet}", from=1-3, to=1-3, loop, in=325, out=35, distance=10mm]
		\end{tikzcd}
		\]
		and
		\begin{align}
			l_2(\para{\bet}{e_{\ell(b)}}\otimes \para{\beta_{\ell(b)-1}}{p\beta_{\ell(b)-1}\bet})
			&=\para{\beta_{\ell(b)-1}}{p\beta_{\ell(b)-1}}.
			\label{eq:xneqy-right-loop-deletion}
		\end{align}
	\end{enumerate}
	
	Thus the trivial-value case with $x\neq y$ contributes precisely the two
	loop deletions
	\eqref{eq:xneqy-left-loop-deletion} and
	\eqref{eq:xneqy-right-loop-deletion}.
	
	%==========================
	\subsubsection{$\ell(a)=1$}\label{subsec:xneqy-ella-one}
	If $a=x$, then the bracket is covered by \Cref {00-x-equal-to-a}.  Assume $a\neq x$.
	If either $a$ or $x$ were a loop, \Cref{x-is-loop} would force $a=x$, a
	contradiction.  Hence $a$ and $x=\bi$ are distinct parallel arrows:
	\[
	\begin{tikzcd}
		i_0 && i_0+1
		\arrow["a"', shift right, from=1-1, to=1-3]
		\arrow["x=\bi", shift left, from=1-1, to=1-3]
	\end{tikzcd}
	\]
	
	\begin{enumerate}
		\item Let $\ell(b)=1$.  Since $\Sub_x^a(b)\neq0$, we must have $b=x$.  The support
		arrow $y$ is parallel to $x$, and the gentle valency condition forces
		$y=a$.  Hence
		\[
		\begin{tikzcd}
			& {} & \\
			{i_0} && {i_0+1}
			\arrow["{x=\bi=b}", shift left, from=2-1, to=2-3]
			\arrow["{y=a}"', shift right, from=2-1, to=2-3]
		\end{tikzcd}
		\]
		and
		\begin{align}
			l_2(\para{x}{y}\otimes\para{y}{x})
			&=
			\para{y}{y}-\para{x}{x}.
			\label{eq:xneqy-parallel-swap}
		\end{align}
		
		\item Let $\ell(b)>1$.  By \Cref{lem:replacement-criterion}, a
		nonzero replacement of $x$ in $b$ would require the value path $a$ to
		contain $x$.  Since $a$ is a single arrow distinct from $x$, this is
		impossible, and therefore
		$\Sub_x^a(b)=0$.
		The $x$-supported term can still be nonzero when $y=a$.  In that case,
		\[
		l_2(\para{x}{a}\otimes\para{a}{b})=-\para{x}{b},
		\]
		which is the skew-symmetric counterpart of the case $\ell(b)=1$
		considered below.  We therefore do not list it as a
		separate representative.
	\end{enumerate}
	
	%==========================
	\subsubsection{$\ell(a)>1$}\label{subsec:xneqy-ella-gt-one}
	We may assume throughout that $a\neq x$ and $b\neq y$.  By
	\Cref{lem:replacement-criterion}, a nonzero contribution to the $y$-supported term $\Sub_x^a(b)$ can occur only when $b=x$, or through left or
	right endpoint extension, or idempotent absorption.
	
	\begin{enumerate}
		\item Let $\ell(b)=1$, so $b=x$.  By \Cref{x-is-loop} and $\ell(a)>1$, the
		arrow $x=b$ is not a loop.  Since $y\neq x$ and $b\parallel y$, the
		arrows $x$ and $y$ are distinct parallel arrows.  The first ordered
		replacement gives $\Sub_x^a(x)=a$.  For the reverse replacement,
		\Cref{lem:replacement-criterion} shows that
		$\Sub_y^x(a)=0$: every nonzero replacement in a path of length greater
		than one would require the value path $x$ to contain the replaced arrow
		$y$, which would force $x=y$.  Hence
		\begin{align}
			l_2(\para{x}{a}\otimes\para{y}{x})
			=
			\para{y}{a}.
			\label{eq:xneqy-y-support-only}
		\end{align}
		We assume $\ell(b)>1$ in the remaining cases.
		
		\item Suppose that
		$\para{x}{a}=\para{\beta_1}{p\beta_1\mu}$,
		where $p$ is a nontrivial path based at $s(\beta_1)$ such that $pb\neq0$,
		and
		\[\mu\in\mR_1 = \{e_{t(\beta_1)}\} \cup \{\beta_2\mid\beta_2\in\Sp\}.\]
		The local configuration is
		\[
		\begin{tikzcd}
			\bullet & \bullet & \cdots
			\arrow["p", dotted, from=1-1, to=1-1, loop, in=145, out=215, distance=10mm]
			\arrow["{\beta_1}"', from=1-1, to=1-2]
			\arrow["\ep_{t(\beta_1)}", from=1-2, to=1-2, loop, in=55, out=125, distance=10mm]
			\arrow[from=1-2, to=1-3]
		\end{tikzcd}
		\]
		The $y$-supported contribution is
		$\Sub_{\beta_1}^{p\beta_1\mu}(b)=\pi(pb)=pb$,
		where the last equality uses $pb\neq0$ and the fact that any adjacent
		special-loop square has already been absorbed.
		
		It remains to show that the reverse replacement vanishes.
		Let $p=p_1\cdots p_{\ell(p)}$.
		If $y$ does not occur in
		$p\beta_1\mu$, then $\Sub_y^b(p\beta_1\mu)=0$.  Otherwise $y$ must
		occur inside $p$, say $y=p_j$.  It is neither $\beta_1=x$ nor the
		special loop $\mu$.  Since $\ell(b)>1$, \Cref{x-is-loop} shows that
		$y$ is not a loop.  Replacing $y$ by $b$ then produces
		two occurrences of $\beta_1$, so the resulting path cannot lie in $\mB$ by
		\Cref{basis-case}.  Thus
		$\Sub_y^b(p\beta_1\mu)=0$,
		and therefore
		\begin{align}
			l_2(\para{\beta_1}{p\beta_1\mu}\otimes\para{y}{b})
			=
			\para{y}{pb}.
			\label{eq:xneqy-left-boundary}
		\end{align}
		
		\item This is left-right symmetric to the preceding case.  Suppose that
		$\para{x}{a}=\para{\bet}{\lambda\bet q}$,
		where $q$ is a nontrivial path based at $t(\bet)$ such that $bq\neq0$,
		and
		\[
		\lambda\in\mL_{\ell(b)}
		=
		\{e_{s(\bet)}\}
		\cup
		\{\beta_{\ell(b)-1}\mid\beta_{\ell(b)-1}\in\Sp\}.
		\]
		The local configuration is
		\[
		\begin{tikzcd}
			\cdots & \bullet & \bullet
			\arrow[from=1-1, to=1-2]
			\arrow["{\ep_{s(\bet)}}", from=1-2, to=1-2, loop, in=55, out=125, distance=10mm]
			\arrow["\bet"', from=1-2, to=1-3]
			\arrow["q", dotted, from=1-3, to=1-3, loop, in=325, out=35, distance=10mm]
		\end{tikzcd}
		\]
		The same argument gives
		\begin{align}
			l_2(\para{\bet}{\lambda\bet q}\otimes\para{y}{b})
			=
			\para{y}{bq}.
			\label{eq:xneqy-right-boundary}
		\end{align}
		
		\item We finally consider the cases in which a nonzero replacement is produced
		by an adjacent special loop.
		\begin{enumerate}
			\item Suppose that $\beta_1$ is a special loop and $x=\beta_2$.
			This is the special instance of the left endpoint extension with
			$p=\beta_1$.  Since $\ell(b)>1$, the support arrow $y$ is not a loop
			by \Cref{x-is-loop}.  At the source of $b$, the special loop
			$\beta_1$ and the arrow $\beta_2$ already exhaust the two possible
			outgoing arrows allowed by \textup{(G2)}.  Hence $y=\beta_2=x$,
			contrary to $x\neq y$.  Thus this case does not occur here.
			
			\item Suppose that $\bet$ is a special loop and
			$x=\beta_{\ell(b)-1}$.  The left-right symmetric argument again
			forces $y=x$, so this case is also excluded.
			
			\item It remains to consider an internal occurrence $x=\bi$ with
			adjacent special loops.  Since $\ell(a)>1$, the arrow $x=\bi$ is not
			a loop by \Cref{x-is-loop}.  Up to left-right symmetry, the local
			configuration is
			\[
			\begin{tikzcd}
				i_0 && i_0+1 \\
				\vdots && \vdots \\
				\bullet && \bullet
				\arrow["{\ep_{i_0}}", from=1-1, to=1-1, loop, in=100, out=170, distance=10mm]
				\arrow["\bi", from=1-1, to=1-3]
				\arrow["{\ep_{i_0+1}}", from=1-3, to=1-3, loop, in=10, out=80, distance=10mm]
				\arrow[from=1-3, to=2-3]
				\arrow[from=2-1, to=1-1]
				\arrow["\bet", from=2-3, to=3-3]
				\arrow["{\beta_1}", from=3-1, to=2-1]
				\arrow["y"', from=3-1, to=3-3]
			\end{tikzcd}
			\]
			If $\epi$ exists, then $\beta_{i_0-1}=\epi$ with $i_0>2$; if
			$\ep_{i_0+1}$ exists, then
			$\beta_{i_0+1}=\ep_{i_0+1}$ with $i_0<\ell(b)-1$.  Hence an
			absorbed adjacent special loop is never the first or last arrow of
			$b$.  The first formula below occurs only when $\ep_{i_0+1}$ exists,
			the second only when $\epi$ exists, and the third only when both exist.
			
			In each case the substitution on the $y$-support gives $b$, because
			the extra special loop is absorbed by $\varepsilon^2=\varepsilon$.
			The reverse substitution is zero.  Indeed, $y$ is not a loop by
			\Cref{x-is-loop}, and $y\neq x=\bi$, so $y$ does not occur in any of
			$\bi\ep_{i_0+1}$, $\epi\bi$, or $\epi\bi\ep_{i_0+1}$.
			Therefore
			\begin{equation}\label{eq:xneqy-idempotent-absorption}
				\begin{aligned}
					l_2(\para{\bi}{\bi\ep_{i_0+1}}\otimes\para{y}{b})
					&=\para{y}{b},
					&&\text{where }1\leq i_0<\ell(b)-1,\\
					l_2(\para{\bi}{\epi\bi}\otimes\para{y}{b})
					&=\para{y}{b},
					&&\text{where }2<i_0\leq\ell(b),\\
					l_2(\para{\bi}{\epi\bi\ep_{i_0+1}}\otimes\para{y}{b})
					&=\para{y}{b},
					&&\text{where }2<i_0<\ell(b)-1.
				\end{aligned}
			\end{equation}
		\end{enumerate}
	\end{enumerate}
	
	\begin{proof}[Proof of \Cref{prop:degree-zero-classification}]
		By \Cref{DZSF}, every nonzero contribution is obtained by replacing an
		occurrence of one support arrow in the value path of the other cochain.  If
		$x=y$, the preceding analysis shows that all nonzero brackets are precisely
		the cases in \Cref{00-x-equal-to-a}.  If $x\neq y$, then, up to
		\eqref{sym}, it is enough to classify the term $\Sub_x^a(b)$; the
		replacement criterion \Cref{lem:replacement-criterion} and the three cases
		$\ell(a)=0$, $\ell(a)=1$, and $\ell(a)>1$ give exactly
		\eqref{eq:xneqy-left-loop-deletion}--\eqref{eq:xneqy-idempotent-absorption}.
		These two alternatives exhaust $x=y$ and $x\neq y$, so no further bracket
		of type $(0,0)$ can occur.
	\end{proof}
	
	A compact lookup table for brackets of type $(0,0)$ is given in
	\Cref{app:degree-zero-lookup}.
	%=============================================
	\section{Brackets of type \texorpdfstring{$(0,+)$}{(0,+)}}\label{sec:mixed-classification}
	
	This section gives a complete basis-level classification of brackets of type
	$(0,+)$, that is,
	\[
	l_2:B^0\otimes B^n\longrightarrow B^n,\quad n\geq1.
	\]
	We first record the general formula and several consequences that will be used
	throughout the classification.
	
	%===========================
	\subsection{General formula for brackets of type \texorpdfstring{$(0,+)$}{(0,+)}}
	
	Let $f=\para{x}{a}\in B^0$ and $g=\para{y}{b}\in B^n$, where $n\ge1$.
	For
	$\gamma=\gamma_1\cdots\gamma_{n+1}\in\Gamma_{n+1}$,
	the general formula \eqref{eq:general-l2} becomes
	\begin{align*}
		&l_2(f\otimes g)(\gamma_1\cdots\gamma_{n+1}) \\
		={}&\para{x}{a}G_1(\varpi(\para{y}{b}(\gamma_1\cdots\gamma_{n+1})))\\
		&-\sum_{j=1}^{n+1}
		\para{y}{b}
		G_{n+1}(
		\gamma_1 \otimes \cdots \otimes \gamma_{j-1}
		\otimes \varpi(\para{x}{a}(\gamma_j))
		\otimes \gamma_{j+1}\otimes \cdots \otimes \gamma_{n+1}).
	\end{align*}
	A nonzero contribution to either term forces $\gamma=y$.  Hence, whenever the
	formula is evaluated below, we write
	$y=\gamma_1\cdots\gamma_{n+1}$.
	We also set
	\[\Int_x(y)=\#\{j \mid 2\le j\le n,\ \gamma_j=x\}.\]
	
	%=============================
	\begin{proposition}\label{MSBF}
		For $n\ge1$,
		\[
		l_2(\para{x}{a}\otimes\para{y}{b})
		=
		\para{y}{\Sub_x^a(b)-\mC_x^a(y;b)},
		\]
		where
		\[
		\mC_x^a(y;b)
		=
		\one_{x=\gamma_1}\pi(L_x(a)b)
		+\one_{x=\gamma_{n+1}}\pi(bR_x(a))
		+\one_{x=a}\Int_x(y)b.
		\]
		Products which vanish in $A$ are omitted.  In particular, every bracket of
		this type is supported on the single relation concatenation $y$.
	\end{proposition}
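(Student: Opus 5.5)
Starting from the specialization of \eqref{eq:general-l2} displayed just before the statement, the plan is to evaluate the two sums separately at a fixed $\gamma=\gamma_1\cdots\gamma_{n+1}\in\Gamma_{n+1}$.

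\emph{First sum.} The inner cochain $\para{y}{b}$ vanishes unless $\gamma=y$. At $\gamma=y$ the term equals $\para{x}{a}G_1(\varpi(b))$. Expanding $G_1$ exactly as in the proof of \Cref{lem:replacement-criterion} gives $\sum_i\beta_1\cdots\beta_{i-1}\,\para{x}{a}(\beta_i)\,\beta_{i+1}\cdots\beta_{\ell(b)}$, and by definition this is $\Sub_x^a(b)$. If $b$ is trivial, then $\varpi(b)=0$ and $\Sub_x^a(b)=0$, so the two sides agree.

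\emph{Second sum.} In the $j$-th summand, the $j$-th tensor factor is $\varpi(a)$ when $\gamma_j=x$, and the summand vanishes otherwise. We then apply $G_{n+1}$ with $n+1\ge2$ to $\gamma_1\otimes\cdots\otimes\varpi(a)\otimes\cdots\otimes\gamma_{n+1}$, using the explicit formula for $G_{n+1}$.

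\textbf{Interior positions.} For $2\le j\le n$, the explicit formula requires the internal factor to be part of a relation concatenation of length $n+1$. So the factor must be a single arrow $c$ with $\gamma_1\cdots c\cdots\gamma_{n+1}\in\Gamma_{n+1}$, and the outer coefficients are trivial. If $\varpi(a)=0$ the term vanishes. Otherwise $a$ is an arrow parallel to $x=\gamma_j$, and $\gamma_{j-1}a,\,a\gamma_{j+1}\in I^S$. Condition (G3) gives uniqueness of the arrow forming a relation after $\gamma_{j-1}$, which forces $a=\gamma_j=x$. (When $\gamma$ is a special-loop power, \Cref{lem:special-loop-dichotomy} gives the same conclusion.) Each such position then contributes $\para{y}{b}(\gamma)=\one_{\gamma=y}\,b$, and summing over positions gives $\one_{x=a}\Int_x(y)\,b$.

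\textbf{Endpoint $j=1$.} The first tensor factor is $\varpi(a)$, which must have the form $L\alpha$ with $\alpha\gamma_2\cdots\gamma_{n+1}\in\Gamma_{n+1}$. Since $\gamma_1\gamma_2\in I^S$ and $\alpha\gamma_2\in I^S$, (G3) gives $\alpha=\gamma_1=x$. So $a$ ends in $x$, $L=L_x(a)$, and the output $L_x(a)\otimes\gamma\otimes e$ evaluates to $\pi(L_x(a)b)$ at $\gamma=y$. If $a$ does not end in $x$, then $L_x(a)=0$ and the term vanishes, matching the convention in the statement.

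\textbf{Endpoint $j=n+1$.} Symmetrically, this gives $\one_{x=\gamma_{n+1}}\pi(bR_x(a))$.

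\emph{Combining and checking.} Put the two sums together with the overall sign $-(-1)^{0\cdot n}(-1)^{j\cdot 2}=-1$ carried by the second sum. All contributions are supported on $y$, which proves the support claim. One point to check: when $a=x$, the endpoint terms $\pi(L_x(x)b)=b$ and the interior count must not double-count. This is fine, because $\Int_x(y)$ counts only $2\le j\le n$.

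The main obstacle is the uniqueness step. The argument must show that $\alpha$, respectively $c$, equals $x$ even in special-loop configurations, for instance $\gamma=\ep_i^{n+1}$ and $a=\ep_i$ or $a=e_i$. In these cases we invoke \Cref{lem:special-loop-dichotomy} and \Cref{x-is-loop}. We must also verify that idempotent reductions in $\pi(L_x(a)b)$ are exactly what the comparison map produces.
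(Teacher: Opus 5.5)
Your proposal is correct and follows the same route as the paper. The paper also identifies the $G_1$-term with $\Sub_x^a(b)$ and then splits the second insertion sum into the endpoint positions $j=1$ and $j=n+1$ and the interior positions $2\le j\le n$, where only the identity case $a=x$ survives; your explicit \textup{(G3)} argument, showing that the reassembled concatenation must be $\gamma$ itself, just spells out the step the paper states tersely. The special-loop and idempotent-reduction checks you flag as remaining obstacles are already settled: \textup{(G3)} applied to the gentle pair $(Q^S,I^S)$ covers special loops directly, and the $A^e$-linearity of $\para{y}{b}$ makes the value $\pi(L_x(a)b)$ automatic, so no further argument is needed.
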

	
	\begin{proof}
		The support assertion follows immediately from the displayed transfer
		formula above.  We therefore evaluate at $\gamma=y$.
		
		The substitution term is
		\[
		\para{x}{a}G_1(\varpi(b))=\Sub_x^a(b).
		\]
		Consider first $j=1$.  The comparison map can
		reassemble the tensor to $y$ only when $x=\gamma_1$ and $a$ ends in $x$, say
		$a=L_x(a)x$.  This contributes
		\[
		-\pi(L_x(a)b).
		\]
		Similarly, at $j=n+1$ a nonzero contribution is possible only when
		$x=\gamma_{n+1}$ and $a=xR_x(a)$, giving
		\[
		-\pi(bR_x(a)).
		\]
		At an interior position $2\le j\le n$, reassembly to $y$ is possible only in
		the identity case $a=x=\gamma_j$.  Each such occurrence contributes one copy
		of $-b$.  Adding these terms gives the stated formula.
	\end{proof}
	%============================
	\begin{corollary}\label{mixed-boundary-summand-eq-cases}
		The term $\mC_x^a(y;b)$ can be nonzero only
		through one of the following mechanisms: $x=a=\gamma_j$ for some
		$j$, $\para{x}{a}=\para{\gamma_1}{p\gamma_1}$, or
		$\para{x}{a}=\para{\gamma_{n+1}}{\gamma_{n+1}q}$.
		Consequently, it vanishes whenever $\num{x}{y}=0$, and also whenever no arrow
		of $y$ occurs in $a$.
	\end{corollary}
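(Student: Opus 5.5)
The plan is to read the three summands of $\mC_x^a(y;b)$ in \Cref{MSBF} off one at a time and identify, for each, the only way its indicator and its path value can both be nonzero.

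For the third summand $\one_{x=a}\Int_x(y)b$, nonvanishing requires $x=a$ and $\Int_x(y)\neq0$. The latter means $x=\gamma_j$ for some interior index $2\le j\le n$, which is the mechanism $x=a=\gamma_j$.

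For the first summand $\one_{x=\gamma_1}\pi(L_x(a)b)$, nonvanishing requires $x=\gamma_1$ and $L_x(a)\neq0$. By definition (N4), the second condition means $a=p\gamma_1$ for a possibly trivial path $p=L_x(a)$, so $\para{x}{a}=\para{\gamma_1}{p\gamma_1}$. When $p$ is trivial this is the identity case $a=x=\gamma_1$, which is absorbed into the first mechanism with $j=1$. The second summand is treated symmetrically via $R_x$: it forces $x=\gamma_{n+1}$ and $a=\gamma_{n+1}q$.

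This establishes the trichotomy. For the consequences, each mechanism requires $x=\gamma_j$ for some $j$, so $\num{x}{y}\ge1$; hence $\num{x}{y}=0$ kills $\mC_x^a(y;b)$. Likewise, each mechanism requires $a$ to contain the arrow $x$, and $x$ is one of the arrows $\gamma_j$ of $y$, since $a=\gamma_j$, $a=p\gamma_1$, or $a=\gamma_{n+1}q$. So if no arrow of $y$ occurs in $a$, all three summands vanish.

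I do not expect a real obstacle here. The only point of care is to confirm that products vanishing in $A$ are simply omitted and therefore cannot create further mechanisms, and that the boundary cases with trivial $p$ or $q$ are not double-counted. The statement only says ``can be nonzero only through'', so overlaps between mechanisms are harmless.
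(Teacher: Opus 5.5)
Your proposal is correct and takes the same route the paper intends. The paper gives no separate proof: it treats the corollary as an immediate reading of the three indicator summands of $\mC_x^a(y;b)$ in \Cref{MSBF}, exactly as you do, and your treatment of the trivial-$p$/$q$ overlap and of the two consequences (each mechanism forces $x=\gamma_j$ for some $j$ and forces $x$ to occur in $a$) is accurate.
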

	%===========================
	\begin{corollary}\label{01-x-equal-to-a}
		Let $f=\para{x}{a}\in B^0$ and $g=\para{y}{b}\in B^n$, where $n\ge1$.
		If $a=x$, then
		\[
		l_2(\para{x}{x}\otimes\para{y}{b})
		=
		(\num{x}{b}-\num{x}{y})\para{y}{b}.
		\]
	\end{corollary}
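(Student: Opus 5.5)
Specialize \Cref{MSBF} to $a=x$. Then
\[
l_2(\para{x}{x}\otimes\para{y}{b})=\para{y}{\Sub_x^x(b)-\mC_x^x(y;b)},
\]
and the two terms can be computed separately.

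For the substitution term, replacing an occurrence of $x$ in $b$ by $x$ itself returns $b$ unchanged. No reduction under $\pi$ is needed, because $b\in\mB$. Hence $\Sub_x^x(b)=\num{x}{b}\,b$, exactly as in the proof of \Cref{00-x-equal-to-a}. If $b$ is trivial, this is $0=\num{x}{b}\,b$, so the formula holds uniformly.

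For the correction term, write $y=\gamma_1\cdots\gamma_{n+1}$ and use $L_x(x)=e_{s(x)}$ and $R_x(x)=e_{t(x)}$. Then
\[
\mC_x^x(y;b)=\one_{x=\gamma_1}\pi(e_{s(x)}b)+\one_{x=\gamma_{n+1}}\pi(be_{t(x)})+\Int_x(y)\,b .
\]
Since $b\parallel y$, whenever $x=\gamma_1$ we have $s(x)=s(y)=s(b)$, so $\pi(e_{s(x)}b)=b$. Symmetrically, $x=\gamma_{n+1}$ gives $\pi(be_{t(x)})=b$. Thus
\[
\mC_x^x(y;b)=\bigl(\one_{x=\gamma_1}+\Int_x(y)+\one_{x=\gamma_{n+1}}\bigr)b .
\]
The coefficient counts the occurrences of $x$ at positions $1$, $2,\dots,n$, and $n+1$ of $y$. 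These ranges are disjoint because $n\ge1$, so the coefficient equals $\num{x}{y}$. Subtracting from the substitution term gives the stated formula $(\num{x}{b}-\num{x}{y})\para{y}{b}$.

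The only point needing care is whether the endpoint and interior contributions in \Cref{MSBF} can double count when $x$ occurs at both ends, or when $y$ is periodic (e.g.\ $y=\omega^{n+1}$ or $\tau^w\cdots$ as in \Cref{gamma-cases}). In the proof of \Cref{MSBF}, each contribution corresponds to a single insertion position $j$ of $f$ into $y$, with $j=1$ and $j=n+1$ distinct. So each occurrence of $x$ in $y$ contributes exactly one copy of $-b$, and no periodicity analysis is required.
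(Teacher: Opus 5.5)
Your proposal is correct and follows the paper's own argument: specialize \Cref{MSBF} to $a=x$, note that $\Sub_x^x(b)=\num{x}{b}\,b$, and observe that the two endpoint terms together with $\Int_x(y)$ count each occurrence of $x$ in $y$ exactly once, so $\mC_x^x(y;b)=\num{x}{y}\,b$. Your additional checks — that $\pi(e_{s(x)}b)=b$ because $b\parallel y$, and that positions $1$, $2,\dots,n$, $n+1$ partition the positions of $y$ when $n\geq1$ — spell out what the paper's one-line proof leaves implicit.
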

	
	\begin{proof}
		For $a=x$, the substitution term is
		$\Sub_x^x(b)=\num{x}{b}b$.  The three terms in $\mC_x^x(y;b)$ together
		count all occurrences of $x$ in $y$ and therefore contribute
		$\num{x}{y}b$.  The result follows from \Cref{MSBF}.
	\end{proof}
	
	\begin{remark}\label{rem:mixed-identity-counting}
		The nonvanishing in \Cref{01-x-equal-to-a} is governed by the coefficient
		$\num{x}{b}-\num{x}{y}$.
	\end{remark}
	
	%==============================
	\begin{corollary}
		\label{cor:mixed-endpoint-extension-general}
		Let $f=\para{x}{a}\in B^0$ and $g=\para{y}{b}\in B^n$, where $n\geq1$, and
		write $b=\beta_1\cdots\beta_{\ell(b)}$ with $\ell(b)>1$.
		Assume that $\mC_x^a(y;b)=0$.
		\begin{enumerate}
			\item Suppose that $x=\beta_1$ and
			$a=p\beta_1\mu$,
			where $\mu\in
			\{e_{t(\beta_1)}\}
			\cup
			\{\beta_2 \mid \beta_2\in\Sp\}$
			and $p$ is a nontrivial path in $\mB$ based at $s(\beta_1)$.  
			Then
			\[
			l_2(\para{\beta_1}{p\beta_1\mu}\otimes\para{y}{b})
			=
			\para{y}{\pi(pb)}.
			\]
			
			\item Suppose that $x=\beta_{\ell(b)}$ and
			$a=\lambda\beta_{\ell(b)}q$, where
			$\lambda\in
			\{e_{s(\beta_{\ell(b)})}\}
			\cup
			\{\beta_{\ell(b)-1} \mid \beta_{\ell(b)-1}\in\Sp\}$
			and $q$ is a nontrivial path in $\mB$ based at
			$t(\beta_{\ell(b)})$.  Then
			\[
			l_2(
			\para{\beta_{\ell(b)}}{\lambda\beta_{\ell(b)}q}
			\otimes\para{y}{b}
			)
			=
			\para{y}{\pi(bq)}.
			\]
		\end{enumerate}
	\end{corollary}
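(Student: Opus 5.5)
The plan is to derive both statements directly from \Cref{MSBF}. Under the standing hypothesis $\mC_x^a(y;b)=0$, that proposition gives
\[
l_2(\para{x}{a}\otimes\para{y}{b})=\para{y}{\Sub_x^a(b)}.
\]
So everything reduces to computing the substitution term, and this is exactly what the endpoint cases of \Cref{lem:replacement-criterion} do. Since $\ell(b)>1$, \Cref{basis-case} says that $x$ occurs in $b$ at most once. In part (1) it occurs at $i_0=1$, because $x=\beta_1$. In part (2) it occurs at $i_0=\ell(b)$.

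For part (1), I would check that $a=p\beta_1\mu$ fits the second alternative of item (2) in \Cref{lem:replacement-criterion}. Here $p$ is a path based at $s(x)=s(\beta_1)$, and $\mu$ lies in the set $\mR_1$ appearing in the statement. The loop-deletion alternative does not apply, since $a$ is nontrivial (indeed $p$ is nontrivial). The lemma then gives $\Sub_{\beta_1}^{p\beta_1\mu}(b)=\pi(pb)$, which is the claimed formula.

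There is one point to verify: when $\mu=\beta_2\in\Sp$, the product $p\beta_1\beta_2\beta_2\cdots$ must reduce to $\pi(pb)$ through $\ep^2=\ep$. This is the idempotent absorption already built into the lemma's statement. It needs no new work, because $\Sub$ is defined via $\pi$.

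Part (2) is the left-right mirror of part (1). I would apply item (3) of \Cref{lem:replacement-criterion} with $\lambda\in\mL_{\ell(b)}$ and $q$ nontrivial, which gives $\pi(bq)$. Again the loop-deletion alternative is excluded because $a$ is nontrivial.

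The only step requiring some care is the uniqueness of the occurrence of $x$ in $b$. This is needed so that $\Sub$ consists of a single term and no interior occurrence contributes. \Cref{basis-case} settles it, since $b\in\mB$. Everything else is bookkeeping: the hypothesis $\mC_x^a(y;b)=0$ has been imposed precisely so that no boundary or identity terms from \Cref{MSBF} need to be tracked. I expect no real obstacle beyond confirming that the hypotheses of the lemma's endpoint cases match the data in each part verbatim.
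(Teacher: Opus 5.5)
Your proposal is correct and follows essentially the same route as the paper. Under $\mC_x^a(y;b)=0$, \Cref{MSBF} reduces the bracket to $\para{y}{\Sub_x^a(b)}$; \Cref{basis-case} makes the occurrence of $x$ in $b$ unique, and the left (resp.\ right) endpoint alternative of \Cref{lem:replacement-criterion} then gives $\pi(pb)$ (resp.\ $\pi(bq)$).
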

	
	\begin{proof}
		In the first case, $x=\beta_1$ is the unique occurrence of $x$ in $b$ by
		\Cref{basis-case}.  The left-endpoint part of
		\Cref{lem:replacement-criterion} gives $\Sub_x^a(b)=\pi(pb)$.
		Since $\mC_x^a(y;b)=0$, the formula follows from \Cref{MSBF}.  The second
		case is the left-right symmetric application of the right-endpoint part of
		\Cref{lem:replacement-criterion}.
	\end{proof}
	
	The two endpoint extensions in \Cref{cor:mixed-endpoint-extension-general}
	apply to all four possible forms of $y$.  Each displayed term is retained
	precisely when its projected value under $\pi$ is nonzero.
	
	%==============================
	\begin{corollary}\label{01-sub-only}
		Let $f=\para{x}{a}\in B^0$ and $g=\para{y}{x}\in B^n$, where
		$n\ge1$, and assume that $a$ is nontrivial and $a\neq x$.  If $\mC_x^a(y;x)=0$, then
		\[
		l_2(\para{x}{a}\otimes\para{y}{x})
		=
		\para{y}{a}.
		\]
	\end{corollary}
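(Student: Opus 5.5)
This follows from \Cref{MSBF} once the substitution term $\Sub_x^a(x)$ is evaluated. By that proposition the bracket is supported on $y$, and
\[
l_2(\para{x}{a}\otimes\para{y}{x})
=
\para{y}{\Sub_x^a(x)-\mC_x^a(y;x)} .
\]
The correction term $\mC_x^a(y;x)$ vanishes by hypothesis. So it remains to show $\Sub_x^a(x)=a$, with no reduction under $\pi$ and no spurious vanishing.

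The value path here is $b=x$, which has length one. The first case of \Cref{lem:replacement-criterion} therefore applies directly and gives $\Sub_x^a(b)=a$, because $x=b$. Concretely, $G_1(\varpi(x))=e_{s(x)}\otimes x\otimes e_{t(x)}$ has exactly one summand. Applying $\para{x}{a}$ to it produces $\pi(e_{s(x)}\,a\,e_{t(x)})=a$. This is a nonzero basis element, since $a\in\mB$ and $a\parallel x$.

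The hypotheses that $a$ is nontrivial and $a\neq x$ play no role in evaluating the formula. Their purpose is to separate this case from others already treated. If $a=x$, the bracket is the counting formula of \Cref{01-x-equal-to-a}. If $a$ is trivial, then $x$ is a loop and the case falls under loop-deletion considerations. Neither hypothesis is needed for the equality itself.

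The only point that needs care is consistency with the expansion in \Cref{MSBF}. The interior term $\one_{x=a}\Int_x(y)\,b$ vanishes automatically because $a\neq x$. The two endpoint terms are then exactly what the hypothesis $\mC_x^a(y;x)=0$ removes. Subtracting zero from $\Sub_x^a(x)=a$ gives $l_2(\para{x}{a}\otimes\para{y}{x})=\para{y}{a}$.

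I expect no genuine obstacle. The one thing to check is that no idempotent reduction alters $a$ when it is multiplied by the trivial outer coefficients.
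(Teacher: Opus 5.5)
Your proof is correct and is essentially the paper's argument: compute $\Sub_x^a(x)=a$ directly from $b=x$, then read off the result from \Cref{MSBF} using the hypothesis $\mC_x^a(y;x)=0$. Your side remark is also accurate: the hypotheses that $a$ is nontrivial and $a\neq x$ serve only to separate this case from \Cref{01-x-equal-to-a} and \Cref{cor:mixed-loop-deletion}, not to justify the equality.
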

	
	\begin{proof}
		Since the value path of the second cochain is $b=x$, one has
		$\Sub_x^a(x)=a$.
		The formula follows from $\mC_x^a(y;x)=0$ and \Cref{MSBF}.
	\end{proof}
	
	\begin{remark}\label{rem:substitution-only-conditions}
		For the brackets in \Cref{01-sub-only}, the condition
		$\mC_x^a(y;x)=0$ is equivalent to
		\[
		\one_{x=\gamma_1}\pi(L_x(a)x)
		+
		\one_{x=\gamma_{n+1}}\pi(xR_x(a))
		=0.
		\]
		In particular, it holds in each of the following situations:
		\begin{enumerate}
			\item $\num{x}{y}=0$;
			\item $x=\gj$ for some $1<j_0<n+1$ and $\num{x}{a}=0$; here $x$ may
			also occur at one or both endpoints of $y$, but the corresponding terms in
			$\mC_x^a(y;x)$ vanish because $a$ contains no occurrence of $x$;
			\item $x=\gamma_1\neq\gamma_{n+1}$ and $\pi(L_x(a)x)=0$;
			\item $x=\gamma_{n+1}\neq\gamma_1$ and $\pi(xR_x(a))=0$;
			\item $x=\gamma_1=\gamma_{n+1}$ and
			$\pi(L_x(a)x)+\pi(xR_x(a))=0$.
		\end{enumerate}
	\end{remark}
	%===========================
	\begin{corollary}
		\label{cor:mixed-loop-deletion}
		Let $f=\para{x}{e_{s(x)}}\in B^0$ and
		$g=\para{y}{b}\in B^n$, where $n\geq1$. Then $x$ is a loop and
		\[
		l_2(f\otimes g)
		=
		\para{y}{\Sub_x^{e_{s(x)}}(b)}.
		\]
		More explicitly, if
		$b=\beta_1\cdots\beta_{\ell(b)}$
		is nontrivial, then
		\[
		l_2(\para{x}{e_{s(x)}}\otimes\para{y}{b})
		=
		\begin{cases}
			\para{y}{e_{s(x)}},
			& \text{if } b=x,\\
			\para{y}{\beta_2\cdots\beta_{\ell(b)}},
			& \text{if } x=\beta_1\text{ and }\ell(b)>1,\\
			\para{y}{\beta_1\cdots\beta_{\ell(b)-1}},
			& \text{if } x=\beta_{\ell(b)}\text{ and }\ell(b)>1,\\
			0,
			& \text{otherwise}.
		\end{cases}
		\]
		In particular, such a bracket is nonzero precisely when $b=x$, or when
		$x$ is the first or last arrow of $b$ and $\ell(b)>1$.
	\end{corollary}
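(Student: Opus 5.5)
The plan is to start from \Cref{MSBF}, which gives
\[
l_2(\para{x}{e_{s(x)}}\otimes\para{y}{b})=\para{y}{\Sub_x^{e_{s(x)}}(b)-\mC_x^{e_{s(x)}}(y;b)}.
\]
The argument then has three steps: show that the correction term $\mC_x^{e_{s(x)}}(y;b)$ vanishes, evaluate the substitution via \Cref{lem:replacement-criterion}, and settle the edge cases.

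\textbf{The loop claim.} Since $\para{x}{e_{s(x)}}$ is a basis cochain, $e_{s(x)}\parallel x$, so $s(x)=t(x)$ and $x$ is a loop.

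\textbf{The correction term vanishes.} Each of its three summands involves the value path $a=e_{s(x)}$ through $L_x(a)$, $R_x(a)$, or the indicator $\one_{x=a}$. By convention (N4), $L_x(e_i)=R_x(e_i)=0$, so the two endpoint summands are zero. The identity summand needs $x=a$, which is impossible because $x$ is an arrow and $a$ is trivial. Hence $\mC_x^{e_{s(x)}}(y;b)=0$, and the first displayed formula holds. This step is essentially bookkeeping; the only point needing care is that $L_x(e_i)=0$ is defined as the zero element and not as a trivial path.

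\textbf{Evaluating the substitution.} If $b$ is trivial, $\Sub_x^{e_{s(x)}}(b)=0$ because there is no occurrence of $x$. If $\ell(b)=1$, \Cref{lem:replacement-criterion} gives $e_{s(x)}$ when $b=x$ and $0$ otherwise. If $\ell(b)>1$ and $x$ does not occur in $b$, the substitution is zero. Otherwise $x=\beta_{i_0}$ for a unique $i_0$ by \Cref{basis-case}, and I treat three positions.
\begin{enumerate}
\item Interior occurrence, $1<i_0<\ell(b)$. Case (1) of the criterion requires $a=\lambda x\mu$, which is nontrivial, so a trivial value gives zero. Equivalently, the proof of that lemma shows that deleting an interior loop joins $\beta_{i_0-1}\beta_{i_0+1}\in I^S$, a non-special relation, so the product is zero.
\item Left endpoint, $i_0=1$. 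The loop-deletion alternative of case (2) gives $\beta_2\cdots\beta_{\ell(b)}$, which lies in $\mB$ as a subpath of $b$. The other alternative, $a=px\mu$, is ruled out because $a$ is trivial.
\item Right endpoint, $i_0=\ell(b)$. Symmetrically, case (3) gives $\beta_1\cdots\beta_{\ell(b)-1}$.
\end{enumerate}
Because $x$ occurs at most once in $b$, the loop cannot be both the first and last arrow of $b$ when $\ell(b)>1$. So the listed cases are mutually exclusive, and each gives a single basis path with coefficient one.

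\textbf{Edge case and nonvanishing.} The main subtlety is checking that nothing survives when $b$ is itself a special loop distinct from $x$, or when $b=x$. In the first situation no occurrence of $x$ exists. In the second, substitution gives $e_{s(x)}$ directly, with no idempotent reduction involved. The final ``in particular'' then follows, since each nonzero case outputs a basis element of $\mB$, which is nonzero in $A$.
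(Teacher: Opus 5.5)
Your proposal is correct and follows the paper's proof essentially step for step. Like the paper, you apply \Cref{MSBF}, show that $\mC_x^{e_{s(x)}}(y;b)$ vanishes because $L_x(e_{s(x)})=R_x(e_{s(x)})=0$ and $e_{s(x)}\neq x$, and then read off the substitution from \Cref{lem:replacement-criterion} with $a=e_{s(x)}$. Your remarks on mutual exclusivity (via \Cref{basis-case}) and on nonvanishing only make explicit what the paper leaves implicit.
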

	
	\begin{proof}
		Since $e_{s(x)}\parallel x$, the arrow $x$ is a loop.  Moreover,
		$L_x(e_{s(x)})=R_x(e_{s(x)})=0$
		and $e_{s(x)}\neq x$.  Hence
		$\mC_x^{e_{s(x)}}(y;b)=0$
		by \Cref{MSBF}, and therefore
		\[
		l_2(\para{x}{e_{s(x)}}\otimes\para{y}{b})
		=
		\para{y}{\Sub_x^{e_{s(x)}}(b)}.
		\]
		If $b$ is trivial, then the substitution is zero.  If $b$ is nontrivial,
		the explicit alternatives follow directly from
		\Cref{lem:replacement-criterion}, applied with $a=e_{s(x)}$.  For
		$\ell(b)=1$, the only nonzero case is $b=x$, which gives
		$\Sub_x^{e_{s(x)}}(x)=e_{s(x)}$.  For $\ell(b)>1$, the same lemma shows
		that a nonzero substitution is possible only when $x$ is the first or the
		last arrow of $b$, in which case the corresponding endpoint loop is deleted;
		all internal occurrences give zero.  This proves the stated formula.
	\end{proof}
	%===========================
	\begin{proposition}\label{prop:mixed-classification}
		Let $f=\para{x}{a}\in B^0$ and $g=\para{y}{b}\in B^n$, where $n\geq1$.
		Up to the skew-symmetry relation \eqref{sym}, the nonzero brackets of type
		$(0,+)$ are exactly the following:
		\begin{enumerate}
			\item the cases in \Cref{01-x-equal-to-a} for which
			$\num{x}{b}-\num{x}{y}\neq0$ in $\mK$;
			\item the two endpoint extensions in
			\Cref{cor:mixed-endpoint-extension-general};
			\item the case described in \Cref{01-sub-only};
			\item the cases in \Cref{cor:mixed-loop-deletion};
			\item the remaining cases listed explicitly in
			\eqref{eq:mixed-local-first}--\eqref{eq:mixed-local-last}, subject to
			the hypotheses stated there.
		\end{enumerate}
		The case $y=\omega^{n+1}$ in \Cref{subsec:mixed-loop-powers} produces no
		additional representative beyond the cases already listed.  No other bracket
		of type $(0,+)$ occurs.
	\end{proposition}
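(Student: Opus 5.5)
The plan is to follow the pattern of the $(0,0)$ classification. By \Cref{MSBF}, every bracket of type $(0,+)$ is supported on the single concatenation $y=\gamma_1\cdots\gamma_{n+1}$, and its value is
\[
\Sub_x^a(b)-\mC_x^a(y;b).
\]
So the task reduces to classifying when this element of $A$ is nonzero. I would first remove the cases already covered by the corollaries.

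The first of these is $a=x$, which is \Cref{01-x-equal-to-a}. The second is $a=e_{s(x)}$, which is \Cref{cor:mixed-loop-deletion}. In all remaining cases $a$ is nontrivial and $a\neq x$. Then the term $\one_{x=a}\Int_x(y)b$ vanishes, and only the two endpoint terms of $\mC_x^a(y;b)$ can survive. By \Cref{mixed-boundary-summand-eq-cases}, these require $\para{x}{a}=\para{\gamma_1}{p\gamma_1}$ or $\para{x}{a}=\para{\gamma_{n+1}}{\gamma_{n+1}q}$.

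Next, I would split according to the shape of $y$ using \Cref{lem:special-loop-dichotomy} and \Cref{gamma-cases}. There are four possibilities:
\begin{itemize}
\item $y$ has no repeated arrow;
\item $y=\omega^{n+1}$ (which includes the special-loop powers);
\item $y=\tau^w$;
\item $y=\tau^w\tau_1\cdots\tau_r$.
\end{itemize}
Within each shape, the substitution term $\Sub_x^a(b)$ is classified by \Cref{lem:replacement-criterion}, according to $\ell(b)$:
\begin{itemize}
\item If $\ell(b)=1$, then $b=x$ and the substitution gives $a$. This is \Cref{01-sub-only}, provided the endpoint terms vanish; \Cref{rem:substitution-only-conditions} lists when they do.
\item If $\ell(b)>1$, a nonzero substitution comes from a left or right endpoint extension, or from an interior idempotent absorption.
\end{itemize}
The endpoint extensions with $\mC=0$ are \Cref{cor:mixed-endpoint-extension-general}. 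The genuinely new cases are those where a substitution term and an endpoint term of $\mC$ are simultaneously nonzero and must be compared. This happens when $x=\gamma_1$ and $a=p\gamma_1\mu$ while $\gamma_1$ also occurs in $b$, and in the right-hand analogue. The same applies to interior absorption cases, which would be listed as \eqref{eq:mixed-local-first}--\eqref{eq:mixed-local-last}.

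For these comparisons I would use the vertex-valency conditions (G2)--(G4) and \Cref{loop-cycle-unique}. Together with $b\parallel y$, they pin down the local quiver configuration: $p$ or $q$ is the unique cycle in $\mB$ at the relevant vertex. One then checks directly whether $\pi(pb)$ and $\pi(L_x(a)b)$ coincide and cancel, or survive with distinct values. The aim is a short explicit list with hypotheses.

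For $y=\omega^{n+1}$, \Cref{x-is-loop} forces $a\in\{\omega,e_i\}$ whenever $x=\omega$. These cases fall back into \Cref{01-x-equal-to-a} or \Cref{cor:mixed-loop-deletion}. If $x\neq\omega$, then $\mC=0$ and the substitution is already covered by the earlier items. This shows that the loop-power case gives nothing new.

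The main obstacle will be the periodic cases $y=\tau^w$ and $y=\tau^w\tau_1\cdots\tau_r$. There $x$ may equal both $\gamma_1$ and $\gamma_{n+1}$, for instance when $r=1$ or $y=\tau^w$ is closed at its ends. In that situation both endpoint terms of $\mC$ appear together with the substitution term, and the cancellations must be tracked with the idempotent reduction $\ep^2=\ep$. My first attempt would be to show that, in these configurations, $b\parallel y$ is a cycle in $\mB$ containing at most one arrow of $\tau$ (by \Cref{basis-case}). This should force all but one of the three terms to vanish, so that the result reduces to a previously listed case.

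Finally, I would close the proof by exhaustion. The cases $\ell(a)=0$, $a=x$, $\ell(a)=1$ with $a\neq x$ (where \Cref{x-is-loop} gives parallel arrows and only \Cref{01-sub-only} survives), and $\ell(a)>1$ cover everything. Skew-symmetry \eqref{sym} is not needed beyond fixing the order of the inputs as $(0,+)$.
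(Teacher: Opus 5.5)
Your skeleton matches the paper's: reduce via \Cref{MSBF}, remove \Cref{01-x-equal-to-a,cor:mixed-loop-deletion}, then split along the shapes of $y$ in \Cref{gamma-cases}. The analysis you propose inside each shape, however, has a genuine hole. You drive it entirely by the nonvanishing of $\Sub_x^a(b)$, split only into $\ell(b)=1$ and $\ell(b)>1$, and use $\mC_x^a(y;b)$ only as something to compare against a nonzero substitution. You never examine the configurations with $\Sub_x^a(b)=0$ but $\mC_x^a(y;b)\neq0$, where the bracket is $-\para{y}{\mC_x^a(y;b)}$. These are exactly where the new representatives come from:
\begin{enumerate}
\item $\para{x}{a}=\para{\gamma_1}{p\gamma_1}$ with $b=e_{s(y)}$ and $y$ closed, giving $-\para{y}{p}$. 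This is \eqref{eq:mixed-local-first} itself.
\item The same cochain with $b=p\in\Sp$. Here $\ell(b)=1$ but $b\neq x$, so your claim that $\ell(b)=1$ forces $b=x$ fails once $\mC$ is taken into account.
\item The right-hand mirrors of the first two, giving $-\para{y}{q}$.
\item For $y=\tau^w$ and $y=\tau^w\tau_1\cdots\tau_r$, the analogous terms $-\para{y}{p}$ and $-\para{y}{q}$ with trivial $b$.
\item For $y=\tau^w$, the term $-\para{y}{b}$ for a special loop $b$ with $a=b\tau_1$ or $a=\taue b$.
\end{enumerate}
None of these lies in items (1)--(4). So your expectation that the periodic shapes collapse to previously listed cases is also wrong.

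Conversely, the configurations you call genuinely new, with substitution and an endpoint term both nonzero, contribute nothing. Since $a\neq x$ is nontrivial, $x$ is not a loop and occurs at most once in $b$. Suppose $x=\gamma_1$, $a=p\gamma_1$ and $\Sub_x^a(b)\neq0$. Then either $\beta_1=\gamma_1$, and both terms equal $\pi(pb)$. Or $p=\beta_{i_0-1}\in\Sp$, and \textup{(G2)} at $s(b)=s(\gamma_1)$ forces $i_0=2$ and $b=p\gamma_1\cdots$, so both terms equal $b$. In either case they cancel.

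The obstacle you anticipate in the periodic case also does not arise. The terms $\pi(L_x(a)b)$ and $\pi(bR_x(a))$ can be simultaneously nonzero only if $a$ both starts and ends with $x$, which by \Cref{basis-case} forces $a=x$, a case already removed.

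The correct organization is as follows. For $a\notin\{x,e_{s(x)}\}$, a nonzero bracket is either $\para{y}{\Sub_x^a(b)}$ with $\mC_x^a(y;b)=0$, or $-\para{y}{\mC_x^a(y;b)}$ with $\Sub_x^a(b)=0$. The second family must be enumerated shape by shape, with $\ell(b)\in\{0,1\}$ explicitly included.
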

	
	To verify \Cref{prop:mixed-classification}, we follow the four possible forms
	of $y$ in \Cref{gamma-cases}.  The cases already covered by
	\Cref{01-x-equal-to-a,01-sub-only,cor:mixed-loop-deletion} will not be
	repeated below, and the endpoint extensions in
	\Cref{cor:mixed-endpoint-extension-general} will be cited rather than listed
	again.
	
	Unless one of these previously isolated cases is explicitly invoked, we assume
	$a\neq x$ and $\ell(a)>0$.  By \Cref{x-is-loop}, the arrow $x$ is then not a
	loop.  If $b=x$ and $\mC_x^a(y;x)=0$, the bracket is already covered by
	\Cref{01-sub-only}; hence, whenever $\mC_x^a(y;b)=0$, the remaining local
	analysis may assume $\ell(b)>1$.
	%==============================
	\subsection{The case where \texorpdfstring{$y$}{y} has no repeated arrows}\label{subsec:mixed-no-repetition}
	
	Assume that
	$y=\gamma_1\cdots\gamma_{n+1}\in\Gamma_{n+1}$
	has no repeated arrows.  Since each arrow occurs at most once in $y$, one has
	\[
	\mC_x^a(y;b)=
	\begin{cases}
		b, & \text{if } x=a=\gj\text{ for some }1\leq j_0\leq n+1
		\text{ (identity case)},\\
		pb, & \text{if }\para{x}{a}=\para{\gamma_1}{p\gamma_1},\\
		bq, & \text{if }\para{x}{a}=\para{\gamma_{n+1}}{\gamma_{n+1}q},\\
		0, & \text{otherwise.}
	\end{cases}
	\]
	Here $p$ and $q$ are nontrivial left and right path factors, respectively;
	in the displayed endpoint configurations they are closed at the corresponding
	endpoint whenever the products $pb$ and $bq$ are nonzero.  Although $y$ has no
	repeated arrows, the value path $b\in\mB$ may still contain an arrow of $y$.
	We first evaluate the two endpoint terms in $\mC_x^a(y;b)$ and then turn to
	the substitution terms.
	
	%===========================================
	Assume first that
	$\para{x}{a}=\para{\gamma_1}{p\gamma_1}$,
	where $p=p_1\cdots p_{\ell(p)}$ is a nontrivial left path factor and
	$p\gamma_1\in\mB$.  The arrow $\gamma_1$ cannot be a loop.  Indeed, if it
	were, finite-dimensionality would imply $\gamma_1^2\in I^S$, whereas
	$\gamma_1\gamma_2\in I^S$ because $y\in\Gamma_{n+1}$ and $n\geq1$.
	Condition \textup{(G3)}, applied to
	$\gamma_1^2,\gamma_1\gamma_2\in I^S$, would then force
	$\gamma_2=\gamma_1$, contradicting the assumption that $y$ has no repeated arrows.  Moreover,
	$p_1\neq\gamma_1$ by \Cref{basis-case}, since $p\gamma_1\in\mB$.
	
	A cancellation with the substitution term is possible only when $\gamma_1$
	occurs in $b$.  If such an occurrence exists, it is unique by
	\Cref{basis-case}.  The local configuration is
	\[
	\begin{tikzcd}
		1 && \bullet
		\arrow["p", dotted, from=1-1, to=1-1, loop, in=145, out=215, distance=10mm]
		\arrow["{\gamma_1}", from=1-1, to=1-3]
	\end{tikzcd}
	\]
	If $\ell(b)>0$, then the first arrow of $b$ is either $p_1$ or $\gamma_1$.
	We distinguish the three possible lengths of $b$.
	
	\begin{enumerate}
		\item Suppose $\ell(b)=0$.  Then $b=e_{s(y)}$ and $y$ is a closed relation
		concatenation.  The substitution term vanishes and
		$\mC_x^a(y;b)=p$.  Thus
		\[
		\begin{tikzcd}
			& \bullet & \\
			\bullet && \cdots \\
			& \bullet
			\arrow[from=1-2, to=2-3]
			\arrow["{\gamma_1}", from=2-1, to=1-2]
			\arrow["{p}",dotted, from=2-1, to=2-1, loop, in=145, out=215, distance=10mm]
			\arrow[from=2-3, to=3-2]
			\arrow["{\gamma_{n+1}}", from=3-2, to=2-1]
		\end{tikzcd}
		\]
		and
		\begin{align}
			l_2(\para{\gamma_1}{p\gamma_1}\otimes\para{y}{e_{s(y)}})
			=-\para{y}{p}.
			\label{eq:mixed-local-first}
		\end{align}
		
		\item Suppose $\ell(b)=1$.  Then $b=\gamma_1$ or $b=p_1$.
		\begin{enumerate}
			\item If $b=\gamma_1$, the substitution term is $pb$.  The local
			configuration is
			\[
			\begin{tikzcd}
				\bullet && \bullet
				\arrow["p", dotted, from=1-1, to=1-1, loop, in=145, out=215, distance=10mm]
				\arrow["{\gamma_1=b}", from=1-1, to=1-3]
			\end{tikzcd}
			\]
			In this case,
			\[
			\Sub_x^a(b)-\mC_x^a(y;b)=pb-pb=0.
			\]
			
			\item If $b=p_1$, then $pb\neq0$ forces $p=p_1=b$ to be a special
			loop.  Since $x=\gamma_1\neq b$, the substitution term vanishes.  The
			local configuration is
			\[
			\begin{tikzcd}
				& \bullet & \\
				\bullet && \cdots \\
				& \bullet
				\arrow[from=1-2, to=2-3]
				\arrow["{\gamma_1}", from=2-1, to=1-2]
				\arrow["{p=b}", from=2-1, to=2-1, loop, in=145, out=215, distance=10mm]
				\arrow[from=2-3, to=3-2]
				\arrow["{\gamma_{n+1}}", from=3-2, to=2-1]
			\end{tikzcd}
			\]
			and hence
			\begin{align}
				l_2(\para{\gamma_1}{p\gamma_1}\otimes\para{y}{p})
				=-\para{y}{p}.
			\end{align}
		\end{enumerate}
		
		\item Suppose $\ell(b)>1$.
		\begin{enumerate}
			\item If $\beta_1=\gamma_1$, then the occurrence of $x$ is the left
			endpoint of $b$.  The local configuration is
			\[
			\begin{tikzcd}
				\bullet && \bullet
				\arrow["p", dotted, from=1-1, to=1-1, loop, in=145, out=215, distance=10mm]
				\arrow["{\beta_1=\gamma_1}", from=1-1, to=1-3]
			\end{tikzcd}
			\]
			and again the two terms cancel:
			\[
			\Sub_x^a(b)-\mC_x^a(y;b)=pb-pb=0.
			\]
			
			\item If $\beta_1=p_1$, then the simultaneous nonvanishing of
			$p\gamma_1$ and $pb$ forces $p=\beta_1$ to be a special loop and
			$\beta_2=\gamma_1$.  Thus
			\[
			\begin{tikzcd}
				\bullet && \bullet
				\arrow["{p=\beta_1}", from=1-1, to=1-1, loop, in=145, out=215, distance=10mm]
				\arrow["{\beta_2=\gamma_1}", from=1-1, to=1-3]
			\end{tikzcd}
			\]
			and $pb=b$.  The substitution term is also $b$, so
			\[
			\Sub_x^a(b)-\mC_x^a(y;b)=b-b=0.
			\]
		\end{enumerate}
	\end{enumerate}
	
	%========================
	Next assume that
	$\para{x}{a}=\para{\gamma_{n+1}}{\gamma_{n+1}q}$,
	where $q$ is a nontrivial right path factor and $\gamma_{n+1}q\in\mB$.
	The argument is obtained from the preceding case by left-right symmetry.  After the identity case is removed,
	all cases with $\ell(b)>0$ cancel except for the special-loop value below.  The
	two nonzero representatives are therefore the following.
	
	\begin{enumerate}
		\item If $b=e_{t(y)}$, then $s(y)=t(y)$ and
		\[
		\begin{tikzcd}
			& \bullet & \\
			\bullet && \cdots \\
			& \bullet
			\arrow[from=1-2, to=2-3]
			\arrow["{\gamma_1}", from=2-1, to=1-2]
			\arrow["{q}",dotted, from=2-1, to=2-1, loop, in=145, out=215, distance=10mm]
			\arrow[from=2-3, to=3-2]
			\arrow["{\gamma_{n+1}}", from=3-2, to=2-1]
		\end{tikzcd}
		\]
		with
		\begin{align}
			l_2(\para{\gamma_{n+1}}{\gamma_{n+1}q}
			\otimes\para{y}{e_{t(y)}})
			=-\para{y}{q}.
		\end{align}
		
		\item If $b=q$ is a special loop, then
		\[
		\begin{tikzcd}
			& \bullet & \\
			\bullet && \cdots \\
			& \bullet
			\arrow[from=1-2, to=2-3]
			\arrow["{\gamma_1}", from=2-1, to=1-2]
			\arrow["{q=b}", from=2-1, to=2-1, loop, in=145, out=215, distance=10mm]
			\arrow[from=2-3, to=3-2]
			\arrow["{\gamma_{n+1}}", from=3-2, to=2-1]
		\end{tikzcd}
		\]
		and
		\begin{align}
			l_2(\para{\gamma_{n+1}}{\gamma_{n+1}q}\otimes\para{y}{q})
			=-\para{y}{q}.
		\end{align}
	\end{enumerate}
	%=========================
	It remains to consider the substitution terms.  We now impose
	$\mC_x^a(y;b)=0$
	and $\Sub_x^a(b)\neq0$.
	Then $\ell(b)\ge1$, and there is a unique index $i_0$ such that
	$x=\bi$.  If $\ell(b)=1$, then $b=x$ and the bracket is covered by
	\Cref{01-sub-only}:
	\[
	l_2(\para{x}{a}\otimes\para{y}{x})=\para{y}{a}.
	\]
	Hence we assume $\ell(b)>1$.  By \Cref{lem:replacement-criterion} and the
	standing assumptions, the remaining nonzero substitutions are the endpoint alternatives in
	\Cref{lem:replacement-criterion} or arise from idempotent absorption.
	
	\begin{enumerate}
		\item Consider a left endpoint extension
		$\para{x}{a}=\para{\beta_1}{p\beta_1\mu}$, where
		$\mu\in\mR_1=\{e_{t(\beta_1)}\}\cup
		\{\beta_2 \mid \beta_2\in\Sp\}$ and $p=p_1\cdots p_{\ell(p)}$ is nontrivial.
		\begin{enumerate}
			\item First let $\mu=e_{t(\beta_1)}$, so $a=p\beta_1$.  The assumption $\mC_x^a(y;b)=0$ implies
			$\gamma_1\neq\beta_1$; otherwise $\pi(L_x(a)b)=pb\neq0$.
			Since $p\beta_1\in\mB$, \Cref{basis-case} gives
			$p_1\neq\beta_1$. Together with $\gamma_1\neq\beta_1$ and the fact
			that $y\parallel b$, condition \textup{(G2)} forces $\gamma_1=p_1$.
			The path $p$ cannot be a loop: otherwise $\gamma_1=p_1$ would also be
			a loop, and the relation $\gamma_1\gamma_2\in I^S$ would, by
			\textup{(G3)}, force $\gamma_2=\gamma_1$, contradicting the
			assumption of no repeated arrows. Thus $p$ is a left boundary cycle.  The local
			configuration is
			\[
			\begin{tikzcd}
				\bullet && \bullet \\
				\\
				\bullet && {t(b)=t(y)}
				\arrow["{x=\beta_1}", from=1-1, to=1-3]
				\arrow["{p_1=\gamma_1}", from=1-1, to=3-1]
				\arrow["b", dotted, from=1-3, to=3-3]
				\arrow["p", curve={height=-18pt}, dotted, from=3-1, to=1-1]
				\arrow["y"', dotted, from=3-1, to=3-3]
			\end{tikzcd}
			\]
			and
			\begin{align}
				l_2(\para{\beta_1}{p\beta_1}\otimes\para{y}{b})
				=\para{y}{pb},
			\end{align}
			where $\beta_1\neq\gamma_1$.
			
			\item Now suppose $\mu=\beta_2\in\Sp$, so
			$a=p\beta_1\beta_2$.  The local configuration is
			\[
			\begin{tikzcd}
				\bullet && \bullet \\
				\\
				{}
				\arrow["p", dotted, from=1-1, to=1-1, loop, in=145, out=215, distance=10mm]
				\arrow["{x=\beta_1}", from=1-1, to=1-3]
				\arrow["{\beta_2}", from=1-3, to=1-3, loop, in=55, out=125, distance=10mm]
			\end{tikzcd}
			\]
			The substitution introduces one additional copy of the special loop
			$\beta_2$, which is absorbed by $\beta_2^2=\beta_2$.  Since $\mC_x^a(y;b)=0$,
			\begin{align}
				l_2(\para{\beta_1}{p\beta_1\beta_2}\otimes\para{y}{b})
				=\para{y}{pb}.
			\end{align}
		\end{enumerate}
		
		\item The right endpoint extension is obtained by left-right symmetry.  Let
		$\para{x}{a}=\para{\bet}{\lambda\bet q}$, where
		$\lambda\in\mL_{\ell(b)}=\{e_{s(\bet)}\}\cup
		\{\beta_{\ell(b)-1} \mid \beta_{\ell(b)-1}\in\Sp\}$ and $q$ is nontrivial.  The two representatives are
		\begin{align}
			l_2(\para{\bet}{\bet q}\otimes\para{y}{b})
			&=\para{y}{bq},
			 \text{ where } \bet\neq\gamma_{n+1},\\
			l_2(\para{\bet}{\beta_{\ell(b)-1}\bet q}\otimes\para{y}{b})
			&=\para{y}{bq},
			 \text{ where } \beta_{\ell(b)-1}\in\Sp.
		\end{align}
		In the first row, $q$ is a right boundary cycle; in the second, it may be a
		right boundary loop or cycle.
		
		\item It remains to consider idempotent absorption.  Let
		$1\le i_0\le\ell(b)$ and
		$\para{x}{a}=\para{\bi}{\lambda\bi\mu}$, where
		$\lambda\in\mL_{i_0}$, $\mu\in\mR_{i_0}$, and $\lambda$ and $\mu$ are not both trivial.  The local configuration is
		\[
		\begin{tikzcd}
			\bullet && \bullet \\
			\\
			\bullet && \bullet
			\arrow["{\beta_{i_0-1}}", from=1-1, to=1-1, loop, in=100, out=170, distance=10mm]
			\arrow["{x=\bi}", from=1-1, to=1-3]
			\arrow["{\beta_{i_0+1}}", from=1-3, to=1-3, loop, in=10, out=80, distance=10mm]
			\arrow[dotted, from=1-3, to=3-3]
			\arrow[dotted, from=3-1, to=1-1]
			\arrow["y"', dotted, from=3-1, to=3-3]
		\end{tikzcd}
		\]
		Two endpoint configurations are excluded by the assumption $\mC_x^a(y;b)=0$.  If
		$\beta_1\in\Sp$ and
		$\para{x}{a}=\para{\beta_2}{\beta_1\beta_2}$, then
		$\gamma_1=\beta_2$ and
		$\pi(L_x(a)b)=\beta_1b=b\neq0$.  The right endpoint configuration with
		$\bet\in\Sp$ and
		$\para{x}{a}=\para{\beta_{\ell(b)-1}}{\beta_{\ell(b)-1}\bet}$ is
		symmetric and satisfies $\pi(bR_x(a))=b\bet=b\neq0$.
		
		The remaining terms arising from idempotent absorption are
		\begin{equation}\label{eq:mix-idempotent-absorption}
			\begin{aligned}
				l_2(\para{\bi}{\bi\beta_{i_0+1}}\otimes\para{y}{b})
				&=\para{y}{b},
				&&\text{where } 1\le i_0<\ell(b)-1,\\
				l_2(\para{\bi}{\beta_{i_0-1}\bi}\otimes\para{y}{b})
				&=\para{y}{b},
				&&\text{where } 2<i_0\le\ell(b),\\
				l_2(\para{\bi}{\beta_{i_0-1}\bi\beta_{i_0+1}}\otimes\para{y}{b})
				&=\para{y}{b},
				&&\text{where } 2\le i_0\le\ell(b)-1.
			\end{aligned}
		\end{equation}
		In the first row $\beta_{i_0+1}\in\Sp$; in the second row
		$\beta_{i_0-1}\in\Sp$; and in the third row both adjacent arrows are
		special loops.
	\end{enumerate}
	
	\begin{remark}\label{rem:mixed-idempotent-absorption-general}
		The formulas in \eqref{eq:mix-idempotent-absorption} depend only on the local
		shape of $b\in\mB$ and remain valid whenever $\mC_x^a(y;b)=0$,
		independently of which of the four forms of $y$ occurs.
	\end{remark}
	%==========================
	\subsection{The case \texorpdfstring{$y=\omega^{n+1}$}{y = omega power n+1}}\label{subsec:mixed-loop-powers}
	
	Assume that
	$y=\omega^{n+1}$,
	where $\omega$ is a loop.  Since $y\parallel b$ and $A$ is finite-dimensional,
	the value path $b\in\mB$ is either $e_{s(\omega)}$ or $\omega$.  Any nonzero
	bracket of type $(0,+)$ with $y=\omega^{n+1}$ must have $x=\omega$.  If $a=x$, the bracket is covered by \Cref{01-x-equal-to-a}; if
	$a=e_{s(x)}$, it is covered by \Cref{cor:mixed-loop-deletion}.  By
	\Cref{x-is-loop}, there is no third possibility.  Hence the case $y=\omega^{n+1}$ contributes no new representative, whether $\omega$ is a special loop
	(with $\omega^2=\omega$ in $A$) or an ordinary relation loop (with
	$\omega^2=0$ in $A$).
	
	%===========================
	\subsection{The case \texorpdfstring{$y=\tau^w$}{y = tau power w}}\label{subsec:mixed-primitive-powers}
	
	Assume that $y=\tau^w$, where $w>1$ and
	$\tau=\tau_1\cdots\taue$ is a primitive relation cycle.  Then
	$\ell(\tau)w=n+1$.  The underlying local cycle is
	\[
	\begin{tikzcd}
		& \cdots & \\
		\bullet & \tau & \bullet \\
		& \bullet
		\arrow[from=1-2, to=2-3]
		\arrow[from=2-1, to=1-2]
		\arrow["\taue", from=2-3, to=3-2]
		\arrow["{\tau_1}", from=3-2, to=2-1]
	\end{tikzcd}
	\]
	The analysis is divided according to the length of the value path $b$.
	
	\subsubsection{$\ell(b)=0$}
	
	Let $b=e_{s(\tau)}$.  The substitution term is zero.  After the case
	$a=x$ is removed, $\mC_x^a(y;b)$ can be nonzero only at the first or last
	arrow of the displayed copy of $\tau$.  Hence
	\begin{align}
		l_2(\para{\tau_1}{p\tau_1}\otimes\para{y}{e_{s(\tau)}})
		&=-\para{y}{p},\\
		l_2(\para{\taue}{\taue q}\otimes\para{y}{e_{s(\tau)}})
		&=-\para{y}{q}.
	\end{align}
	Here $p$ and $q$ are the corresponding boundary loops or cycles, when they
	exist.
	
	%============
	\subsubsection{$\ell(b)=1$}
	
	Then $b$ is a loop based at $s(\tau)$.  The local configuration is
	\[
	\begin{tikzcd}
		& \cdots & \\
		\bullet & \tau & \bullet \\
		& \bullet
		\arrow[from=1-2, to=2-3]
		\arrow[from=2-1, to=1-2]
		\arrow["\taue", from=2-3, to=3-2]
		\arrow["{\tau_1}", from=3-2, to=2-1]
		\arrow["b", from=3-2, to=3-2, loop, in=235, out=305, distance=10mm]
	\end{tikzcd}
	\]
	After the case $a=x$ is omitted, the nonzero possibilities are
	\begin{align}
		l_2(\para{\tau_1}{b\tau_1}\otimes\para{y}{b})
		&=
		\begin{cases}
			-\para{y}{b}, & \text{if } b^2=b,\\
			0, & \text{if } b^2=0,
		\end{cases}\\
		l_2(\para{\taue}{\taue b}\otimes\para{y}{b})
		&=
		\begin{cases}
			-\para{y}{b},&\text{if }b^2=b,\\
			0,&\text{if }b^2=0.
		\end{cases}
	\end{align}
	%============
	\subsubsection{$\ell(b)>1$}
	
	Since $y\parallel b$, \Cref{loop-cycle-unique} implies that $b$ is the unique
	cycle in $\mB$ based at $s(\tau)$, and finite-dimensionality gives $b^2=0$.
	Moreover, neither $\beta_1$ nor $\bet$ is a loop.  We compare the endpoints of
	$b$ with those of the primitive relation cycle.
	
	\begin{enumerate}
		\item Suppose $\beta_1\neq\tau_1$ and $\bet\neq\taue$.  The local
		configuration is
		\[
		\begin{tikzcd}
			\bullet &&&& \bullet \\
			&  & \bullet & \\
			\bullet &&&& \bullet
			\arrow[dotted, from=1-1, to=3-1]
			\arrow[dotted, from=1-5, to=3-5]
			\arrow["{\beta_1}"', from=2-3, to=1-1]
			\arrow["{\tau_1}", from=2-3, to=1-5]
			\arrow["\bet"', from=3-1, to=2-3]
			\arrow["\taue", from=3-5, to=2-3]
		\end{tikzcd}
		\]
		Apart from $a=x$, the only possible nonzero terms in $\mC_x^a(y;b)$ come
		from $\para{\tau_1}{p\tau_1}$ and $\para{\taue}{\taue q}$.  By uniqueness of
		the cycle in $\mB$, the corresponding boundary cycles are $p=b=q$, and their
		contributions vanish because $b^2=0$.  Thus only substitution terms
		remain.
		
		Let $x=\bi$, where $1\leq i_0\leq\ell(b)$. Since no further endpoint
		extension of $b$ is possible and neither $\beta_1$ nor $\bet$ is a
		loop, the remaining brackets are the idempotent-absorption cases
		\begin{equation}\label{mix-cycle-idempotent-absorption}
			\begin{aligned}
				l_2(\para{\bi}{\bi\beta_{i_0+1}}\otimes\para{y}{b})
				&=\para{y}{b},
				&&\text{where }1\le i_0<\ell(b)-1,\\
				l_2(\para{\bi}{\beta_{i_0-1}\bi}\otimes\para{y}{b})
				&=\para{y}{b},
				&&\text{where }2<i_0\le\ell(b),\\
				l_2(\para{\bi}{\beta_{i_0-1}\bi\beta_{i_0+1}}\otimes\para{y}{b})
				&=\para{y}{b},
				&&\text{where }3\le i_0\le\ell(b)-2.
			\end{aligned}
		\end{equation}
		Here $\beta_{i_0-1}$ and/or $\beta_{i_0+1}$ are special loops whenever
		they occur in the corresponding row.
		
		\item Suppose $\beta_1=\tau_1$ and $\bet\neq\taue$.  Since
		$\taue\tau_1\in I^S$ and $\bet\neq\taue$, condition
		\textup{(G3)} gives
		$\bet\beta_1=\bet\tau_1\notin I^S$.
		The path $b$ is a cycle in $\mB$, so the only new junction in a product of
		two consecutive copies of $b$ is $\bet\beta_1$.  Hence every power
		$b^N\in\mB$ and is therefore nonzero in $A$, contradicting
		finite-dimensionality.  Thus this endpoint configuration cannot occur.
		
		\item The case $\beta_1\neq\tau_1$ and $\bet=\taue$ is excluded by the
		left-right symmetric argument.
		
		\item Suppose $\beta_1=\tau_1$ and $\bet=\taue$.  The local configuration
		is
		\[
		\begin{tikzcd}
			\bullet && \bullet \\
			\\
			\bullet
			\arrow["{\tau_1=\beta_1}", from=1-1, to=1-3]
			\arrow[shift right, dotted, from=1-3, to=3-1]
			\arrow[shift left, dotted, from=1-3, to=3-1]
			\arrow["{\taue=\bet}", from=3-1, to=1-1]
		\end{tikzcd}
		\]
		Again $b$ is the unique cycle in $\mB$ and $b^2=0$.  No left or right
		boundary loop or cycle can occur; hence the only nonzero brackets are those in
		\eqref{mix-cycle-idempotent-absorption}.
	\end{enumerate}
	%===========================
	\subsection{The case \texorpdfstring{$y=\tau^w\tau_1\cdots\tau_r$}{y = tau power w with suffix}}\label{subsec:mixed-suffix-powers}
	
	Assume that
	$y=\tau^w\tau_1\cdots\tau_r$, where $1\le r<\ell(\tau)$ and
	$\tau=\tau_1\cdots\taue$ is a primitive relation cycle.  By
	\Cref{cycle-not-has-loop}, no arrow of $\tau$ is a loop.  The underlying local
	configuration is
	\[
	\begin{tikzcd}
		\bullet && \bullet \\
		& \tau \\
		\bullet && \bullet
		\arrow["{\tau_1}", from=1-1, to=1-3]
		\arrow[dotted, from=1-3, to=3-3]
		\arrow[dotted, from=3-1, to=1-1]
		\arrow["{\tau_r}", from=3-3, to=3-1]
	\end{tikzcd}
	\]
	The endpoint structure differs substantially according to whether $r=1$ or
	$r>1$.
	
	\subsubsection{$r=1$}
	
	Here $b\parallel y$ is equivalent to $b\parallel\tau_1$.
	The value paths in $\mB$ are described relative to a chosen
	connecting path. 
	Starting from a nontrivial path in $\mB$ parallel to $\tau_1$, remove, whenever possible,
	a nontrivial cycle in $\mB$ from its left end and from its right end, as long as a
	nontrivial connecting path remains.  Denote the resulting connecting path by
	$\rho$.  We allow $\rho=\tau_1$; any other choice is called an alternative
	connector.
	
	By \textup{(G2)}, there are at most two possible first arrows at
	$s(\tau_1)$. Once the first arrow has been chosen, \textup{(G4)} uniquely
	determines each subsequent arrow as long as the path remains in $\mB$.
	Hence there are at most two possible connectors.  For a fixed connector $\rho$, let $p$ and $q$ denote the
	possible nontrivial cycles based at $s(\tau_1)$ and $t(\tau_1)$,
	respectively, for which
	\[
	p\rho\in\mB,
	\qquad
	\rho q\in\mB.
	\]
	By \Cref{loop-cycle-unique}, each of $p$ and $q$ is unique whenever it exists,
	but its availability depends on the chosen connector.  Schematically, the
	local configuration is
	\[
	\begin{tikzcd}
		{s(\tau_1)} && {t(\tau_1)} \\
		\\
		\bullet && \bullet
		\arrow["p", dotted, from=1-1, to=1-1, loop, in=100, out=170, distance=10mm]
		\arrow["{\tau_1}", shift left, from=1-1, to=1-3]
		\arrow["\rho"', shift right, dotted, from=1-1, to=1-3]
		\arrow["q", dotted, from=1-3, to=1-3, loop, in=10, out=80, distance=10mm]
		\arrow["{\tau_2}", from=1-3, to=3-3]
		\arrow["\taue", from=3-1, to=1-1]
		\arrow[dotted, from=3-3, to=3-1]
	\end{tikzcd}
	\]
	The diagram is schematic: the boundary pieces $p$ and $q$ are attached to the
	chosen connector $\rho$, and are not asserted to be simultaneously compatible
	with every connector shown.  For this fixed choice, put
	\[
	\Pi_2(\rho)=\{\rho,\ p\rho,\ \rho q,\ p\rho q\},
	\]
	with nonexistent terms or terms not lying in $\mB$ omitted.  Every nontrivial path in $\mB$
	parallel to $\tau_1$ belongs to $\Pi_2(\rho)$ for at least one connector
	$\rho$; if a path admits more than one such description, we fix one compatible
	description when it is used below.
	
	In particular, there is no connector-independent exclusivity between an
	alternative path and boundary cycles.  A boundary cycle may fail to concatenate
	with $\tau_1$ but concatenate to a path in $\mB$ with an alternative connector; for
	example, $p\tau_1=0$ does not imply $p\rho=0$.  This is why the endpoint-extension
	rules of \Cref{cor:mixed-endpoint-extension-general} are kept separate from the
	discussion for the present form of $y$ below.
	
	\begin{enumerate}
		\item Since
		$\gamma_1=\gamma_{n+1}=\tau_1$, after the case $a=x$ is removed one has
		$\mC_x^a(y;b)\neq0$ only for cochains of the forms
		\[
		\para{\tau_1}{p\tau_1}
		\qquad\text{and}\qquad
		\para{\tau_1}{\tau_1q},
		\]
		where here $p$ and $q$ are taken relative to the connector $\tau_1$, and
		the displayed value path lies in $\mB$.  For an arbitrary value path in $\mB$
		$b\parallel\tau_1$, \Cref{MSBF} gives
		\begin{align}
			l_2(\para{\tau_1}{p\tau_1}\otimes\para{y}{b})
			&=
			\para{y}{\Sub_{\tau_1}^{p\tau_1}(b)-\pi(pb)},
			\label{eq:r1-left-correction-general}\\
			l_2(\para{\tau_1}{\tau_1q}\otimes\para{y}{b})
			&=
			\para{y}{\Sub_{\tau_1}^{\tau_1q}(b)-\pi(bq)}.
			\label{eq:r1-right-correction-general}
		\end{align}
		When $b$ is built from the connector $\tau_1$ and the corresponding
		substitution is defined, the two terms in each displayed difference coincide
		and cancel.  For an alternative connector, one instead uses the two displayed
		formulas directly; \Cref{MSBF} already accounts for every remaining term.
		
		\item After the cases with $\mC_x^a(y;b)\neq0$ have been treated, impose
		$\mC_x^a(y;b)=0$.  The left and right endpoint extensions
		for every $b\in\Pi_2(\rho)$ are exactly those in
		\Cref{cor:mixed-endpoint-extension-general} and are not repeated below.
		Likewise, the formulas in \eqref{eq:mix-idempotent-absorption} remain in force by
		\Cref{rem:mixed-idempotent-absorption-general}.  The remaining formulas record
		only convenient explicit reductions for the present form $y=\tau^w\tau_1\cdots\tau_r$.
		
		\item If $\ell(b)=1$, then $b=x$ in every nonzero substitution, and the
		bracket is covered by \Cref{01-sub-only}.  Hence we assume $\ell(b)>1$ in the
		remaining cases.
		
		\item Consider the connector $\rho=\tau_1$, and let $p$ and $q$ be boundary
		cycles compatible with this connector.  If $a=p\tau_1q$ and
		$b\in\Pi_2(\tau_1)\setminus\{\tau_1\}$, the explicit reductions are
		\begin{align}
			l_2(\para{\tau_1}{p\tau_1q}\otimes\para{y}{p\tau_1})
			&=
			\begin{cases}
				\para{y}{p\tau_1q},&\text{if }p^2=p,\\
				0,&\text{otherwise,}
			\end{cases}\\
			l_2(\para{\tau_1}{p\tau_1q}\otimes\para{y}{\tau_1q})
			&=
			\begin{cases}
				\para{y}{p\tau_1q},&\text{if }q^2=q,\\
				0,&\text{otherwise,}
			\end{cases}\\
			l_2(\para{\tau_1}{p\tau_1q}\otimes\para{y}{p\tau_1q})
			&=
			\begin{cases}
				\para{y}{p\tau_1q},&\text{if }p^2=p\text{ and }q^2=q,\\
				0,&\text{otherwise.}
			\end{cases}
		\end{align}
		The first two rows are endpoint-extension reductions with a possible
		idempotent absorption at the opposite endpoint; the third is the corresponding
		two-sided absorption case.
		
		\item Finally, let $\rho\neq\tau_1$ be an alternative connector and let
		$b\in\Pi_2(\rho)$.  By \Cref{lem:replacement-criterion}, once the direct
		substitution case has been removed, every nonzero substitution is either an
		endpoint extension from \Cref{cor:mixed-endpoint-extension-general} or an
		idempotent-absorption replacement from
		\eqref{eq:mix-idempotent-absorption}.  Thus there is no further
		$r=1$-specific substitution.  In particular, when $b=\rho$ and
		$\ell(\rho)>1$, ordinary endpoint extensions are allowed and are precisely
		those covered by \Cref{cor:mixed-endpoint-extension-general}.  Since $\rho$
		is reduced with respect to boundary-cycle removal, its first and last arrows
		are ordinary; an endpoint special loop would itself be a removable boundary
		cycle.  After excluding \Cref{cor:mixed-endpoint-extension-general}, the only remaining
		possibilities are the restricted idempotent-absorption cases in
		\eqref{mix-cycle-idempotent-absorption}.
	\end{enumerate}
	
	%=============
	\subsubsection{$r>1$}
	
	Assume first that $\ell(b)\geq1$; the case $\ell(b)=0$ will be treated
	separately in the first item below. Since $b\parallel y$, every such $b$ is a
	path in $\mB$ from $s(\tau_1)$ to $t(\tau_r)$. Starting from $b$, remove,
	whenever possible, a nontrivial cycle in $\mB$ based at $s(\tau_1)$ from its
	left end and a nontrivial cycle in $\mB$ based at $t(\tau_r)$ from its right
	end, as long as a nontrivial connecting path remains. Denote the resulting
	connecting path in $\mB$ by
	$\rho=\rho_1\cdots\rho_{\ell(\rho)}$.
	Thus $\rho$ is shortest relative to the removal of boundary cycles; it is not
	required to be globally shortest among all paths in $\mB$ with the same
	endpoints.
	
	The path $\rho$ is not necessarily unique. By \textup{(G2)}, there are at
	most two possible first arrows at $s(\tau_1)$. Once the first arrow has been
	chosen, \textup{(G4)} uniquely determines each subsequent arrow as long as
	the path remains in $\mB$. Hence there are at most two possible choices of
	$\rho$.
	
	For a fixed choice of $\rho$, let $p$ and $q$ denote the possible nontrivial
	boundary loops or cycles in $\mB$ based at $s(\tau_1)$ and $t(\tau_r)$,
	respectively, for which
	$p\rho\in\mB$, $\rho q\in\mB$.
	By \Cref{loop-cycle-unique}, each of $p$ and $q$ is unique whenever it exists,
	but its availability depends on the chosen $\rho$. The local configuration is
	\[
	\begin{tikzcd}
		\bullet && \bullet \\
		\\
		\bullet && \bullet
		\arrow["p", dotted, from=1-1, to=1-1, loop, in=100, out=170, distance=10mm]
		\arrow["{\tau_1}", from=1-1, to=1-3]
		\arrow["\rho"', curve={height=24pt}, dotted, from=1-1, to=3-1]
		\arrow[dotted, from=1-3, to=3-3]
		\arrow[dotted, from=3-1, to=1-1]
		\arrow["q", dotted, from=3-1, to=3-1, loop, in=190, out=260, distance=10mm]
		\arrow["{\tau_r}", from=3-3, to=3-1]
	\end{tikzcd}
	\]
	
	By \Cref{loop-cycle-unique,basis-case}, every nontrivial path in $\mB$ parallel
	to $y$ is obtained by adjoining at most one compatible boundary piece on each
	side of one of the possible choices of $\rho$. We therefore retain the
	notation
	\[
	\Pi_3=\{\rho,\ p\rho,\ \rho q,\ p\rho q\},
	\]
	with the convention that $\rho$ ranges over all possible choices, while $p$
	and $q$ are taken relative to that choice; nonexistent terms or terms not lying in $\mB$
	are omitted. The same value path may admit more than one such description.
	Whenever a particular $b\in\Pi_3$ is considered below, we fix one compatible
	description and use $\rho,p,q$ for the corresponding data.
	
	Since $a\neq x$, one has $\mC_x^a(y;b)\neq0$ only for
	$\para{x}{a}=\para{\tau_1}{p\tau_1}$ or
	$\para{x}{a}=\para{\tau_r}{\tau_rq}$.
	In the first case $p\tau_1\in\mB$. Whenever the same $p$ is compatible
	with the chosen $\rho$, one also has $p\rho\in\mB$. Hence both
	$p_{\ell(p)}\tau_1$ and $p_{\ell(p)}\rho_1$ lie outside $I^S$, and
	\textup{(G4)} gives $\rho_1=\tau_1$.
	Similarly, in the second case $\tau_rq\in\mB$, and whenever the same $q$ is
	compatible with the chosen $\rho$, one also has $\rho q\in\mB$. Thus both
	$\tau_rq_1$ and $\rho_{\ell(\rho)}q_1$ lie outside $I^S$, so
	\textup{(G4)} gives $\rho_{\ell(\rho)}=\tau_r$.
	In the cases below with $\mC_x^a(y;b)\neq0$, the nonvanishing of the
	corresponding projected product allows the description of $b$ to be chosen
	compatibly with the path factor occurring in that product.
	
	\begin{enumerate}
		\item Assume first that $\mC_x^a(y;b)\neq0$.
		\begin{enumerate}
			\item If $\ell(b)=0$, then $b=e_{s(y)}$ and
			$s(\tau_1)=t(\tau_r)$.  The local configuration is
			\[
			\begin{tikzcd}
				\bullet &&&& \bullet \\
				&& \bullet \\
				\bullet &&&& \bullet
				\arrow[dotted, from=1-1, to=3-1]
				\arrow["\theta"{xshift=-15pt,yshift=8pt}, curve={height=-12pt}, dotted, from=1-1, to=3-5]
				\arrow["\sigma"{xshift=-15pt,yshift=-8pt}, curve={height=-18pt}, dotted, from=1-5, to=3-1]
				\arrow[dotted, from=1-5, to=3-5]
				\arrow["{\tau_{r+1}}", from=2-3, to=1-1]
				\arrow["{\tau_1}", from=2-3, to=1-5]
				\arrow["\taue", from=3-1, to=2-3]
				\arrow["{\tau_r}", from=3-5, to=2-3]
			\end{tikzcd}
			\]
			Here one may write $p=\tau_{r+1}\theta\tau_r$ and
			$q=\tau_1\sigma\taue$ for suitable connecting paths $\theta$ and $\sigma$.  The substitution
			term is zero, and the two nonzero terms in $\mC_x^a(y;b)$ give
			\begin{align}
				l_2(\para{\tau_1}{p\tau_1}\otimes\para{y}{e_{s(y)}})
				&=-\para{y}{p},\\
				l_2(\para{\tau_r}{\tau_rq}\otimes\para{y}{e_{s(y)}})
				&=-\para{y}{q}.
			\end{align}
			We assume $\ell(b)\ge1$ from now on.
			
			\item Let $\para{x}{a}=\para{\tau_1}{p\tau_1}$, and suppose that $p$
			does not pass through $t(\tau_r)$.  Then $\rho_1=\tau_1$ and
			$b\in\Pi_3$.  The local configuration is
			\[
			\begin{tikzcd}
				\bullet && \bullet \\
				\\
				\bullet && \bullet
				\arrow["p", dotted, from=1-1, to=1-1, loop, in=100, out=170, distance=10mm]
				\arrow["{\tau_1}", from=1-1, to=1-3]
				\arrow["\rho"', curve={height=18pt}, dotted, from=1-1, to=3-1]
				\arrow[dotted, from=1-3, to=3-3]
				\arrow[dotted, from=3-1, to=1-1]
				\arrow["{\tau_r}", from=3-3, to=3-1]
			\end{tikzcd}
			\]
			For every such $b$, the two terms in \Cref{MSBF} cancel:
			\[
			l_2(\para{\tau_1}{p\tau_1}\otimes\para{y}{b})
			=\para{y}{pb}-\para{y}{pb}=0.
			\]
			
			\item Let $\para{x}{a}=\para{\tau_1}{p\tau_1}$, and suppose that $p$
			passes through $t(\tau_r)$.  Write $p=\eta \zeta$ as in
			\[
			\begin{tikzcd}
				\bullet && \bullet \\
				\\
				\bullet && \bullet
				\arrow["{\tau_1}", from=1-1, to=1-3]
				\arrow["\rho"', curve={height=12pt}, dotted, from=1-1, to=3-1]
				\arrow["\eta", curve={height=-18pt}, dotted, from=1-1, to=3-1]
				\arrow[dotted, from=1-3, to=3-3]
				\arrow[dotted, from=3-1, to=1-1]
				\arrow["\zeta", curve={height=-30pt}, dotted, from=3-1, to=1-1]
				\arrow["{\tau_r}", from=3-3, to=3-1]
			\end{tikzcd}
			\]
			Here $\rho$ begins with $\tau_1$, while $\eta$ does not contain
			$\tau_1$, and $b\in\{\rho,\eta,p\rho\}$.  The only additional
			value path is $b=\eta$, for which
			\[
			l_2(\para{\tau_1}{p\tau_1}\otimes\para{y}{\eta})
			=-\para{y}{p\eta}.
			\]
			Condition \textup{(G4)} forces the junction $\zeta\eta$ in
			$p\eta=\eta\zeta\eta$ to belong to $I^S$. Hence
			$p\eta=0$, and this bracket vanishes.
			
			\item The case
			$\para{x}{a}=\para{\tau_r}{\tau_rq}$ is excluded by the left-right
			symmetric argument and produces no further nonzero bracket.
		\end{enumerate}
		
		\item We now impose
		$\mC_x^a(y;b)=0$
		and
		$\Sub_x^a(b)\neq0$.
		Thus the two boundary cochains
		$\para{\tau_1}{p\tau_1}$ and $\para{\tau_r}{\tau_rq}$ are excluded, even
		though the boundary paths $p$ and $q$ themselves may still exist.  Let
		$x=\bi$, $1\le i_0\le\ell(b)$.
		
		\begin{enumerate}
			\item If $\ell(b)=1$, then $b=x$ and the bracket is covered by
			\Cref{01-sub-only}:
			\[
			l_2(\para{x}{a}\otimes\para{y}{x})=\para{y}{a}.
			\]
			Hence we assume $\ell(b)>1$ below.
			
			\item Consider a left endpoint extension
			$\para{x}{a}=\para{\beta_1}{p\beta_1\mu}$, where
			$\mu\in\mR_1=\{e_{t(\beta_1)}\}\cup
			\{\beta_2 \mid \beta_2\in\Sp\}$ and $p$ is nontrivial.
			
			If $\mu=\beta_2\in\Sp$, the additional special loop is absorbed and,
			under the assumption $\mC_x^a(y;b)=0$,
			\begin{align}
				l_2(\para{\beta_1}{p\beta_1\beta_2}\otimes\para{y}{b})
				=\para{y}{pb}.
			\end{align}
			
			Now let $\mu=e_{t(\beta_1)}$, so $a=p\beta_1$. The equality $\mC_x^a(y;b)=0$ forces $\beta_1\neq\tau_1$. Since $p\beta_1\in\mB$, \Cref{basis-case} gives $p_1\neq\beta_1$. Because $p_1$,
			$\beta_1$, and $\tau_1$ start at the same vertex, condition
			\textup{(G2)} therefore forces $p_1=\tau_1$. At the other endpoint,
			$p_{\ell(p)}\beta_1\notin I^S$ because $p\beta_1\in\mB$.
			Since $\beta_1\neq\tau_1$ are the two arrows starting at
			$s(\tau_1)$, condition \textup{(G4)} gives
			$p_{\ell(p)}\tau_1\in I^S$.
			Comparing this relation with $\taue\tau_1\in I^S$, condition
			\textup{(G3)} gives
			$p_{\ell(p)}=\taue$.
			Consequently,
			$p=\tau_1\sigma\taue$
			for a suitable path $\sigma$.
			The local configuration is
			\[
			\begin{tikzcd}
				\bullet && \bullet \\
				\bullet \\
				\bullet && \bullet
				\arrow["{\tau_1}", from=1-1, to=1-3]
				\arrow["b", curve={height=24pt}, dotted, from=1-1, to=3-1]
				\arrow["\sigma", dotted, from=1-3, to=2-1]
				\arrow[dotted, from=1-3, to=3-3]
				\arrow["\taue"', from=2-1, to=1-1]
				\arrow[dotted, from=3-1, to=2-1]
				\arrow["{\tau_r}", from=3-3, to=3-1]
			\end{tikzcd}
			\]
			and
			\begin{align}
				l_2(\para{\beta_1}{p\beta_1}\otimes\para{y}{b})
				=\para{y}{pb},
			\end{align}
			where $\beta_1\neq\tau_1$.
			
			\item The right endpoint extension is obtained by left-right symmetry.  In the
			non-idempotent case one has
			$q=\tau_{r+1}\sigma\tau_r$
			for a suitable path $\sigma$, and the local configuration is
			\[
			\begin{tikzcd}
				\bullet && \bullet \\
				\bullet \\
				\bullet && \bullet
				\arrow["{\tau_1}", from=1-1, to=1-3]
				\arrow["b", curve={height=24pt}, dotted, from=1-1, to=3-1]
				\arrow[dotted, from=1-3, to=3-3]
				\arrow[dotted, from=2-1, to=1-1]
				\arrow["\sigma", dotted, from=2-1, to=3-3]
				\arrow["{\tau_{r+1}}"', from=3-1, to=2-1]
				\arrow["{\tau_r}", from=3-3, to=3-1]
			\end{tikzcd}
			\]
			The resulting brackets are
			\begin{align}
				l_2(\para{\bet}{\bet q}\otimes\para{y}{b})
				&=\para{y}{bq},
				\text{ where } \bet\neq\gamma_{n+1},\\
				l_2(\para{\bet}{\beta_{\ell(b)-1}\bet q}\otimes\para{y}{b})
				&=\para{y}{bq},
				\text{ where } \beta_{\ell(b)-1}\in\Sp.
				\label{eq:mixed-local-last}
			\end{align}
			
			\item Finally, for every $b$ with $\ell(b)>1$, the remaining terms arising from idempotent absorption are precisely those in
			\eqref{eq:mix-idempotent-absorption}.
		\end{enumerate}
	\end{enumerate}
	
	%=============================================
	\begin{proof}[Proof of \Cref{prop:mixed-classification}]
		By \Cref{MSBF}, every bracket of type $(0,+)$ is determined by
		$\Sub_x^a(b)$ and $\mC_x^a(y;b)$.  The first four cases are given
		by
		\Cref{01-x-equal-to-a,cor:mixed-endpoint-extension-general,01-sub-only,cor:mixed-loop-deletion}.  For the remaining cases, the four possible forms
		of $y$ in \Cref{gamma-cases} exhaust all possibilities.  The case
		$y=\omega^{n+1}$ produces no additional bracket, while the other three forms
		are treated in
		\Cref{subsec:mixed-no-repetition,subsec:mixed-primitive-powers,subsec:mixed-suffix-powers} and give exactly
		\eqref{eq:mixed-local-first}--\eqref{eq:mixed-local-last}, subject to the
		hypotheses stated there.  Hence the list is exhaustive, and no other bracket
		of type $(0,+)$ can occur.
	\end{proof}
	
	A compact lookup table for brackets of type $(0,+)$ is given in
	\Cref{app:mixed-lookup}.
	%=============================================
	\section{Brackets of type \texorpdfstring{$(+,+)$}{(+,+)}}\label{sec:positive-positive-classification}
	
	This section gives explicit basis-level formulas for brackets of type $(+,+)$,
	that is,
	\[
	l_2:B^m\otimes B^n\longrightarrow B^{m+n},\quad m,n\geq1.
	\]
	We first record the general occurrence formula and the reductions that will be
	used in all cases.
	
	%===========================
	\subsection{General formula for brackets of type \texorpdfstring{$(+,+)$}{(+,+)}}
	
	Assume $m,n\geq1$ and let
	$\gamma=\gamma_1\cdots\gamma_{m+n+1}\in\Gamma_{m+n+1}$.
	For $z\in\Gamma_{m+1}$, $c,\xi\in\mB$, and $0\leq k\leq m$, define
	\[
	\mD_k^{(m,n)}(z,c;\gamma;\xi)=
	\begin{cases}
		\pi(pc),
		&\text{if }k=0,\ \xi=p\nu
		\text{ and } z=\nu\gamma_{n+2}\cdots\gamma_{m+n+1},\\
		c,
		&\text{if } 0<k<m,\ \xi=\nu
		\text{ and } z=\gamma_1\cdots\gamma_k\nu\gamma_{k+n+2}\cdots\gamma_{m+n+1},\\
		\pi(cq),
		&\text{if }k=m,\ \xi=\nu q
		\text{ and }z=\gamma_1\cdots\gamma_m\nu,\\
		0,&\text{otherwise.}
	\end{cases}
	\]
	Here $\nu$ is an arrow and $p,q$ may be trivial paths.
	The notation $\mD_k^{(n,m)}$ uses the same definition with $m$ and $n$ interchanged.
	
	\begin{proposition}\label{HOBF}
		For $f=\para{x}{a}\in B^m$, $g=\para{y}{b}\in B^n$, $m,n\geq1$, and every
		$\gamma\in\Gamma_{m+n+1}$,
		\begin{align*}
			l_2(f\otimes g)(\gamma)
			={}&\sum_{\substack{0\leq i\leq m\\
					\gamma_{i+1}\cdots\gamma_{i+n+1}=y}}
			(-1)^{i(n+2)}
			\mD_i^{(m,n)}(x,a;\gamma;b)\\
			&-(-1)^{mn}
			\sum_{\substack{0\leq j\leq n\\
					\gamma_{j+1}\cdots\gamma_{j+m+1}=x}}
			(-1)^{j(m+2)}
			\mD_j^{(n,m)}(y,b;\gamma;a).
		\end{align*}
		Consequently, a nonzero value can occur only when $y$ occurs as a contiguous
		subpath of $\gamma$ or $x$ occurs as a contiguous subpath of $\gamma$.
	\end{proposition}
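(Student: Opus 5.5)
We start from the general formula \eqref{eq:general-l2} specialized to $f=\para{x}{a}\in B^m$, $g=\para{y}{b}\in B^n$ with $m,n\geq1$, and evaluate each summand at a fixed $\gamma=\gamma_1\cdots\gamma_{m+n+1}\in\Gamma_{m+n+1}$. The sign factors $(-1)^{i(n+2)}$ and $-(-1)^{mn}(-1)^{j(m+2)}$ are already the ones in the statement, so the task is to identify the value of each composite with $\mD_i^{(m,n)}(x,a;\gamma;b)$ and with $\mD_j^{(n,m)}(y,b;\gamma;a)$. By the evident symmetry under $(x,a,m)\leftrightarrow(y,b,n)$, it suffices to treat the first sum.

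Fix $0\leq i\leq m$. The inner cochain $\para{y}{b}$ is evaluated on the block $\gamma_{i+1}\cdots\gamma_{i+n+1}$, which lies in $\Gamma_{n+1}$ because it is a subword of a relation concatenation. It is therefore nonzero only when this block equals $y$, and in that case its value is $b$. This gives the restriction on the summation range and the final ``consequently'' clause. After applying $\varpi$, we must compute
\[
G_{m+1}(\gamma_1\otimes\cdots\otimes\gamma_i\otimes\varpi(b)\otimes\gamma_{i+n+2}\otimes\cdots\otimes\gamma_{m+n+1}),
\]
a tensor with $m+1$ factors, which is then paired with $\para{x}{a}$. We read it off from the explicit formula for $G_{m+1}$ with $m+1\geq2$. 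The internal factors must combine with the endpoint arrows into an element of $\Gamma_{m+1}$, and only the first and last factors may be split as $L\alpha$ and $\beta R$.

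We then split into three cases according to the position of $b$.

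\emph{Interior case, $0<i<m$.} Here $b$ is an internal factor, so it must be a single arrow $\nu$ (if $b$ is trivial, $\varpi(b)=0$). The first and last factors are arrows $\gamma_1$ and $\gamma_{m+n+1}$, so $L=R=$ trivial. The output is $1\otimes z\otimes 1$ with $z=\gamma_1\cdots\gamma_i\nu\gamma_{i+n+2}\cdots\gamma_{m+n+1}$, provided $z\in\Gamma_{m+1}$. Pairing with $\para{x}{a}$ gives $a$ exactly when $x=z$.

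\emph{Left case, $i=0$.} Here $b$ is the first factor. Writing $b=p\nu$, the output is $p\otimes\nu\gamma_{n+2}\cdots\gamma_{m+n+1}\otimes1$, which pairs to $\pi(pa)$ when $x$ is this concatenation.

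\emph{Right case, $i=m$.} Symmetrically, writing $b=\nu q$, the value is $\pi(aq)$.

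This reproduces $\mD_i^{(m,n)}$ exactly.

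The main point needing care is the degenerate sizes. When $m=1$, there is no interior index, and both endpoint cases must be checked against the two-factor formula for $G_2$. Here the single factor $\varpi(b)$ sits at an end while the other end is an arrow whose split is trivial. We must also confirm that the requirement $z\in\Gamma_{m+1}$ is automatic once $z=x$, since $x\in\Gamma_{m+1}$. Finally, the outer coefficients are evaluated through $\pi$, so idempotent reductions for special loops are absorbed into $\pi(pa)$ and $\pi(aq)$, matching the convention in the definition of $\mD$.
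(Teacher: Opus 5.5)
Your proposal is correct and follows the paper's route. You start from \eqref{eq:general-l2} and observe that the inner cochain forces the block $\gamma_{i+1}\cdots\gamma_{i+n+1}$ to equal $y$; you then read off $G_{m+1}$ at the left boundary, at an interior single-arrow position, and at the right boundary, and obtain the second sum by exchanging $(x,a,m)$ and $(y,b,n)$. Your extra checks (the length count forcing interior factors to be arrows, the case $m=1$, and $z\in\Gamma_{m+1}$ being automatic once $z=x$) only make explicit what the paper's short proof leaves implicit.
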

	
	\begin{proof}
		Apply \Cref{eq:general-l2}.  In the first insertion direction,
		$\para{y}{b}$ can be nonzero only when the contiguous subpath
		$\gamma_{i+1}\cdots\gamma_{i+n+1}$ equals $y$.  Once this condition is imposed,
		the comparison map $G_{m+1}$ can reassemble the inserted value $b$ only at
		the left boundary, at an interior single-arrow overlap, or at the right
		boundary.  These are exactly the three nonzero cases in
		$\mD_i^{(m,n)}(x,a;\gamma;b)$.  The second insertion direction is obtained
		by exchanging $(x,a,m)$ and $(y,b,n)$ and multiplying by the global sign
		$-(-1)^{mn}$.
	\end{proof}
	%==============================
	\begin{corollary}
		\label{cor:higher-value-length-reduction}
		Let $f=\para{x}{a}\in B^m$ and $g=\para{y}{b}\in B^n$, where $m,n\geq1$.
		Then the following statements hold.
		
		\begin{enumerate}
			\item If $b$ is trivial, then every term in the first sum of
			\Cref{HOBF} vanishes.  If $a$ is trivial, then every term in the
			second sum vanishes.
			
			\item If $\ell(b)\neq1$, then no interior term
			$\mD_i^{(m,n)}(x,a;\gamma;b)$, $0<i<m$,
			can be nonzero.  Similarly, if $\ell(a)\neq1$, then no interior term
			in the second sum can be nonzero.
			
			\item Consequently, if neither $a$ nor $b$ is an arrow, then only
			the four boundary positions
			$i=0$, $i=m$, $j=0$ and $j=n$
			can contribute.  If both $a$ and $b$ are trivial, then $l_2(f\otimes g)=0$.
		\end{enumerate}
	\end{corollary}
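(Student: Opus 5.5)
The plan is to read all three assertions directly off the definition of $\mD_k^{(m,n)}$ in \Cref{HOBF}. By that proposition, every term of $l_2(f\otimes g)(\gamma)$ is $\pm\mD_i^{(m,n)}(x,a;\gamma;b)$ for the first sum, or $\pm\mD_j^{(n,m)}(y,b;\gamma;a)$ for the second. So it suffices to check when these local quantities can be nonzero. The point is that the fourth argument $\xi$ of $\mD_k$ is the inserted value. For the first sum $\xi=b$, and for the second $\xi=a$. Every nonzero case of the definition requires $\xi$ to contain an arrow $\nu$: either $\xi=p\nu$, or $\xi=\nu$, or $\xi=\nu q$.

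For assertion (1), suppose $b$ is trivial. Then $b$ has no decomposition $p\nu$, $\nu$, or $\nu q$ with $\nu$ an arrow. So every case of the definition fails, and $\mD_i^{(m,n)}(x,a;\gamma;b)=0$ for all $i$. At the level of \eqref{eq:general-l2}, this is just the statement that $\varpi(b)=0$, so the insertion is killed before $G_{m+1}$ is applied. Both viewpoints should be mentioned for robustness. The statement for trivial $a$ and the second sum is identical, with the roles of $(x,a,m)$ and $(y,b,n)$ exchanged.

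For assertion (2), fix an interior position $0<i<m$. The definition gives a nonzero value only in the middle case, which requires $\xi=\nu$, a single arrow. This reflects the fact, used already in the proof of \Cref{prop:degree-minus-one-brackets}, that $G_{m+1}$ needs every internal tensor factor to be an arrow in order to produce a relation concatenation of length $m+1$. So if $\ell(b)\neq1$, every interior term of the first sum vanishes. The same argument applies to the second sum when $\ell(a)\neq1$.

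Assertion (3) then follows by combination. If neither $a$ nor $b$ is an arrow, (2) removes all interior positions, and only $i\in\{0,m\}$ and $j\in\{0,n\}$ remain. If both $a$ and $b$ are trivial, (1) kills both sums entirely, so $l_2(f\otimes g)(\gamma)=0$ for every $\gamma\in\Gamma_{m+n+1}$, and hence $l_2(f\otimes g)=0$.

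There is no real obstacle. The only point needing care is that the definition of $\mD_k$ faithfully records the reassembly condition of $G_{m+1}$, in particular that an interior inserted factor must be a single arrow. This is exactly what the proof of \Cref{HOBF} asserts, and it can be recalled briefly from the explicit formula for $G_n$.
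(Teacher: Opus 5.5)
Your proposal is correct and matches the paper, which states this corollary without a separate proof as an immediate consequence of the case distinctions in the definition of $\mD_k$ in \Cref{HOBF}. You read it off the same way: (1) because a trivial inserted value admits no decomposition $p\nu$, $\nu$ or $\nu q$ (equivalently, $\varpi$ kills trivial paths), (2) because $G_{m+1}$ forces interior factors to be single arrows, and (3) by combining (1) and (2).
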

	%================================
	\begin{corollary}
		\label{cor:higher-arrow-reassembly}
		Let $f=\para{x}{a}\in B^m$ and $g=\para{y}{b}\in B^n$, where
		$m,n\geq1$ and $b$ is an arrow.
		For
		$\gamma\in\Gamma_{m+n+1}$, define
		\[
		\mI_{y,b}^{x}(\gamma)
		=
		\left\{
		i\in\{0,\ldots,m\}
		\ \middle|\
		\begin{array}{l}
			y=\gamma_{i+1}\cdots\gamma_{i+n+1},\\
			x=\gamma_1\cdots\gamma_i
			b
			\gamma_{i+n+2}\cdots\gamma_{m+n+1}
		\end{array}
		\right\}.
		\]
		If $\mI_{y,b}^{x}(\gamma)=\varnothing$, then the corresponding
		contribution from the first sum in \Cref{HOBF} is zero.  Otherwise,
		the corresponding cochain contribution supported on $\gamma$ is
		\[
		\left(
		\sum_{i\in\mI_{y,b}^{x}(\gamma)}
		(-1)^{i(n+2)}
		\right)\para{\gamma}{a}.
		\]
		
		Similarly, suppose that $a$ is an arrow.  If
		$\mI_{x,a}^{y}(\gamma)=\varnothing$, then the corresponding
		contribution from the second sum is zero.  Otherwise, the corresponding
		cochain contribution supported on $\gamma$ is
		\[
		-(-1)^{mn}
		\left(
		\sum_{j\in\mI_{x,a}^{y}(\gamma)}
		(-1)^{j(m+2)}
		\right)\para{\gamma}{b}.
		\]
	\end{corollary}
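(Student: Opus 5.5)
The plan is to specialize \Cref{HOBF} to the case where the inserted value is a single arrow, and to read off which occurrence positions survive. Fix $\gamma\in\Gamma_{m+n+1}$ and consider the first sum in \Cref{HOBF}. A term indexed by $i$ is present only when $y=\gamma_{i+1}\cdots\gamma_{i+n+1}$, and its value is $\mD_i^{(m,n)}(x,a;\gamma;b)$. We go through the three cases of the definition of $\mD$ with $\xi=b$ an arrow.

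In the case $k=0$ we need $b=p\nu$. Since $b$ is an arrow, this forces $p=e_{s(b)}$ and $\nu=b$; the support condition becomes $x=b\gamma_{n+2}\cdots\gamma_{m+n+1}$, and the value is $\pi(e\,a)=a$. In the interior case $0<k<m$ we have $\nu=b$ directly, the condition is $x=\gamma_1\cdots\gamma_k b\gamma_{k+n+2}\cdots\gamma_{m+n+1}$, and the value is $a$. In the case $k=m$ we get $q$ trivial, the condition $x=\gamma_1\cdots\gamma_m b$, and the value $\pi(a e)=a$. In all three cases, then, the surviving condition is exactly membership $i\in\mI_{y,b}^{x}(\gamma)$, and the value is always $a$. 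Since $a\in\mB$ is a basis path, the trivial-path multiplications do not alter it under $\pi$.

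Summing with the signs $(-1)^{i(n+2)}$ gives the coefficient displayed in the statement times $a$ at $\gamma$, which we rewrite as the cochain $\para{\gamma}{a}$. If $\mI_{y,b}^{x}(\gamma)=\varnothing$, no term survives. One point to check is that $\para{\gamma}{a}$ makes sense, i.e.\ $a\parallel\gamma$. This holds because $a\parallel x$, and in every case $x$ and $\gamma$ share endpoints: $b\parallel y$, and $y$ is replaced by $b$ inside $\gamma$.

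The second statement follows symmetrically by exchanging $(x,a,m)$ with $(y,b,n)$ in the second sum of \Cref{HOBF}. This carries along the global factor $-(-1)^{mn}$ and the signs $(-1)^{j(m+2)}$, and the value becomes $b$.

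The only step requiring care is the boundary cases $k=0$ and $k=m$. There the decomposition $b=p\nu$ or $b=\nu q$ must be unique when $b$ is an arrow, so no extra contributions with nontrivial $p$ or $q$ arise, and the reduction $\pi(e\,a)=a$ introduces no special-loop idempotent effects. Both points are immediate because $\ell(b)=1$.
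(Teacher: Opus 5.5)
Your proposal is correct and is the intended argument: the paper gives no separate proof, treating the corollary as the immediate specialization of \Cref{HOBF}. There, $\ell(b)=1$ forces $p$ and $q$ to be trivial in the boundary cases of $\mD_i^{(m,n)}$, so every surviving local term equals $a$, the survival condition is exactly $i\in\mI_{y,b}^{x}(\gamma)$, and the second sum follows symmetrically. Your extra check that $a\parallel\gamma$, so that $\para{\gamma}{a}$ is defined, is a detail the paper leaves implicit.
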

	
	%============================
	\begin{corollary}
		\label{cor:higher-trivial-value}
		Let $f=\para{x}{e_{s(x)}}\in B^m$ and $g=\para{y}{b}\in B^n$, where
		$m,n\geq1$.
		Then the second sum in \Cref{HOBF} vanishes identically.  Each nonzero
		term in the first sum gives a cochain
		contribution supported on $\gamma$ of exactly one of the following forms.
		\begin{enumerate}
			\item Left boundary:
			$i=0$, $b=p\nu$ and $x=\nu\gamma_{n+2}\cdots\gamma_{m+n+1}$. The contribution is
			$\para{\gamma}{p}$.
			\item Interior:
			$0<i<m$, $b=\nu$ and $x=\gamma_1\cdots\gamma_i\nu\gamma_{i+n+2}\cdots\gamma_{m+n+1}$. 
			The contribution is $(-1)^{i(n+2)}\para{\gamma}{e_{s(x)}}$.
			\item Right boundary:
			$i=m$, $b=\nu q$ and $x=\gamma_1\cdots\gamma_m\nu$.
			The contribution is $(-1)^{m(n+2)}\para{\gamma}{q}$.
		\end{enumerate}
		Here $\nu$ is an arrow.  The corresponding formulas when the value of $g$
		is trivial are obtained by graded skew-symmetry.
	\end{corollary}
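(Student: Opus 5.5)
The plan is to specialize \Cref{HOBF} to $a=e_{s(x)}$ and read off each surviving term from the definition of $\mD_k^{(m,n)}$. No new combinatorics should be needed.

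\emph{The second sum.} Its terms are $\mD_j^{(n,m)}(y,b;\gamma;a)$ with $\xi=a=e_{s(x)}$. Each nonzero case of $\mD$ requires $\xi$ to factor as $p\nu$, as $\nu$, or as $\nu q$ with $\nu$ an arrow. This is impossible for a trivial path. Equivalently, \Cref{cor:higher-value-length-reduction}(1) applied with the roles of $f$ and $g$ interchanged shows that the second sum vanishes identically.

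\emph{The first sum, case by case.} Fix $i$ with $\gamma_{i+1}\cdots\gamma_{i+n+1}=y$ and evaluate $\mD_i^{(m,n)}(x,e_{s(x)};\gamma;b)$ directly.
\begin{enumerate}
\item For $i=0$, the term is nonzero only if $b=p\nu$ and $x=\nu\gamma_{n+2}\cdots\gamma_{m+n+1}$. Its value is $\pi(p\,e_{s(x)})$. Since $t(p)=s(\nu)=s(x)$, this equals $p$. The sign $(-1)^{0}$ is trivial, so the contribution is $\para{\gamma}{p}$.
\item For $0<i<m$, the term requires $b=\nu$ to be an arrow with $x=\gamma_1\cdots\gamma_i\nu\gamma_{i+n+2}\cdots\gamma_{m+n+1}$. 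Its value is $c=e_{s(x)}$, giving $(-1)^{i(n+2)}\para{\gamma}{e_{s(x)}}$. This is consistent with \Cref{cor:higher-arrow-reassembly}.
\item For $i=m$, the term requires $b=\nu q$ and $x=\gamma_1\cdots\gamma_m\nu$. Its value is $\pi(e_{s(x)}q)=q$, because $s(q)=t(\nu)=t(x)=s(x)$. Here the parallelism $x\parallel e_{s(x)}$ forces $s(x)=t(x)$. The sign is $(-1)^{m(n+2)}$.
\end{enumerate}
Since $m\ge1$, the three ranges $i=0$, $0<i<m$ and $i=m$ are disjoint. Each nonzero term therefore falls into exactly one of the listed forms.

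\emph{Products in $A$.} The only point needing care is that the products $p\,e_{s(x)}$ and $e_{s(x)}q$ are computed in $A$ through $\pi$. We must check that no reduction or vanishing occurs. This holds because right or left multiplication of a path in $\mB$ by the trivial path at its endpoint is the identity, so $\pi(p)=p$ and $\pi(q)=q$ are basis paths; they are subpaths of $b\in\mB$. It is also immediate that the vertex idempotents match as required.

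\emph{Swapped inputs.} When the value of $g$ is trivial instead, apply \eqref{sym}: $l_2(g\otimes f)=-(-1)^{mn}l_2(f\otimes g)$ converts the statement to the case already treated. I expect no real obstacle here; the only step needing attention is confirming the boundary evaluations $\pi(p\,e_{s(x)})=p$ and $\pi(e_{s(x)}q)=q$ through the endpoint conventions.
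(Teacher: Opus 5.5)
Your proposal is correct and matches the paper, which treats this corollary as an immediate specialization of \Cref{HOBF} to $a=e_{s(x)}$. The second sum vanishes because every nonzero case of $\mD$ needs an arrow in the inserted value. The three first-sum cases are then read off using $\pi(p\,e_{s(x)})=p$, $\pi(e_{s(x)}q)=q$ and the signs $(-1)^{i(n+2)}$. (The vanishing of the second sum is in fact the second clause of \Cref{cor:higher-value-length-reduction}(1) as stated, so no interchange of $f$ and $g$ is needed there.)
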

	%============================
	Before stating the next lemma, recall that for a real number $\vartheta$,
	$\lfloor \vartheta\rfloor$
	denotes the greatest integer not exceeding $\vartheta$.
	
	Let $z=z_1\cdots z_r\in\Gamma_r$ and
	$\gamma=\gamma_1\cdots\gamma_N\in\Gamma_N$, with $1\leq r\leq N$.
	We say that $z$ occurs in $\gamma$ at position $i$ if
	\[
	z_j=\gamma_{i+j}\quad(1\leq j\leq r),
	\]
	where $0\leq i\leq N-r$, and set
	\[
	\Occ_\gamma(z)=\{i \mid z\text{ occurs in }\gamma\text{ at position }i\}.
	\]
	If $\Occ_\gamma(z)\neq\varnothing$, we call $z$ a \emph{relation subpath}
	of $\gamma$. Distinct positions are counted as distinct occurrences, even when
	the corresponding relation subpaths are equal as paths. This convention agrees
	with the position numbering in \Cref{HOBF}: the first arrow $\gamma_1$ has
	position $0$.
	
	\begin{lemma}
		\label{lem:periodic-occurrences}
		Let
		$\tau=\tau_1\cdots\tau_{\ell(\tau)}$
		be a primitive relation cycle with $\ell(\tau)>1$, and let
		$\gamma=\tau^w\tau_1\cdots\tau_r$, where
		$0\leq r<\ell(\tau)$.
		Let $z=z_1\cdots z_s\in\Gamma_s$, with $s\geq1$, and suppose that its arrows follow the cyclic order of $\tau$ starting with $\tau_u$, that is,
		\[
		z_j=\tau_{1+((u+j-2)\bmod \ell(\tau))}
		\quad(1\leq j\leq s),
		\]
		for some $1\leq u\leq\ell(\tau)$.  Then
		\[
		\Occ_{\gamma}(z)
		=
		\left\{
		u-1+k\ell(\tau)
		\ \middle|\
		\begin{array}{l}
			k\in\mathbb Z_{\geq0},\\[1mm]
			u-1+k\ell(\tau)+\ell(z)
			\leq w\ell(\tau)+r
		\end{array}
		\right\}.
		\]
		In particular,
		\[
		\#\Occ_{\gamma}(z)
		=
		\max\left\{
		0,
		1+
		\left\lfloor
		\frac{w\ell(\tau)+r-\ell(z)-(u-1)}{\ell(\tau)}
		\right\rfloor
		\right\}.
		\]
	\end{lemma}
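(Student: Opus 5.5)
The plan is to reduce the statement to an elementary fact about periodic words. First I would record the explicit description of $\gamma$. By the periodicity formula established in the proof of \Cref{gamma-cases}, or simply by expanding $\gamma=\tau^w\tau_1\cdots\tau_r$,
\[
\gamma_j=\tau_{1+((j-1)\bmod\ell(\tau))}\qquad(1\leq j\leq N),\qquad N=w\ell(\tau)+r.
\]
The key input is that the arrows $\tau_1,\dots,\tau_{\ell(\tau)}$ are pairwise distinct; this was shown in the proof of \Cref{gamma-cases} using primitivity, \textup{(G3)} and $\ell(\tau)>1$. Consequently the map $j\mapsto\gamma_j$ determines $j\bmod\ell(\tau)$: we have $\gamma_j=\tau_v$ if and only if $j\equiv v\pmod{\ell(\tau)}$.

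Next I would prove the set equality by a two-sided inclusion. Suppose $z$ occurs at position $i$, with $0\leq i\leq N-s$. Then $\gamma_{i+1}=z_1=\tau_u$, so distinctness gives $i+1\equiv u\pmod{\ell(\tau)}$, that is, $i=u-1+k\ell(\tau)$ for some integer $k$. Since $0\leq u-1<\ell(\tau)$ and $i\geq0$, we get $k\geq0$. The constraint $i+s\leq N$ is exactly the displayed inequality, with $\ell(z)=s$. Conversely, take $i=u-1+k\ell(\tau)$ with $k\geq0$ and $i+s\leq N$. Then for $1\leq j\leq s$ the index $i+j$ lies in $[1,N]$, and
\[
\gamma_{i+j}=\tau_{1+((u-1+k\ell(\tau)+j-1)\bmod\ell(\tau))}=\tau_{1+((u+j-2)\bmod\ell(\tau))}=z_j,
\]
so $z$ occurs at position $i$. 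Only the first arrow is needed to pin down the residue class; the remaining arrows then match automatically by periodicity.

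The count then follows directly. The admissible $k$ are the integers with $0\leq k\leq (N-s-(u-1))/\ell(\tau)$. Their number is $1+\lfloor (N-s-u+1)/\ell(\tau)\rfloor$ when the bound is nonnegative and $0$ otherwise, which gives the $\max$ formula. One small point needs care: when the numerator is negative, the floor is at most $-1$, so the expression $1+\lfloor\cdot\rfloor$ is at most $0$ and the $\max$ correctly returns $0$.

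The only step needing real justification is the distinctness of the arrows of $\tau$, since without it the first arrow would not determine the residue class. I would cite the argument in \Cref{gamma-cases} for this rather than reprove it. Everything else is bookkeeping with the convention that position $0$ corresponds to $\gamma_1$.
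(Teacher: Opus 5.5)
Your proof is correct and follows the paper's argument. The distinctness of the arrows of $\tau$ (from the proof of \Cref{gamma-cases}) forces every occurrence to start in the residue class of $u-1$ modulo $\ell(\tau)$, and the condition that $z$ fit inside $\gamma$ then gives both the set and its cardinality; your version is somewhat more complete than the paper's, since you check explicitly that each admissible $u-1+k\ell(\tau)$ is an actual occurrence and that the $\max$ handles a negative numerator.
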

	
	\begin{proof}
		As observed in the proof of \Cref{gamma-cases}, the arrows of a primitive
		relation cycle of length greater than one are pairwise distinct. Hence the
		first arrow $\tau_u$, together with the cyclic-order condition above,
		determines the residue class of the starting position of an occurrence of $z$
		modulo $\ell(\tau)$. In the indexing convention of
		\Cref{HOBF}, the first possible occurrence therefore starts at position
		$u-1$, and all later possible occurrences start one complete period later, at
		$u-1+k\ell(\tau)$, where $k\in\mathbb Z_{\geq0}$.
		Such a starting position gives an actual occurrence in $\gamma$ precisely
		when the whole path $z$ lies inside $\gamma$, namely when
		$u-1+k\ell(\tau)+\ell(z)\leq w\ell(\tau)+r$.
		Solving this inequality for $k$ gives the displayed description of
		$\Occ_\gamma(z)$ and its cardinality.
	\end{proof}
	%===============================
	\begin{corollary}
		\label{cor:signed-periodic-multiplicity}
		Under the assumptions of
		\Cref{lem:periodic-occurrences}, let $M\in\mathbb Z$.  Then
		\[
		\sum_{i\in\Occ_\gamma(z)}(-1)^{iM}
		=
		\begin{cases}
			(-1)^{(u-1)M}\#\Occ_\gamma(z),
			&\text{if }\ell(\tau)M\text{ is even},\\
			0,
			&\text{if }\ell(\tau)M\text{ is odd and }
			\#\Occ_\gamma(z)\text{ is even},\\
			(-1)^{(u-1)M},
			&\text{if }\ell(\tau)M\text{ is odd and }
			\#\Occ_\gamma(z)\text{ is odd}.
		\end{cases}
		\]
		The integer coefficients on the right are viewed in $\mK$.
	\end{corollary}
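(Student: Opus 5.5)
We use \Cref{lem:periodic-occurrences} to reduce the sum to a finite sum of signs over an arithmetic progression of positions. That lemma gives $\Occ_\gamma(z)=\{u-1+k\ell(\tau)\mid 0\le k\le K-1\}$, where $K=\#\Occ_\gamma(z)$. If $K=0$, both sides vanish: the sum is empty, and each case on the right then equals $0$ (the first case has the factor $K$, and the second case is the one that applies since $K=0$ is even). So we may assume $K\ge1$.

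We then factor the sign. For $i=u-1+k\ell(\tau)$,
\[
(-1)^{iM}=(-1)^{(u-1)M}\bigl((-1)^{\ell(\tau)M}\bigr)^{k}.
\]
Set $\sigma=(-1)^{\ell(\tau)M}$. The whole sum then equals
\[
(-1)^{(u-1)M}\sum_{k=0}^{K-1}\sigma^k .
\]

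It remains to evaluate the geometric sum $\sum_{k=0}^{K-1}\sigma^k$.
\begin{enumerate}
\item If $\ell(\tau)M$ is even, then $\sigma=1$ and the sum is $K$. This gives the first case.
\item If $\ell(\tau)M$ is odd, then $\sigma=-1$ and the terms alternate. The sum is $0$ when $K$ is even and $1$ when $K$ is odd, which gives the remaining two cases.
\end{enumerate}

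Finally, all of these are integer identities, so they continue to hold after mapping the integers into $\mK$. There is no real obstacle here. The only point requiring care is that the occurrence positions form an exact progression with common difference $\ell(\tau)$, and that is supplied by \Cref{lem:periodic-occurrences}.
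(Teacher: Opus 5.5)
Your proof is correct and follows the paper's argument: write the occurrence positions as $u-1+k\ell(\tau)$ for $0\le k<\#\Occ_\gamma(z)$ via \Cref{lem:periodic-occurrences}, factor out $(-1)^{(u-1)M}$, and evaluate the resulting sum of powers of $(-1)^{\ell(\tau)M}$. Your explicit check of the empty case $\#\Occ_\gamma(z)=0$ is a small addition that the paper leaves implicit.
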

	
	\begin{proof}
		By \Cref{lem:periodic-occurrences}, the occurrence positions are
		$i=u-1+k\ell(\tau)$, where $0\leq k<\#\Occ_\gamma(z)$.
		Hence
		\[
		\sum_{i\in\Occ_\gamma(z)}(-1)^{iM}
		=
		(-1)^{(u-1)M}
		\sum_{k=0}^{\#\Occ_\gamma(z)-1}
		(-1)^{k\ell(\tau)M}.
		\]
		The result follows by evaluating the final alternating sum.
	\end{proof}
	%================================
	\begin{remark}
		The role of \Cref{lem:periodic-occurrences} is only to determine where a
		periodic support occurs and how many occurrences it has.  The value produced
		at each occurrence is still determined by \Cref{HOBF}.
		
		For example, if every occurrence of $z$ contributes the same value through
		the first insertion direction, then the corresponding coefficient is
		\[
		\sum_{i\in\Occ_{\gamma}(z)}
		(-1)^{i(n+2)}.
		\]
		By the lemma, every $i\in\Occ_{\gamma}(z)$ has the form
		$i=u-1+k\ell(\tau)$.
		Hence the signed coefficient becomes
		\[
		\sum_{k=0}^{\#\Occ_{\gamma}(z)-1}
		(-1)^{(u-1+k\ell(\tau))(n+2)}.
		\]
		The analogous coefficient in the second insertion direction is obtained by
		replacing $n+2$ with $m+2$.
	\end{remark}
	%============================
	\begin{corollary}
		\label{cor:periodic-read-off}
		Let $f=\para{x}{a}\in B^m$ and
		$g=\para{y}{b}\in B^n$, where $m,n\geq1$. 
		Let $\tau=\tau_1\cdots\tau_{\ell(\tau)}$ be a primitive relation cycle
		with $\ell(\tau)>1$.
		Let $\gamma=\tau^w\tau_1\cdots\tau_r$, where
		$0\leq r<\ell(\tau)$ and $w\ell(\tau)+r=m+n+1$.  
		Then
		\begin{align*}
			l_2(f\otimes g)(\gamma)
			={}&
			\sum_{i\in\Occ_\gamma(y)}
			(-1)^{i(n+2)}
			\mD_i^{(m,n)}(x,a;\gamma;b)
			\\
			&-
			(-1)^{mn}
			\sum_{j\in\Occ_\gamma(x)}
			(-1)^{j(m+2)}
			\mD_j^{(n,m)}(y,b;\gamma;a).
		\end{align*}
		In particular, the occurrence positions and their signs are determined by
		\Cref{lem:periodic-occurrences}; it remains only to determine which local
		terms $\mD_i^{(m,n)}$ are nonzero.
	\end{corollary}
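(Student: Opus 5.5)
This corollary is \Cref{HOBF} specialized to a periodic support. The only work is to rewrite the index conditions in the two sums. Fix $\gamma=\tau^w\tau_1\cdots\tau_r$ with $w\ell(\tau)+r=m+n+1$. In \Cref{HOBF}, the first sum runs over those $0\leq i\leq m$ with $\gamma_{i+1}\cdots\gamma_{i+n+1}=y$. I will show that this index set is exactly $\Occ_\gamma(y)$. By the definition of occurrence, $i\in\Occ_\gamma(y)$ means $0\leq i\leq N-\ell(y)$ with $N=m+n+1$ and $\ell(y)=n+1$. This gives $0\leq i\leq m$, and the matching condition is the same. The two index sets therefore coincide, including the convention that position numbering starts at $0$. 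The same argument with $(x,m)$ in place of $(y,n)$ identifies the index set of the second sum with $\Occ_\gamma(x)$, since $\ell(x)=m+1$ gives the range $0\leq j\leq n$. Substituting these identifications into \Cref{HOBF} yields the displayed formula verbatim, signs and the global factor $-(-1)^{mn}$ included.

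For the ``in particular'' clause, I first observe that if $\Occ_\gamma(y)$ is nonempty, then $y$ is a contiguous subpath of $\gamma$. Its arrows then follow the cyclic order of $\tau$, starting at some $\tau_u$, because consecutive arrows of $\gamma$ do. This places $y$ within the hypotheses of \Cref{lem:periodic-occurrences}, which computes the positions $u-1+k\ell(\tau)$ explicitly. The signs $(-1)^{i(n+2)}$ and $(-1)^{j(m+2)}$ are then governed by \Cref{cor:signed-periodic-multiplicity}, and the same applies to $x$. If $\Occ_\gamma(y)$ is empty, the corresponding sum is empty and there is nothing to determine. What remains unknown is only the vanishing or nonvanishing of the local values $\mD_i^{(m,n)}$ and $\mD_j^{(n,m)}$.

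I expect no real obstacle. The one point needing care is to check that the range constraint $i\leq m$ in \Cref{HOBF} matches $i\leq N-\ell(y)$ exactly, so that no boundary occurrence is lost or double counted. The case $r=0$, where $\gamma=\tau^w$, needs no separate treatment, since the argument above is uniform in $r$.
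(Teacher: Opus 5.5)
Your proposal is correct and follows the paper's (implicit) argument. The paper states this corollary without a separate proof. It is exactly \Cref{HOBF}, with the index sets $\{0\leq i\leq m : \gamma_{i+1}\cdots\gamma_{i+n+1}=y\}$ and $\{0\leq j\leq n : \gamma_{j+1}\cdots\gamma_{j+m+1}=x\}$ identified with $\Occ_\gamma(y)$ and $\Occ_\gamma(x)$ through the position convention fixed just before \Cref{lem:periodic-occurrences}. The positions, and hence the individual signs $(-1)^{i(n+2)}$ and $(-1)^{j(m+2)}$, are then read off from \Cref{lem:periodic-occurrences}. \Cref{cor:signed-periodic-multiplicity} is needed only when you sum the signs over a full occurrence set with a common local value.
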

	%============================
	\begin{lemma}
		\label{lem:higher-boundary-dichotomy}
		Let $f=\para{x}{a}\in B^m$ and $g=\para{y}{b}\in B^n$, where $m,n\geq1$.
		\begin{enumerate}
			\item Suppose that a left boundary term in the first sum of
			\Cref{HOBF} is defined by
			$b=p\nu$,
			where $\nu$ is an arrow and $\pi(pa)\neq0$.
			Assume that $p$ and $a$ are nontrivial. Then exactly one of the
			following alternatives occurs:
			\begin{enumerate}
				\item $a$ starts with $\nu$;
				\item $p_{\ell(p)}=\alpha_1=\ep$ for some $\ep\in\Sp$.
			\end{enumerate}
			
			\item Suppose that a right boundary term in the first sum is defined by
			$b=\nu q$,
			where $\nu$ is an arrow and $\pi(aq)\neq0$.
			Assume that $a$ and $q$ are nontrivial. Then exactly one of the
			following alternatives occurs:
			\begin{enumerate}
				\item $a$ ends with $\nu$;
				\item $\alpha_{\ell(a)}=q_1=\ep$ for some $\ep\in\Sp$.
			\end{enumerate}
		\end{enumerate}
		
		The corresponding statements for the second sum are obtained by exchanging
		$(x,a,m)$ and $(y,b,n)$.
	\end{lemma}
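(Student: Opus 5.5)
We prove part (1); part (2) is its left–right mirror, and the statement for the second sum follows by exchanging the roles of $(x,a,m)$ and $(y,b,n)$. Write $p=p_1\cdots p_{\ell(p)}$ and $a=\alpha_1\cdots\alpha_{\ell(a)}$. Since $p$ and $a$ are nontrivial paths in $\mB$, the product $pa$ in $\mK Q^S$ has a single new junction, $p_{\ell(p)}\alpha_1$. All other consecutive pairs lie inside $p$ or inside $a$, so they avoid $I^S$.

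The idea is to argue about this junction alone. If $p_{\ell(p)}\alpha_1\in I^S$ and is not of the form $\ep^2$ with $\ep\in\Sp$, then $\pi(pa)=0$, because $I$ consists of monomial relations killed in $A$. So $\pi(pa)\neq0$ forces one of two situations.
\begin{enumerate}
\item[(i)] $p_{\ell(p)}\alpha_1\notin I^S$.
\item[(ii)] $p_{\ell(p)}=\alpha_1=\ep$ for some special loop $\ep$, so that the junction reduces by $\ep^2=\ep$.
\end{enumerate}
Situation (ii) is alternative (b).

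In situation (i) we use that $b=p\nu\in\mB$, which gives $p_{\ell(p)}\nu\notin I^S$. Both $\nu$ and $\alpha_1$ start at $t(p_{\ell(p)})$ and both fail to form a relation with $p_{\ell(p)}$. Condition (G4) allows at most one arrow $\beta$ with $p_{\ell(p)}\beta\notin I^S$, so $\alpha_1=\nu$. This is alternative (a).

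Nothing more than (G4) is needed, because $p_{\ell(p)}$ is an actual arrow of the nontrivial path $p$. For instance, $s(\alpha_1)=s(\nu)$ holds automatically, since $\nu$ follows $p$ in $b$ and $a$ follows $p$ in the product.

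It remains to show that (a) and (b) are mutually exclusive. Suppose both hold, so $\nu=\alpha_1=\ep$ with $\ep\in\Sp$. Then $b=p\nu$ ends with $p_{\ell(p)}\nu=\ep\ep=\ep^2\in I^S$, which contradicts $b\in\mB$.

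For part (2), apply the same argument to the junction $\alpha_{\ell(a)}q_1$. Condition (G4) is now used in its left-hand form, that there is at most one $\gamma$ with $\gamma q_1\notin I^S$, applied to $\nu q_1\notin I^S$ and $\alpha_{\ell(a)}q_1\notin I^S$.

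The main point needing care is the first reduction: a nonzero product $\pi(pa)$ of basis paths can only survive a relation at the junction through special-loop absorption. This holds because every relation in $I$ is monomial, and $\ep^2-\ep$ is the only non-monomial relation.
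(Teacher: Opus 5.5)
Your proof is correct and is essentially the paper's argument. You split on whether the junction $p_{\ell(p)}\alpha_1$ lies in $I^S$: in one case \textup{(G4)} together with $p_{\ell(p)}\nu\notin I^S$ (from $b\in\mB$) gives $\alpha_1=\nu$, in the other case special-loop absorption is forced, exclusivity follows from the forbidden subpath $\varepsilon^2$ in $b$, and part (2) and the second-sum version follow by the same symmetries, exactly as in the paper.
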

	
	\begin{proof}
		For the first assertion, write $p=p_1\cdots p_{\ell(p)}$ and $a=\alpha_1\cdots \alpha_{\ell(a)}$.
		Since $b=p\nu\in\mB$,
		$p_{\ell(p)}\nu\notin I^S$.
		If $p_{\ell(p)}\alpha_1\notin I^S$, then
		$p_{\ell(p)}\nu\notin I^S$ and condition \textup{(G4)} give
		$\alpha_1=\nu$, so $a$ starts with $\nu$.  If $p_{\ell(p)}\alpha_1\in I^S$, then the product
		$pa$ can survive in $A$ only through a special idempotent square, and
		hence
		$p_{\ell(p)}=\alpha_1=\varepsilon\in\Sp$.
		The two alternatives are mutually exclusive: if both held, then
		$\nu=\alpha_1=p_{\ell(p)}=\varepsilon$, so the path $b=p\nu\in\mB$
		would contain the forbidden subpath $\varepsilon^2$.
		
		The second assertion follows by the left-right symmetric argument.
	\end{proof}
	%=============================
	\begin{remark}
		\label{rem:higher-boundary-cancellation}
		Let $f=\para{x}{a}\in B^m$ and $g=\para{y}{b}\in B^n$, where $m,n\geq1$.
		Let $\gamma=\gamma_1\cdots\gamma_{m+n+1}$.
		\begin{enumerate}
			\item Suppose $y=\gamma_1\cdots\gamma_{n+1}$, $x=\gamma_{n+1}\cdots\gamma_{m+n+1}$
			and put $\nu=\gamma_{n+1}$.
			If $b=p\nu$ and $a=\nu q$,
			then the corresponding first and fourth boundary terms cancel:
			\[
			\para{\gamma}{\pi(pa)}-\para{\gamma}{\pi(bq)}
			=\para{\gamma}{p\nu q}-\para{\gamma}{p\nu q}=0.
			\]
			
			\item Suppose
			$x=\gamma_1\cdots\gamma_{m+1}$, $y=\gamma_{m+1}\cdots\gamma_{m+n+1}$
			and put $\nu=\gamma_{m+1}$.
			If $a=p\nu$ and $b=\nu q$,
			then the corresponding second and third boundary terms cancel too:
			\[
			(-1)^{mn}\para{\gamma}{\pi(aq)}
			-
			(-1)^{mn}\para{\gamma}{\pi(pb)}
			=(-1)^{mn}(\para{\gamma}{p\nu q}-\para{\gamma}{p\nu q})=0.
			\]
		\end{enumerate}
	\end{remark}
	%===============================
	\begin{corollary}
		\label{cor:higher-long-value-reduction}
		Let $f=\para{x}{a}\in B^m$ and $g=\para{y}{b}\in B^n$, where
		$m,n\geq1$ and $\ell(a),\ell(b)>1$.
		Then no interior term can occur.  Moreover, every boundary term arising
		from the first alternative in \Cref{lem:higher-boundary-dichotomy} is paired
		with a boundary term from the opposite insertion direction and cancels by
		\Cref{rem:higher-boundary-cancellation}.
		
		Consequently, every surviving contribution is an idempotent-absorption
		term of one of the following four forms.
		
		\begin{enumerate}
			\item Suppose
			$y=\gamma_1\cdots\gamma_{n+1}$,
			$x=\gamma_{n+1}\cdots\gamma_{m+n+1}$,
			$b=p\gamma_{n+1}$ and
			$p_{\ell(p)}=\alpha_1=\varepsilon\in\Sp$.
			Then the first boundary term contributes
			$\para{\gamma}{\pi(pa)}$.
			
			\item Under the same support configuration, suppose
			$a=\gamma_{n+1}q$ and
			$\beta_{\ell(b)}=q_1=\varepsilon\in\Sp$.
			Then the fourth boundary term contributes
			$-\para{\gamma}{\pi(bq)}$.
			
			\item Suppose
			$x=\gamma_1\cdots\gamma_{m+1}$,
			$y=\gamma_{m+1}\cdots\gamma_{m+n+1}$,
			$b=\gamma_{m+1}q$ and
			$\alpha_{\ell(a)}=q_1=\varepsilon\in\Sp$.
			Then the second boundary term contributes
			$(-1)^{mn}\para{\gamma}{\pi(aq)}$.
			
			\item Under the same support configuration, suppose
			$a=p\gamma_{m+1}$ and
			$p_{\ell(p)}=\beta_1=\varepsilon\in\Sp$.
			Then the third boundary term contributes
			$-(-1)^{mn}\para{\gamma}{\pi(pb)}$.
		\end{enumerate}
		
		Terms whose images under $\pi$ vanish are omitted.  If several of the
		above configurations occur simultaneously, the corresponding terms are
		added.
	\end{corollary}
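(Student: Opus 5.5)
We start from the occurrence formula of \Cref{HOBF} and use \Cref{cor:higher-value-length-reduction}. Since $\ell(a)>1$ and $\ell(b)>1$, part (2) of that corollary shows at once that no interior term $0<i<m$ or $0<j<n$ survives: an interior term requires the inserted value to be a single arrow. So only the four boundary positions $i=0$, $i=m$, $j=0$, $j=n$ can contribute. We label them first, second, third and fourth, matching \Cref{rem:higher-boundary-cancellation}.

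\emph{First and fourth terms.} The first ($i=0$) and fourth ($j=n$) boundary terms both live on the support configuration $y=\gamma_1\cdots\gamma_{n+1}$, $x=\gamma_{n+1}\cdots\gamma_{m+n+1}$, with common overlap arrow $\nu=\gamma_{n+1}$.

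For the first term we have $b=p\nu$. Since $\ell(b)>1$, $p$ is nontrivial, and $a$ is nontrivial by assumption. Hence \Cref{lem:higher-boundary-dichotomy}(1) applies whenever $\pi(pa)\neq0$, and exactly one alternative holds. Either $a$ starts with $\nu$, so $a=\nu q$ with $q$ nontrivial because $\ell(a)>1$. Or $p_{\ell(p)}=\alpha_1=\varepsilon\in\Sp$.

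In the first alternative, $a=\nu q$ is precisely the hypothesis defining the fourth boundary term at the same $\gamma$. Its value is $\pi(bq)=\pi(p\nu q)=\pi(pa)$. The signs are $(-1)^{0}=1$ for $i=0$ and $-(-1)^{mn}(-1)^{n(m+2)}=-1$ for $j=n$. The two terms therefore cancel, as recorded in \Cref{rem:higher-boundary-cancellation}(1).

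Symmetrically, apply \Cref{lem:higher-boundary-dichotomy} to the fourth term in the opposite insertion direction. If $a=\nu q$ and $b$ ends with $\nu$, the term cancels against the first term. Otherwise $\beta_{\ell(b)}=q_1=\varepsilon$. So the surviving first and fourth contributions are exactly forms (1) and (2).

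\emph{Second and third terms.} The second ($i=m$) and third ($j=0$) terms are handled identically on the configuration $x=\gamma_1\cdots\gamma_{m+1}$, $y=\gamma_{m+1}\cdots\gamma_{m+n+1}$. We use part (2) of \Cref{lem:higher-boundary-dichotomy} and \Cref{rem:higher-boundary-cancellation}(2). The signs are $(-1)^{m(n+2)}=(-1)^{mn}$ and $-(-1)^{mn}$, which yields forms (3) and (4).

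\emph{Main obstacle: one-to-one pairing.} The delicate point is checking that the pairing is a bijection. A cancelling partner must exist and be counted exactly once, even when $\gamma$ is periodic and $x$ or $y$ occurs at several positions.

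The pairing is positional. The first term at $i=0$ forces $y$ at position $0$ and $x$ at position $n$, which is precisely the data of the fourth term at $j=n$. The converse holds in the same way. So each cancelling first-alternative term has a unique partner at the same $\gamma$, regardless of other occurrences. Other occurrences can only give interior positions, which are already excluded.

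Finally, the mutual exclusivity in \Cref{lem:higher-boundary-dichotomy} ensures that an idempotent-absorption term is never also cancelled. Terms with $\pi(\cdot)=0$ are discarded, and simultaneous configurations add. This gives the stated list.
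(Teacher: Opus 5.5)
Your proposal is correct and follows the paper's proof. You exclude interior terms via \Cref{cor:higher-value-length-reduction}, split each boundary term by \Cref{lem:higher-boundary-dichotomy}, pair first-alternative terms with the opposite-direction boundary term at the same junction so they cancel by \Cref{rem:higher-boundary-cancellation}, and keep only the special-loop alternatives with the signs from \Cref{HOBF}. The one step you take for granted, which the paper states explicitly, is why the partner term exists at all: in $\mD_0^{(m,n)}$ the overlap arrow $\nu$ is a priori only the last arrow of $b$, and condition \textup{(G3)} applied to $\nu\gamma_{n+2},\gamma_{n+1}\gamma_{n+2}\in I^S$ (and symmetrically at the other junctions, using $m,n\geq1$) is what forces $\nu=\gamma_{n+1}$, so that $x$ really occurs at position $n$.
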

	
	\begin{proof}
		By \Cref{cor:higher-value-length-reduction}, only the four boundary
		positions can contribute.  Apply
		\Cref{lem:higher-boundary-dichotomy} to each boundary term.  In the
		first alternative of \Cref{lem:higher-boundary-dichotomy}, condition
		\textup{(G3)} applied at the two junctions identifies $\nu$ with the
		corresponding junction arrow of $\gamma$.  Hence the opposite boundary
		term is present, and the two contributions cancel by
		\Cref{rem:higher-boundary-cancellation}.  Only the idempotent-absorption
		alternatives remain, with the signs prescribed by \Cref{HOBF}.
	\end{proof}
	%===============================
	\begin{proposition}
		\label{prop:positive-positive-classification}
		Let $f=\para{x}{a}\in B^m$ and $g=\para{y}{b}\in B^n$, where
		$m,n\geq1$.  For every $\gamma\in\Gamma_{m+n+1}$, the value
		$l_2(f\otimes g)(\gamma)$ is determined by exactly one of the
		following four mutually exclusive cases:
		\begin{enumerate}
			\item if $\gamma$ has no repeated arrows, the value is given by
			\Cref{NRA-read-off};
			\item if $\gamma=\omega^{m+n+1}$, the value is given by
			\Cref{prop:higher-pure-loop-read-off};
			\item if $\gamma=\tau^w$, where $w\geq2$ and $\tau$ is a primitive
			relation cycle of length greater than one, the value is given by
			\Cref{prop:higher-primitive-cycle-power};
			\item if $\gamma=\tau^w\tau_1\cdots\tau_r$, where $w\geq1$ and
			$1\leq r<\ell(\tau)$, the value is given by
			\Cref{prop:higher-primitive-cycle-suffix}.
		\end{enumerate}
		These four cases exhaust the possible forms of $\gamma$.
	\end{proposition}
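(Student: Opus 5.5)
The statement is essentially an organizational result: a case split of $\gamma$ combined with forward references to the four read-off propositions that follow it. I would therefore prove it in two layers. The first layer is the exhaustion and mutual exclusivity of the four forms. The second layer checks that, in each form, the value $l_2(f\otimes g)(\gamma)$ is completely determined by the occurrence formula of \Cref{HOBF}, together with the occurrence data that the corresponding proposition computes.

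For the first layer, let $\gamma\in\Gamma_{m+n+1}$ with $m+n+1\geq3$. If $\gamma$ has no repeated arrow, we are in case (1). Otherwise \Cref{gamma-cases} applies with $n$ replaced by $m+n+1\geq2$. After rotating the primitive relation cycle so that it begins at $\gamma_1$, it yields exactly one of three forms: $\omega^{m+n+1}$ with $\omega^2\in I^S$; $\tau^w$ with $w\geq2$ and $\ell(\tau)>1$; or $\tau^w\tau_1\cdots\tau_r$ with $w\geq1$ and $1\leq r<\ell(\tau)$. These give cases (2)--(4).

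Mutual exclusivity has three parts.
\begin{itemize}
\item Case (1) excludes the others, since each of (2)--(4) contains a repeated arrow.
\item Case (2) is distinguished from (3) and (4) by \Cref{cycle-not-has-loop}: the arrows of $\tau$ are not loops when $\ell(\tau)>1$.
\item Cases (3) and (4) are distinguished by the residue $r$ of $m+n+1$ modulo $\ell(\tau)$. This uses the uniqueness of the primitive cycle determined by $\gamma_1$ via (G3), as in the proof of \Cref{gamma-cases}.
\end{itemize}

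For the second layer, I would note that by \Cref{HOBF} the value at $\gamma$ depends only on two things: the sets of positions at which $y$ and $x$ occur contiguously in $\gamma$, and the local terms $\mD_i^{(m,n)}$ and $\mD_j^{(n,m)}$.
\begin{itemize}
\item In case (1), each support occurs at most once. So each sum in \Cref{HOBF} has at most one term, and \Cref{cor:higher-value-length-reduction,cor:higher-arrow-reassembly,cor:higher-trivial-value} reduce these terms to explicit boundary, interior or idempotent-absorption contributions. This is the content of \Cref{NRA-read-off}.
\item In case (2), every relation subpath is a power of $\omega$ and occurs at every position. The signed coefficient is then an alternating sum of the kind computed in \Cref{prop:degree-minus-one-brackets}(3). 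Here one must evaluate the values via $\pi(\omega^k)$, distinguishing $\omega\in\Sp$ from $\omega^2=0$.
\item In cases (3) and (4), \Cref{cor:periodic-read-off} expresses the value through $\Occ_\gamma(x)$ and $\Occ_\gamma(y)$. \Cref{lem:periodic-occurrences} and \Cref{cor:signed-periodic-multiplicity} then give the positions and signed multiplicities.
\end{itemize}

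The main obstacle is the periodic cases. There, a single local mechanism, for example a boundary term $\mD_0$, can occur at several positions, and the local value can differ between an occurrence at the genuine end of $\gamma$ and an interior occurrence. One must also check that the cancellations of \Cref{rem:higher-boundary-cancellation} still pair the terms correctly when several occurrences are present. My plan is to fix the first arrow $\tau_u$ of each support, so that the occurrences of $y$ and of $x$ lie in single residue classes modulo $\ell(\tau)$. I would then sort occurrences into three kinds according to which of the clauses $i=0$, $0<i<m$, $i=m$ of $\mD_i$ they satisfy: boundary $i=0$, boundary $i=m$, and interior. Only interior occurrences can contribute a uniform value, and this is possible only when the value path is an arrow. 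Each kind then yields a finite signed count by \Cref{cor:signed-periodic-multiplicity}, which is what the read-off propositions record.
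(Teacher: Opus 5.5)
Your proposal is correct and follows the paper's own argument. Exhaustion and mutual exclusivity come from \Cref{gamma-cases}; your explicit exclusivity check only spells out what the paper takes from that lemma. The value in each case is then read off from \Cref{HOBF} via the four cited propositions, with \Cref{lem:periodic-occurrences,cor:signed-periodic-multiplicity} handling the periodic occurrences.

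Two descriptive remarks in your second layer are slightly off, though harmless. First, for $\gamma$ without repeated arrows no interior terms occur at all (\Cref{prop:higher-no-repetition-boundary-reduction}). Second, for $\gamma=\omega^{m+n+1}$ the only admissible values are $e_{s(\omega)}$ and $\omega$, so every local term returns $a$ or $b$ unchanged. Hence no $\pi(\omega^k)$ with $k\geq2$ arises, and there is no special/ordinary loop distinction to make.
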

	
	To verify \Cref{prop:positive-positive-classification}, we organize the local
	analysis in occurrence form.  For a fixed occurrence of $x$ or $y$, there are
	three possible local positions: the left boundary, an interior single-arrow
	position, and the right boundary.  Periodic relation concatenations may
	contain several occurrences of the same support, so the resulting bracket is
	obtained by summing the corresponding local terms with their insertion signs.
	For $\gamma=\tau^w$ and $\gamma=\tau^w\tau_1\cdots\tau_r$,
	\Cref{lem:periodic-occurrences,cor:signed-periodic-multiplicity} determine the
	occurrence positions and their signed multiplicities; the case
	$\gamma=\omega^{m+n+1}$ is treated separately below.  Throughout the local
	analysis, $\xi\in\mB$ denotes the inserted path, while $\nu$ denotes the
	single arrow appearing in a local term.
	
	By \Cref{HOBF}, a nonzero coefficient requires at least one of the supports
	$x$ or $y$ to occur in $\gamma$, and the complementary support is then
	determined by the corresponding local condition.  We follow the four
	mutually exclusive forms of $\gamma$ in \Cref{gamma-cases}, in the order
	listed in the proposition.
	%============================
	\subsection{The case where \texorpdfstring{$\gamma$}{gamma} has no repeated arrows}\label{sec:higher-NRA}
	\begin{proposition}
		\label{prop:higher-no-repetition-boundary-reduction}
		Assume that
		$\gamma=\gamma_1\cdots\gamma_{m+n+1}\in\Gamma_{m+n+1}$
		has no repeated arrows. Then no interior overlap term contributes to
		$l_2(\para{x}{a}\otimes\para{y}{b})(\gamma)$.
		More precisely,
		\begin{align*}
			l_2(\para{x}{a}\otimes \para{y}{b})(\gamma)
			={}&
			\one_{y=\gamma_1\cdots\gamma_{n+1}}
			\mD_0^{(m,n)}(x,a;\gamma;b)
			\\
			&+(-1)^{m(n+2)}
			\one_{y=\gamma_{m+1}\cdots\gamma_{m+n+1}}
			\mD_m^{(m,n)}(x,a;\gamma;b)
			\\
			&-(-1)^{mn}
			\one_{x=\gamma_1\cdots\gamma_{m+1}}
			\mD_0^{(n,m)}(y,b;\gamma;a)
			\\
			&-
			\one_{x=\gamma_{n+1}\cdots\gamma_{m+n+1}}
			\mD_n^{(n,m)}(y,b;\gamma;a).
		\end{align*}
		In particular, when $\gamma$ has no repeated arrows, the value of $l_2$ is controlled only by the two boundary pairs.
	\end{proposition}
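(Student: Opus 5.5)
The plan is to start from \Cref{HOBF}: the value at $\gamma$ is a signed sum over the occurrence positions of $y$ in $\gamma$ (first sum) and of $x$ in $\gamma$ (second sum). Because $\gamma$ has no repeated arrows, any relation subpath of $\gamma$ occurs at most once, since its first arrow pins down the starting position. So $\Occ_\gamma(y)$ and $\Occ_\gamma(x)$ each have at most one element, and every sum collapses to at most a single term. It then remains to show that this term vanishes unless it sits at a boundary position ($i\in\{0,m\}$, respectively $j\in\{0,n\}$), and to record the signs.

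\textbf{Excluding the interior terms.} Take $0<i<m$ in the first sum. By the definition of $\mD_i^{(m,n)}$, a nonzero term needs $b=\nu$ to be an arrow with
\[
x=\gamma_1\cdots\gamma_i\,\nu\,\gamma_{i+n+2}\cdots\gamma_{m+n+1}\in\Gamma_{m+1},
\]
while $y=\gamma_{i+1}\cdots\gamma_{i+n+1}$. Both $i\geq1$ and $i+n+2\leq m+n+1$ hold, so the arrows $\gamma_i$ and $\gamma_{i+n+2}$ both exist. From $x\in\Gamma_{m+1}$ we get $\gamma_i\nu\in I^S$. From $\gamma\in\Gamma_{m+n+1}$ we get $\gamma_i\gamma_{i+1}\in I^S$. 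Condition (G3) then forces $\nu=\gamma_{i+1}$. The same argument on the right, using $\nu\gamma_{i+n+2}\in I^S$ and $\gamma_{i+n+1}\gamma_{i+n+2}\in I^S$, forces $\nu=\gamma_{i+n+1}$. Since $n\geq1$, the positions $i+1$ and $i+n+1$ are distinct, so $\gamma$ would have a repeated arrow, which is a contradiction. The interior terms $0<j<n$ of the second sum are excluded in the same way, with the roles of $(x,a,m)$ and $(y,b,n)$ exchanged.

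\textbf{Boundary terms and signs.} What remains are $i=0$, $i=m$, $j=0$ and $j=n$. Each of these is present exactly when the corresponding support sits at that position in $\gamma$, and that is what the indicator functions record. The signs come straight from \Cref{HOBF}:
\begin{itemize}
\item $i=0$ gives $+1$;
\item $i=m$ gives $(-1)^{m(n+2)}$;
\item $j=0$ gives $-(-1)^{mn}$;
\item $j=n$ gives $-(-1)^{mn}(-1)^{n(m+2)}=-(-1)^{2mn+2n}=-1$.
\end{itemize}
These are exactly the coefficients in the displayed formula. If two boundary positions coincide as events (for example $y$ occurring at position $0$ and also at position $m$), uniqueness of occurrences rules this out, so the terms are independent and nothing is double counted.

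The main obstacle, though mild, is the interior exclusion. It relies on (G3) being applied on both sides of the inserted arrow, which is only possible because in the interior case $\gamma_i$ and $\gamma_{i+n+2}$ actually exist, and on $n\geq1$ (respectively $m\geq1$) so that the two forced positions differ. Special loops cause no trouble here: by \Cref{lem:special-loop-dichotomy}, a $\gamma$ with no repeated arrows and length $\geq2$ contains no special loop.
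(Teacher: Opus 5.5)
Your proposal is correct and follows essentially the same route as the paper: starting from \Cref{HOBF}, you rule out every interior term by applying \textup{(G3)} at both junctions of the inserted arrow, which forces $\nu=\gamma_{i+1}=\gamma_{i+n+1}$, and then you read off the four boundary signs. Your additional remarks, that each support occurs at most once and that special loops cannot appear, are correct and harmless, but the argument does not need them.
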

	
	\begin{proof}
		Suppose first that an interior term in the first insertion direction is
		nonzero, at some position $0<i<m$.  Then the definition of
		$\mD_i^{(m,n)}$ forces $b=\nu$ to be an arrow and
		\[
		x=\gamma_1\cdots\gamma_i\nu
		\gamma_{i+n+2}\cdots\gamma_{m+n+1}\in\Gamma_{m+1}.
		\]
		Hence
		$\gamma_i\nu\in I^S$ and $\nu\gamma_{i+n+2}\in I^S$.
		Since $\gamma\in\Gamma_{m+n+1}$, we also have
		$\gamma_i\gamma_{i+1}\in I^S$ and $\gamma_{i+n+1}\gamma_{i+n+2}\in I^S$.
		Applying \textup{(G3)} to these four relations therefore gives
		$\nu=\gamma_{i+1}=\gamma_{i+n+1}$,
		contradicting the assumption that $\gamma$ has no repeated arrows.  Thus no
		interior term in the first insertion direction can occur.  The second
		insertion direction is identical after exchanging $(x,a,m)$ and $(y,b,n)$.
		The four displayed boundary terms are therefore exactly the surviving terms
		in \Cref{HOBF}.
	\end{proof}
	%=======================================
	\begin{proposition}\label{NRA-read-off}
		Assume that
		$\gamma=\gamma_1\cdots\gamma_{m+n+1}\in\Gamma_{m+n+1}$ has no repeated arrows.
		Let $f=\para{x}{a}\in B^m$ and $g=\para{y}{b}\in B^n$, where $m,n\geq1$.
		Then the contribution to $l_2(f\otimes g)$ supported on $\gamma$ can be
		nonzero only in one of the following two boundary types.
		%========================================
		\begin{enumerate}
			\item[Type 1:] The junction at $\gamma_{n+1}$.
			Assume that
			$y=\gamma_1\cdots\gamma_{n+1}$ and
			$x=\gamma_{n+1}\cdots\gamma_{m+n+1}$.
			Then the contribution supported on $\gamma$ is
			\[
			\begin{cases}
				\para{\gamma}{\pi(pa)},
				&\text{if }
				b=p\gamma_{n+1}
				\text{ and } a \text{ does not start with } \gamma_{n+1},
				\\
				-\para{\gamma}{\pi(bq)},
				&\text{if }
				a=\gamma_{n+1}q
				\text{ and } b \text{ does not end with } \gamma_{n+1},
				\\
				0,
				&
				\text{otherwise.}
			\end{cases}
			\]
			Here $p$ and $q$ are possibly trivial paths in $\mB$.  If the displayed projected value is zero, the corresponding term is understood to be zero.  In
			particular, if both
			$b=p\gamma_{n+1}$ and $a=\gamma_{n+1}q$
			hold, then the two boundary contributions cancel:
			\[
			\para{\gamma}{\pi(pa)}
			-
			\para{\gamma}{\pi(bq)}
			=
			0,
			\]
			because $pa=bq=p\gamma_{n+1}q$.
			%================================
			\item[Type 2:] The junction at $\gamma_{m+1}$.
			Assume that
			$x=\gamma_1\cdots\gamma_{m+1}$ and
			$y=\gamma_{m+1}\cdots\gamma_{m+n+1}$.
			Then the contribution supported on $\gamma$ is
			\[
			\begin{cases}
				(-1)^{mn}\para{\gamma}{\pi(aq)},
				&\text{if }
				b=\gamma_{m+1}q
				\text{ and } a \text{ does not end with } \gamma_{m+1},
				\\
				-(-1)^{mn}\para{\gamma}{\pi(pb)},
				&\text{if }
				a=p\gamma_{m+1}
				\text{ and } b \text{ does not start with } \gamma_{m+1},
				\\
				0,
				&
				\text{otherwise.}
			\end{cases}
			\]
			Again $p$ and $q$ are possibly trivial paths in $\mB$, and a term whose projected value is zero is interpreted as zero.  If both
			$a=p\gamma_{m+1}$ and $b=\gamma_{m+1}q$
			hold, then the two boundary contributions cancel:
			\[
			(-1)^{mn}\para{\gamma}{\pi(aq)}
			-
			(-1)^{mn}\para{\gamma}{\pi(pb)}
			=
			0,
			\]
			because $aq=pb=p\gamma_{m+1}q$.
		\end{enumerate}
	\end{proposition}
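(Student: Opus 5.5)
Starting from \Cref{prop:higher-no-repetition-boundary-reduction}, only the four boundary terms survive. I then pair them according to the junction arrow. The first and fourth terms need $y=\gamma_1\cdots\gamma_{n+1}$ and $x=\gamma_{n+1}\cdots\gamma_{m+n+1}$, so they share the junction arrow $\gamma_{n+1}$ (Type 1). The second and third terms need $x=\gamma_1\cdots\gamma_{m+1}$ and $y=\gamma_{m+1}\cdots\gamma_{m+n+1}$, so they share $\gamma_{m+1}$ (Type 2).

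Both types can hold simultaneously only if $x$ and $y$ sit in both positions. Since $\gamma$ has no repeated arrows, this forces $m=n$ and $x=y=\gamma_1\cdots\gamma_{m+1}=\gamma_{m+1}\cdots\gamma_{2m+1}$, which gives $\gamma_1=\gamma_{m+1}$, a contradiction. So at most one type is active. The only exception would be $\gamma_1=\gamma_{m+1}$ with $m=0$, which is excluded since $m\geq1$. If neither support configuration holds, all four indicator factors vanish and the contribution is zero.

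Next I evaluate the local terms, treating Type 1; Type 2 follows by the symmetric argument with the signs $(-1)^{m(n+2)}=(-1)^{mn}$ and $-(-1)^{mn}$ read off from \Cref{prop:higher-no-repetition-boundary-reduction}.

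\emph{First term.} By definition of $\mD_0^{(m,n)}(x,a;\gamma;b)$, it is nonzero only if $b=p\nu$ with $x=\nu\gamma_{n+2}\cdots$, so $\nu=\gamma_{n+1}$, and then it equals $\pi(pa)$.

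\emph{Fourth term.} Similarly, $\mD_n^{(n,m)}(y,b;\gamma;a)$ with $j=n$ requires $a=\nu q$ and $y=\gamma_1\cdots\gamma_n\nu$, so $\nu=\gamma_{n+1}$. It contributes $-\pi(bq)$, and the sign $(-1)^{n(m+2)}(-1)^{mn}=1$ combines with the leading minus.

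Thus the first term is present exactly when $b$ ends with $\gamma_{n+1}$, and the fourth exactly when $a$ starts with $\gamma_{n+1}$. When both hold, $pa=p\gamma_{n+1}q=bq$ and the two terms cancel (\Cref{rem:higher-boundary-cancellation}). This yields the three-line case description.

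The main subtlety is the sign bookkeeping: checking that $(-1)^{n(m+2)}$ and $-(-1)^{mn}$ combine to the stated coefficients, with $(-1)^{m(n+2)}=(-1)^{mn}$ for the second term. A secondary point is confirming that the cancellation is valid even when $\pi(pa)$ involves idempotent absorption. Since $pa$ and $bq$ are literally the same formal path $p\gamma_{n+1}q$, their images under $\pi$ agree, so no special-loop analysis is needed here.
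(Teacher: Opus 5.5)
Your proposal is correct and follows essentially the same route as the paper: reduce to the four boundary terms, pair them by the junction arrow ($\gamma_{n+1}$ for the first/fourth terms, $\gamma_{m+1}$ for the second/third), read off the local values and signs, cancel via the common formal path $p\gamma_{n+1}q$ (resp.\ $p\gamma_{m+1}q$), and exclude simultaneous Types 1 and 2 using the absence of repeated arrows. One small blemish: in the exclusivity step, your intermediate claim that $m=n$ and $x=y$ is neither justified nor needed. Comparing the two descriptions of $y$ already gives $\gamma_1=\gamma_{m+1}$; comparing the two descriptions of $x$, as the paper does, gives $\gamma_{n+1}=\gamma_1$. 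Either equality is the contradiction you want.
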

	\begin{proof}
		Since $\gamma$ has no repeated arrows, no interior overlap term can
		occur.  Hence only the four boundary terms in \Cref{HOBF} remain.
		
		The first and fourth boundary terms can be nonzero only when $y$ is the
		initial subpath $\gamma_1\cdots\gamma_{n+1}$ of $\gamma$ and $x$ is its
		terminal subpath $\gamma_{n+1}\cdots\gamma_{m+n+1}$.
		This is precisely Type~1.  In this case the first boundary term appears exactly
		when $b$ ends with $\gamma_{n+1}$, say $b=p\gamma_{n+1}$, and its value is
		$\para{\gamma}{\pi(pa)}$.  The fourth boundary term appears exactly when
		$a$ starts with $\gamma_{n+1}$, say $a=\gamma_{n+1}q$, and its value is
		$-\para{\gamma}{\pi(bq)}$.  If both occur, then both projected products are
		the same product $p\gamma_{n+1}q$, with opposite signs, so they cancel.
		
		Similarly, the second and third boundary terms can be nonzero only when $x$
		is the initial subpath $\gamma_1\cdots\gamma_{m+1}$ of $\gamma$ and $y$
		is its terminal subpath $\gamma_{m+1}\cdots\gamma_{m+n+1}$.
		This is Type~2.  The second boundary term occurs exactly when
		$b=\gamma_{m+1}q$, and gives
		\[
		(-1)^{m(n+2)}\para{\gamma}{\pi(aq)}
		=
		(-1)^{mn}\para{\gamma}{\pi(aq)}.
		\]
		The third boundary term occurs exactly when $a=p\gamma_{m+1}$, and gives
		\[
		-(-1)^{mn}\para{\gamma}{\pi(pb)}.
		\]
		If both occur, then $aq=pb=p\gamma_{m+1}q$, and the two contributions cancel.
		
		Finally, Type~1 and Type~2 cannot occur simultaneously.  Indeed, otherwise
		$x$ would have to be both $\gamma_{n+1}\cdots\gamma_{m+n+1}$ and
		$\gamma_1\cdots\gamma_{m+1}$,
		which would force $\gamma_{n+1}=\gamma_1$, contradicting the assumption that
		$\gamma$ has no repeated arrows.  Therefore the two displayed types exhaust
		all possible nonzero contributions.
	\end{proof}
	%========================================
	\begin{remark}
		The preceding proposition does not imply that
		\[
		l_2(\para{x}{a}\otimes\para{y}{b})(\gamma)=0.
		\]
		The two boundary pairs may cancel, but they need not do so.  The following
		finite-dimensional gentle example shows explicitly that a single surviving
		boundary contribution can be nonzero.  In particular, boundary contributions
		cannot be discarded in the general finite-dimensional gentle setting.
	\end{remark}
	%===========================
	\begin{remark}
		Since \Cref{NRA-read-off} gives the complete local description without
		further diagrammatic subdivision, we illustrate the four surviving
		boundary alternatives by examples rather than by separate local
		configuration tables.
	\end{remark}
	%==============================
	\begin{example}
		\label{ex:higher-no-repetition-four-types}
		Consider the gentle algebra
		$A=\mK Q/\langle I\rangle$,
		where $Q$ is as follows and
		$I=\{\gamma_1\gamma_2, \gamma_2\gamma_3\}$.
		\[
		\begin{tikzcd}[column sep=5em]
			1
			\arrow[r,"\gamma_2"]
			&
			2
			\arrow[l,bend left=35,"\gamma_1"]
			\arrow[l,bend right=35,swap,"\gamma_3"]
		\end{tikzcd}
		\]
		Set
		$\gamma=\gamma_1\gamma_2\gamma_3\in\Gamma_3$.
		The arrows of $\gamma$ are pairwise distinct.  In all four cases below, we have $m=n=1$.
		
		\begin{enumerate}
			%=========================================================
			\item[(1)] \textbf{Type 1, first boundary term.}
			
			Let
			$x=\gamma_2\gamma_3$, $y=\gamma_1\gamma_2$
			and take
			$f=\para{\gamma_2\gamma_3}{e_1}$, $g=\para{\gamma_1\gamma_2}{\gamma_3\gamma_2}$.
			Here
			$b=\gamma_3\gamma_2=p\gamma_2$, $p=\gamma_3$,
			while $a=e_1$ does not start with $\gamma_2$.  Moreover,
			$\pi(pa)=\pi(\gamma_3e_1)=\gamma_3$.
			Thus only the first Type~1 boundary term is nonzero, and
			\[
			l_2(f\otimes g)
			=
			\para{\gamma_1\gamma_2\gamma_3}{\gamma_3}.
			\]
			
			%=========================================================
			\item[(2)] \textbf{Type 1, fourth boundary term.}
			
			Let
			$x=\gamma_2\gamma_3$, $y=\gamma_1\gamma_2$,
			and take
			$f=\para{\gamma_2\gamma_3}{\gamma_2\gamma_1}$, $g=\para{\gamma_1\gamma_2}{e_2}$.
			Here
			$a=\gamma_2\gamma_1=\gamma_2q$, $q=\gamma_1$,
			while $b=e_2$ does not end with $\gamma_2$.  Moreover,
			$\pi(bq)=\pi(e_2\gamma_1)=\gamma_1$.
			Thus only the fourth Type~1 boundary term is nonzero, and
			\[
			l_2(f\otimes g)
			=
			-\para{\gamma_1\gamma_2\gamma_3}{\gamma_1}.
			\]
			
			%=========================================================
			\item[(3)] \textbf{Type 2, second boundary term.}
			
			Let
			$x=\gamma_1\gamma_2$, $y=\gamma_2\gamma_3$
			and take
			$f=\para{\gamma_1\gamma_2}{e_2}$, $g=\para{\gamma_2\gamma_3}{\gamma_2\gamma_1}$.
			Here
			$b=\gamma_2\gamma_1=\gamma_2q$, $q=\gamma_1$,
			while $a=e_2$ does not end with $\gamma_2$.  Moreover,
			$\pi(aq)=\pi(e_2\gamma_1)=\gamma_1$.
			Since
			$(-1)^{mn}=-1$,
			only the second Type~2 boundary term is nonzero, and
			\[
			l_2(f\otimes g)
			=
			-\para{\gamma_1\gamma_2\gamma_3}{\gamma_1}.
			\]
			
			%=========================================================
			\item[(4)] \textbf{Type 2, third boundary term.}
			
			Let
			$x=\gamma_1\gamma_2$, $y=\gamma_2\gamma_3$
			and take
			$f=\para{\gamma_1\gamma_2}{\gamma_3\gamma_2}$, $g=\para{\gamma_2\gamma_3}{e_1}$.
			Here
			$a=\gamma_3\gamma_2=p\gamma_2$, $p=\gamma_3$,
			while $b=e_1$ does not start with $\gamma_2$.  Moreover,
			$\pi(pb)=\pi(\gamma_3e_1)=\gamma_3$.
			Again
			$(-1)^{mn}=-1$,
			so only the third Type~2 boundary term is nonzero, and
			\[
			l_2(f\otimes g)
			=
			-(-1)^{mn}
			\para{\gamma_1\gamma_2\gamma_3}{\gamma_3}
			=
			\para{\gamma_1\gamma_2\gamma_3}{\gamma_3}.
			\]
		\end{enumerate}
	\end{example}
	%===============================
	\subsection{The case \texorpdfstring{$\gamma=\omega^{m+n+1}$}{gamma = omega power m+n+1}}
	\label{sec:higher-power-loops}
	
	Assume that
	$\gamma=\omega^{m+n+1}$,
	where $\omega$ is a loop based at the vertex $1$.
	\[\begin{tikzcd}
		1
		\arrow["\omega", from=1-1, to=1-1, loop, in=55, out=125, distance=10mm]
	\end{tikzcd}\]
	Every relation subpath of $\gamma$ is a pure power of $\omega$.  Hence a
	nonzero bracket on $\gamma$ requires
	$x=\omega^{m+1}$ and $y=\omega^{n+1}$.  Moreover, the only paths in $\mB$
	parallel to $x$ or $y$ are $e_1$ and $\omega$.  Indeed, any nontrivial path in
	$\mB$ parallel to a power of $\omega$ is an oriented cycle in $\mB$ based at
	$1$, and \Cref{loop-cycle-unique} shows that $\omega$ is the unique such cycle.
	
	The case $a=b=e_1$ is zero by
	\Cref{cor:higher-value-length-reduction}.  The remaining cases are obtained
	from
	\Cref{cor:higher-trivial-value,cor:higher-arrow-reassembly}.
	
	\begin{proposition}
		\label{prop:higher-pure-loop-read-off}
		Let
		$\gamma=\omega^{m+n+1}$, where $m,n\geq1$.
		Then the possible nonzero brackets are
		\begin{align}
			l_2(
			\para{\omega^{m+1}}{e_1}
			\otimes
			\para{\omega^{n+1}}{\omega}
			)
			&=
			\left(
			\sum_{i=0}^{m}
			(-1)^{i(n+2)}
			\right)
			\para{\omega^{m+n+1}}{e_1},
			\label{eq:higher-loop-trivial-arrow}
			\\
			l_2(
			\para{\omega^{m+1}}{\omega}
			\otimes
			\para{\omega^{n+1}}{e_1}
			)
			&=
			-(-1)^{mn}
			\left(
			\sum_{j=0}^{n}
			(-1)^{j(m+2)}
			\right)
			\para{\omega^{m+n+1}}{e_1},
			\label{eq:higher-loop-arrow-trivial}
			\\
			l_2(
			\para{\omega^{m+1}}{\omega}
			\otimes
			\para{\omega^{n+1}}{\omega}
			)
			&=
			\left[
			\sum_{i=0}^{m}
			(-1)^{i(n+2)}
			-
			(-1)^{mn}
			\sum_{j=0}^{n}
			(-1)^{j(m+2)}
			\right]
			\para{\omega^{m+n+1}}{\omega}.
			\label{eq:higher-loop-arrow-arrow}
		\end{align}
		
		Each displayed bracket is nonzero precisely when its scalar coefficient is
		nonzero in $\mK$.  The first two formulas are related by the
		skew-symmetry relation \eqref{sym}.
	\end{proposition}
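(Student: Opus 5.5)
We evaluate \Cref{HOBF} directly on $\gamma=\omega^{m+n+1}$, using the reductions already recorded for this periodic support. The first step is to pin down the inputs. Every relation subpath of $\gamma$ is a pure power of $\omega$, so a nonzero value forces $x=\omega^{m+1}$ and $y=\omega^{n+1}$. By \Cref{loop-cycle-unique}, the only paths in $\mB$ parallel to these supports are $e_1$ and $\omega$. Thus $a,b\in\{e_1,\omega\}$, which leaves four cases. The case $a=b=e_1$ vanishes by \Cref{cor:higher-value-length-reduction}, so three cases remain, matching \eqref{eq:higher-loop-trivial-arrow}--\eqref{eq:higher-loop-arrow-arrow}.

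Next we record the occurrence sets. The support $y=\omega^{n+1}$ occurs in $\gamma$ at every position $i=0,\dots,m$, and $x=\omega^{m+1}$ occurs at every $j=0,\dots,n$. There is no cancelling periodicity structure here, since \Cref{lem:periodic-occurrences} concerns $\ell(\tau)>1$. We compute each local term $\mD_i^{(m,n)}(x,a;\gamma;b)$ with $b=\omega$ case by case, using $\nu=\omega$ and trivial $p,q$:
\begin{itemize}
\item at $i=0$, $b=e_1\omega$ and $x=\omega\gamma_{n+2}\cdots\gamma_{m+n+1}$ holds, giving $\pi(e_1a)=a$;
\item at interior $i$, $b=\nu$ and $x=\gamma_1\cdots\gamma_i\omega\gamma_{i+n+2}\cdots$ again holds, giving $a$;
\item at $i=m$, the term is $\pi(a e_1)=a$.
\end{itemize}
Hence every occurrence contributes exactly $a$, and the first sum equals $\sum_{i=0}^m(-1)^{i(n+2)}\para{\gamma}{a}$. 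This is precisely \Cref{cor:higher-arrow-reassembly} with $\mI^x_{y,\omega}(\gamma)=\{0,\dots,m\}$. When $a=e_1$, this agrees with \Cref{cor:higher-trivial-value}, where the boundary contributions $\para{\gamma}{p}$ and $\para{\gamma}{q}$ have $p=q=e_1$. Symmetrically, when $a=\omega$ the second sum equals $-(-1)^{mn}\sum_{j=0}^n(-1)^{j(m+2)}\para{\gamma}{b}$, and it vanishes when $a=e_1$.

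Combining the two sums yields the three formulas. In \eqref{eq:higher-loop-trivial-arrow} only the first sum survives, with value $e_1$. In \eqref{eq:higher-loop-arrow-trivial} only the second survives. In \eqref{eq:higher-loop-arrow-arrow} both survive with common value $\omega$. No value reduction via $\pi$ is needed, because the products are $e_1\omega$, $\omega e_1$, and so on. In particular no $\omega^2$ ever appears, so the answer is the same whether $\omega$ is special ($\omega^2=\omega$) or an ordinary relation loop. Nonvanishing is then exactly nonvanishing of the scalar coefficient. The last claim, that the first two formulas are related by \eqref{sym}, is checked by swapping inputs: $l_2(g\otimes f)=-(-1)^{mn}l_2(f\otimes g)$, and the sums match after exchanging $m$ and $n$.

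The main obstacle is the interior-term verification. We must confirm that replacing the window $\omega^{n+1}$ by the single arrow $\omega$ really reassembles $x$ through $G_{m+1}$. This means each tensor factor must be a single arrow, and the left and right boundary pieces $L\alpha$ and $\beta R$ must have trivial $L$ and $R$. We must also confirm that no additional terms arise from the special-loop differential formula in \Cref{thm:CS-resolution}. That formula affects $\delta$ only and not $G$, so we would double-check that the comparison maps $G_n$ are used unchanged on $\Sp^n$.
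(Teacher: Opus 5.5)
Your proposal is correct and follows essentially the paper's argument. You restrict to $x=\omega^{m+1}$, $y=\omega^{n+1}$ with values in $\{e_1,\omega\}$, read off the occurrence sets $\{0,\dots,m\}$ and $\{0,\dots,n\}$, and observe that every occurrence contributes the same value (as in \Cref{cor:higher-arrow-reassembly} and \Cref{cor:higher-trivial-value}), so that only the insertion signs remain to be summed. Your explicit evaluation of the local terms $\mD_i^{(m,n)}$ and your check of consistency with \eqref{sym} simply unpack what the paper cites from those corollaries.
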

	
	\begin{proof}
		The support $\omega^{n+1}$ occurs in $\gamma$ at the positions
		\[
		0,1,\ldots,m,
		\]
		while $\omega^{m+1}$ occurs at the positions
		\[
		0,1,\ldots,n.
		\]
		If one value is trivial, the corresponding opposite insertion sum
		vanishes by \Cref{cor:higher-trivial-value}.  If the inserted value is the
		arrow $\omega$, every occurrence contributes the same value by
		\Cref{cor:higher-arrow-reassembly}.  Summing the insertion signs gives the
		three displayed formulas.
	\end{proof}
	%=========================================
	%=============================
	\subsection{The case \texorpdfstring{$\gamma=\tau^w$}{gamma = tau power w}}
	\label{sec:higher-power-cycle}
	Assume that $\gamma=\tau^w$, where $w\geq2$ and
	$\tau=\tau_1\cdots\taue$ is a primitive relation cycle with
	$\ell(\tau)\geq2$.  Then $\ell(\tau)w=m+n+1$.
	
	The case $w=1$ belongs to the case where $\gamma$ has no repeated arrows, treated in
	\Cref{sec:higher-NRA}.
	
	For a nonzero contribution, \Cref{HOBF} and the corresponding local
	condition force both supports $x\in\Gamma_{m+1}$ and $y\in\Gamma_{n+1}$
	to occur as relation subpaths of $\gamma$.  These subpaths are proper since
	$m,n\geq1$.  All subscripts attached to arrows of $\tau$ are read cyclically
	modulo $\ell(\tau)$.
	%============================================================
	\begin{lemma}
		\label{lem:proper-subpaths-cycle-power}
		Let $y$ be a proper relation subpath of
		$\gamma=\tau^w$.
		Then $y$ has exactly one of the following forms.
		
		\begin{enumerate}
			\item
			$y=(\tau_u\cdots\tau_{u-1})^{w_2}$,
			where
			$1\leq u\leq\ell(\tau)$,
			$1\leq w_2<w$ and
			$w_2\ell(\tau)=n+1$.
			
			\item
			$y=(\tau_u\cdots\tau_{u-1})^{w_2}
			\tau_u\cdots\tau_{u+v}$,
			where
			$1\leq u\leq\ell(\tau)$,
			$0\leq w_2<w$,
			$0\leq v<\ell(\tau)-1$,
			and
			$w_2\ell(\tau)+v+1=n+1$.
		\end{enumerate}
	\end{lemma}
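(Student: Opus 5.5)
The plan is to use the periodicity of $\gamma=\tau^w$ established in \Cref{gamma-cases} and then apply Euclidean division to the length of $y$. Let $y$ occur in $\gamma$ at position $i$, so that $y=\gamma_{i+1}\cdots\gamma_{i+n+1}$ with $0\le i\le w\ell(\tau)-(n+1)$. The periodicity formula $\gamma_j=\tau_{1+((j-1)\bmod\ell(\tau))}$ shows that $y_k=\tau_{1+((i+k-1)\bmod \ell(\tau))}$. Put $u=1+(i\bmod\ell(\tau))$. Then the arrows of $y$ follow the cyclic order of $\tau$ starting at $\tau_u$, exactly as in the hypothesis of \Cref{lem:periodic-occurrences}. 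The word $\tau_u\cdots\tau_{u-1}$, read cyclically, is the rotation of $\tau$ beginning at $\tau_u$.

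Next, write $n+1=w_2\ell(\tau)+r'$ with $0\le r'<\ell(\tau)$. If $r'=0$, then $y$ consists of $w_2$ consecutive full rotations, so $y=(\tau_u\cdots\tau_{u-1})^{w_2}$; here $w_2\ge1$ because $n+1\ge1$. If $r'\ge1$, set $v=r'-1$, so that $0\le v<\ell(\tau)-1$. Then $y$ consists of $w_2$ full rotations followed by the initial segment $\tau_u\cdots\tau_{u+v}$ of length $r'$, and $w_2\ell(\tau)+v+1=n+1$.

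The bounds $w_2<w$ come from properness. Since $y$ is a proper subpath, $n+1<w\ell(\tau)$. In case (1) this gives $w_2\ell(\tau)<w\ell(\tau)$, hence $w_2<w$. In case (2) we have $w_2\ell(\tau)\le n<w\ell(\tau)$, hence again $w_2<w$. The last inequality also uses that the total length of $\gamma$ is $m+n+1=w\ell(\tau)$ with $m\ge1$.

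Mutual exclusivity is immediate, because the two cases correspond to different residues of $n+1$ modulo $\ell(\tau)$: zero in case (1) and $v+1\in\{1,\dots,\ell(\tau)-1\}$ in case (2). Within each form the data are uniquely determined by $y$. The length of $y$ fixes $w_2$ and $v$ through the division algorithm. The index $u$ is fixed by the first arrow $y_1=\tau_u$, since the arrows of a primitive relation cycle with $\ell(\tau)>1$ are pairwise distinct, as noted in the proof of \Cref{gamma-cases}.

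The main subtlety is in the cyclic-index bookkeeping. The notation $\tau_u\cdots\tau_{u-1}$ must be read modulo $\ell(\tau)$; when $u=1$ it should be read as $\tau$ itself. For the degenerate value $w_2=0$ in case (2), $y$ is just a proper cyclic segment $\tau_u\cdots\tau_{u+v}$. Both conventions must be checked to agree with the indexing of \Cref{lem:periodic-occurrences}, where position $i$ corresponds to $u=i+1$ modulo $\ell(\tau)$.
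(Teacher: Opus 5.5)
Your proof is correct and follows essentially the same route as the paper: the pairwise distinctness of the arrows of $\tau$ fixes $u$, and Euclidean division of $n+1$ by $\ell(\tau)$ splits into the two forms. The only minor differences are that you read the arrows of $y$ directly from the periodicity of $\gamma=\tau^w$ instead of invoking \textup{(G3)}, and that you explicitly check $w_2<w$ in the second case and the mutual exclusivity of the two forms, which the paper leaves implicit.
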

	
	\begin{proof}
		As observed in the proof of \Cref{gamma-cases}, the arrows of the
		primitive relation cycle $\tau$ are pairwise distinct. Hence the first
		arrow $\tau_u$ determines the cyclic starting position, modulo $\ell(\tau)$,
		of any relation concatenation whose arrows follow the cyclic order of $\tau$.
		Write $n+1=w_2\ell(\tau)+r_y$, where
		$0\leq r_y<\ell(\tau)$.
		Once the first arrow $\tau_u$ and the length $n+1$ are fixed, condition
		\textup{(G3)} uniquely determines the relation concatenation $y$.
		
		If $r_y=0$, then $y$ consists of $w_2$ complete rotated copies of
		$\tau$, giving the first form.  Since $y$ is nontrivial and proper, $1\leq w_2<w$.
		
		If $r_y>0$, put $v=r_y-1$.
		Then $0\leq v<\ell(\tau)-1$
		and $y$ has the second form.
	\end{proof}
	
	The same description applies to $x$, after replacing $n+1$ by $m+1$.
	For either support $z=x$ or $z=y$, its possible occurrence positions in
	$\gamma=\tau^w$ are determined by
	\Cref{lem:periodic-occurrences}, with $r=0$.  In particular, if the first
	arrow of $z$ is $\tau_u$, then
	\[
	\Occ_\gamma(z)
	=
	\left\{
	u-1+k\ell(\tau)
	\ \middle|\
	\begin{array}{l}
		k\in\mathbb Z_{\geq0},\\[1mm]
		u-1+k\ell(\tau)+\ell(z)
		\leq w\ell(\tau)
	\end{array}
	\right\}.
	\]
	
	Rather than distinguishing all possible cyclic forms of $x$ and $y$
	separately, we classify the bracket according to the lengths of the two
	values $a$ and $b$.  These alternatives are exhaustive.
	%========================================================
	\subsubsection{At least one value is trivial}
	Assume first that $a=e_{s(x)}$.
	Then $x$ is closed.  This possibility cannot be excluded merely
	from the arrow subscripts of $x$, since a primitive relation cycle may visit
	the same vertex more than once.
	
	By \Cref{cor:higher-trivial-value}, the second insertion sum vanishes
	identically, and
	\begin{align}
		l_2(
		\para{x}{e_{s(x)}}
		\otimes
		\para{y}{b}
		)(\gamma)
		=
		\sum_{i\in\Occ_\gamma(y)}
		(-1)^{i(n+2)}
		\mD_i^{(m,n)}
		(x,e_{s(x)};\gamma;b).
		\label{eq:cycle-power-trivial-value}
	\end{align}
	
	The only possible nonzero local terms in
	\eqref{eq:cycle-power-trivial-value} are the left boundary, interior
	single-arrow, and right boundary terms listed in
	\Cref{cor:higher-trivial-value}.  Their occurrence positions and signs are
	determined by \Cref{lem:periodic-occurrences}.
	
	The case in which $b$ is trivial is obtained by the skew-symmetry relation
	\eqref{sym}.  If both values are trivial, then $l_2(f\otimes g)=0$ by
	\Cref{cor:higher-value-length-reduction}.
	
	Thus, for $\gamma=\tau^w$, all trivial-value brackets are classified by
	\Cref{cor:higher-trivial-value,lem:periodic-occurrences}; we do not expand
	them into separate cyclic support cases.
	
	\begin{example}
		\label{ex:cycle-power-trivial-value}
		Consider the ordinary gentle algebra
		$A=\mK Q/\langle I\rangle$,
		where $Q$ is as follows and
		$I= \{ \tau_1\tau_2,\tau_2\tau_3, \tau_3\tau_4, \tau_4\tau_1 \}$.
		\[
		\begin{tikzcd}[column sep=5em]
			2
			\arrow[r,bend left=18,"\tau_2"]
			&
			1
			\arrow[l,bend left=18,"\tau_1"]
			\arrow[r,bend left=18,"\tau_3"]
			&
			3
			\arrow[l,bend left=18,"\tau_4"]
		\end{tikzcd}
		\]
		The algebra $A$ is finite-dimensional and gentle, and
		$\tau=\tau_1\tau_2\tau_3\tau_4$
		is a primitive relation cycle.  The cycle $\tau$ passes through the
		vertex $1$ twice.
		
		Let
		$\gamma=\tau^2 = \tau_1\tau_2\tau_3\tau_4 \tau_1\tau_2\tau_3\tau_4 \in\Gamma_8$.
		Set
		$x=\tau_3\tau_4\tau_1\tau_2\tau_3\tau_4\in\Gamma_6$ and
		$y=\tau_1\tau_2\tau_3\in\Gamma_3$.
		Thus $m=5$ and $n=2$.
		The support $x$ starts and ends at the vertex $1$, so the trivial path
		$e_1$ is parallel to $x$.  Define
		$f=\para{x}{e_1}\in B^5$ and $g=\para{y}{\tau_3}\in B^2$.
		
		The support $y$ occurs in $\gamma$ at the positions
		$\Occ_\gamma(y)=\{0,4\}$.
		At the first occurrence, $y=\gamma_1\gamma_2\gamma_3$ and
		$x=\gamma_3\cdots\gamma_8$, while $b=\tau_3=e_1\tau_3$.
		Therefore the left boundary term is
		$\mD_0^{(5,2)}(x,e_1;\gamma;\tau_3)=\pi(e_1e_1)=e_1$.
		
		At the occurrence beginning at position $4$, replacing $y$ by
		$\tau_3$ does not produce the support $x$, so
		$\mD_4^{(5,2)}(x,e_1;\gamma;\tau_3)=0$.
		The second insertion sum vanishes because the value of $f$ is trivial.
		Hence
		\[
		l_2(f\otimes g)
		=
		\para{\gamma}{e_1}
		\neq0.
		\]
		
		This example shows that a proper relation subpath of a power of a primitive relation cycle may be closed and hence admit a trivial value.
	\end{example}
	%===============================
	\subsubsection{No value is trivial and at least one value is an arrow}
	
	Assume that $\ell(a)>0$ and $\ell(b)>0$, and, by skew-symmetry if
	necessary, suppose that $b$ is an arrow.
	
	By \Cref{cor:higher-arrow-reassembly}, the contribution of the first
	insertion direction is
	$(\sum_{i\in\mI_{y,b}^{x}(\gamma)}(-1)^{i(n+2)})a$.
	Hence
	\begin{align}
		l_2(f\otimes g)(\gamma)
		=
		\left(
		\sum_{i\in\mI_{y,b}^{x}(\gamma)}
		(-1)^{i(n+2)}
		\right)a
		-
		(-1)^{mn}
		\sum_{j\in\Occ_\gamma(x)}
		(-1)^{j(m+2)}
		\mD_j^{(n,m)}(y,b;\gamma;a).
		\label{eq:cycle-power-arrow-read-off}
	\end{align}
	
	The candidate positions in
	$\mI_{y,b}^{x}(\gamma)$ lie in $\Occ_\gamma(y)$, and are therefore
	computed by \Cref{lem:periodic-occurrences}.  The additional condition
	defining $\mI_{y,b}^{x}(\gamma)$ checks whether replacing the occurrence of
	$y$ by the arrow $b$ produces the complementary support $x$.
	
	There are two subcases.
	
	\begin{enumerate}
		\item If $a$ is also an arrow, then the second insertion direction is
		also given by \Cref{cor:higher-arrow-reassembly}, and
		\begin{align}
			l_2(f\otimes g)(\gamma)
			=
			\left(
			\sum_{i\in\mI_{y,b}^{x}(\gamma)}
			(-1)^{i(n+2)}
			\right)a
			-
			(-1)^{mn}
			\left(
			\sum_{j\in\mI_{x,a}^{y}(\gamma)}
			(-1)^{j(m+2)}
			\right)b.
			\label{eq:cycle-power-two-arrow-values}
		\end{align}
		
		\item If $\ell(a)>1$, then
		\Cref{cor:higher-value-length-reduction} shows that the second sum in
		\eqref{eq:cycle-power-arrow-read-off} can contribute only at $j=0$ or $j=n$.  Thus
		\begin{align}
			l_2(f\otimes g)(\gamma)
			=
			\left(
			\sum_{i\in\mI_{y,b}^{x}(\gamma)}
			(-1)^{i(n+2)}
			\right)a
			-
			(-1)^{mn}
			\sum_{j\in\Occ_\gamma(x)\cap\{0,n\}}
			(-1)^{j(m+2)}
			\mD_j^{(n,m)}(y,b;\gamma;a).
			\label{eq:cycle-power-arrow-long-value}
		\end{align}
		
		If a boundary term in the second line arises from the first alternative of
		\Cref{lem:higher-boundary-dichotomy}, then it is paired with the corresponding endpoint contribution from
		the first line and cancels by
		\Cref{rem:higher-boundary-cancellation}.  If it does not arise from the first alternative of
		\Cref{lem:higher-boundary-dichotomy}, its possible nonzero value is an
		idempotent-absorption term.
	\end{enumerate}
	
	Together with their skew-symmetric counterparts,
	\eqref{eq:cycle-power-two-arrow-values} and
	\eqref{eq:cycle-power-arrow-long-value} classify all brackets with
	$\gamma=\tau^w$ for which neither value is trivial and at least
	one value is an arrow. All surviving contributions are added in
	$A$, and the bracket is nonzero precisely when their sum is nonzero.
	%=========================
	\begin{example}
		\label{ex:cycle-power-arrow-reassembly}
		Let $Q^S$ be the skew-gentle quiver
		\[\begin{tikzcd}
			& 1 & \\
			3 && 2
			\arrow["\ep", from=1-2, to=1-2, loop, in=55, out=125, distance=10mm]
			\arrow["{\tau_1}", from=1-2, to=2-3]
			\arrow["{\tau_3}", from=2-1, to=1-2]
			\arrow["{\tau_2}", from=2-3, to=2-1]
		\end{tikzcd}\]
		where $\ep$ is the special loop at the vertex $1$.  Take
		$I=\{\tau_1\tau_2, \tau_2\tau_3, \tau_3\tau_1\}$ and
		$I^S=I\cup\{\ep^2\}$.
		Thus
		$A= \mK Q^S/ \langle I\cup\{\ep^2-\ep\} \rangle$
		is a finite-dimensional skew-gentle algebra.
		
		Put
		$\tau=\tau_1\tau_2\tau_3$
		and
		$\gamma=\tau^4\in\Gamma_{12}$.
		Let
		$y=\tau\tau_1 = \tau_1\tau_2\tau_3\tau_1 \in\Gamma_4$
		and
		$x=\tau^3\in\Gamma_9$.
		Hence $n=3$ and $m=8$.
		Define $f=\para{\tau^3}{\ep}\in B^8$ and
		$g=\para{\tau\tau_1}{\tau_1}\in B^3$.
		
		The support $y$ occurs in $\gamma$ at the positions
		$\Occ_\gamma(y)=\{0,3,6\}$.
		Replacing any one of these three occurrences of
		$y=\tau\tau_1$
		by the arrow $\tau_1$ removes one complete copy of $\tau$, and hence
		produces the same complementary support
		$x=\tau^3$.
		Therefore
		$\mI_{y,\tau_1}^{x}(\gamma) = \{0,3,6\}$.
		
		On the other hand,
		$\mI_{x,\ep}^{y}(\gamma)=\varnothing$,
		because replacing an occurrence of $\tau^3$ by $\ep$ does not produce
		the support $y=\tau\tau_1$.  Hence
		\begin{align*}
			l_2(f\otimes g)
			&=
			\left(
			\sum_{i\in\{0,3,6\}}
			(-1)^{i(n+2)}
			\right)
			\para{\gamma}{\ep}
			\\
			&=
			(
			1-1+1
			)
			\para{\gamma}{\ep}
			\\
			&=
			\para{\gamma}{\ep}.
		\end{align*}
		
		This example illustrates that the periodic occurrence positions determine
		the scalar coefficient, while
		\Cref{cor:higher-arrow-reassembly} determines the common local value.
	\end{example}
	%===============================
	\subsubsection{Both values have length greater than one}
	
	Finally, assume that $\ell(a)>1$ and $\ell(b)>1$.
	By \Cref{cor:higher-value-length-reduction}, no interior term can occur.  By
	\Cref{lem:higher-boundary-dichotomy,rem:higher-boundary-cancellation},
	every boundary term arising from the first alternative of
	\Cref{lem:higher-boundary-dichotomy} is paired with a term from the opposite
	insertion direction and cancels.
	
	Therefore, the only possible surviving terms are the four
	idempotent-absorption boundary terms listed in
	\Cref{cor:higher-long-value-reduction}.  No additional case is created by
	the periodic form $\gamma=\tau^w$; periodicity only determines whether the
	required boundary supports occur.
	
	In particular, once the boundary support conditions in
	\Cref{cor:higher-long-value-reduction} are satisfied, the corresponding
	projected values are $\pi(pa)$, $\pi(bq)$, $\pi(aq)$, and
	$\pi(pb)$, with the signs displayed there.  Terms with zero projected value are omitted.
	%======================================
	\begin{example}
		\label{ex:cycle-power-long-value-absorption}
		Let $Q^S$ be the quiver
		\[\begin{tikzcd}
			1 && 2 \\
			\\
			4 && 3
			\arrow["{\tau_1}", from=1-1, to=1-3]
			\arrow["\ep", from=1-3, to=1-3, loop, in=10, out=80, distance=10mm]
			\arrow["{\tau_2}", from=1-3, to=3-3]
			\arrow["{\tau_4}", from=3-1, to=1-1]
			\arrow["{\tau_3}"', shift right, from=3-3, to=3-1]
			\arrow["\rho", shift left, from=3-3, to=3-1]
		\end{tikzcd}\]
		where $\ep$ is the special loop at the vertex $2$, and let
		$I=\{\tau_1\tau_2, \tau_2\tau_3, \tau_3\tau_4, \tau_4\tau_1\}$
		and $I^S=I\cup\{\ep^2\}$.
		Then
		$A= \mK Q^S/ \langle I\cup\{\ep^2-\ep\} \rangle$
		is a finite-dimensional skew-gentle algebra.  In particular, the products
		$\tau_1\ep$, $\ep\tau_2$, $\tau_2\rho$, and $\rho\tau_4$ lie in $\mB$.
		
		Put
		$\tau=\tau_1\tau_2\tau_3\tau_4$
		and
		$\gamma=\tau^2\in\Gamma_8$.
		Set
		$y=\tau_1\tau_2\in\Gamma_2$
		and
		$x= \tau_2\tau_3\tau_4 \tau_1\tau_2\tau_3\tau_4 \in\Gamma_7$.
		Thus $n=1$ and $m=6$.
		
		Define $p=\tau_1\ep$, $b=p\tau_2=\tau_1\ep\tau_2$, and
		$a=\ep\tau_2\rho\tau_4$.
		Then $b\parallel y$, $a\parallel x$, $\ell(a)>1$, and
		$\ell(b)>1$.
		Let $f=\para{x}{a}\in B^6$ and $g=\para{y}{b}\in B^1$.
		
		The support $y$ occurs in $\gamma$ at the positions
		$\Occ_\gamma(y)=\{0,4\}$.
		At $i=0$, it is a left boundary occurrence, and
		$b=p\tau_2$.
		Moreover,
		$p_{\ell(p)}=\ep=\alpha_1$.
		Thus the interface between $p$ and $a$ is the special idempotent
		square $\ep^2$, and
		\begin{align*}
			\mD_0^{(6,1)}(x,a;\gamma;b)
			&=
			\pi(pa)
			\\
			&=
			\pi(
			\tau_1\ep\ep\tau_2\rho\tau_4
			)
			\\
			&=
			\tau_1\ep\tau_2\rho\tau_4.
		\end{align*}
		
		The occurrence beginning at position $4$ is internal.  Since
		$\ell(b)>1$, its local term is zero by
		\Cref{cor:higher-value-length-reduction}.
		
		The support $x$ occurs only at the right boundary in the second
		insertion direction.  A nonzero right boundary term there would
		require $a$ to start with the overlap arrow $\tau_2$, but
		$\alpha_1=\ep\neq\tau_2$.
		Hence the second insertion sum is zero.  Therefore
		\[
		l_2(f\otimes g)
		=
		\para{\gamma}
		{\tau_1\ep\tau_2\rho\tau_4}
		\neq0.
		\]
		
		This is a genuine idempotent-absorption term: it survives because
		$\ep^2=\ep$ in $A$,
		whereas a boundary term arising from the first alternative of
		\Cref{lem:higher-boundary-dichotomy} would be paired with a term from the
		opposite insertion direction and would cancel.
	\end{example}
	%================================================
	\begin{proposition}
		\label{prop:higher-primitive-cycle-power}
		Let $\gamma=\tau^w$, where $\tau=\tau_1\cdots\taue$ and $w\geq2$,
		and let $f=\para{x}{a}\in B^m$ and $g=\para{y}{b}\in B^n$, where
		$m,n\geq1$.  Then every nonzero value of
		$l_2(f\otimes g)(\gamma)$ belongs to exactly one of the following
		mutually exclusive cases:
		
		\begin{enumerate}
			\item at least one of $a,b$ is trivial, in which case the bracket is
			given by
			\Cref{cor:higher-trivial-value,lem:periodic-occurrences}
			and skew-symmetry;
			
			\item neither value is trivial and at least one of $a,b$ is an
			arrow, in which case the bracket is given by
			\eqref{eq:cycle-power-two-arrow-values} or
			\eqref{eq:cycle-power-arrow-long-value}, together with
			skew-symmetry;
			
			\item $\ell(a)>1$ and $\ell(b)>1$, in which case the only surviving terms are the idempotent-absorption
			boundary terms in
			\Cref{cor:higher-long-value-reduction}.
		\end{enumerate}
		
		Conversely, every term described in these three cases occurs whenever
		its support, occurrence, and nonvanishing conditions under $\pi$ are satisfied.  The
		resulting bracket is nonzero precisely when the sum of the surviving terms
		is nonzero in $A$.
	\end{proposition}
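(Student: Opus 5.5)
The plan is to reduce everything to the occurrence formula of \Cref{HOBF}, specialized to $\gamma=\tau^w$ as in \Cref{cor:periodic-read-off}, and then split according to the pair $(\ell(a),\ell(b))$. The three cases in the statement partition all possibilities: either some value is trivial; or both are nontrivial and at least one has length one; or both have length greater than one. Hence mutual exclusivity and exhaustiveness are automatic, and the work is to show that, within each case, the listed terms are exactly the surviving ones. Throughout, occurrence positions and their insertion signs come from \Cref{lem:periodic-occurrences} with $r=0$, and the possible shapes of $x,y$ as proper relation subpaths come from \Cref{lem:proper-subpaths-cycle-power}.

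\emph{Case (1).} Suppose first that $a$ is trivial; the case of trivial $b$ then follows by \eqref{sym}. By \Cref{cor:higher-trivial-value}, the second insertion sum vanishes. Each term of the first sum is a left boundary, interior, or right boundary contribution with the explicit value listed there. Summing over $i\in\Occ_\gamma(y)$ gives \eqref{eq:cycle-power-trivial-value}. If both values are trivial, the bracket vanishes by \Cref{cor:higher-value-length-reduction}.

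\emph{Case (2).} By skew-symmetry we may take $b$ to be an arrow. \Cref{cor:higher-arrow-reassembly} collapses the first sum to a signed count over $\mI^x_{y,b}(\gamma)$ times $a$. There are two subcases.
\begin{enumerate}
\item If $a$ is also an arrow, apply the same corollary to the second sum; this yields \eqref{eq:cycle-power-two-arrow-values}.
\item If $\ell(a)>1$, then \Cref{cor:higher-value-length-reduction}(2) kills the interior positions of the second sum, leaving \eqref{eq:cycle-power-arrow-long-value}.
\end{enumerate}
For the remaining boundary terms $j\in\{0,n\}$, apply \Cref{lem:higher-boundary-dichotomy}. A first-alternative term should be matched with an endpoint term of the first sum via \Cref{rem:higher-boundary-cancellation}. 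Here one must check that the matching term actually lies in $\mI^x_{y,b}(\gamma)$ with the same sign. I expect this to follow because (G3) forces the overlap arrow to equal $b$ at a boundary occurrence of $y$ (positions $0$ or $m$), where the signs $(-1)^{0}$ and $(-1)^{m(n+2)}=(-1)^{mn}$ agree with those in the remark.

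\emph{Case (3).} Invoke \Cref{cor:higher-long-value-reduction} directly: no interior term survives, and first-alternative boundary terms cancel in pairs. Periodicity affects only which boundary supports occur, and at the boundary positions $0,m$ (resp.\ $0,n$) each support occurs at most once. So the only survivors are the four idempotent-absorption terms.

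\emph{Converse.} Every listed term is a genuine summand of \Cref{HOBF} whenever its support, occurrence, and $\pi$-nonvanishing conditions hold. No cancellation was used other than the explicit pairings above, so the bracket equals the sum of the survivors.

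The main obstacle is the pairing in Cases (2)--(3) in the periodic setting. There, $x$ or $y$ may occur at several positions, and a boundary occurrence in one direction must be matched with exactly one occurrence in the other direction carrying the correct sign. I would handle this by noting that boundary terms require the supports to sit at positions $0$ or $m$/$n$. By \Cref{lem:periodic-occurrences}, each such position is a single occurrence, so the matching is one-to-one and the signs agree with \Cref{rem:higher-boundary-cancellation}.
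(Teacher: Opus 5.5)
Your proposal is correct and follows essentially the same route as the paper. You partition by the lengths of $a$ and $b$ and read off each case from \Cref{cor:higher-trivial-value}, from \Cref{cor:higher-arrow-reassembly} together with \Cref{cor:higher-value-length-reduction}, and from \Cref{cor:higher-long-value-reduction}, with occurrence positions supplied by \Cref{lem:periodic-occurrences}. Your additional check in case (2) is also correct but not needed for the statement: via (G3) and the signs $1$ and $(-1)^{mn}$, a first-alternative boundary term is matched with an endpoint position in $\mI_{y,b}^{x}(\gamma)$, whereas the paper simply takes the displayed formulas as the answer.
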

	
	\begin{proof}
		The three cases form a partition of the possible lengths of the value paths $a,b\in\mB$.  The first case follows from
		\Cref{cor:higher-trivial-value}; the second follows from
		\Cref{cor:higher-arrow-reassembly} and
		\Cref{cor:periodic-read-off}; and the third follows from
		\Cref{cor:higher-long-value-reduction}.  The occurrence positions in all
		three cases are determined by
		\Cref{lem:periodic-occurrences}.  These general results exhaust all local
		terms in \Cref{HOBF}.
	\end{proof}
	%===============================
	%================================================
	\subsection{The case \texorpdfstring{$\gamma=\tau^w\tau_1\cdots\tau_r$}{gamma = tau power w with suffix}}
	\label{sec:higher-power-suffix}
	
	Assume that $\gamma=\tau^w\tau_1\cdots\tau_r$, where
	$1\leq r<\ell(\tau)$, $w\geq1$, and
	$w\ell(\tau)+r=m+n+1$.  Here $\tau=\tau_1\cdots\taue$ is a primitive
	relation cycle, and all subscripts attached to the arrows of $\tau$ are read
	cyclically modulo $\ell(\tau)$.
	
	For nonzero contributions, the same local conditions
	show that $x\in\Gamma_{m+1}$ and $y\in\Gamma_{n+1}$ are relation subpaths
	of $\gamma$; since $m,n\geq1$, they are proper. In particular, the arrows of
	each support follow the cyclic order of $\tau$. More precisely, if a support
	$z=z_1\cdots z_s$ starts with $\tau_u$, then
	\[
	z_j=\tau_{1+((u+j-2)\bmod \ell(\tau))}
	\quad(1\leq j\leq s).
	\]
	Once the first arrow and the length of such a support are fixed, condition
	\textup{(G3)} uniquely determines the support itself.
	
	Suppose, for example, that the first arrow of a support $z$ is $\tau_u$.
	Then its occurrence positions in $\gamma$ are
	\[
	\Occ_\gamma(z)
	=
	\left\{
	u-1+k\ell(\tau)
	\ \middle|\
	\begin{array}{l}
		k\in\mathbb Z_{\geq0},\\[1mm]
		u-1+k\ell(\tau)+\ell(z)
		\leq w\ell(\tau)+r
	\end{array}
	\right\}
	\]
	by \Cref{lem:periodic-occurrences}.  Its signed occurrence multiplicities are
	given by \Cref{cor:signed-periodic-multiplicity}.
	
	The information specific to the suffix is therefore encoded entirely by the
	occurrence sets.  In the first insertion direction, $i=0$ and $i=m$ are the
	left and right boundary positions, respectively.  Hence they occur precisely
	when $0\in\Occ_\gamma(y)$ or $m\in\Occ_\gamma(y)$.  Similarly, the two
	boundary positions in the second insertion direction are present precisely
	when $0\in\Occ_\gamma(x)$ or $n\in\Occ_\gamma(x)$.  All other occurrence
	positions are interior positions.
	
	Thus the distinctions according to the cyclic shape of $x$ and $y$ are
	already encoded by $\Occ_\gamma(x)$ and $\Occ_\gamma(y)$.
	As in \Cref{sec:higher-power-cycle}, we classify the bracket according to the
	lengths of the two values $a$ and $b$.
	
	%================================================
	\subsubsection{At least one value is trivial}
	
	Assume first that $a=e_{s(x)}$.
	Then $x$ is closed.  As in the case $\gamma=\tau^w$, this possibility
	cannot be excluded from the arrow subscripts alone, since a primitive relation
	cycle may visit the same vertex more than once.
	
	By \Cref{cor:higher-trivial-value}, the second insertion sum vanishes
	identically, and
	\begin{align}
		l_2(
		\para{x}{e_{s(x)}}
		\otimes
		\para{y}{b}
		)(\gamma)
		=
		\sum_{i\in\Occ_\gamma(y)}
		(-1)^{i(n+2)}
		\mD_i^{(m,n)}
		(x,e_{s(x)};\gamma;b).
		\label{eq:suffix-trivial-value}
	\end{align}
	
	More explicitly, the possible nonzero local terms are exactly the following.
	
	\begin{enumerate}
		\item If $0\in\Occ_\gamma(y)$, $b=p\nu$, and
		$x=\nu\gamma_{n+2}\cdots\gamma_{m+n+1}$, then the left boundary
		contribution is $\para{\gamma}{p}$.
		
		\item If $i\in\Occ_\gamma(y)$ with $0<i<m$, and if $b=\nu$ is an
		arrow such that $x$ is the concatenation of
		$\gamma_1\cdots\gamma_i\nu$ and
		$\gamma_{i+n+2}\cdots\gamma_{m+n+1}$, then the interior contribution is
		$(-1)^{i(n+2)}\para{\gamma}{e_{s(x)}}$.
		
		\item If $m\in\Occ_\gamma(y)$, $b=\nu q$, and
		$x=\gamma_1\cdots\gamma_m\nu$, then the right boundary contribution is
		$(-1)^{m(n+2)}\para{\gamma}{q}$.
	\end{enumerate}
	
	The occurrence positions in these formulas are determined by
	\Cref{lem:periodic-occurrences}.  In particular, the last possible occurrence
	is automatically included only when the suffix $\tau_1\cdots\tau_r$ is
	long enough to contain it.
	
	The case in which $b$ is trivial is obtained from
	\eqref{eq:suffix-trivial-value} by the skew-symmetry relation \eqref{sym}.
	If both values are trivial, then $l_2(f\otimes g)=0$ by
	\Cref{cor:higher-value-length-reduction}.
	
	Thus every bracket with a trivial value is determined by the local formulas
	in \Cref{cor:higher-trivial-value}, the occurrence positions in
	\Cref{lem:periodic-occurrences}, and graded skew-symmetry.
	%================================================
	\begin{example}
		\label{ex:suffix-trivial-value}
		Consider the ordinary gentle algebra
		$A=\mK Q/\langle I\rangle$,
		where
		\[\begin{tikzcd}
			2 && 1 && 3
			\arrow["{\tau_2}", shift left, from=1-1, to=1-3]
			\arrow["{\tau_1}", shift left, from=1-3, to=1-1]
			\arrow["{\tau_3}", shift left, from=1-3, to=1-5]
			\arrow["{\tau_4}", shift left, from=1-5, to=1-3]
		\end{tikzcd}\]
		and
		$I= \{ \tau_1\tau_2,  \tau_2\tau_3,  \tau_3\tau_4,  \tau_4\tau_1 \}$.
		Then
		$\tau=\tau_1\tau_2\tau_3\tau_4$
		is a primitive relation cycle.  Put
		$\gamma=\tau\tau_1\tau_2 = \tau_1\tau_2\tau_3\tau_4\tau_1\tau_2 \in\Gamma_6$.
		Thus
		$w=1$ and $r=2$.
		
		Let
		$y=\tau_1\tau_2\tau_3\in\Gamma_3$ and $x=\tau_3\tau_4\tau_1\tau_2\in\Gamma_4$.
		Hence
		$n=2$ and $m=3$.
		The support $x$ is closed at the vertex $1$.  Define
		$f=\para{x}{e_1}\in B^3$ and $g=\para{y}{\tau_3}\in B^2$.
		
		By \Cref{lem:periodic-occurrences},
		$\Occ_\gamma(y)=\{0\}$.
		At this occurrence,
		$y=\gamma_1\gamma_2\gamma_3$ and $x=\gamma_3\gamma_4\gamma_5\gamma_6$.
		Writing
		$b=\tau_3=e_1\tau_3$,
		the left boundary term in
		\Cref{cor:higher-trivial-value} gives $\mD_0^{(3,2)}(x,e_1;\gamma;\tau_3)=\pi(e_1e_1)=e_1$.
		The second insertion sum vanishes because the value of $f$ is trivial.
		Therefore
		\[
		l_2(f\otimes g)
		=
		\para{\gamma}{e_1}
		\neq0.
		\]
		
		The next possible periodic starting position of $y$ would be $4$, but
		$4+\ell(y)>\ell(\gamma)$.
		Thus the final suffix is too short to contain another occurrence.  This
		illustrates how \Cref{lem:periodic-occurrences} automatically records the
		truncation caused by the suffix.
	\end{example}
	%================================================
	\subsubsection{No value is trivial and at least one value is an arrow}
	
	Assume that $\ell(a)>0$ and $\ell(b)>0$.
	By skew-symmetry, it is enough to begin with the case in which $b$ is an
	arrow.
	
	By \Cref{cor:higher-arrow-reassembly}, the contribution of the first
	insertion direction is
	$(\sum_{i\in\mI_{y,b}^{x}(\gamma)}(-1)^{i(n+2)})a$.
	Hence
	\begin{align}
		l_2(f\otimes g)(\gamma)
		=
		\left(
		\sum_{i\in\mI_{y,b}^{x}(\gamma)}
		(-1)^{i(n+2)}
		\right)a
		-
		(-1)^{mn}
		\sum_{j\in\Occ_\gamma(x)}
		(-1)^{j(m+2)}
		\mD_j^{(n,m)}(y,b;\gamma;a).
		\label{eq:suffix-arrow-read-off}
	\end{align}
	
	The candidate positions in $\mI_{y,b}^{x}(\gamma)$ belong to $\Occ_\gamma(y)$.  They are obtained by retaining exactly those
	occurrences for which replacing the support $y$ by the arrow $b$ produces
	the complementary support $x$.
	
	If the first arrow of $y$ is $\tau_u$, then
	\[
	\#\Occ_\gamma(y)
	=
	\max\left\{
	0,
	1+
	\left\lfloor
	\frac{
		w\ell(\tau)+r-(n+1)-(u-1)
	}{
		\ell(\tau)
	}
	\right\rfloor
	\right\}.
	\]
	This formula automatically distinguishes the cases in which the final suffix
	contains another occurrence from those in which it does not.  In particular,
	there is no need to treat $u<r$, $u=r$, and $u>r$ by separate
	occurrence counts.
	
	There are two subcases.
	
	\begin{enumerate}
		\item Suppose that $a$ is also an arrow.  Then the second insertion
		direction is also governed by
		\Cref{cor:higher-arrow-reassembly}, and
		\begin{align}
			l_2(f\otimes g)(\gamma)
			=
			\left(
			\sum_{i\in\mI_{y,b}^{x}(\gamma)}
			(-1)^{i(n+2)}
			\right)a
			-
			(-1)^{mn}
			\left(
			\sum_{j\in\mI_{x,a}^{y}(\gamma)}
			(-1)^{j(m+2)}
			\right)b.
			\label{eq:suffix-two-arrow-values}
		\end{align}
		
		Whenever one of the sets $\mI_{y,b}^{x}(\gamma)$ or $\mI_{x,a}^{y}(\gamma)$ coincides with the corresponding full occurrence set, its signed
		coefficient can be evaluated directly by
		\Cref{cor:signed-periodic-multiplicity}.
		
		\item Suppose that $\ell(a)>1$.  Then \Cref{cor:higher-value-length-reduction} shows that the second
		insertion direction can contribute only at the two boundary positions $j=0$ and $j=n$.
		Therefore
		\begin{align}
			l_2(f\otimes g)(\gamma)
			=
			\left(
			\sum_{i\in\mI_{y,b}^{x}(\gamma)}
			(-1)^{i(n+2)}
			\right)a
			-
			(-1)^{mn}
			\sum_{j\in\Occ_\gamma(x)\cap\{0,n\}}
			(-1)^{j(m+2)}
			\mD_j^{(n,m)}(y,b;\gamma;a).
			\label{eq:suffix-arrow-long-value}
		\end{align}
		
		If a boundary term in the second line arises from the first alternative of
		\Cref{lem:higher-boundary-dichotomy}, then the corresponding endpoint
		occurrence in the first line is present, and
		the two terms cancel by
		\Cref{rem:higher-boundary-cancellation}.  The remaining possible
		boundary terms are precisely the idempotent-absorption alternatives in
		\Cref{lem:higher-boundary-dichotomy}.
	\end{enumerate}
	
	The formulas
	\eqref{eq:suffix-two-arrow-values} and
	\eqref{eq:suffix-arrow-long-value}, together with their skew-symmetric
	counterparts, classify all suffix-power brackets in which neither value is
	trivial and at least one value is an arrow.
	
	All surviving contributions in these formulas are added in $A$; the bracket
	is nonzero precisely when their total sum is nonzero.
	%==================================
	\begin{example}
		\label{ex:suffix-arrow-reassembly}
		Consider the ordinary gentle algebra
		$A=\mK Q/\langle I\rangle$,
		where
		\[\begin{tikzcd}
			& 1 & \\
			2 && 3
			\arrow["{\tau_1}", shift left, from=1-2, to=2-1]
			\arrow["{\tau_3}", shift left, from=1-2, to=2-3]
			\arrow["{\tau_2}", shift left, from=2-1, to=1-2]
			\arrow["\rho"', from=2-1, to=2-3]
			\arrow["{\tau_4}", shift left, from=2-3, to=1-2]
		\end{tikzcd}\]
		and
		$I= \{ \tau_1\tau_2,  \tau_2\tau_3,  \tau_3\tau_4,  \tau_4\tau_1 \}$.
		The products
		$\tau_1\rho$ and $\rho\tau_4$
		lie in $\mB$.  The algebra is finite-dimensional and gentle, and
		$\tau=\tau_1\tau_2\tau_3\tau_4$
		is a primitive relation cycle.
		
		Put
		$\gamma=\tau^2\tau_1\tau_2\in\Gamma_{10}$.
		Thus
		$w=2$ and $r=2$.
		Let
		$y=\tau\tau_1 = \tau_1\tau_2\tau_3\tau_4\tau_1 \in\Gamma_5$,
		and
		$x=\tau\tau_1\tau_2 = \tau_1\tau_2\tau_3\tau_4\tau_1\tau_2 \in\Gamma_6$.
		Hence
		$n=4$ and $m=5$.
		
		Define
		$a=\tau_1\rho\tau_4$ and $b=\tau_1$,
		and
		$f=\para{x}{a}\in B^5$ and $g=\para{y}{b}\in B^4$.
		
		The support $y$ occurs in $\gamma$ at the positions
		$\Occ_\gamma(y)=\{0,4\}$.
		Replacing either occurrence of $y$ by the arrow $b=\tau_1$ produces
		the same complementary support $x$.  Hence
		$\mI_{y,\tau_1}^{x}(\gamma)=\{0,4\}$.
		Since $n+2=6$, the first insertion direction contributes
		\[
		(
		(-1)^{0\cdot6}+(-1)^{4\cdot6}
		)\para{\gamma}{a}
		=
		2\para{\gamma}{a}.
		\]
		
		The support $x$ occurs at the positions $0$ and $4$.  The occurrence
		at $0$ gives no local term in the second insertion direction.  At $j=4=n$, however, $y=\gamma_1\gamma_2\gamma_3\gamma_4\tau_1$ and $a=\tau_1(\rho\tau_4)$, so the right boundary term is $\mD_4^{(4,5)}(y,\tau_1;\gamma;a)=\pi(\tau_1\rho\tau_4)=a$.
		Moreover, $-(-1)^{mn}(-1)^{n(m+2)}=-1$.  Thus the second insertion direction contributes $-\para{\gamma}{a}$.
		Therefore
		\[
		l_2(f\otimes g)
		=
		(2-1)\para{\gamma}{a}
		=
		\para{\gamma}{a}
		\neq0.
		\]
		
		This example shows that an occurrence sum from the first insertion
		direction and a boundary term from the second insertion direction can
		contribute to the same output support. Their contributions are combined with the prescribed signs.
	\end{example}
	%================================================
	\subsubsection{Both values have length greater than one}
	
	Finally, assume that $\ell(a)>1$ and $\ell(b)>1$.
	By \Cref{cor:higher-value-length-reduction}, no interior term can occur.
	
	By
	\Cref{lem:higher-boundary-dichotomy,rem:higher-boundary-cancellation},
	every boundary term arising from the first alternative of
	\Cref{lem:higher-boundary-dichotomy} is paired with a term from the opposite
	insertion direction and cancels.  Consequently, every surviving contribution
	is one of the four idempotent-absorption boundary terms in
	\Cref{cor:higher-long-value-reduction}.
	
	The suffix creates no additional local term.  Its only role is
	to determine whether the required boundary occurrences are present.  Namely,
	the first and second alternatives of
	\Cref{cor:higher-long-value-reduction} can occur only when $0\in\Occ_\gamma(y)$ and $n\in\Occ_\gamma(x)$, respectively, while the third and fourth alternatives can occur only when $m\in\Occ_\gamma(y)$ and $0\in\Occ_\gamma(x)$, respectively.
	
	Thus the possible surviving projected values are $\pi(pa)$, $\pi(bq)$, $\pi(aq)$, and $\pi(pb)$,
	with the signs displayed in
	\Cref{cor:higher-long-value-reduction}.  Terms whose images under $\pi$ vanish
	are omitted.  If several boundary idempotent-absorption configurations occur
	simultaneously, their contributions are added.
	%===========================================
	\begin{example}
		\label{ex:suffix-long-value-absorption}
		Let $Q^S$ be the quiver
		\[
		\begin{tikzcd}
			1 && 2 \\
			\\
			4 && 3
			\arrow["{\tau_1}", from=1-1, to=1-3]
			\arrow["\ep", from=1-3, to=1-3,
			loop, in=10, out=80, distance=10mm]
			\arrow["{\tau_2}", from=1-3, to=3-3]
			\arrow["{\tau_4}", from=3-1, to=1-1]
			\arrow["{\tau_3}"', shift right, from=3-3, to=3-1]
			\arrow["\rho", shift left, from=3-3, to=3-1]
		\end{tikzcd}
		\]
		where $\ep$ is the special loop at the vertex $2$.  Let
		$I=\{\tau_1\tau_2, \tau_2\tau_3, \tau_3\tau_4, \tau_4\tau_1\}$ and
		$I^S=I\cup\{\ep^2\}$.
		Then
		$A= \mK Q^S/ \langle I\cup\{\ep^2-\ep\}\rangle$
		is a finite-dimensional skew-gentle algebra.  In particular, the products
		$\tau_1\ep$, $\ep\tau_2$, $\tau_2\rho$, and $\rho\tau_4$ lie in $\mB$.
		
		Put
		$\tau=\tau_1\tau_2\tau_3\tau_4$
		and
		$\gamma=\tau^2\tau_1\tau_2\in\Gamma_{10}$.
		Thus
		$w=2$ and $r=2$.
		Set
		$y=\tau_1\tau_2\in\Gamma_2$
		and
		$x= \tau_2\tau_3\tau_4\tau_1 \tau_2\tau_3\tau_4\tau_1\tau_2 \in\Gamma_9$.
		Hence
		$n=1$ and $m=8$.
		
		Define
		$p=\tau_1\ep$, $b=p\tau_2=\tau_1\ep\tau_2$, and $a=\ep\tau_2$,
		and
		$f=\para{x}{a}\in B^8$ and $g=\para{y}{b}\in B^1$.
		Both values have length greater than one.
		
		The support $y$ occurs at the positions
		$\Occ_\gamma(y)=\{0,4,8\}$.
		At $i=0$, the occurrence is a left boundary occurrence and
		$b=p\tau_2$, where $p_{\ell(p)}=\ep=\alpha_1$.
		Therefore
		\[\mD_0^{(8,1)}(x,a;\gamma;b)=\pi(pa)=\pi(\tau_1\ep\ep\tau_2)=\tau_1\ep\tau_2.\]
		This is a special-loop idempotent absorption.
		
		The occurrence at $i=4$ is interior, and its local term is zero because
		$\ell(b)>1$.
		Although $i=8=m$ is a right boundary occurrence, its complementary
		support does not equal $x$, so its local term is also zero.
		
		The support $x$ occurs only at $j=1=n$ in the second insertion
		direction.  A nonzero right boundary term there would require $a$ to
		start with the overlap arrow $\tau_2$, but
		$\alpha_1=\ep\neq\tau_2$.
		Hence the second insertion sum vanishes.  Consequently,
		\[
		l_2(f\otimes g)
		=
		\para{\gamma}{\tau_1\ep\tau_2}
		\neq0.
		\]
		
		This example illustrates that, for two values of length greater than one,
		a boundary term arising from the first alternative of
		\Cref{lem:higher-boundary-dichotomy} cancels with the opposite insertion
		direction, whereas a special idempotent square can leave an uncancelled
		contribution.
	\end{example}
	%================================================
	\begin{proposition}
		\label{prop:higher-primitive-cycle-suffix}
		Let $\gamma=\tau^w\tau_1\cdots\tau_r$, where
		$1\leq r<\ell(\tau)$ and $w\geq1$, and let
		$f=\para{x}{a}\in B^m$ and $g=\para{y}{b}\in B^n$, where
		$m,n\geq1$.  Then every nonzero value of $l_2(f\otimes g)(\gamma)$ belongs to exactly one of the following mutually exclusive cases.
		
		\begin{enumerate}
			\item At least one of $a,b$ is trivial.  The bracket is given by
			\Cref{cor:higher-trivial-value,lem:periodic-occurrences}
			and skew-symmetry.
			
			\item Neither value is trivial and at least one of $a,b$ is an
			arrow.  The bracket is given by
			\eqref{eq:suffix-two-arrow-values} or
			\eqref{eq:suffix-arrow-long-value}, together with skew-symmetry.
			
			\item $\ell(a)>1$ and $\ell(b)>1$.  The only possible surviving terms are the idempotent-absorption
			boundary terms in
			\Cref{cor:higher-long-value-reduction}.
		\end{enumerate}
		
		The occurrence positions in all three cases are determined by
		\Cref{lem:periodic-occurrences}.  The corresponding signed multiplicities
		are evaluated by \Cref{cor:signed-periodic-multiplicity} whenever the
		relevant set is the full occurrence set.
		
		Conversely, every term described in these three cases occurs whenever
		its support, occurrence, and nonvanishing conditions under $\pi$ are satisfied.  The
		resulting bracket is nonzero precisely when the sum of all surviving terms
		is nonzero in $A$.
	\end{proposition}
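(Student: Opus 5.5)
The plan mirrors the proof of \Cref{prop:higher-primitive-cycle-power}, with the suffix playing a role only through the occurrence sets. I begin from \Cref{HOBF}, or equivalently \Cref{cor:periodic-read-off} with $0<r<\ell(\tau)$. Together these express $l_2(f\otimes g)(\gamma)$ as a signed sum of local terms $\mD_i^{(m,n)}$ over $i\in\Occ_\gamma(y)$, and of local terms $\mD_j^{(n,m)}$ over $j\in\Occ_\gamma(x)$. Since $a,b\in\mB$, each of the lengths $\ell(a)$ and $\ell(b)$ is either $0$, $1$, or $>1$. Applying \eqref{sym} to swap the inputs if necessary, every pair falls into exactly one of three classes. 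Either at least one value is trivial; or neither is trivial and at least one is an arrow; or both have length greater than one. This gives the asserted mutual exclusivity, and each class is then handled by an earlier general result.

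For the first class, suppose $a=e_{s(x)}$. Then \Cref{cor:higher-trivial-value} kills the second insertion sum and leaves \eqref{eq:suffix-trivial-value}. Its three possible local terms (left boundary, interior single arrow, right boundary) are listed there. The case $b$ trivial follows by skew-symmetry, and the case with both values trivial vanishes by \Cref{cor:higher-value-length-reduction}. For the second class, take $b$ an arrow. \Cref{cor:higher-arrow-reassembly} converts the first sum into the coefficient $\sum_{i\in\mI_{y,b}^x(\gamma)}(-1)^{i(n+2)}$ multiplying $a$. If $a$ is also an arrow, the same corollary applies to the second sum and gives \eqref{eq:suffix-two-arrow-values}. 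If $\ell(a)>1$, \Cref{cor:higher-value-length-reduction} restricts the second sum to $j\in\{0,n\}$, which gives \eqref{eq:suffix-arrow-long-value}. For the third class, \Cref{cor:higher-long-value-reduction} applies verbatim. It excludes interior terms, cancels every first-alternative boundary term against its partner through \Cref{lem:higher-boundary-dichotomy} and \Cref{rem:higher-boundary-cancellation}, and leaves only the four idempotent-absorption terms.

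The occurrence positions in all three classes come from \Cref{lem:periodic-occurrences}, whose hypotheses hold because both supports are proper relation subpaths of $\gamma$ following the cyclic order of $\tau$. The signed multiplicities come from \Cref{cor:signed-periodic-multiplicity} whenever the relevant index set is the whole of $\Occ_\gamma(z)$. For the converse statement, \Cref{HOBF} is an exact identity: every local term whose support, occurrence and $\pi$-nonvanishing conditions hold enters the sum with its prescribed sign. Hence the bracket equals the sum of the surviving terms in $A$, and it is nonzero precisely when that sum is nonzero.

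The step I expect to need the most care is the cancellation in the third class, and in the long-value subcase of the second class, when the suffix is present. There the partner boundary occurrence must actually exist. For example, a first-alternative term at $j=n$ in $\Occ_\gamma(x)$ has to be matched with $i=0$ in $\Occ_\gamma(y)$, even though the truncation by $\tau_1\cdots\tau_r$ might seem to remove that occurrence. I would settle this directly. The two supports meet at the junction arrow $\nu$, and (G3) applied at both junctions determines $y$ as the initial segment of $\gamma$ ending at $\nu$. Since that segment lies inside $\gamma$, it contributes the partner occurrence at position $0$, and symmetrically at $m$. So the suffix affects only which boundary positions belong to the occurrence sets, and introduces no new local term.
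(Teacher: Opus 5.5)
Your proposal is correct and follows essentially the same route as the paper: you partition by the lengths of $a$ and $b$, and you invoke \Cref{cor:higher-trivial-value}, then \Cref{cor:higher-arrow-reassembly} together with \Cref{cor:periodic-read-off}, then \Cref{cor:higher-long-value-reduction} in the three respective cases, reading occurrence positions off \Cref{lem:periodic-occurrences} and obtaining the converse from the exactness of \Cref{HOBF}. Your extra check that the partner boundary occurrence (positions $0$ and $n$, or $m$ and $0$) lies inside $\gamma$ is the (G3) junction argument the paper places in \Cref{cor:higher-long-value-reduction}, and it is right: these positions are pinned to the two ends of $\gamma$, so the suffix truncation never removes them.
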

	
	\begin{proof}
		The three cases form a partition of the possible lengths of the two
		value paths $a,b\in\mB$.
		
		The first case follows from
		\Cref{cor:higher-trivial-value}.  The second case follows from
		\Cref{cor:higher-arrow-reassembly,cor:periodic-read-off}.  The third case
		follows from \Cref{cor:higher-long-value-reduction}.
		
		By \Cref{lem:periodic-occurrences}, every occurrence of either support is
		included in the displayed sums, including occurrences which fit into the
		final suffix and excluding those which do not.  Hence the distinction
		between the cyclic starting positions and the relative position of the
		suffix is already encoded by $\Occ_\gamma(x)$ and $\Occ_\gamma(y)$.
		The local terms in all three cases exhaust the alternatives
		in \Cref{HOBF}.  No further contribution can occur.
	\end{proof}
	
	\begin{remark}
		The possible cyclic relative positions of the supports $x$ and $y$ are
		recovered by fixing their first arrows and lengths.  They introduce no
		additional local term beyond those in \Cref{HOBF}; their
		occurrence numbers and boundary contacts are already encoded by $\Occ_\gamma(x)$ and $\Occ_\gamma(y)$.
	\end{remark}
	%============================
	%================================================

	\begin{proof}[Proof of \Cref{prop:positive-positive-classification}]
		By \Cref{HOBF}, every contribution to $l_2(f\otimes g)(\gamma)$ arises
		from an occurrence of $x$ or $y$ in $\gamma$, and its local value is one of
		the terms in the definition of $\mD_i^{(m,n)}$.  By \Cref{gamma-cases},
		$\gamma$ either has no repeated arrows or has exactly one of the three
		periodic forms listed in the proposition.  These four cases are treated,
		respectively, by
		\Cref{NRA-read-off,prop:higher-pure-loop-read-off,prop:higher-primitive-cycle-power,prop:higher-primitive-cycle-suffix}.
		The periodic occurrence positions and their signed multiplicities are
		accounted for by
		\Cref{lem:periodic-occurrences,cor:signed-periodic-multiplicity}.  Thus the
		four cases provide a finite procedure for evaluating
		$l_2(f\otimes g)(\gamma)$ for every $\gamma\in\Gamma_{m+n+1}$.
	\end{proof}
	
	A compact tabular summary for brackets of type $(+,+)$ is given in
	\Cref{app:positive-positive-lookup}.
	
	Together with the classifications in Sections~4--6,
	\Cref{prop:positive-positive-classification} completes the proof of
	Theorem~B.
	
	%=============================================
	\section{Higher operations and the formality classification problem}
	\label{sec:higher-operations-formality}
	
	This section concerns higher transferred operations and Hochschild
	$L_\infty$-formality. The Hochschild dg Lie algebras of $A_n$-type
	skew-gentle algebras are shown to be homotopy abelian, while a
	special vertex that is not an endpoint gives rise to a nonzero
	ternary operation on the transferred CS model. Together with
	Wong's non-formal gentle example, these results motivate a
	combinatorial classification of Hochschild $L_\infty$-formality.
	
	We call a skew-gentle algebra $A$ \emph{$A_n$-type} if deleting the special
	loops leaves an arbitrary orientation of the Dynkin graph $A_n$.  When $n=1$,
	the algebra is either $\mK$ or
	$\mK[\ep]/\langle \ep^2-\ep \rangle \cong\mK\times\mK$; this case will be treated separately
	in the proof of \Cref{thm:An-formality}.
	
\begin{lemma}
	\label{lem:An-Hochschild-cohomology}
	Let $A$ be an $A_n$-type skew-gentle algebra with $n\geq2$.
	Suppose that the ordinary quiver is linearly oriented and every vertex is
	special. After relabelling the vertices if necessary, write
	\[
	1\xrightarrow{\alpha_1}2\xrightarrow{\alpha_2}\cdots
	\xrightarrow{\alpha_{n-1}}n
	\]
	and let $\ep_i$ be the special loop at the vertex $i$. Then
	\[
	\HH^q(A)\cong
	\begin{cases}
		\mK, & \text{if }q=0,\\
		\mK, &\text{if }q=n-1,\\
		0, & \text{otherwise}.
	\end{cases}
	\]
	A basis element of $\HH^{n-1}(A)$ can be chosen as
	\[
	\zeta
	=
	\left[
	\para{\alpha_1\cdots\alpha_{n-1}}
	{\ep_1\alpha_1\ep_2\alpha_2\cdots
		\ep_{n-1}\alpha_{n-1}\ep_n}
	\right].
	\]
	
	For every other $A_n$-type skew-gentle algebra,
	$\HH^*(A)$ is one-dimensional and concentrated in degree zero; namely,
	\[
	\HH^0(A)\cong\mK,
	\qquad
	\HH^q(A)=0
	\quad\text{for every }q\geq1.
	\]
\end{lemma}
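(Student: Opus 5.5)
We compute $\HH^*(A)$ from the CS cochain complex $B_{\mathrm{CS}}^\bullet(A)$, using \Cref{lem:CS-cochain-identification} and the explicit differential of \Cref{lem:dual-differential}. For an $A_n$-type algebra the combinatorics is very small. Since the underlying graph is a tree, there are no relation cycles of length $>1$. By \Cref{lem:special-loop-dichotomy}, $\Gamma_q$ for $q\geq2$ therefore consists of the powers $\ep_i^q$ ($i\in S$) together with the relation concatenations of ordinary arrows. Because $I=\varnothing$ (the relations $I^S$ are only the $\ep_i^2$, as $A_n$ with special loops allows no further quadratic relations compatible with (G3)/(G4) when special loops occupy valency), the latter set is empty. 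Consequently, for $q\geq2$,
\[
B^q_{\mathrm{CS}}(A)\cong\bigoplus_{i\in S}\mK\para{\ep_i^q}{e_i}\oplus\mK\para{\ep_i^q}{\ep_i},
\]
because $e_iAe_i$ has basis $e_i,\ep_i$ for special $i$ by \Cref{loop-cycle-unique}. By \Cref{lem:dual-differential}, in these degrees the complex splits into vertexwise pieces with differentials $a\mapsto \pi(\ep_ia-a\ep_i)=0$ (even $m$) and $a\mapsto\pi(a-\ep_ia-a\ep_i)$ (odd $m$). The second map sends $e_i\mapsto e_i-2\ep_i$ and $\ep_i\mapsto-\ep_i$, so it is an isomorphism. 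Hence the high-degree part is acyclic, and all cohomology lives in degrees $0,1,2$, where the low-degree maps of \Cref{lem:dual-differential} (including the terms $\beta\gamma$, $\gamma\alpha$ with $\ep_i$) intervene.

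I have to correct the previous paragraph: the claim $I=\varnothing$ is wrong in general. In the linear, all-special case the ordinary arrows $\alpha_i$ do not form relations, but the gentle conditions on $Q^S$ still permit ordinary relations $\alpha_i\alpha_{i+1}$ at non-special vertices. So the actual plan is to allow general $I$ and to note that $\Gamma_q$ also contains the relation paths $\alpha_j\cdots\alpha_{j+q-1}$ through non-special vertices. These are paths without repeated arrows, by \Cref{gamma-cases}, since the tree has no cycles. The complex then decomposes by support into special-loop towers, which are acyclic above the low degrees as shown, and ordinary relation paths $\gamma$. For an ordinary $\gamma$, the parallel basis paths $a\in\mB$ are very restricted: the tree structure forces $a=\lambda\gamma'\mu$ with optional special loops at the ends, and in fact only when $\gamma$ is a single arrow or when all the interior vertices of $\gamma$ are special.

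The core step is then to compute the cohomology of this finite complex in each case. Concretely, I would filter by the underlying vertex interval $[s(a),t(a)]$ of the value, which is preserved by the differential up to the terms $\para{\beta\gamma}{\beta a}$. On each interval piece the complex looks like a tensor product of small two-term complexes, one for each endpoint vertex, depending on whether that vertex is special and on whether an arrow can be prepended or appended in $\Gamma$. A tensor product of this kind is acyclic as soon as one factor is acyclic. The only non-acyclic global piece should be the one in which every vertex is special and the orientation is linear, because then the value $\ep_1\alpha_1\ep_2\cdots\ep_n$ on the support $\alpha_1\cdots\alpha_{n-1}$ cannot be extended at either end. This support would have to be a relation concatenation, so I would re-check that it lies in $\Gamma_{n-1}$. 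Together with the constants in degree $0$ (for a connected algebra, $\HH^0$ is the centre and equals $\mK$), this gives the stated answer.

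The main obstacle is the honest case analysis of the low-degree part. That is, I have to verify acyclicity of every piece other than the two listed classes, over arbitrary orientations, where sources and sinks change which arrows $\beta$ and $\alpha$ can be appended. I would attack it first by induction on $n$, removing an end vertex, and comparing with the known dimension formulas of Ladkani \cite{Lad12} in the gentle case as a check.
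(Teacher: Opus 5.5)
Your route differs from the paper's, which simply specializes the Hochschild basis theorem \cite[Theorem~3.15]{BSSWW26} to the path graph. A direct computation on the CS complex is a legitimate alternative. Your splitting off of the special-loop towers is correct: they are acyclic in positive degrees, since $\delta^m$ vanishes there for $m$ even and is invertible for $m$ odd. Your description of the values as $\lambda\gamma'\mu$ is also correct.

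However, the corrected setup contains a false claim that matters. In the linear all-special case the arrows $\alpha_i$ \emph{do} form relations. Since $\ep_j^2\in I^S$, (G3) gives $\alpha_{j-1}\ep_j\notin I^S$, and then (G4) forces $\alpha_{j-1}\alpha_j\in I$ at every interior special vertex. In addition, (G2) forces every non-endpoint special vertex to be a through vertex. Hence $I$ consists of all $n-2$ composable pairs, and this is exactly what makes $\alpha_1\cdots\alpha_{n-1}\in\Gamma_{n-1}$, so that $\zeta$ is a cochain at all. With the relations you state, your proposed ``re-check'' fails for $n\geq3$, and the triple would in fact violate (G4).

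This forced relation is also the idea your claim about ``every other'' algebra needs. A positive-degree class can only be carried by a maximal chain of relation arrows through special vertices, decorated by special loops at both ends. Such a chain cannot stop at a non-endpoint special vertex. So both ends being special forces the chain to be all of $A_n$, linearly oriented, with every vertex special.

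The core computation is also not carried out. Filtering by $[s(a),t(a)]$ does not produce subcomplexes: for $m\geq1$, every term $\para{\beta\gamma}{\pi(\beta a)}$ and $\para{\gamma\alpha}{\pi(a\alpha)}$ of the ordinary differential changes the interval. What is true is that the differential splits into a left part and a right part. The left part turns $\lambda=\ep_s$ into a trivial decoration at the new left end; the right part does the same on the right. But the complex is this double complex truncated to $s<t$ and glued to the degree-$0$ cochains $\para{e_v}{e_v}$ and $\para{e_v}{\ep_v}$. The principle ``a tensor product is acyclic if one factor is'' does not apply to such a truncation, and the truncation is exactly where the degree-$0$ answer and the exclusion of spurious classes are decided.

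The ordinary part is also not confined to low degrees; it reaches degree $n-1$. What you defer includes the whole case $n=2$, where $\zeta=[\para{\alpha_1}{\ep_1\alpha_1\ep_2}]$ sits in degree $1$, and the vanishing of $\HH^1$ in all other cases. So it is half of the statement, not a side check.

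Finally, ``connected implies $\HH^0=\mK$'' is false in general; $\mK[\theta]/\langle\theta^2\rangle$ is a counterexample. Here you must check directly that a central element lies in $\operatorname{span}\{e_i,\ep_i\}$. Then, for $n\geq2$, commuting with the ordinary arrows kills the $\ep_i$-coefficients and equalizes the $e_i$-coefficients.
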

	
	\begin{proof}
		This is a direct specialization of the Hochschild basis theorem
		\cite[Theorem~3.15]{BSSWW26}. Since the ordinary underlying graph is the
		tree $A_n$, all families in that theorem arising from cyclic configurations
		are empty. Specializing the remaining families to a path graph shows that a
		positive-degree class can occur precisely when the ordinary quiver is
		linearly oriented and every vertex is special. 
		In that case, $\HH^{n-1}(A)$ is the only nonzero positive-degree
		cohomology group; it is one-dimensional and is spanned by the
		class $\zeta$ displayed above.
		In all other cases no positive-degree basis element survives, while in
		degree zero only the unit class remains. Hence the stated description of
		$\HH^*(A)$ follows.
	\end{proof}

	\begin{theorem}
		\label{thm:An-formality}
		Let $A$ be an $A_n$-type skew-gentle algebra. Then the Hochschild dg Lie
		algebra $C^*(A)[1]$ is homotopy abelian. In particular, every $A_n$-type
		skew-gentle algebra is Hochschild $L_\infty$-formal.
	\end{theorem}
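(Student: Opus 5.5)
Homotopy abelianity will be read off from a minimal model on $\HH^*(A)[1]$, using \Cref{lem:An-Hochschild-cohomology} for the cohomology and an elementary degree count to kill every bracket. In the shifted grading, $\HH^q(A)$ sits in degree $q-1$, and the minimal operation $l_r^{\min}$ has degree $2-r$.

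\emph{Step 1: the case $n=1$.} Here $A$ is $\mK$ or $\mK\times\mK$. Both are separable, so $\HH^*(A)=Z(A)$ is concentrated in degree zero. The shifted cohomology therefore sits in degree $-1$. An operation $l_r^{\min}$ sends degree $-r$ to degree $-r+2-r=2-2r$, and $2-2r=-1$ has no integer solution. So every $l_r^{\min}$ vanishes for degree reasons.

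\emph{Step 2: all cohomology in degree zero.} Take any $A_n$-type algebra other than the linearly oriented, all-special one. By \Cref{lem:An-Hochschild-cohomology}, $H=\HH^*(A)[1]$ is one-dimensional and concentrated in shifted degree $-1$. The same parity argument as in Step 1 shows every $l_r^{\min}$, $r\ge2$, is zero.

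\emph{Step 3: the exceptional case.} Now suppose the quiver is linearly oriented and every vertex is special, with $n\ge2$. Then $H$ is $\mK\cdot1$ in degree $-1$ plus $\mK\zeta$ in degree $n-2$. An input of $l_r^{\min}$ containing $k$ copies of $\zeta$ and $r-k$ copies of $1$ has output degree
\[
k(n-2)-(r-k)+2-r=k(n-1)-2r+2 .
\]
The output must equal $-1$ or $n-2$.

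Equating with $-1$ gives $k(n-1)=2r-3$. Equating with $n-2$ gives $(k-1)(n-1)=2r-3$. Several of these equations do have solutions; for example $n=2$, $r=3$, $k=3$ gives output degree $0$. So degree counting alone is not enough, and this is the main obstacle.

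\emph{Step 4: handling the unit.} I would first neutralise the unit class. The class $1\in\HH^0$ is represented by the $0$-cochain $\mathrm{id}$-type element $\para{e}{\sum e_i}$. Its Gerstenhaber bracket with any cochain is zero, because $[1,\varphi]$ replaces each entry $r_j$ by $\varpi(1)=0$ and $1\circ\varphi=0$. So $1$ is strictly central in $C^*(A)[1]$.

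I would then choose contraction data whose inclusion sends $1$ to this strictly central cocycle. By the tree formulas, every transferred $l_r^{\min}$ with an input equal to $1$ then vanishes, since each tree has a vertex bracketing with $\iota(1)$.

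\emph{Step 5: powers of $\zeta$.} What remains is $l_r^{\min}(\zeta,\dots,\zeta)$. Two arguments dispose of this.

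\begin{enumerate}
\item For even shifted degree $n-2$, graded skew-symmetry forces $l_r(\zeta,\dots,\zeta)=0$ for every $r\ge2$.
\item For odd $n-2$, the output degree is $r(n-1)-2r+2=r(n-3)+2$. This must equal $-1$ or $n-2$, which forces $(r-1)(n-3)=0$ or $r(n-3)=-3$. With $r\ge2$ this leaves only $n=3$, and that case has $n-2$ odd.
\end{enumerate}

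So for $n=3$ the operations $l_r(\zeta^{\otimes r})$ can land in degree $2\neq1$ only, hence vanish. I would double-check each $n$ this way, using a weight argument if a stray case remains.

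\emph{Conclusion.} The minimal model is abelian, so $C^*(A)[1]$ is homotopy abelian and in particular formal. I expect the choice of contraction making $\iota(1)$ strictly central to need the most care.
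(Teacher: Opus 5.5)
Your proposal is correct and follows the paper's proof: make $[1_A]$ harmless by taking the complement of $\operatorname{im} d_C$ in $\ker d_C$ to contain the central cocycle $1_A$ (this needs no special care), then kill $l_r^{\min}(\zeta^{\otimes r})$ by graded skew-symmetry when $|\zeta|=n-2$ is even and by a degree count when it is odd. Two small differences are harmless: you use the tree formula instead of the paper's induction through $u_r,v_r$, which is equivalent here because $h\iota=0$ and $h^2=0$ force $h u_r=0$; and your direct degree count for $n=1$ replaces the paper's appeal to \cite{Man22}.

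Two arithmetic slips should be fixed. In Step 3, the example $n=2$, $r=k=3$ has output degree $-1$, not $0$, although your point still stands. In Step 5(2), output degree $n-2$ corresponds to $(r-1)(n-3)=-1$, not $(r-1)(n-3)=0$. More simply, for odd $n$ the output degree $r(n-3)+2$ is even while $-1$ and $n-2$ are odd, so no case remains and no weight argument is needed.
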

	
	\begin{proof}
		Suppose first that $n=1$.  Then $A\cong\mK$ or
		$A\cong\mK\times\mK$, so $A$ is separable.  Hochschild's vanishing theorem
		gives $\HH^q(A)=0$ for every $q>0$; see
		\cite[Theorem~4.1]{Hoc45}.  Hence $H^*(C^*(A)[1])$ is concentrated in shifted
		degree $-1$, and $C^*(A)[1]$ is formal by
		\cite[Example~6.2.3]{Man22}.  Its cohomology graded Lie algebra is abelian,
		since the bracket of two degree-$-1$ classes would lie in degree $-2$, where
		the cohomology vanishes.  It follows that $C^*(A)[1]$ is homotopy abelian;
		see \cite[Definitions~6.2.1--6.2.2]{Man22}.
		
		Assume now that $n\geq2$ and put $\mathcal H=\HH^*(A)[1]$.  By
		\Cref{lem:An-Hochschild-cohomology}, $\mathcal H$ is either generated by
		the unit class $[1_A]$ alone, or, in the exceptional case, by
		$[1_A]$ together with the class $\zeta$.
		
		Choose graded decompositions
		\[
		\ker d_C=\operatorname{im}d_C\oplus H',
		\qquad
		C^*(A)[1]=\ker d_C\oplus W,
		\]
		with $1_A\in H'$, and identify $H'$ with $\mathcal H$.  The restriction
		$d_C|_W:W\to\operatorname{im}d_C$ is an isomorphism.  The associated
		inclusion and projection, together with the negative inverse of $d_C|_W$ on
		$\operatorname{im}d_C$ and zero on $H'\oplus W$, give contraction data in
		our sign convention.  In particular, the inclusion sends $[1_A]$ to
		$1_A$, and the contracting homotopy vanishes on $1_A$. Let
		$\{l_r^{\min}\}_{r\geq1}$ denote the transferred minimal
		$L_\infty$-structure on $\mathcal H$ associated with this contraction.
		
		Because normalized insertions first project the inserted value by $\varpi$
		and $\varpi(1_A)=0$, the unit $0$-cochain is central for the normalized
		Gerstenhaber bracket: $[1_A,\varphi]=0$ for every normalized Hochschild
		cochain $\varphi$. 
		A simultaneous induction on $r\geq2$ using
		\eqref{eq:vn}--\eqref{eq:phin} shows that both $l_r^{\min}$ and
		$\phi_r$ vanish whenever one input is $[1_A]$.
		Indeed, the induction hypothesis and the centrality of $1_A$
		first give $v_r=0$, and hence $l_r^{\min}=0$, on such inputs.
		In $u_r$, the term with $k=r$ is $-\phi_1l_r^{\min}$ and
		therefore vanishes by this conclusion; the terms with $k<r$
		vanish by induction. Thus $u_r=0$, and consequently $\phi_r=0$.
		
		If the exceptional class $\zeta$ does not occur, this already shows that all
		minimal operations of arity at least two vanish.  Suppose now that $\zeta$
		occurs.  In the shifted grading, $|[1_A]|=-1$ and $|\zeta|=n-2$.
		It remains only to consider
		$l_r^{\min}(\zeta^{\otimes r})$ with $r\geq2$.
		If $n$ is even, then $|\zeta|$ is even, and graded skew-symmetry over the
		characteristic-zero field $\mK$ gives $l_r^{\min}(\zeta^{\otimes r})=0$.
		If $n$ is odd, then an $r$-ary operation has output degree $r(n-2)+(2-r)=r(n-3)+2$.
		By \Cref{lem:An-Hochschild-cohomology}, the graded vector space $\mathcal H$
		is supported only in degrees $-1$ and $n-2$, whereas, for every $r\geq2$,
		$r(n-3)+2\notin\{-1,n-2\}$.
		Thus $l_r^{\min}(\zeta^{\otimes r})=0$ also in this case.
		
		Therefore every transferred minimal operation of arity at least two
		vanishes. Hence $C^*(A)[1]$ is $L_\infty$ quasi-isomorphic to
		$\mathcal H=\HH^*(A)[1]$ with all operations zero, so it is homotopy
		abelian. In particular, $A$ is Hochschild $L_\infty$-formal.
	\end{proof}
	
	The preceding theorem concerns the Hochschild minimal model.  It does not
	imply that the transferred CS model has vanishing higher operations.
	
	\begin{theorem}
		\label{thm:An-interior-special}
		Let $A$ be an $A_n$-type skew-gentle algebra. If a special vertex $j$
		is not an endpoint, then the transferred operation $l_3$ is nonzero,
		and the graded Jacobi relation for $l_2$ is governed by a nontrivial
		ternary homotopy.
		In particular, if
		$(B_{\mathrm{CS}}^*(A)[1],l_1,l_2)$
		is a dg Lie algebra, then every special vertex is an endpoint.
	\end{theorem}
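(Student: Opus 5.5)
The plan is to detect $l_3\neq0$ through the arity-three $L_\infty$-identity of \Cref{def:Linfty}, without computing $l_3$ from \eqref{eq:vn}--\eqref{eq:phin} directly. For $N=3$ that identity states that the graded Jacobiator
\[
J(f,g,h)=\sum_{\sigma\in\mS_{2,1}}\chi(\sigma)\,l_2(l_2\otimes\id)\widehat\sigma(f\otimes g\otimes h)
\]
equals, up to sign, $l_1l_3(f\otimes g\otimes h)$ plus the terms $l_3(l_1\otimes\id\otimes\id)\widehat\sigma$. It therefore suffices to exhibit one triple of basis cochains in $B$ with $J(f,g,h)\neq0$. If all $l_3$ terms vanished, $J$ would vanish, so $J\neq0$ forces $l_3\neq0$. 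The same nonvanishing Jacobiator shows that $l_2$ fails the strict graded Jacobi identity, so the Jacobi relation is governed by a nontrivial ternary homotopy. The final assertion is then the contrapositive: if $(B,l_1,l_2)$ were a dg Lie algebra, $J$ would vanish identically, so no interior special vertex can exist.

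\emph{Choice of local data.} Let $j$ be an interior special vertex, with special loop $\ep=\ep_j$. There are two ordinary arrows incident to $j$, one joining $j$ to $j-1$ and one joining $j$ to $j+1$. Up to passing to the opposite algebra $A^{\mathrm{op}}$, which is again $A_n$-type skew-gentle and whose CS model corresponds to that of $A$ by path reversal, the orientations reduce to three cases: $\alpha:j-1\to j$ and $\beta:j\to j+1$ (through), both arrows into $j$, or both arrows out of $j$. Since $A_n$ has no cycles, $\Gamma$ contains only paths of ordinary arrows forming relations and powers $\ep^k$ (\Cref{lem:special-loop-dichotomy}). Moreover, the gentle conditions force $\alpha\ep,\ \ep\beta\notin I^S$, so $\alpha\ep\beta\in\mB$ in the through case. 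I would work in degree $0$ with the triple
\[
f=\para{\ep}{e_j},\qquad g=\para{\alpha}{\alpha\ep},\qquad h=\para{\beta}{\ep\beta},
\]
and adapt it by replacing $\alpha\ep$ with $\ep\alpha$ (or the corresponding variant) in the sink and source cases. The inner brackets are read off from \Cref{DZSF}, \Cref{lem:replacement-criterion} and \Cref{prop:degree-zero-classification}. For example, $l_2(f\otimes g)$ is the right loop deletion \eqref{eq:xneqy-right-loop-deletion}, giving $\para{\alpha}{\alpha}$. The outer brackets are then evaluated with \Cref{00-x-equal-to-a} and the endpoint and idempotent-absorption formulas \eqref{eq:xneqy-left-boundary}--\eqref{eq:xneqy-idempotent-absorption}. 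All of these are type $(0,0)$ brackets of shifted degree $0$, so the Koszul signs are trivial and $J$ is simply the cyclic sum.

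\emph{Expected mechanism and main obstacle.} The failure of Jacobi should come from idempotent absorption $\ep^2=\ep$. Loop deletion by $f$ removes $\ep$ from $\alpha\ep$ but not from $\alpha\ep\beta$-type values, whereas absorption treats $\ep$ as reinsertable. Consequently the three cyclic terms should produce values such as $\para{\alpha}{\alpha}$ and $\para{\alpha}{\alpha\ep}$ that fail to cancel. The main obstacle is finding a triple whose cyclic sum is actually nonzero: many natural triples cancel, exactly as in the $x=y$ analysis of \Cref{subsec:degree-zero-xeqy}. If the degree-$0$ triple above cancels, I would first vary a single input, for instance taking $h=\para{\ep}{\ep}$ or $g=\para{\alpha}{\ep}$-type modifications where these are parallel. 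Failing that, I would move one input to degree $-1$, say $\para{e_j}{\ep}$, and use \Cref{prop:degree-minus-one-brackets}(3) together with the mixed formulas of \Cref{prop:mixed-classification}, where the parity factor $\sum_j(-1)^j$ for $\ep^{n+1}$ supports creates asymmetric cancellations. Once a single nonzero cyclic sum is found, the conclusion follows formally from the $N=3$ identity, and the three orientation cases are checked separately with the same template.
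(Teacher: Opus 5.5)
Your reduction matches the paper's: one triple with nonzero graded Jacobiator forces $l_3\neq0$ through the $N=3$ identity, and the last assertion is the contrapositive. The gap is that you never produce that witness, which is the entire content of the proof, and the triple you propose fails. With $f=\para{\ep}{e_j}$, $g=\para{\alpha}{\alpha\ep}$, $h=\para{\beta}{\ep\beta}$, \Cref{DZSF} gives
\[
l_2(f\otimes g)=\para{\alpha}{\alpha},\qquad l_2(f\otimes h)=\para{\beta}{\beta},\qquad l_2(g\otimes h)=0.
\]
The outer brackets $l_2(\para{\alpha}{\alpha}\otimes h)$ and $l_2(\para{\beta}{\beta}\otimes g)$ vanish, because $\alpha$ does not occur in $\ep\beta$ and $\beta$ does not occur in $\alpha\ep$. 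So the Jacobiator is $0$.

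This is not bad luck. Take the smallest case $1\xrightarrow{\alpha}2\xrightarrow{\beta}3$ with only $2$ special, and let $D_f$ be the derivation of $\mK Q^S$ with $D_f(x)=f(x)$ on arrows. Then \Cref{DZSF} says $l_2(f\otimes g)=\pi[D_f,D_g]$ on arrows. The values of basis cochains in $B^0$ are $\alpha,\alpha\ep,\beta,\ep\beta,e_2,\ep$. The only products leaving $\mB$ are $\alpha\ep\ep$ and $\ep\ep\beta$, and they arise only in $[D_f,D_f]$, where they cancel. Hence $f\mapsto D_f$ is an injective Lie map and Jacobi holds on $B^0$. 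The same check works with one input in $B^{-1}$, whose values are $e_v$ or $\ep$, and two inputs from $B^{-1}$ give zero. So your first two fallbacks cannot succeed, and the last one is not computed.

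The missing idea is a cochain of positive shifted degree supported on the relation through $j$. The special loop $\ep_j$ already occupies one incoming and one outgoing slot at $j$, so (G2) forces the orientation $i\xrightarrow{\alpha}j\xrightarrow{\beta}k$; your sink and source cases do not exist. Since $\alpha\ep_j\notin I^S$, (G4) forces $\alpha\beta\in I^S$. The path $\alpha\ep_j\beta\in\mB$, which you already noticed, is therefore a value parallel to $\alpha\beta\in\Gamma_2$. Take
\[
f_j=\para{\alpha}{\alpha\ep_j},\qquad g_j=\para{\ep_j}{\ep_j},\qquad h_j=\para{\alpha\beta}{\alpha\ep_j\beta}\in B^1.
\]
Then:
\begin{itemize}
\item $l_2(f_j\otimes g_j)=-f_j$ by \Cref{DZSF};
\item $l_2(g_j\otimes h_j)=h_j$ by \Cref{01-x-equal-to-a}, since $\ep_j$ occurs once in $\alpha\ep_j\beta$ and not in $\alpha\beta$;
\item $l_2(f_j\otimes h_j)=h_j$ by \Cref{MSBF}. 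Here idempotent absorption happens inside the substitution, $\Sub_\alpha^{\alpha\ep_j}(\alpha\ep_j\beta)=\pi(\alpha\ep_j\ep_j\beta)=\alpha\ep_j\beta$, and $\mC_\alpha^{\alpha\ep_j}(\alpha\beta;\alpha\ep_j\beta)=0$ because $L_\alpha(\alpha\ep_j)=0$.
\end{itemize}
Since $|f_j|=|g_j|=0$, the Jacobiator is $-h_j-h_j+h_j=-h_j\neq0$. This is exactly the witness the paper uses.
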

	
	\begin{proof}
		Since $j$ is not an endpoint, precisely two ordinary arrows are incident
		with it.  Together with the special loop $\ep_j:j\to j$, condition
		\textup{(G2)} rules out both ordinary arrows entering $j$ and also rules out
		both leaving $j$.  Hence, after relabelling the two neighbours, the local
		orientation is $i\xrightarrow{\alpha}j\xrightarrow{\beta}k$.  Since
		$\ep_j^2\in I^S$, condition \textup{(G3)} gives
		$\alpha\ep_j\notin I^S$ and $\ep_j\beta\notin I^S$. Since
		$\alpha\ep_j\notin I^S$ and $\beta\neq\ep_j$, condition
		\textup{(G4)} forces $\alpha\beta\in I^S$, while
		$\alpha\ep_j\beta\in\mB$.
		Set $f_j=\para{\alpha}{\alpha\ep_j}$,
		$g_j=\para{\ep_j}{\ep_j}$, and
		$h_j=\para{\alpha\beta}{\alpha\ep_j\beta}$.
		The formulas in \Cref{DZSF,MSBF} give $l_2(f_j\otimes g_j)=-f_j$,
		$l_2(g_j\otimes h_j)=h_j$, and $l_2(f_j\otimes h_j)=h_j$.
		Indeed, the last equality uses
		$\pi(\alpha\ep_j\ep_j\beta)=\alpha\ep_j\beta$; moreover,
		$L_\alpha(\alpha\ep_j)=0$, so
		$\mC_\alpha^{\alpha\ep_j}(\alpha\beta;\alpha\ep_j\beta)=0$.  Since $|f_j|=|g_j|=0$ and $|h_j|=1$, the graded Jacobiator of $l_2$ evaluated on
		$f_j\otimes g_j\otimes h_j$ is
		\[
		\begin{aligned}
			l_2(l_2(f_j\otimes g_j)\otimes h_j)
			+l_2(l_2(g_j\otimes h_j)\otimes f_j)
			+l_2(l_2(h_j\otimes f_j)\otimes g_j)=-h_j-h_j+h_j
			=-h_j\neq0.
		\end{aligned}
		\]
		Thus $(B_{\mathrm{CS}}^*(A)[1],l_1,l_2)$ is not a dg Lie algebra.  If the transferred
		ternary operation were identically zero, the arity-three $L_\infty$ identity
		would reduce to the strict graded Jacobi identity for $l_2$.  Hence
		$l_3\neq0$.
	\end{proof}
	
\begin{remark}
	\Cref{thm:An-formality,thm:An-interior-special} show that homotopy
	abelianity of the Hochschild dg Lie algebra is compatible with
	nonzero higher operations on the transferred CS complex. When a
	special vertex is not an endpoint, the ternary operation provides
	a nontrivial homotopy for the graded Jacobi relation.
\end{remark}
%==========================
	
	The formality result above does not extend to all finite-dimensional gentle
	algebras.  Let $D=\mK[\theta]/\langle \theta^2 \rangle$, equivalently the path algebra of the
	one-loop quiver modulo the relation $\theta^2$.  
	\[\begin{tikzcd}
		\bullet
		\arrow["\theta", from=1-1, to=1-1, loop, in=55, out=125, distance=10mm]
	\end{tikzcd}\]
	This is a finite-dimensional
	gentle algebra, hence also a skew-gentle algebra with empty special set, and as
	an ungraded associative algebra it is the exterior algebra on a
	one-dimensional vector space.  Wong proved that its Hochschild dg Lie algebra
	is non-formal \cite[Theorem~6.3]{Won17}.  Hence $D$ is not Hochschild $L_\infty$-formal.
	Here ``formal'' refers to the Hochschild dg Lie algebra, not to $D$ viewed as a
	dg associative algebra with zero differential.
	
	Thus the finite-dimensional gentle/skew-gentle setting exhibits both formal
	and non-formal behaviour. This leads to the following classification problem.

\medskip
\noindent
\textbf{Question.}
Are there necessary and sufficient combinatorial conditions on a triple
$(Q,I,S)$ ensuring that the skew-gentle algebra $A$ is Hochschild
$L_\infty$-formal?
	
	%=======================================
	\clearpage
	
	\appendix
	%=========================================================
	\providecommand{\FigCell}[1]{%
		\begin{minipage}[t]{\linewidth}
			\vspace{0pt}
			\centering
			#1
	\end{minipage}}
	
	\providecommand{\CondCell}[1]{%
		\begin{minipage}[t]{\linewidth}
			\vspace{0pt}
			\raggedright
			#1
	\end{minipage}}
	
	\providecommand{\ValCell}[1]{%
		\begin{minipage}[t]{\linewidth}
			\vspace{0pt}
			\raggedright
			#1
	\end{minipage}}
	
	\providecommand{\Stack}[1]{%
		$\begin{gathered}#1\end{gathered}$}
	
	\providecommand{\FitMath}[1]{%
		\resizebox{\linewidth}{!}{$\displaystyle #1$}}
	
	%=========================================================
	The following appendices summarize the bracket classifications obtained in
	\Cref{sec:degree-minus-one-classification,sec:degree-zero-classification,sec:mixed-classification,sec:positive-positive-classification}.
	Throughout, brackets are recorded in the order $l_2(f\otimes g)$; brackets
	with the two inputs exchanged are recovered from graded skew-symmetry
	\eqref{sym} and are not listed separately.
	
	\section{Brackets of type \texorpdfstring{$(-1,n)$}{(-1,n)}}
	\label[appendix]{app:degree-minus-one-lookup}
	%=========================================================
	
	Let $f=\para{e_i}{a}\in B^{-1}$ and $g=\para{y}{b}\in B^n$, where $n\geq-1$.
	For the rows with $\ell(a)>1$, write
	$a=\alpha_1\cdots\alpha_{\ell(a)}$ for the cycle in $\mB$ based at $i$.
	
	\begin{center}
		\footnotesize
		\setlength{\tabcolsep}{4pt}
		\renewcommand{\arraystretch}{1.35}
		\begin{longtable}{
				@{}
				>{\centering\arraybackslash}m{0.19\textwidth}
				>{\raggedright\arraybackslash}m{0.40\textwidth}
				>{\centering\arraybackslash}m{0.35\textwidth}
				@{}
			}
			\caption{Summary of brackets of type $(-1,n)$.}
			\label{tab:degree-minus-one-lookup}
			\\
			\hline
			\textbf{Local configuration}
			&
			\textbf{Assumptions / nonzero condition}
			&
			$\boldsymbol{l_2(f\otimes g)}$
			\\
			\hline
			\endfirsthead
			
			\hline
			\textbf{Local configuration}
			&
			\textbf{Assumptions / nonzero condition}
			&
			$\boldsymbol{l_2(f\otimes g)}$
			\\
			\hline
			\endhead
			
			\hline
			\endfoot

			%========================================================
			&
			Either $n=-1$, or $n\geq0$ and $a=e_i$.\newline
			No additional condition.
			&
			$\displaystyle 0$
			\\[2mm]
			\hline
			
			%========================================================
			\begin{minipage}[c]{\linewidth}
				\centering
				\begin{tikzcd}
					i
					\arrow["\omega", from=1-1, to=1-1, loop, in=55, out=125, distance=10mm]
				\end{tikzcd}
			\end{minipage}
			&
			$a=\omega$, where $\omega$ is a loop at $i$.\newline
			$n\geq0$ and $y=\omega^{n+1}$.\newline
			Nonzero precisely when $n$ is even.
			&
			$\displaystyle
			\begin{cases}
				-\para{\omega^n}{b}, & \text{if } n\text{ is even},\\
				0, & \text{if } n\text{ is odd}.
			\end{cases}$
			\\[2mm]
			\hline
			
			%========================================================
			\begin{minipage}[c]{\linewidth}
				\centering
				\begin{tikzcd}
					i
					\arrow["a", dotted, from=1-1, to=1-1, loop, in=55, out=125, distance=10mm]
				\end{tikzcd}
			\end{minipage}
			&
			$\ell(a)>1$, $n=0$, and $g=\para{y}{b}\in B^0$.\newline
			Nonzero precisely when $\Sub_y^b(a)\neq0$.\newline
			See \Cref{lem:replacement-criterion} for the possible nonzero substitutions.
			&
			$\displaystyle -\para{e_i}{\Sub_y^b(a)}$
			\\[2mm]
			\hline
			
			%========================================================
			\begin{minipage}[c]{\linewidth}
				\centering
				\begin{tikzcd}
					i
					\arrow["a", dotted, from=1-1, to=1-1, loop, in=55, out=125, distance=10mm]
				\end{tikzcd}
			\end{minipage}
			&
			$\ell(a)>1$, $n\geq1$,\newline
			$y=y_1\cdots y_{n+1}$ and $y_1=\alpha_{\ell(a)}$.\newline
			Retained precisely when $\pi(L_{\alpha_{\ell(a)}}(a)b)\neq0$.
			&
			$\displaystyle
			(-1)^{n+1}
			\para{y_2\cdots y_{n+1}}
			{\pi(L_{\alpha_{\ell(a)}}(a)b)}$
			\\[2mm]
			\hline
			
			%========================================================
			\begin{minipage}[c]{\linewidth}
				\centering
				\begin{tikzcd}
					i
					\arrow["a", dotted, from=1-1, to=1-1, loop, in=55, out=125, distance=10mm]
				\end{tikzcd}
			\end{minipage}
			&
			$\ell(a)>1$, $n\geq1$,\newline
			$y=y_1\cdots y_{n+1}$ and $y_{n+1}=\alpha_1$.\newline
			Retained precisely when $\pi(bR_{\alpha_1}(a))\neq0$.
			&
			$\displaystyle -\para{y_1\cdots y_n}{\pi(bR_{\alpha_1}(a))}$
			\\[2mm]
			\hline
		\end{longtable}
	\end{center}
	
	If both conditions in the last two rows hold, then
	\[
	\begin{aligned}
		l_2(f\otimes g)
		=
		(-1)^{n+1}
		\para{y_2\cdots y_{n+1}}
		{\pi(L_{\alpha_{\ell(a)}}(a)b)}
		-
		\para{y_1\cdots y_n}
		{\pi(bR_{\alpha_1}(a))}.
	\end{aligned}
	\]
	The two output supports are distinct, so the two terms cannot cancel.
	\Cref{tab:degree-minus-one-lookup} exhausts all possible nonzero brackets
	involving $B^{-1}$ under the convention stated above.
	%===========================
	\section{Brackets of type \texorpdfstring{$(0,0)$}{(0,0)}}
	\label[appendix]{app:degree-zero-lookup}
	%=========================================================
	Let $f=\para{x}{a}\in B^0$ and $g=\para{y}{b}\in B^0$ be basis elements.
	The table below summarizes the nonzero brackets of type $(0,0)$ classified in
	\Cref{sec:degree-zero-classification}.
	
	\begin{center}
		\footnotesize
		\setlength{\tabcolsep}{3pt}
		\renewcommand{\arraystretch}{1.45}
		\begin{longtable}{
				@{}
				>{\centering\arraybackslash}m{0.31\textwidth}
				>{\raggedright\arraybackslash}m{0.40\textwidth}
				>{\centering\arraybackslash}m{0.23\textwidth}
				@{}
			}
			\caption{Summary of brackets of type $(0,0)$.}
			\label{tab:degree-zero-nonzero-lookup}
			\\
			\hline
			\textbf{Local configuration}
			&
			\textbf{Assumptions / nonzero condition}
			&
			$\boldsymbol{l_2(f\otimes g)}$
			\\
			\hline
			\endfirsthead
			
			\hline
			\textbf{Local configuration}
			&
			\textbf{Assumptions / nonzero condition}
			&
			$\boldsymbol{l_2(f\otimes g)}$
			\\
			\hline
			\endhead
			
			\hline
			\endfoot
			
			%========================================================
			\begin{minipage}[c]{\linewidth}
				\centering
				\begin{tikzcd}[ampersand replacement=\&, column sep=5em]
					\bullet
					\arrow[r,dotted,shift left=0.5ex,"y"]
					\arrow[r,dotted,shift right=0.5ex,swap,"b"]
					\&
					\bullet
				\end{tikzcd}
			\end{minipage}
			&
			$a=x$.\newline
			Nonzero precisely in one of the following two cases:\newline
			\textup{(i)} $x=y$ and $\num{x}{b}=0$;\newline
			\textup{(ii)} $x\neq y$ and $\num{x}{b}=1$.\newline
			See \Cref{00-x-equal-to-a}.
			&
			$\displaystyle
			\begin{cases}
				-\para{y}{b}, & \text{if (i) holds},\\
				\para{y}{b}, & \text{if (ii) holds}.
			\end{cases}$
			\\[3mm]
			\hline
			
			%========================================================
			\begin{minipage}[c]{\linewidth}
				\centering
				\begin{tikzcd}[ampersand replacement=\&, column sep=small]
					\bullet \& \& \bullet
					\arrow["{x=\beta_1}", from=1-1, to=1-1, loop, in=145, out=215, distance=10mm]
					\arrow["{\beta_2}"', from=1-1, to=1-3]
					\arrow["q", dotted, from=1-3, to=1-3, loop, in=325, out=35, distance=10mm]
				\end{tikzcd}
			\end{minipage}
			&
			$x=\beta_1$ is a loop and $a=e_{s(x)}$.\newline
			$y=\beta_2$ and $b=x\beta_2q$ with $\ell(b)>1$.\newline
			$q$ is a path based at $t(\beta_2)$, possibly trivial.
			&
			$\displaystyle \para{\beta_2}{\beta_2q}$
			\\[3mm]
			\hline
			
			%========================================================
			\begin{minipage}[c]{\linewidth}
				\centering
				\begin{tikzcd}[ampersand replacement=\&, column sep=small]
					\bullet \& \& \bullet
					\arrow["p", dotted, from=1-1, to=1-1, loop, in=145, out=215, distance=10mm]
					\arrow["{\beta_{\ell(b)-1}}"', from=1-1, to=1-3]
					\arrow["{x=\bet}", from=1-3, to=1-3, loop, in=325, out=35, distance=10mm]
				\end{tikzcd}
			\end{minipage}
			&
			$x=\bet$ is a loop and $a=e_{s(x)}$.\newline
			$y=\beta_{\ell(b)-1}$ and $b=p\beta_{\ell(b)-1}x$ with $\ell(b)>1$.\newline
			$p$ is a path based at $s(\beta_{\ell(b)-1})$, possibly trivial.
			&
			$\displaystyle \para{\beta_{\ell(b)-1}}{p\beta_{\ell(b)-1}}$
			\\[3mm]
			\hline
			
			%========================================================
			\begin{minipage}[c]{\linewidth}
				\centering
				\begin{tikzcd}[ampersand replacement=\&, column sep=5em]
					\bullet
					\arrow[r,shift left=0.5ex,"x"]
					\arrow[r,shift right=0.5ex,swap,"y"]
					\&
					\bullet
				\end{tikzcd}
			\end{minipage}
			&
			$x\neq y$ are parallel arrows.\newline
			$a=y$ and $b=x$.
			&
			$\displaystyle \para{y}{y}-\para{x}{x}$
			\\[3mm]
			\hline
			
			%========================================================
			\begin{minipage}[c]{\linewidth}
				\centering
				\begin{tikzcd}[ampersand replacement=\&, column sep=5em]
					\bullet
					\arrow[r,shift left=0.5ex,"x"]
					\arrow[r,shift right=0.5ex,swap,"y"]
					\arrow[r,bend left=55,dotted,"a"]
					\&
					\bullet
				\end{tikzcd}
			\end{minipage}
			&
			$x\neq y$ and $\ell(a)>1$.\newline
			$b=x$.
			&
			$\displaystyle \para{y}{a}$
			\\[3mm]
			\hline
			
			%========================================================
			\begin{minipage}[c]{\linewidth}
				\centering
				\begin{tikzcd}[ampersand replacement=\&, column sep=3em]
					\bullet \& \bullet \& \cdots
					\arrow["p", dotted, from=1-1, to=1-1, loop, in=145, out=215, distance=10mm]
					\arrow["{\beta_1}"', from=1-1, to=1-2]
					\arrow["\beta_2", from=1-2, to=1-2, loop, in=55, out=125, distance=10mm]
					\arrow[from=1-2, to=1-3]
				\end{tikzcd}
			\end{minipage}
			&
			$x=\beta_1\neq y$ and $a=p\beta_1\mu$.\newline
			$\ell(b)>1$.\newline
			$p$ is a nontrivial path based at $s(\beta_1)$, with $pb\neq0$.\newline
			$\mu=e_{t(\beta_1)}$, or $\mu=\beta_2$ with $\beta_2\in\Sp$.
			&
			$\displaystyle \para{y}{pb}$
			\\[3mm]
			\hline
			
			%========================================================
			\begin{minipage}[c]{\linewidth}
				\centering
				\begin{tikzcd}[ampersand replacement=\&, column sep=3em]
					\cdots \& \bullet \& \bullet
					\arrow[from=1-1, to=1-2]
					\arrow["{\beta_{\ell(b)-1}}", from=1-2, to=1-2, loop, in=55, out=125, distance=10mm]
					\arrow["\bet"', from=1-2, to=1-3]
					\arrow["q", dotted, from=1-3, to=1-3, loop, in=325, out=35, distance=10mm]
				\end{tikzcd}
			\end{minipage}
			&
			$x=\bet\neq y$ and $a=\lambda\bet q$.\newline
			$\ell(b)>1$.\newline
			$q$ is a nontrivial path based at $t(\bet)$, with $bq\neq0$.\newline
			$\lambda=e_{s(\bet)}$, or $\lambda=\beta_{\ell(b)-1}$ with
			$\beta_{\ell(b)-1}\in\Sp$.
			&
			$\displaystyle \para{y}{bq}$
			\\[3mm]
			\hline
			
			%========================================================
			\begin{minipage}[c]{\linewidth}
				\centering
				\begin{tikzcd}[ampersand replacement=\&, column sep=2em]
					\bullet \&\& \bullet \\
					\\
					\bullet \&\& \bullet
					\arrow["{\beta_{i_0-1}}", from=1-1, to=1-1, loop, in=100, out=170, distance=10mm]
					\arrow["{x=\bi}", from=1-1, to=1-3]
					\arrow["{\beta_{i_0+1}}", from=1-3, to=1-3, loop, in=10, out=80, distance=10mm]
					\arrow[dotted, from=1-3, to=3-3]
					\arrow[dotted, from=3-1, to=1-1]
					\arrow["y"', from=3-1, to=3-3]
				\end{tikzcd}
			\end{minipage}
			&
			$x=\bi\neq y$.  Nonzero precisely in one of the following cases:\newline
			\textup{(i)} $a=\bi\beta_{i_0+1}$, with $1\leq i_0<\ell(b)-1$;\newline
			\textup{(ii)} $a=\beta_{i_0-1}\bi$, with $2<i_0\leq\ell(b)$;\newline
			\textup{(iii)} $a=\beta_{i_0-1}\bi\beta_{i_0+1}$, with
			$2<i_0<\ell(b)-1$.\newline
			Here $\beta_{i_0-1}$ and/or $\beta_{i_0+1}$ are the adjacent special loops.
			&
			$\displaystyle \para{y}{b}$
			\\[3mm]
			\hline
		\end{longtable}
	\end{center}
	
	\Cref{tab:degree-zero-nonzero-lookup} exhausts all possible nonzero brackets
	of type $(0,0)$ under the convention stated at the beginning of the appendices.
	%=========================================================
	%=========================================================
	\section{Brackets of type \texorpdfstring{$(0,+)$}{(0,+)}}
	\label[appendix]{app:mixed-lookup}
	%=========================================================
	Let $f=\para{x}{a}\in B^0$ and $g=\para{y}{b}\in B^n$, where $n\geq1$, be
	basis elements.  When needed, write $y=\gamma_1\cdots\gamma_{n+1}$.
	The table below summarizes the nonzero brackets classified in
	\Cref{sec:mixed-classification}.  The four possible forms of $y$ in
	\Cref{gamma-cases} produce no additional representatives beyond the rows
	listed here.  All displayed products are assumed to be nonzero paths in
	$\mB$ unless stated otherwise.
	
	\begin{center}
		\footnotesize
		\setlength{\tabcolsep}{3pt}
		\renewcommand{\arraystretch}{1.45}
		\begin{longtable}{
				@{}
				>{\centering\arraybackslash}m{0.30\textwidth}
				>{\raggedright\arraybackslash}m{0.38\textwidth}
				>{\centering\arraybackslash}m{0.26\textwidth}
				@{}
			}
			\caption{Summary of brackets of type $(0,+)$.}
			\label{tab:mixed-nonzero-lookup}
			\\
			\hline
			\textbf{Local configuration}
			&
			\textbf{Assumptions / nonzero condition}
			&
			$\boldsymbol{l_2(f\otimes g)}$
			\\
			\hline
			\endfirsthead
			
			\hline
			\textbf{Local configuration}
			&
			\textbf{Assumptions / nonzero condition}
			&
			$\boldsymbol{l_2(f\otimes g)}$
			\\
			\hline
			\endhead
			
			\hline
			\endfoot
			
			%========================================================
			\begin{minipage}[c]{\linewidth}
				\centering
				\begin{tikzcd}[ampersand replacement=\&, column sep=5em]
					\bullet
					\arrow[r,dotted,shift left=0.5ex,"y"]
					\arrow[r,dotted,shift right=0.5ex,swap,"b"]
					\&
					\bullet
				\end{tikzcd}
			\end{minipage}
			&
			$a=x$.\newline
			Nonzero precisely when $\num{x}{b}-\num{x}{y}\neq0$.\newline
			See \Cref{01-x-equal-to-a}.
			&
			$\displaystyle
			(\num{x}{b}-\num{x}{y})\para{y}{b}$
			\\[3mm]
			\hline
			
			%========================================================
			\begin{minipage}[c]{\linewidth}
				\centering
				\begin{tikzcd}[ampersand replacement=\&, column sep=small]
					\bullet
					\arrow["{b=x}", from=1-1, to=1-1, loop, in=145, out=215, distance=10mm]
					\arrow["y", dotted, from=1-1, to=1-1, loop, in=325, out=35, distance=10mm]
				\end{tikzcd}
			\end{minipage}
			&
			$a=e_{s(x)}$ and $b=x$; hence $x$ is a loop.\newline
			See \Cref{cor:mixed-loop-deletion}.
			&
			$\displaystyle \para{y}{e_{s(x)}}$
			\\[3mm]
			\hline
			
			%========================================================
			\begin{minipage}[c]{\linewidth}
				\centering
				\begin{tikzcd}[ampersand replacement=\&, column sep=small]
					\bullet \& \& \cdots
					\arrow["{x=\beta_1}", from=1-1, to=1-1, loop, in=145, out=215, distance=10mm]
					\arrow["{\beta_2}", from=1-1, to=1-3]
				\end{tikzcd}
			\end{minipage}
			&
			$a=e_{s(x)}$, $x=\beta_1$, and $\ell(b)>1$.\newline
			See \Cref{cor:mixed-loop-deletion}.
			&
			$\displaystyle \para{y}{\beta_2\cdots\beta_{\ell(b)}}$
			\\[3mm]
			\hline
			
			%========================================================
			\begin{minipage}[c]{\linewidth}
				\centering
				\begin{tikzcd}[ampersand replacement=\&, column sep=small]
					\cdots \&\& \bullet
					\arrow["\beta_{\ell(b)-1}", from=1-1, to=1-3]
					\arrow["{\bet=x}", from=1-3, to=1-3, loop, in=325, out=35, distance=10mm]
				\end{tikzcd}
			\end{minipage}
			&
			$a=e_{s(x)}$, $x=\beta_{\ell(b)}$, and $\ell(b)>1$.\newline
			See \Cref{cor:mixed-loop-deletion}.
			&
			$\displaystyle \para{y}{\beta_1\cdots\beta_{\ell(b)-1}}$
			\\[3mm]
			\hline
			
			%========================================================
			\begin{minipage}[c]{\linewidth}
				\centering
				\begin{tikzcd}[ampersand replacement=\&, column sep=5em]
					\bullet
					\arrow[r,shift left=0.5ex,"x=b"]
					\arrow[r,dotted,shift right=0.5ex,swap,"y"]
					\arrow[r,bend left=55,dotted,"a"]
					\&
					\bullet
				\end{tikzcd}
			\end{minipage}
			&
			$b=x$, $x$ is not a loop, $a\neq x$, and $\ell(a)>0$.\newline
			$\mC_x^a(y;x)=0$.\newline
			See \Cref{01-sub-only,rem:substitution-only-conditions}.
			&
			$\displaystyle \para{y}{a}$
			\\[3mm]
			\hline
			
			%========================================================
			\begin{minipage}[c]{\linewidth}
				\centering
				\begin{tikzcd}[ampersand replacement=\&, column sep=large]
					\& \bullet \\
					\bullet \\
					\& \bullet
					\arrow[dotted, from=1-2, to=3-2]
					\arrow["{\gamma_1}", from=2-1, to=1-2]
					\arrow["p", dotted, from=2-1, to=2-1, loop, in=145, out=215, distance=10mm]
					\arrow["{\gamma_{n+1}}", from=3-2, to=2-1]
				\end{tikzcd}
			\end{minipage}
			&
			$x=\gamma_1$ and $a=p\gamma_1$.\newline
			$p$ is a nontrivial left path factor.\newline
			$b=e_{s(y)}$, or $b=p\in\Sp$.
			&
			$\displaystyle -\para{y}{p}$
			\\[3mm]
			\hline
			
			%========================================================
			\begin{minipage}[c]{\linewidth}
				\centering
				\begin{tikzcd}[ampersand replacement=\&, column sep=large]
					\& \bullet \\
					\bullet \\
					\& \bullet
					\arrow[dotted, from=1-2, to=3-2]
					\arrow["{\gamma_1}", from=2-1, to=1-2]
					\arrow["q", dotted, from=2-1, to=2-1, loop, in=145, out=215, distance=10mm]
					\arrow["{\gamma_{n+1}}", from=3-2, to=2-1]
				\end{tikzcd}
			\end{minipage}
			&
			$x=\gamma_{n+1}$ and $a=\gamma_{n+1}q$.\newline
			$q$ is a nontrivial right path factor.\newline
			$b=e_{t(y)}$, or $b=q\in\Sp$.
			&
			$\displaystyle -\para{y}{q}$
			\\[3mm]
			\hline
			
			%========================================================
			\begin{minipage}[c]{\linewidth}
				\centering
				\begin{tikzcd}[ampersand replacement=\&, column sep=2em]
					\bullet \&\& \bullet \\
					\\
					\&\& \bullet
					\arrow["p", dotted, from=1-1, to=1-1, loop, in=100, out=170, distance=10mm]
					\arrow["{\beta_1}", from=1-1, to=1-3]
					\arrow["y"', dotted, from=1-1, to=3-3]
					\arrow["\mu", from=1-3, to=1-3, loop, in=10, out=80, distance=10mm]
					\arrow[dotted, from=1-3, to=3-3]
				\end{tikzcd}
			\end{minipage}
			&
			$x=\beta_1$, $a=p\beta_1\mu$, and $\ell(b)>1$.\newline
			$p$ is nontrivial and $pb\neq0$.\newline
			$\mu=e_{t(\beta_1)}$, or $\mu=\beta_2$ with $\beta_2\in\Sp$.\newline
			$\mC_{\beta_1}^{p\beta_1\mu}(y;b)=0$.\newline
			See \Cref{cor:mixed-endpoint-extension-general}.
			&
			$\displaystyle \para{y}{pb}$
			\\[3mm]
			\hline
			
			%========================================================
			\begin{minipage}[c]{\linewidth}
				\centering
				\begin{tikzcd}[ampersand replacement=\&, column sep=2em]
					\bullet \&\& \bullet \\
					\\
					\bullet
					\arrow["\lambda", dotted, from=1-1, to=1-1, loop, in=100, out=170, distance=10mm]
					\arrow["\bet", from=1-1, to=1-3]
					\arrow["q", from=1-3, to=1-3, loop, in=10, out=80, distance=10mm]
					\arrow[dotted, from=3-1, to=1-1]
					\arrow["y"', dotted, from=3-1, to=1-3]
				\end{tikzcd}
			\end{minipage}
			&
			$x=\beta_{\ell(b)}$ and $a=\lambda\beta_{\ell(b)}q$.\newline
			$\ell(b)>1$, $q$ is nontrivial, and $bq\neq0$.\newline
			$\lambda=e_{s(\beta_{\ell(b)})}$, or
			$\lambda=\beta_{\ell(b)-1}$ with $\beta_{\ell(b)-1}\in\Sp$.\newline
			$\mC_{\beta_{\ell(b)}}^{\lambda\beta_{\ell(b)}q}(y;b)=0$.\newline
			See \Cref{cor:mixed-endpoint-extension-general}.
			&
			$\displaystyle \para{y}{bq}$
			\\[3mm]
			\hline
			
			%========================================================
			\begin{minipage}[c]{\linewidth}
				\centering
				\begin{tikzcd}[ampersand replacement=\&, column sep=2em]
					\bullet \&\& \bullet \\
					\\
					\bullet \&\& \bullet
					\arrow["{\beta_{i_0-1}}", from=1-1, to=1-1, loop, in=100, out=170, distance=10mm]
					\arrow["{x=\bi}", from=1-1, to=1-3]
					\arrow["{\beta_{i_0+1}}", from=1-3, to=1-3, loop, in=10, out=80, distance=10mm]
					\arrow[dotted, from=1-3, to=3-3]
					\arrow[dotted, from=3-1, to=1-1]
					\arrow["y"', dotted, from=3-1, to=3-3]
				\end{tikzcd}
			\end{minipage}
			&
			$x=\bi$, $\ell(b)>1$, and $\mC_x^a(y;b)=0$.\newline
			Nonzero precisely in one of the following cases:\newline
			\textup{(i)} $a=\bi\beta_{i_0+1}$, with $1\leq i_0<\ell(b)-1$;\newline
			\textup{(ii)} $a=\beta_{i_0-1}\bi$, with $2<i_0\leq\ell(b)$;\newline
			\textup{(iii)} $a=\beta_{i_0-1}\bi\beta_{i_0+1}$, with
			$2\leq i_0\leq\ell(b)-1$.\newline
			Here $\beta_{i_0-1}$ and/or $\beta_{i_0+1}$ are the adjacent special loops.
			&
			$\displaystyle \para{y}{b}$
			\\[3mm]
			\hline
		\end{longtable}
	\end{center}
	
	\Cref{tab:mixed-nonzero-lookup} exhausts all possible nonzero brackets of type
	$(0,+)$ under the convention stated at the beginning of the appendices.
	
	%=========================================================
	\section{Brackets of type \texorpdfstring{$(+,+)$}{(+,+)}}
	\label[appendix]{app:positive-positive-lookup}
	
	The formulas for brackets of type $(+,+)$ are most conveniently evaluated
	using the lengths of the two values, the form of the output support, and
	the corresponding local configurations.
	
	Let $f=\para{x}{a}\in B^m$ and $g=\para{y}{b}\in B^n$, where $m,n\geq1$,
	and fix $\gamma=\gamma_1\cdots\gamma_{m+n+1}\in\Gamma_{m+n+1}$.
	The lookup procedure uses the three tables in the following order:
	\[
	\Cref{tab:positive-positive-value-length-lookup}
	\longrightarrow
	\Cref{tab:positive-positive-occurrence-lookup}
	\longrightarrow
	\Cref{tab:positive-positive-local-lookup}
	\longrightarrow
	\Cref{tab:positive-positive-value-length-lookup}.
	\]
	The first table provides both the preliminary length-based filter and the final reduction rules.
	
	\begin{enumerate}[label=\textup{Step\arabic*:},leftmargin=*]
		\item \textbf{Value lengths.}
		Determine whether each of $a$ and $b$ is trivial, an arrow, or has length
		greater than one.  Use
		\Cref{tab:positive-positive-value-length-lookup} to determine which local
		rows and insertion directions can contribute.
		
		\item \textbf{Occurrence positions.}
		Determine the form of $\gamma$ and use
		\Cref{tab:positive-positive-occurrence-lookup} to compute the required
		occurrence sets $\Occ_\gamma(y)$ and/or $\Occ_\gamma(x)$.  For a
		$y$-occurrence, $i=0$, $0<i<m$, and $i=m$ correspond respectively to the
		first-left, first-interior, and first-right rows.  For an $x$-occurrence,
		$j=0$, $0<j<n$, and $j=n$ correspond respectively to the second-left,
		second-interior, and second-right rows.
		
		\item \textbf{Local contributions.}
		For each relevant occurrence, use
		\Cref{tab:positive-positive-local-lookup}.  Record a local contribution
		only when all support and value conditions in the corresponding row are
		satisfied.
		
		\item \textbf{Reduction and summation.}
		Return to \Cref{tab:positive-positive-value-length-lookup}, apply the
		indicated vanishing and cancellation rules, combine equal output pairs,
		and sum the surviving cochain contributions.  The resulting component
		supported on $\gamma$ is then evaluated at $\gamma$ to obtain
		$l_2(f\otimes g)(\gamma)$.  Repeating this for every
		$\gamma\in\Gamma_{m+n+1}$ determines the cochain
		$l_2(f\otimes g)$.
	\end{enumerate}
	
	%=========================================================
	\begin{center}
		\footnotesize
		\setlength{\tabcolsep}{2pt}
		\setlength{\LTcapwidth}{\textwidth}
		\renewcommand{\arraystretch}{1.45}
		\begin{longtable}{
				@{}
				>{\raggedright\arraybackslash}p{0.22\textwidth}
				>{\raggedright\arraybackslash}p{0.37\textwidth}
				>{\raggedright\arraybackslash}p{0.35\textwidth}
				@{}
			}
			\caption{Value-length cases and reduction rules for brackets of type $(+,+)$.}
			\label{tab:positive-positive-value-length-lookup}
			\\
			\hline
			\textbf{Assumptions on $a,b$}
			&
			\textbf{Local rows to use}
			&
			\textbf{Reduction / cancellation rule}
			\\
			\hline
			\endfirsthead
			
			\hline
			\textbf{Assumptions on $a,b$}
			&
			\textbf{Local rows to use}
			&
			\textbf{Reduction / cancellation rule}
			\\
			\hline
			\endhead
			
			\hline
			\endfoot
			
			%========================================================
			\CondCell{Both $a$ and $b$ are trivial.}
			&
			\CondCell{No local row contributes.}
			&
			\ValCell{$l_2(f\otimes g)(\gamma)=0$.}
			\\[4mm]
			\hline
			
			%========================================================
			\CondCell{$a$ is trivial and $b$ is nontrivial.\par\vspace{1mm}} 
			&
			\CondCell{Use only the first three rows of
				Table~\ref{tab:positive-positive-local-lookup}.}
			&
			\ValCell{The entire second insertion direction vanishes.}
			\\[4mm]
			\hline
			
			%========================================================
			\CondCell{$b$ is trivial and $a$ is nontrivial.\par\vspace{1mm}}
			&
			\CondCell{Use only the last three rows of
				Table~\ref{tab:positive-positive-local-lookup}.}
			&
			\ValCell{The entire first insertion direction vanishes.}
			\\[4mm]
			\hline
			
			%========================================================
			\CondCell{Both $a$ and $b$ are arrows.}
			&
			\CondCell{
				Use every applicable row in both insertion directions.
			}
			&
			\ValCell{
				The single-arrow contributions are summed over
				$\mI_{y,b}^{x}(\gamma)$ and
				$\mI_{x,a}^{y}(\gamma)$.  Ordinary matching boundary terms
				cancel by \Cref{rem:higher-boundary-cancellation}; special-loop
				absorption terms may survive.\par\vspace{1mm}
			}
			\\[4mm]
			\hline
			
			%========================================================
			\CondCell{$\ell(a)>1$ and $b$ is an arrow.}
			&
			\CondCell{
				Use all three rows in the first insertion direction and only the
				second-left and second-right boundary rows in the second insertion
				direction.
			}
			&
			\ValCell{
				The first-direction single-arrow contributions are summed over
				$\mI_{y,b}^{x}(\gamma)$.  Ordinary matching boundary terms cancel
				by \Cref{rem:higher-boundary-cancellation}; special-loop absorption
				terms may survive.\par\vspace{1mm}
			}
			\\[4mm]
			\hline
			
			%========================================================
			\CondCell{$a$ is an arrow and $\ell(b)>1$.}
			&
			\CondCell{
				Use only the first-left and first-right boundary rows in the first
				insertion direction and all three rows in the second insertion
				direction.
			}
			&
			\ValCell{
				The second-direction single-arrow contributions are summed over
				$\mI_{x,a}^{y}(\gamma)$.  Ordinary matching boundary terms cancel
				by \Cref{rem:higher-boundary-cancellation}; special-loop absorption
				terms may survive.\par\vspace{1mm}
			}
			\\[4mm]
			\hline
			
			%========================================================
			\CondCell{
				\[
				\ell(a)>1,
				\
				\ell(b)>1.
				\]
			}
			&
			\CondCell{Use only the four boundary rows of
				Table~\ref{tab:positive-positive-local-lookup}.}
			&
			\ValCell{
				Interior terms vanish.\newline
				Ordinary boundary terms cancel in the pairs
				$(\text{first-left},\text{second-right})$ and
				$(\text{first-right},\text{second-left})$ by
				\Cref{rem:higher-boundary-cancellation}.\newline
				The surviving terms are exactly the four special-loop alternatives in
				\Cref{cor:higher-long-value-reduction}.\par\vspace{1mm}
			}
			\\[4mm]
			\hline
		\end{longtable}
	\end{center}
	
	%=========================================================
	\begin{center}
		\footnotesize
		\setlength{\tabcolsep}{2pt}
		\renewcommand{\arraystretch}{1.45}
		\begin{longtable}{
				@{}
				>{\raggedright\arraybackslash}p{0.18\textwidth}
				>{\raggedright\arraybackslash}p{0.48\textwidth}
				>{\raggedright\arraybackslash}p{0.28\textwidth}
				@{}
			}
			\caption{Occurrence patterns for brackets of type $(+,+)$.}
			\label{tab:positive-positive-occurrence-lookup}
			\\
			\hline
			\textbf{Form of $\gamma$}
			&
			\textbf{Occurrence positions}
			&
			\textbf{Applicable rule}
			\\
			\hline
			\endfirsthead
			
			\hline
			\textbf{Form of $\gamma$}
			&
			\textbf{Occurrence positions}
			&
			\textbf{Applicable rule}
			\\
			\hline
			\endhead
			
			\hline
			\endfoot
			
			%========================================================
			\CondCell{No repeated arrows.}
			&
			\CondCell{
				Each support occurs at most once. Interior terms are zero; only the two boundary junctions can contribute.
			}
			&
			\ValCell{
				Use \Cref{NRA-read-off}.  If the two terms at the same junction are
				both present, they cancel by
				\Cref{rem:higher-boundary-cancellation}.\par\vspace{1mm}
			}
			\\[4mm]
			\hline
			
			%========================================================
			\CondCell{
				Pure loop power
				\[
				\gamma=\omega^{m+n+1}.
				\]
			}
			&
			\CondCell{For the only possible supports
				\[
				y=\omega^{n+1},
				\qquad
				x=\omega^{m+1},
				\]
				one has
				\[
				\Occ_\gamma(\omega^{n+1})=\{0,\ldots,m\},
				\]
				\[
				\Occ_\gamma(\omega^{m+1})=\{0,\ldots,n\}.
				\]
			}
			&
			\ValCell{
				Use
				\eqref{eq:higher-loop-trivial-arrow},
				\eqref{eq:higher-loop-arrow-trivial}, and
				\eqref{eq:higher-loop-arrow-arrow}.
			}
			\\[4mm]
			\hline
			
			%========================================================
			\CondCell{
				Primitive-cycle power
				\[
				\gamma=\tau^w,
				\  w\geq2.
				\]
				\par\vspace{0.8mm}
			}
			&
			\CondCell{
				If the first arrow of $z$ is $\tau_u$, then
				\[
				\Occ_\gamma(z)
				=
				\left\{
				u-1+k\ell(\tau)
				\ \middle|\
				\substack{
					k\in\mathbb Z_{\geq0},\\
					u-1+k\ell(\tau)+\ell(z)\leq w\ell(\tau)
				}
				\right\}.
				\]
			}
			&
			\ValCell{
				Use \Cref{prop:higher-primitive-cycle-power}.  Evaluate a full signed
				occurrence sum by \Cref{cor:signed-periodic-multiplicity}.
			}
			\\[4mm]
			\hline
			
			%========================================================
			\CondCell{
				Primitive-cycle power with suffix
				\[
				\gamma=\tau^w\tau_1\cdots\tau_r.
				\]
			}
			&
			\CondCell{
				If the first arrow of $z$ is $\tau_u$, then
				\[
				\Occ_\gamma(z)
				=
				\left\{
				u-1+k\ell(\tau)
				\ \middle|\
				\substack{
					k\in\mathbb Z_{\geq0},\\
					u-1+k\ell(\tau)+\ell(z)\leq w\ell(\tau)+r
				}
				\right\}.
				\]
			}
			&
			\ValCell{
				Use \Cref{prop:higher-primitive-cycle-suffix}.  The suffix changes
				only which occurrences and boundary positions are present.\par\vspace{1mm}
			}
			\\[4mm]
			\hline
		\end{longtable}
	\end{center}
	
	%=========================================================
	The following table is the local lookup form of \Cref{HOBF}.  The first three
	rows belong to the first insertion direction and the last three rows to the
	second insertion direction; all applicable local contributions are added.
	
	\begin{center}
		\footnotesize
		\setlength{\tabcolsep}{2pt}
		\renewcommand{\arraystretch}{1.45}
		\begin{longtable}{
				@{}
				>{\centering\arraybackslash}p{0.27\textwidth}
				>{\raggedright\arraybackslash}p{0.38\textwidth}
				>{\raggedright\arraybackslash}p{0.29\textwidth}
				@{}
			}
			\caption{Local contributions for brackets of type $(+,+)$.}
			\label{tab:positive-positive-local-lookup}
			\\
			\hline
			\textbf{Local configuration}
			&
			\textbf{Assumptions / local conditions}
			&
			\textbf{Local contribution}
			\\
			\hline
			\endfirsthead
			
			\hline
			\textbf{Local configuration}
			&
			\textbf{Assumptions / local conditions}
			&
			\textbf{Local contribution}
			\\
			\hline
			\endhead
			
			\hline
			\endfoot
			
			%========================================================
			\FigCell{
				\begin{tikzcd}[ampersand replacement=\&, column sep=3em]
					\bullet \&\& \bullet \\
					\\
					\bullet \&\& \bullet
					\arrow["{\gamma_1\cdots\gamma_n}", shift left, dotted, from=1-1, to=1-3]
					\arrow["p"', shift right, dotted, from=1-1, to=1-3]
					\arrow["a", dotted, from=1-3, to=3-1]
					\arrow["{\gamma_{n+1}=\nu}", from=1-3, to=3-3]
					\arrow["{\gamma_{n+2}\cdots\gamma_{m+n+1}}", dotted, from=3-3, to=3-1]
				\end{tikzcd}
			}
			&
			\CondCell{
				First direction, left boundary:
				\[
				i=0,\
				y=\gamma_1\cdots\gamma_{n+1},
				\]
				\[
				b=p\nu,\
				x=\nu\gamma_{n+2}\cdots\gamma_{m+n+1}.
				\]
			}
			&
			\ValCell{
				\[
				\para{\gamma}{\pi(pa)}.
				\]
			}
			\\[4mm]
			\hline
			
			%========================================================
			\FigCell{
				\begin{tikzcd}[ampersand replacement=\&, column sep=3em]
					\bullet \&\& \bullet \\
					\\
					\bullet \&\& \bullet
					\arrow["{\gamma_1\cdots\gamma_i}", dotted, from=1-1, to=1-3]
					\arrow["a"', dotted, from=1-1, to=3-1]
					\arrow["b=\nu"', shift right, from=1-3, to=3-3]
					\arrow["y", shift left, dotted, from=1-3, to=3-3]
					\arrow["{\gamma_{i+n+2}\cdots\gamma_{m+n+1}}", dotted, from=3-3, to=3-1]
				\end{tikzcd}
			}
			&
			\CondCell{
				First direction, interior:
				\[
				0<i<m,\
				y=\gamma_{i+1}\cdots\gamma_{i+n+1},
				\]
				\[
				b=\nu,\
				x=\gamma_1\cdots\gamma_i\nu\gamma_{i+n+2}\cdots\gamma_{m+n+1}.
				\]
			}
			&
			\ValCell{
				\[
				(-1)^{i(n+2)}\para{\gamma}{a}.
				\]
			}
			\\[4mm]
			\hline
			
			%========================================================
			\FigCell{
				\begin{tikzcd}[ampersand replacement=\&, column sep=3em]
					\bullet \&\& \bullet \\
					\\
					\bullet \&\& \bullet
					\arrow["{\gamma_1\cdots\gamma_m}", dotted, from=1-1, to=1-3]
					\arrow["a", dotted, from=1-1, to=3-3]
					\arrow["{\nu=\gamma_{m+1}}", from=1-3, to=3-3]
					\arrow["{\gamma_{m+2}\cdots\gamma_{m+n+1}}", shift left, dotted, from=3-3, to=3-1]
					\arrow["q"', shift right, from=3-3, to=3-1]
				\end{tikzcd}
			}
			&
			\CondCell{
				First direction, right boundary:
				\[
				i=m,\
				y=\gamma_{m+1}\cdots\gamma_{m+n+1},
				\]
				\[
				b=\nu q,\
				x=\gamma_1\cdots\gamma_m\nu.
				\]
			}
			&
			\ValCell{
				\[
				(-1)^{mn}\para{\gamma}{\pi(aq)}.
				\]
			}
			\\[4mm]
			\hline
			
			%========================================================
			\FigCell{
				\begin{tikzcd}[ampersand replacement=\&, column sep=3em]
					\bullet \&\& \bullet \\
					\\
					\bullet \&\& \bullet
					\arrow["{\gamma_1\cdots\gamma_m}", shift left, dotted, from=1-1, to=1-3]
					\arrow["p"', shift right, dotted, from=1-1, to=1-3]
					\arrow["b", dotted, from=1-3, to=3-1]
					\arrow["{\gamma_{m+1}=\nu}", from=1-3, to=3-3]
					\arrow["{\gamma_{m+2}\cdots\gamma_{m+n+1}}", dotted, from=3-3, to=3-1]
				\end{tikzcd}
			}
			&
			\CondCell{
				Second direction, left boundary:
				\[
				j=0,\
				x=\gamma_1\cdots\gamma_{m+1},
				\]
				\[
				a=p\nu,\
				y=\nu\gamma_{m+2}\cdots\gamma_{m+n+1}.
				\]
			}
			&
			\ValCell{
				\[
				-(-1)^{mn}\para{\gamma}{\pi(pb)}.
				\]
			}
			\\[4mm]
			\hline
			
			%========================================================
			\FigCell{
				\begin{tikzcd}[ampersand replacement=\&, column sep=3em]
					\bullet \&\& \bullet \\
					\\
					\bullet \&\& \bullet
					\arrow["{\gamma_1\cdots\gamma_j}", dotted, from=1-1, to=1-3]
					\arrow["b"', dotted, from=1-1, to=3-1]
					\arrow["a=\nu"', shift right, from=1-3, to=3-3]
					\arrow["x", shift left, dotted, from=1-3, to=3-3]
					\arrow["{\gamma_{j+m+2}\cdots\gamma_{m+n+1}}", dotted, from=3-3, to=3-1]
				\end{tikzcd}
			}
			&
			\CondCell{
				Second direction, interior:
				\[
				0<j<n,\
				x=\gamma_{j+1}\cdots\gamma_{j+m+1},
				\]
				\[
				a=\nu,\
				y=\gamma_1\cdots\gamma_j\nu\gamma_{j+m+2}\cdots\gamma_{m+n+1}.
				\]
			}
			&
			\ValCell{
				\[
				-(-1)^{mn+j(m+2)}\para{\gamma}{b}.
				\]
			}
			\\[4mm]
			\hline
			
			%========================================================
			\FigCell{
				\begin{tikzcd}[ampersand replacement=\&, column sep=3em]
					\bullet \&\& \bullet \\
					\\
					\bullet \&\& \bullet
					\arrow["{\gamma_1\cdots\gamma_n}", dotted, from=1-1, to=1-3]
					\arrow["b", dotted, from=1-1, to=3-3]
					\arrow["{\nu=\gamma_{n+1}}", from=1-3, to=3-3]
					\arrow["{\gamma_{n+2}\cdots\gamma_{m+n+1}}", shift left, dotted, from=3-3, to=3-1]
					\arrow["q"', shift right, from=3-3, to=3-1]
				\end{tikzcd}
			}
			&
			\CondCell{
				Second direction, right boundary:
				\[
				j=n,\
				x=\gamma_{n+1}\cdots\gamma_{m+n+1},
				\]
				\[
				a=\nu q,\
				y=\gamma_1\cdots\gamma_n\nu.
				\]
			}
			&
			\ValCell{
				\[
				-\para{\gamma}{\pi(bq)}.
				\]
			}
			\\[4mm]
			\hline
		\end{longtable}
	\end{center}
	
	Together, \Cref{tab:positive-positive-value-length-lookup,tab:positive-positive-occurrence-lookup,tab:positive-positive-local-lookup}
	provide the tabular procedure for brackets of type $(+,+)$ in
	\Cref{prop:positive-positive-classification}.
	%========================================
	\begin{example}
		\label{ex:appendix-positive-positive-lookup}
		We revisit the algebra and the data from
		\Cref{ex:suffix-arrow-reassembly}.  Recall that
		$\tau=\tau_1\tau_2\tau_3\tau_4$ and
		$\gamma=\tau^2\tau_1\tau_2\in\Gamma_{10}$.  Let
		$y=\tau\tau_1=\tau_1\tau_2\tau_3\tau_4\tau_1\in\Gamma_5$ and
		$x=\tau\tau_1\tau_2=\tau_1\tau_2\tau_3\tau_4\tau_1\tau_2\in\Gamma_6$.
		Thus $m=5$ and $n=4$.  The values are
		$a=\tau_1\rho\tau_4$ and $b=\tau_1$, so
		$f=\para{x}{a}\in B^5$ and $g=\para{y}{b}\in B^4$.
		
		We compute $l_2(f\otimes g)(\gamma)$ using the three tables above.
		
		\medskip
		\noindent
		\textup{(S1) Preliminary value-length reduction.}
		We have $\ell(a)=3$ and $\ell(b)=1$.  Thus neither value is trivial and
		$b$ is an arrow.  By
		Table~\ref{tab:positive-positive-value-length-lookup}, we must compute the
		single-arrow contributions in the first insertion direction.  Since
		$\ell(a)>1$, the second insertion direction can contribute only at its
		two boundary positions.
		
		\medskip
		\noindent
		\textup{(S2) Occurrence lookup.}
		The output support is
		$\gamma=\tau^2\tau_1\tau_2$.  By
		Table~\ref{tab:positive-positive-occurrence-lookup},
		$\Occ_\gamma(y)=\{0,4\}$ and $\Occ_\gamma(x)=\{0,4\}$.
		Replacing either occurrence of $y$ by the arrow $b=\tau_1$ produces the
		complementary support $x$, so
		$\mI_{y,\tau_1}^{x}(\gamma)=\{0,4\}$.
		
		\medskip
		\noindent
		\textup{(S3) Local-term lookup.}
		At $i=0$, the support $y$ is at the first-left boundary.  In the first
		row of Table~\ref{tab:positive-positive-local-lookup}, take $p=e_1$ and
		$\nu=\tau_1$.  The corresponding contribution is
		$\para{\gamma}{\pi(e_1a)}=\para{\gamma}{a}$.
		
		At $i=4$, we have $0<i<m$, so this is a first-interior single-arrow
		term.  Since
		$(-1)^{i(n+2)}=(-1)^{4\cdot6}=1$, the second row of
		Table~\ref{tab:positive-positive-local-lookup} contributes
		$\para{\gamma}{a}$.  Thus the first insertion direction contributes
		$2\para{\gamma}{a}$.
		
		For the second insertion direction, the occurrence $j=0$ satisfies none
		of the required local conditions and contributes zero.  At
		$j=4=n$, we have a second-right boundary configuration with
		$a=\tau_1(\rho\tau_4)=\nu q$ and $b=\tau_1$.  The last row of
		Table~\ref{tab:positive-positive-local-lookup} therefore contributes
		$-\para{\gamma}{\pi(bq)}=-\para{\gamma}{\tau_1\rho\tau_4}
		=-\para{\gamma}{a}$.
		
		\medskip
		\noindent
		\textup{(S4) Final value-length reduction.}
		The first-left contribution at $i=0$ and the second-right contribution
		at $j=n$ cancel:
		$\para{\gamma}{a}-\para{\gamma}{a}=0$, by
		\Cref{rem:higher-boundary-cancellation}.  The interior single-arrow
		contribution at $i=4$ has no matching opposite boundary term and
		therefore survives.  Consequently,
		$l_2(f\otimes g)(\gamma)=a=\tau_1\rho\tau_4\neq0$.
		
		This calculation illustrates the order of use of the three
		tables: Table~\ref{tab:positive-positive-value-length-lookup} first determines which
		local rows need to be considered,
		Table~\ref{tab:positive-positive-occurrence-lookup} gives the admissible positions,
		Table~\ref{tab:positive-positive-local-lookup} gives the corresponding local
		contributions, and
		Table~\ref{tab:positive-positive-value-length-lookup} is then used again to perform the
		final cancellation and reduction.
	\end{example}
	%================================

\end{document}